\documentclass{amsart}
\usepackage[T1]{fontenc}
\usepackage{lmodern}
\usepackage[latin1]{inputenc}
\usepackage{amsmath}
\usepackage{amssymb}
\usepackage{amsthm}
\usepackage[enableskew]{youngtab}
\usepackage{hyperref}
\newcommand{\arxiv}[1]{\href{http://arxiv.org/abs/#1}{\texttt{arXiv:#1}}}
\usepackage{breakurl}
\usepackage{color}
\usepackage{paralist}
\usepackage{setspace}
\usepackage{graphicx}
\onehalfspacing
\newcommand{\NN}{\ensuremath{\mathbb N}}

\theoremstyle{plain}
\newtheorem{Th}{Theorem}[section]
\newtheorem{Lem}[Th]{Lemma}
\newtheorem{Co}[Th]{Corollary}
\newtheorem{Prop}[Th]{Proposition}
\newtheorem{Hyp}[Th]{Hypothesis}
\theoremstyle{definition}
\newtheorem{Def}[Th]{Definition}
\newtheorem{Ex}[Th]{Example}
\newtheorem*{Ack}{Acknowledgement}
\theoremstyle{remark}
\newtheorem{Rem}[Th]{Remark}
\newtheorem*{Not}{\underline{\bf{Notation}}}
\newtheorem*{Con}{\underline{\bf{Convention}}}
\numberwithin{equation}{section}
\newcommand{\meins}{1'}
\newcommand{\mzwei}{2'}
\newcommand{\mdrei}{3'}
\newcommand{\mvier}{4'}
\newcommand{\mfuenf}{5'}
\newcommand{\msechs}{6'}

\newcommand{\deins}{\mathbf 1}
\newcommand{\dzwei}{\mathbf 2}

\newcommand{\dmeins}{\mathbf 1'}
\newcommand{\dmzwei}{\mathbf 2'}

\newcommand{\none}{\textcolor{white}{.}}

\begin{document}
\title{Classification of $Q$-multiplicity-free skew Schur $Q$-functions}
\author{Christopher Schure}
\address{\scshape Institut für Algebra, Zahlentheorie und Diskrete Mathematik, Leibniz Universität Hannover, Welfengarten 1, D-30167 Hannover; schure@math.uni-hannover.de}
\subjclass[2010]{Primary: 05E05; Secondary: 05E10}
\begin{abstract}
We classify the $Q$-multiplicity-free skew Schur $Q$-functions.
Towards this result, we also provide new relations between the shifted Littlewood-Richardson coefficients.
\end{abstract}

\maketitle

\section{Introduction}
Schur functions form an important basis of the algebra of symmetric functions.
They appear in the study of the representations of the symmetric groups and the general linear groups.
Schur $P$-functions and Schur $Q$-functions are bases of the subalgebra generated by the odd power sums.
In \cite{Stembridge}, Stembridge proved a number of important properties of Schur $Q$-functions emphasizing that they may be viewed as shifted analogues of Schur functions.
While in the classical situation Schur functions are closely related to ordinary irreducible characters of the symmetric groups,
Schur $Q$-functions are intimately connected to irreducible spin characters of the double covers of the symmetric groups.
Multiplicity-free products of Schur functions were classified by Stembridge in \cite{Stembridge3}.
As a shifted analogue of Stembridge's result, Bessenrodt then classified the $P$-multiplicity-free products of Schur $P$-functions in \cite{Bessenrodt}.
A skew generalization of Stembridge's result was proved independently by Gutschwager in \cite{Gutschwager} and Thomas and Yong in \cite{ThomasYong}.
While Gutschwager classified the multiplicity-free skew Schur functions, Thomas and Yong classified the multiplicity-free products of Schubert classes.
However, what was missing was a skew analogue of Bessenrodt's result or equivalently a shifted analogue of Gutschwager's result.
The main goal of this article is to provide this here, i.e., to classify the $Q$-multiplicity-free skew Schur $Q$-functions.
We will heavily rely on the shifted Littlewood-Richardson rule obtained by Stembridge in \cite{Stembridge} (another version of this rule was given by Cho \cite{Cho}).\par
The paper is structured as follows.
In the second section we provide the required definitions and some properties needed later.
In the third section we prove relations between shifted Littlewood-Richardson coefficients, which will simplify proofs of the fourth section.
In the fourth section we will first exclude all non-$Q$-multiplicity-free skew Schur $Q$-functions before proving the $Q$-multiplicity-freeness of the remaining skew Schur $Q$-functions to obtain our main classification result, Theorem \ref{listmf}.
Note that we define some special notation for partitions with distinct parts whose shifted diagrams have at most two corners in Definition \ref{shapepath}.
We will use that notation in most lemmas of the fourth section.

\section{Preliminaries}
We will use the same notation as in \cite{Schure}.
Some of the tools introduced there will also be useful in the context here.

\subsection{Partitions, diagrams and tableaux}
We define a \textbf{partition} as a tuple $\lambda = (\lambda_1, \lambda_2, \ldots, \lambda_n)$ where $\lambda_j \in \NN$ for all $1 \leq j \leq n$ and $\lambda_i \geq \lambda_{i+1} > 0$ for all $1 \leq i \leq n-1$.
The \textbf{length} of $\lambda$ is $\ell(\lambda) := n$.
A partition $\lambda$ is called a partition of $k$ if $|\lambda| := \lambda_1 + \lambda_2 + \ldots + \lambda_{\ell(\lambda)} = k$ where $|\lambda|$ is called the \textbf{size} of $\lambda$.
A \textbf{partition with distinct parts} is a partition $\lambda = (\lambda_1, \lambda_2, \ldots, \lambda_n)$ where $\lambda_i > \lambda_{i+1} > 0$ for all $1 \leq i \leq n-1$.
The set of partitions of $k$ with distinct parts is denoted by $DP_k$.
By definition, the empty partition $\emptyset$ is the only element in $DP_0$ and it has length $0$.
The \textbf{set of all partitions with distinct parts} is denoted by $DP := \bigcup_k DP_k$.
For $\lambda \in DP$ the \textbf{shifted diagram} $D_{\lambda}$ is defined by $D_{\lambda} := \{(i,j) \mid 1 \leq i \leq \ell(\lambda), i \leq j \leq i+\lambda_i-1\}$.
\begin{Con}
In this paper we will omit the adjective shifted.
This means that whenever a diagram is mentioned it is always a shifted diagram.
\end{Con}
For $\lambda, \mu \in DP$ with $\ell(\mu) \leq \ell(\lambda)$ and $\mu_i \leq \lambda_i$ for all $1 \leq i \leq \ell(\mu)$, we define the \textbf{skew diagram} $D_{\lambda/\mu} := D_{\lambda} \setminus D_{\mu}$.
Its \textbf{size} is $|D_{\lambda/\mu}| = |D_\lambda| - |D_\mu|$.\par
Each edgewise connected part of a skew diagram $D$
is called a \textbf{component}.
The number of components of $D$ is denoted by $comp(D)$.
If $comp(D) = 1$, then $D$ is called \textbf{connected}, otherwise it is called \textbf{disconnected}.\par
In the following, if components are numbered, the numbering is as follows: the first component is the leftmost component, the second component is the next component to the right of the first component etc.\par
A \textbf{corner} of a skew diagram $D$ is a box $(x,y) \in D$ such that $(x+1,y), (x,y+1) \notin D$.
An \textbf{unshifted} diagram is a skew diagram $D_{\lambda/\mu}$ with $\ell(\mu) = \ell(\lambda)-1$.
A (skew) diagram can be depicted as an arrangement of boxes where the coordinates of the boxes are interpreted in matrix notation.
\begin{Ex}
Let $\lambda = (6,5,2,1)$ and $\mu = (4,3)$.
Then the (skew) diagram is
$$D_{\lambda/\mu} = {\Yvcentermath1 \young(::\none \none ,::\none \times ,\none \none ,:\times ).}$$
We have $|D_{\lambda/\mu}| = 7$.
The diagram $D_{\lambda/\mu}$ has two components where the first component consists of three boxes and the second component consists of four boxes.
The corners of $D_{\lambda/\mu}$ are the boxes marked $\times$.
\end{Ex}

We consider the alphabet $\mathcal{A} = \{1' < 1 < 2' < 2 < \ldots\}$.
The letters $1, 2, 3, \ldots$ are called \textbf{unmarked} letters and the letters denoted by $1', 2', 3', \ldots$ are called \textbf{marked} letters.
For a letter $x$ of the alphabet $\mathcal{A}$ we denote the unmarked version of this letter with $|x|$.

\begin{Def}\label{tableaudef}
Let $\lambda, \mu \in DP$ with skew diagram  $D_{\lambda/\mu}$.
A \textbf{tableau} $T$ of shape $D_{\lambda/\mu}$ is a map $T: D_{\lambda/\mu} \rightarrow \mathcal{A}$ such that
\begin{enumerate}[a)]
	\item $T(i,j) \leq T(i+1,j)$, $T(i,j) \leq T(i,j+1)$ for all $i,j$,
	\item each column has at most one $k$ ($k = 1, 2, 3, \ldots$),
	\item each row has at most one $k'$ ($k' = 1', 2', 3', \ldots$).
\end{enumerate}
Let $c^{(u)}(T) = (c^{(u)}_1, c^{(u)}_2, \ldots)$ where $c^{(u)}_i$ denotes the number of letters equal to $i$ in the tableau $T$, for each $i$.
Analogously, let $c^{(m)}(T) = (c^{(m)}_1, c^{(m)}_2, \ldots)$ where $c^{(m)}_i$ denotes the number of $i'$s in the tableau $T$, for each $i$.
Then the \textbf{content} of $T$ is defined by $c(T) = (c_1, c_2, \ldots) := c^{(u)}(T) + c^{(m)}(T)$.
If there is some $k$ such that $c_k > 0$ but $c_j = 0$ for all $j > k$ then we omit all these $c_j$ from $c(T)$.
\end{Def}
\begin{Rem}
We depict a tableau $T$ of shape $D_{\lambda/\mu}$ by filling each box $(x,y)$ of the diagram with the letter $T(x,y)$.
\end{Rem}
\begin{Ex}
Let $\lambda = (8,6,5,3,2)$ and $\mu = (5,2,1)$.
Then a tableau of shape $D_{\lambda/\mu}$ is
$$T = {\Yvcentermath1 \young(::\meins 12,\mzwei 224,2455,4\msechs 6,:67).}$$
We have $c(T) = (2,5,0,3,2,3,1)$.
\end{Ex}

\subsection{Skew Schur $Q$-functions}
For $\lambda, \mu \in DP$ and a countable set of independent variables $x_1, x_2, \ldots$ the \textbf{skew Schur $Q$-function} is defined by
$$Q_{\lambda/\mu} := \sum_{T \in T(\lambda/\mu)}{x^{c(T)}}$$
where $T(\lambda/\mu)$ denotes the set of all tableaux of shape $D_{\lambda/\mu}$ and $x^{(c_1, c_2, \ldots, c_{\ell})} := x_1^{c_1} x_2^{c_2} \cdots$ with $c_k := 0$ for $k > \ell$.
If $D_{\mu} \nsubseteq D_{\lambda}$ then $Q_{\lambda/\mu} := 0$.
Since $D_{\lambda/\emptyset} = D_{\lambda}$, we denote $Q_{\lambda/\emptyset}$ by $Q_{\lambda}$.

\begin{Def}
Let a diagram $D$ be such that the $y^{\textrm{th}}$ column has no box, but there are boxes to the right of the $y^{\textrm{th}}$ column and after shifting all boxes that are to the right of the $y^{\textrm{th}}$ column one box to the left we obtain a diagram $D_{\alpha/\beta}$ for some $\alpha, \beta \in DP$.
Then we call the $y^{\textrm{th}}$ column \textbf{empty} and the diagram $D_{\alpha/\beta}$ is obtained by removing the $y^{\textrm{th}}$ column.
Similarly, let a diagram $D$ be such that the $x^{\textrm{th}}$ row has no box,  but there are boxes below the $x^{\textrm{th}}$ row and after shifting all boxes that are below the $x^{\textrm{th}}$ row one box up and then all boxes of the diagram one box to the left, we obtain a diagram $D_{\alpha/\beta}$ for some $\alpha, \beta \in DP$.
Then we call the $x^{\textrm{th}}$ row \textbf{empty} and the diagram $D_{\alpha/\beta}$ is obtained by removing the $x^{\textrm{th}}$ row.
\end{Def}

\begin{Def}
For $\lambda, \mu \in DP$ we call the diagram $D_{\lambda/\mu}$ \textbf{basic} if it satisfies the following properties:
\begin{itemize}
	\item $D_{\mu} \subseteq D_{\lambda}$,
	\item $\ell(\lambda) > \ell(\mu)$,
	\item $\lambda_i > \mu_i$,  for all $1 \leq i \leq \ell(\mu)$,
	\item $\lambda_{i+1} \geq \mu_i-1$,  for all $1 \leq i \leq \ell(\mu)$.
\end{itemize}
This means that $D_{\lambda/\mu}$ has no empty rows or columns.
\end{Def}

For a given diagram $D$ let $\bar{D}$ be the diagram obtained by removing all empty rows and columns of the diagram $D$.
Since the tableau restrictions on each box in a diagram are unaffected by removing empty rows and columns, there is a content-preserving bijection between tableaux of a given shape and tableaux of the shape obtained by removing empty rows and columns; thus we have $Q_D = Q_{\bar{D}}$.
Hence, we may restrict our considerations to partitions $\lambda$ and $\mu$ such that $D_{\lambda/\mu}$ is basic.\par
For some given skew diagram $D$ let the diagram obtained after removing empty rows and columns be $D_{\lambda/\mu}$ for some $\lambda, \mu \in DP$.
Then $Q_D$ is equal to the skew Schur $Q$-function $Q_{\lambda/\mu}$.

For a tableau $T$ of shape $D$, the \textbf{reading word} $w = w(T)$ is the word obtained by reading the rows from left to right beginning with the bottom row and ending with the top row.
The \textbf{length} $\ell(w)$ is the number of letters and, thus, the number
$n=|D|$ of boxes in $D$.
Let $(x(i),y(i))$ denote the box of the $i^{\textrm{th}}$ letter of the reading word $w(T)$.\par
For the reading word $w=w_1w_2\ldots w_n$
of the tableau $T$ the statistics $m_i(j)$ are defined as follows:
\begin{itemize}
	\item $m_i(0) = 0$ for all $i$.
	\item For $1 \leq j \leq n$ the number $m_i(j)$ is equal to the number of times $i$ occurs in the word $w_{n-j+1} \dots w_n$.
	\item For $n+1 \leq j \leq 2n$ we set $m_i(j) := m_i(n) + k(i)$ where $k(i)$ is the number of times $i'$ occurs in the word $w_1 \dots w_{j-n}$.
\end{itemize}
As Stembridge remarked \cite[before Theorem 8.3]{Stembridge}, the statistics $m_i(j)$ for some given $i$ can be calculated  by taking the word $w(T)$ and scan it first from right to left while counting the letters $i$ and afterwards scan it from left to right and adding the number of letters $i'$.
After the $j^{\textrm{th}}$ step of scanning and counting the statistic $m_i(j)$ is calculated.\begin{Def}\label{amenable}
Let $k \in \NN$ and $w=w_1w_2\ldots w_n$ be a word of length $n$ consisting of letters from the alphabet $\mathcal{A}$.
The word $w$ is called $k$\textbf{-amenable} if it satisfies the following conditions:
\begin{enumerate}[a)]
	\item if $m_k(j) = m_{k-1}(j)$ then $w_{n-j} \notin \{k, k'\}$ for all $0 \leq j \leq n-1$,
	\item if $m_k(j) = m_{k-1}(j)$ then $w_{j-n+1} \notin \{k-1, k'\}$ for all $n \leq j \leq 2n-1$,
	\item if $j$ is the smallest number such that $w_j \in \{k', k\}$ then $w_j = k$,
	\item if $j$ is the smallest number such that $w_j \in \{(k-1)', k-1\}$ then $w_j = k-1$.
\end{enumerate}
Note that $c^{(u)}_i = m_i(n)$ and $c^{(m)}_i = m_i(2n) - m_i(n)$.\par
The word $w$ is called \textbf{amenable} if it is $k$-amenable for all $k > 1$.
A tableau $T$ is called ($k$-)amenable if $w(T)$ is ($k$-)amenable.
\end{Def}
\begin{Lem}\cite[Lemma 3.28]{Salmasian}\label{unmarked}
Let $w$ be a $k$-amenable word for some $k \geq 1$.
Let $n = \ell(w)$.
If $m_{k-1}(n) > 0$ then $m_{k-1}(n) > m_k(n)$.
\end{Lem}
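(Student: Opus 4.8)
The plan is to track the single integer statistic $\delta(j) := m_{k-1}(j) - m_k(j)$ as $j$ runs from $0$ to $2n$ and to show that the hypothesis forces $\delta(n)\ge 1$. Recall that $m_i(j)$ is computed by first scanning $w$ from right to left counting unmarked $i$'s (for $0\le j\le n$) and then scanning from left to right adding marked $i'$'s (for $n\le j\le 2n$). Consequently, in the first phase $\delta$ increases by $1$ exactly when the newly scanned letter is an unmarked $k-1$ and decreases by $1$ exactly when it is an unmarked $k$ (marked letters being invisible in this phase), while in the second phase $\delta$ increases by $1$ on a letter $(k-1)'$ and decreases by $1$ on a letter $k'$. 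Since $m_{k-1}(n)-m_k(n)=\delta(n)$, the lemma is exactly the assertion that $m_{k-1}(n)>0$ implies $\delta(n)>0$.

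First I would prove the weak inequality $\delta(j)\ge 0$ for all $0\le j\le 2n$ by induction on $j$, starting from $\delta(0)=0$. Since $\delta$ changes by at most $\pm1$ per step, the bound can only fail at step $j+1$ if $\delta(j)=0$ and the next processed letter is the one that lowers $\delta$. In the first phase that forbidden letter is an unmarked $k$ and the next processed letter is $w_{n-j}$, which is excluded precisely by condition a) of Definition \ref{amenable}; in the second phase the forbidden letter is $k'$ and the next processed letter is $w_{j-n+1}$, which is excluded by condition b). Hence $\delta$ can never step below $0$ out of a tie, and the induction yields $\delta(j)\ge 0$ throughout; in particular $m_{k-1}(n)\ge m_k(n)$.

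It then remains to upgrade this to a strict inequality, which is the substantive part. Assume for contradiction that $m_{k-1}(n)>0$ but $\delta(n)=0$. Since $m_{k-1}(n)>0$ there is at least one letter in $\{k-1,(k-1)'\}$; let $b$ be the position of the leftmost such letter. By condition d) we have $w_b=k-1$ (unmarked), and by minimality of $b$ there is no $(k-1)'$ among $w_1,\dots,w_{b-1}$. Evaluating $\delta$ at the index $j=n+b-1$ via the second-phase description, the prefix $w_1\cdots w_{b-1}$ contributes no marked $k-1$'s, so $m_{k-1}(n+b-1)=m_{k-1}(n)$, while it may contribute some $k'$'s, giving $m_k(n+b-1)=m_k(n)+\#\{\,k'\text{ in }w_1\cdots w_{b-1}\,\}$. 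Therefore $\delta(n+b-1)=\delta(n)-\#\{\,k'\text{ in }w_1\cdots w_{b-1}\,\}\le 0$, and combined with the weak inequality $\delta(n+b-1)\ge 0$ this forces $\delta(n+b-1)=0$.

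Finally I would extract the contradiction from this tie. At $j=n+b-1$, which lies in the admissible range $n\le j\le 2n-1$ because $1\le b\le n$, condition b) asserts that the next letter $w_{j-n+1}=w_b$ does not lie in $\{k-1,k'\}$; but $w_b=k-1$, a contradiction. Hence $\delta(n)\neq 0$, and together with $\delta(n)\ge 0$ we conclude $\delta(n)>0$, i.e. $m_{k-1}(n)>m_k(n)$. (The case $k=1$ is vacuous, since then $m_{k-1}=m_0\equiv 0$ and the hypothesis never holds.) I expect the main obstacle to be exactly this passage to strictness: the non-strict bound drops out immediately from the first-phase condition a), whereas ruling out equality requires the second-phase condition b) together with the somewhat non-obvious idea of locating the leftmost unmarked $k-1$ and evaluating the running difference precisely one step before that letter is re-encountered in the left-to-right scan.
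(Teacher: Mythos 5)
The paper does not prove this statement at all: Lemma \ref{unmarked} is imported verbatim from Salmasian (cited as Lemma 3.28 there), so there is no in-paper proof to compare against. Judged on its own, your argument is correct and complete. The bookkeeping of $\delta(j)=m_{k-1}(j)-m_k(j)$ matches the definitions exactly: in the right-to-left phase the increment at step $j\to j+1$ is governed by $w_{n-j}$ and condition a) of Definition \ref{amenable} blocks a descent out of a tie, while in the left-to-right phase the increment is governed by $w_{j-n+1}$ and condition b) does the same, giving $\delta\ge 0$ throughout. For strictness, your choice of $b$ as the leftmost position of a letter in $\{k-1,(k-1)'\}$ (forced to be unmarked by condition d)), together with the observation that $w_1\cdots w_{b-1}$ contributes nothing to $m_{k-1}$ but possibly something negative to $\delta$, correctly pins $\delta(n+b-1)=0$ under the assumption $\delta(n)=0$; the index $n+b-1$ lies in the range $n\le j\le 2n-1$ where condition b) applies, and $w_b=k-1$ then violates it. The edge cases ($b=1$, and the vacuous case $k=1$ where $m_0\equiv 0$) are handled. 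This is a clean, self-contained derivation from the amenability conditions alone, which is arguably a useful addition given that the paper leans on this lemma repeatedly but only cites it.
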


\begin{Lem}\label{firstrows}
Let $T$ be an amenable tableau.
Then there are no entries greater than~$k$ in the first $k$ rows.
\end{Lem}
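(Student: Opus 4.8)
The plan is to prove the equivalent reformulation that $|T(i,j)| \le i$ for every box $(i,j)$ of $T$; this is the same assertion, since ``no entry greater than $k$ in the first $k$ rows, for every $k$'' reduces, on taking $k = i$, to bounding the entries of row $i$. I would argue by induction on $k$, with induction hypothesis $P(k)$: every entry in rows $1, \dots, k$ has absolute value at most $k$ (which is exactly the lemma for the parameter $k$). The case $k = 0$ is vacuous. In the inductive step I may assume row $k$ is nonempty, since if it is empty then $P(k)$ is immediate from $P(k-1)$.

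Assuming $P(k-1)$, I first use that rows are weakly increasing from left to right (Definition \ref{tableaudef}(a)) to reduce the claim for row $k$ to its rightmost, hence largest, entry. The crux is to read off this entry from the word $w(T)$. Since $w(T)$ lists the rows from bottom to top, the boxes of rows $1, \dots, k-1$ form the terminal block of $w(T)$; writing $j^\ast$ for the number of such boxes, the quantity $m_i(j^\ast)$ counts the unmarked $i$'s in rows $1, \dots, k-1$, and the rightmost entry of row $k$ is precisely the letter $w_{n-j^\ast}$ immediately preceding that block.

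By $P(k-1)$ no value exceeding $k-1$ occurs in rows $1, \dots, k-1$, so $m_t(j^\ast) = 0$ for all $t \ge k$. Hence for each parameter $t \ge k+1$ we have $m_t(j^\ast) = m_{t-1}(j^\ast) = 0$, and condition (a) of Definition \ref{amenable}, applied with its index equal to $t$ at the scan position $j = j^\ast \in \{0, \dots, n-1\}$, yields $w_{n-j^\ast} \notin \{t, t'\}$. Letting $t$ range over all integers $\ge k+1$ shows $|w_{n-j^\ast}| \le k$; by row-monotonicity every entry of row $k$ satisfies the same bound, and together with $P(k-1)$ this gives $P(k)$, completing the induction.

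It is worth noting that this argument uses only amenability condition (a), and neither Lemma \ref{unmarked} nor conditions (b)--(d): everything rests on invoking (a) simultaneously for all indices $t \ge k+1$ at the single scan position $j^\ast$ separating row $k$ from the already-controlled rows above it. The only delicate point is therefore the index bookkeeping---confirming that $w_{n-j^\ast}$ really is the rightmost box of row $k$, that $j^\ast \le n-1$ so that (a) is available at that position, and that ``greater than $k$'' is interpreted as ``absolute value exceeding $k$'' so that the resulting bound closes the induction. I expect this bookkeeping to be the main, though fairly mild, obstacle.
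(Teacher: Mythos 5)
Your proof is correct and is essentially the paper's argument recast as an induction: the paper instead takes the topmost offending row and notes that, scanning $w(T)$ from right to left, the large letter is reached while $m_t = m_{t-1} = 0$, which is exactly your application of Definition \ref{amenable}(a) at the scan position $j^\ast$. The only cosmetic difference is that you invoke condition (a) for all $t \geq k+1$ at once and handle the reduction to the rightmost entry of row $k$ explicitly, which the paper leaves implicit.
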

\begin{proof}
Assume the opposite.
Let $i$ be the topmost row with an entry greater than~$i$.
Let this entry be $x$.
Then, while scanning the word $w(T)$ from right to left, the letter $x$ will be scanned before any letter $|x|-1$ will be scanned; a contradiction to the amenability of the tableau $T$.
\end{proof}
Using Lemma \ref{firstrows}, one can construct a brute force algorithm to obtain amenable tableaux of a given shape.
The algorithm fills the first row with elements from $\{1', 1\}$ satisfying the conditions of Definition \ref{tableaudef}.
Each box of the $i^{\textrm{th}}$ row gets filled with entries at most $i$ and greater or equal the entry of the box above (if there is a box above) and greater or equal the entry of the box to the left (if there is a box to the left), again in a way such that the $i^{\textrm{th}}$ row satisfies Definition \ref{tableaudef}.
After all boxes are filled the algorithm takes the reading word and uses Stembridge's scanning algorithm to check if this filling is amenable.

For the proofs of the lemmas in Section~4 we will use the following shifted Littlewood-Richardson rule by Stembridge.

\begin{Th}\cite[Theorem 8.3]{Stembridge}
For $\lambda, \mu \in DP$ we have
$$Q_{\lambda/\mu} = \sum_{\nu \in DP}{f^{\lambda}_{\mu \nu}} Q_{\nu},$$
where $f^{\lambda}_{\mu \nu}$ is the number of amenable tableaux $T$ of shape $D_{\lambda/\mu}$ and content $\nu$.
\end{Th}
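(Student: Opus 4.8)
The plan is to prove the rule by exhibiting a content-preserving bijection that realizes the claimed coefficients combinatorially. Fix a weight $\alpha$ and compare the coefficient of $x^{\alpha}$ on both sides. On the left it counts the tableaux in $T(\lambda/\mu)$ with content $\alpha$; on the right it is $\sum_{\nu} f^{\lambda}_{\mu\nu}\,\#\{U \in T(\nu) : c(U)=\alpha\}$. Hence the theorem is equivalent to a content-preserving bijection
\[
\Phi\colon T(\lambda/\mu)\;\longrightarrow\;\bigsqcup_{\nu\in DP}\bigl\{A : A \text{ amenable of shape } D_{\lambda/\mu},\ c(A)=\nu\bigr\}\times T(\nu),
\]
sending $T$ to a pair $(A,U)$ with $c(U)=c(T)$ and $c(A)=\nu$ the shape of $U$. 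The component $A$ is a \emph{recording} datum depending only on a combinatorial class of $T$, while $U$ is the \emph{rectification} of $T$ to a straight shape.

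First I would develop shifted jeu de taquin for marked shifted tableaux: inward and outward slides adapted to the alphabet $\mathcal{A}$ and to conditions a)--c) of Definition \ref{tableaudef}. The central step is a confluence theorem: the straight-shape tableau obtained by successively sliding all of $D_{\lambda/\mu}$ into $D_{\mu}$ is independent of the order of slides. I would prove this by showing that each elementary slide changes the reading word $w(T)$ only by shifted Knuth (coplactic) transformations, so that the rectification target is an invariant of the induced equivalence class on reading words. This produces a well-defined map $T\mapsto U$ onto straight shapes, and the image shape is the $\nu$ attached to $T$.

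Next I would identify the recording datum with amenability. The goal is to show that each jeu-de-taquin class in $T(\lambda/\mu)$ contains exactly one amenable tableau, and that its content equals the shape $\nu$ of the common rectification of the class. Here the statistics $m_i(j)$ are the right bookkeeping device: conditions a)--d) of Definition \ref{amenable} are precisely the shifted ``lattice-word'' constraints that are preserved under rectification and that single out the canonical (superstandard) straight tableau of shape $\nu$. Lemma \ref{firstrows} and Lemma \ref{unmarked} enter here as the structural constraints on amenable words --- bounding which entries can appear in the first $k$ rows, and forcing $m_{k-1}(n)>m_k(n)$ once $m_{k-1}(n)>0$ --- which drive the inductive verification that amenability transports correctly across slides. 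Taking $A$ to be the unique amenable representative of the class of $T$ and $U$ its rectification then defines $\Phi$, and invertibility follows because $T$ is recovered from $(A,U)$ by reverse-sliding $U$ along the class encoded by $A$.

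I expect the main obstacle to be the confluence of shifted jeu de taquin together with the marked/unmarked bookkeeping in the lattice-word analysis. Unlike the unshifted case, the two-part alphabet and the asymmetric scanning defining $m_i(j)$ (right-to-left for unmarked letters, then left-to-right for marked ones) make the invariance of amenability under slides delicate: one must check that neither a primed nor an unprimed entry is created or destroyed in a way that violates conditions a)--d), and that uniqueness of the amenable representative survives each slide. An alternative, more structural route would replace jeu de taquin by a queer ($\mathfrak{q}$-)crystal structure on $T(\lambda/\mu)$, identify amenable tableaux with the highest-weight vectors, and decompose into connected components isomorphic to the crystals of the $Q_{\nu}$; the same marked-letter subtleties then reappear as the verification of the crystal axioms.
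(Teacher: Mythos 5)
This statement is not proved in the paper at all: it is Stembridge's shifted Littlewood--Richardson rule, imported verbatim as \cite[Theorem 8.3]{Stembridge} and used as a black box throughout Section~4. So there is no ``paper's own proof'' to compare against; the only meaningful question is whether your outline would constitute an independent proof. Your architecture --- reduce to a content-preserving bijection $T \mapsto (A,U)$, obtain $U$ by shifted jeu de taquin rectification, and identify the recording datum $A$ with the unique amenable tableau in the relevant equivalence class --- is essentially the strategy of the original proofs (Worley, Sagan, and Stembridge's own \S 8), so the route is sound in outline.

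The gap is that everything load-bearing is deferred. Confluence of shifted jeu de taquin for marked tableaux, the invariance of the amenability conditions a)--d) under elementary slides, and the uniqueness of the amenable representative are each substantial theorems, and you name them as ``obstacles'' rather than proving them; Lemmas \ref{firstrows} and \ref{unmarked} are consequences of amenability, not tools that establish its transport across slides. Two specific points need repair even at the level of the plan. First, ``each jeu-de-taquin class in $T(\lambda/\mu)$ contains exactly one amenable tableau'' is not the right formulation: jdt-equivalence classes run across shapes, so the statement you actually need is that the fibre of the rectification map over a fixed straight-shape tableau $U$ of shape $\nu$ has cardinality depending only on $\nu$, and that within the set of tableaux of shape $D_{\lambda/\mu}$ rectifying to the distinguished tableau of shape $\nu$ the amenable ones are counted by $f^{\lambda}_{\mu\nu}$; this requires a dual-equivalence (or recording-tableau) argument, not just confluence. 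Second, the marked/unmarked bookkeeping is genuinely delicate: the asymmetric scan defining $m_i(j)$ is not obviously a coplactic invariant, and most treatments in the literature sidestep a fully marked jeu de taquin by standardizing or by working with $P$-functions and the factor $2^{\ell(\lambda)}$. As it stands the proposal is a correct roadmap to a known proof, not a proof.
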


For $\lambda \in DP$, the corresponding Schur $P$-function is defined by $P_{\lambda} := 2^{-\ell(\lambda)} Q_{\lambda}$.
In  \cite[Chapter 8]{Stembridge}, Stembridge showed that the numbers $f^{\lambda}_{\mu \nu}$ above also appear in the product of $P$-functions:
$$P_{\mu} P_{\nu} = \sum_{\lambda \in DP}{f^{\lambda}_{\mu \nu} P_{\lambda}}\:.$$
Using this, one easily obtains the equation $f^\lambda_{\mu\nu} = f^\lambda_{\nu\mu}$ for all $\lambda, \mu, \nu \in DP$.\par

\begin{Def}
A \textbf{border strip} is a connected (skew) diagram $B$ such that for each $(x,y) \in B$ we have $(x-1,y-1) \notin B$.
The box $(x,y) \in B$ such that $(x-1,y) \notin B$ and $(x,y+1) \notin B$ is called the \textbf{first box} of $B$.
The box $(u,v) \in B$ such that $(u+1,v) \notin B$ and $(u,v-1) \notin B$ is called the \textbf{last box} of $B$.\par
A (possibly disconnected) diagram $D$ where all components are border strips is called a \textbf{broken border strip}.
Then the first box of the rightmost component is called the \textbf{first box of $D$}, and the last box of the leftmost component is called the \textbf{last box of $D$}.\par
A $(p,q)$\textbf{-hook} is a set of boxes
$$\{(u,v+q-1), \ldots, (u,v+1), (u,v), (u+1, v), \ldots, (u+p-1, v)\}$$
for some $u, v \in \NN$.
More precisely, we say that the set of boxes above is a $(p,q)$-hook at $(u,v)$.
\end{Def}

\begin{Def}\label{boxeswithi}
Let $T$ be a tableau of shape $D_{\lambda/\mu}$.
Define $T^{(i)}$ by
$$T^{(i)} := \{(x,y) \in D_{\lambda/\mu} \mid |T(x,y)| = i\}.$$
\end{Def}

\begin{Lem}\label{diagonal}\cite[Remark before Theorem 13.1]{HoffmanHumphreys}
Let $T$ be a tableau of shape $D_{\lambda/\mu}$.
Then $|T(x,y)| < |T(x+1,y+1)|$ for all $x,y$ such that $(x,y), (x+1,y+1) \in D_{\lambda/\mu}$.
\end{Lem}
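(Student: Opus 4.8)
The plan is to reduce the diagonal comparison to one horizontal step followed by one vertical step, and then to use the marking restrictions b) and c) of Definition \ref{tableaudef} to turn a weak inequality into a strict one. First I would establish the purely combinatorial fact that, whenever $(x,y)$ and $(x+1,y+1)$ both lie in $D_{\lambda/\mu}$, the intermediate box $(x,y+1)$ lies in $D_{\lambda/\mu}$ as well. This is exactly where the distinctness of the parts of $\lambda$ enters. If $(x,y+1) \notin D_\lambda$, then from $(x,y) \in D_\lambda$ the box $(x,y)$ must be the rightmost box of its row, i.e.\ $y = x + \lambda_x - 1$; but membership of $(x+1,y+1)$ in $D_\lambda$ forces $y \le x + \lambda_{x+1} - 1$, hence $\lambda_x \le \lambda_{x+1}$, contradicting $\lambda_x > \lambda_{x+1}$. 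The removal of $D_\mu$ causes no trouble here, since $(x,y) \notin D_\mu$ already guarantees $(x,y+1) \notin D_\mu$; thus membership of $(x,y+1)$ in $D_{\lambda/\mu}$ is equivalent to membership in $D_\lambda$.

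With $(x,y+1) \in D_{\lambda/\mu}$ secured, condition a) yields the chain $T(x,y) \le T(x,y+1) \le T(x+1,y+1)$ in the order $1' < 1 < 2' < 2 < \cdots$, and since $|\cdot|$ is weakly monotone along this order we obtain $|T(x,y)| \le |T(x+1,y+1)|$ at once. It remains to exclude equality. Suppose $|T(x,y)| = |T(x+1,y+1)| = k$. Then all three entries lie in $\{k',k\}$, being squeezed between $k'$ and $k$. The boxes $(x,y)$ and $(x,y+1)$ share row $x$, so by c) they cannot both equal $k'$; together with $T(x,y) \le T(x,y+1)$ this forces $T(x,y+1) = k$. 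The boxes $(x,y+1)$ and $(x+1,y+1)$ share column $y+1$, so by b) they cannot both equal $k$; since $T(x,y+1) = k$ this forces $T(x+1,y+1) = k'$. But then $T(x,y+1) = k \le T(x+1,y+1) = k'$ contradicts $k' < k$, finishing the argument.

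The main obstacle is the geometric step in the first paragraph: one must verify that the diagonal never skips over a missing cell, and this is precisely where the shifted (distinct-part) hypothesis is indispensable. Had I instead routed the path through the lower neighbour $(x+1,y)$, the analogue of the second paragraph would break down: the configuration $T(x,y) = k'$, $T(x+1,y) = k'$, $T(x+1,y+1) = k$ satisfies a), b) and c) yet leaves $|T(x,y)| = |T(x+1,y+1)|$, so no contradiction arises. The whole proof therefore hinges on always passing through the right-hand neighbour $(x,y+1)$, which the strict decrease of the parts of $\lambda$ guarantees to be present.
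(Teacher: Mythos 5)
Your argument is correct and complete. Note that the paper itself offers no proof of this lemma: it is quoted from the remark before Theorem 13.1 of Hoffman--Humphreys, so there is nothing internal to compare against. Your self-contained derivation is exactly the standard one, and the two places where care is needed are both handled properly: the geometric step that $(x,y)$, $(x+1,y+1) \in D_{\lambda/\mu}$ forces $(x,y+1) \in D_{\lambda/\mu}$ does genuinely use $\lambda_x > \lambda_{x+1}$ (via $y \le x+\lambda_{x+1}-1 \le x+\lambda_x-2$) together with the left-justification of the rows of $D_\mu$; and the case analysis via conditions b) and c) of Definition \ref{tableaudef} correctly rules out $|T(x,y)| = |T(x+1,y+1)|$. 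Your closing observation that routing the path through $(x+1,y)$ instead would fail (two $k'$ in a column being permitted) is also accurate and explains why the right-hand neighbour is the only viable intermediate box.
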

As a consequence of this lemma, for a given tableau $T$ each component of $T^{(i)}$ is a border strip.
This fact as well as the following lemma are derived from \cite[after Corollary 8.6]{SaganStanley}.
\begin{Lem}\label{2fillings}
Let $T$ be a tableau of shape $D_{\lambda/\mu}$.
Let $T^{(i)}$ be of shape $D$ for some diagram $D$.
Then each component of $D$ has two possible fillings which differ only in the last box of this component.
\end{Lem}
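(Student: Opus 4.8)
The plan is to fix a single component $B$ of $D = T^{(i)}$ and to show that all of its boxes are forced to a unique value in $\{i', i\}$ except the last box, which admits both values. By the consequence of Lemma \ref{diagonal} noted above, $B$ is a border strip, and every entry of $B$ is either $i$ or $i'$. First I would extract two local rules from Definition \ref{tableaudef}. Since entries weakly increase along rows and $i' < i$, condition c) forces each row of $B$ to consist of an optional leading $i'$ in its leftmost box followed by $i$'s (the \emph{row rule}); dually, since entries weakly increase down columns, condition b) forces each column of $B$ to consist of $i'$'s followed by an optional trailing $i$ in its bottommost box (the \emph{column rule}). Here I use that $T^{(i)}$ meets each row and each column of $D_{\lambda/\mu}$ in a contiguous segment, which follows from the weak monotonicity in a) together with the fact that $i'$ and $i$ are adjacent in the alphabet.

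Next I would pin down the geometry of the border strip $B$. Using the defining condition $(x-1,y-1) \notin B$ together with connectedness, I would show that two consecutive rows of $B$ overlap in exactly one column, and that this column is simultaneously the leftmost box of the upper row and the rightmost box of the lower row. The crucial consequence is that no box of $B$ has both a left neighbour and a bottom neighbour in $B$: a left neighbour places the box strictly to the right of its row's leftmost column, whereas a bottom neighbour forces it into a shared column, i.e.\ into its row's leftmost column, and these are incompatible.

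With these two ingredients the forcing argument is short. A box that is not leftmost in its row is forced to $i$ by the row rule, and it is automatically bottommost in its column (having no bottom neighbour), so this is consistent with the column rule. A box that is leftmost in its row but does not lie in the last row of $B$ has a bottom neighbour, hence is not bottommost in its column, so the column rule forces it to $i'$, consistently with the row rule. The only remaining box is the leftmost box of the last row, which is precisely the last box of $B$: having neither a left neighbour nor a bottom neighbour, it is both leftmost in its row and bottommost in its column, so the row and column rules each permit either value. Thus every box except the last is forced.

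Finally I would check that both assignments at the last box actually extend to valid fillings, so that there are exactly two. The key verification is that the boxes lying above the last box in its column (if any) are leftmost boxes of non-last rows, hence already forced to $i'$; consequently, setting the last box to $i'$ leaves its column with no $i$ and its row with a single $i'$, while setting it to $i$ gives exactly one $i$ in the column and no $i'$ in the row, so conditions a)--c) hold in either case. The main obstacle is the bookkeeping in this last step and in establishing the shared-column structure of $B$; once the dichotomy ``no box of $B$ has both a left and a bottom neighbour'' is in place, the two-filling conclusion follows immediately.
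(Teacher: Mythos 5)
Your proposal is correct. Note that the paper itself gives no proof of Lemma \ref{2fillings}: it simply records the statement as being ``derived from'' the discussion after Corollary 8.6 of Sagan--Stanley, so there is no internal argument to compare against. Your write-up supplies the missing elementary derivation, and it is the standard one: the row rule (at most one $i'$, necessarily leftmost, by condition c) and weak increase), the column rule (at most one $i$, necessarily bottommost, by condition b) and weak increase), and the structural dichotomy that in a border strip no box has both a left and a bottom neighbour, which together force every box except the leftmost box of the last row --- i.e.\ the last box --- and leave exactly two consistent choices there. Your identification of the shared column of consecutive rows as simultaneously the leftmost column of the upper row and the rightmost column of the lower row is exactly what the condition $(x-1,y-1)\notin B$ gives, and your final check that both values at the last box extend to valid fillings is sound (the boxes above it in its column, if any, are leftmost boxes of single-box non-last rows and hence carry $i'$). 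The only point you leave implicit is that the choice of marks on the letters of absolute value $i$ cannot violate condition a) against boxes outside $T^{(i)}$; this is immediate since those neighbours have a different absolute value, but it is worth a sentence. With that, the argument is complete and self-contained.
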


We will use a criterion for $k$-amenability of a tableau that avoids the use of the reading word.
This is provided in Lemma \ref{checklist}; to state this lemma we need the following definitions.

\begin{Def}\label{fitting}
Let $T$ be a tableau.
If the last box of $T^{(i)}$ is filled with $i$ we call $T^{(i)}$ \textbf{fitting}.
\end{Def}

\begin{Def}
Let $\lambda, \mu \in DP$ and let $T$ be a tableau of $D_{\lambda/\mu}$.
Then
$$\mathcal{S}^{\boxtimes}_{\lambda/\mu}(x,y) := \{(u,v) \in D_{\lambda/\mu} \mid u \leq x, v \geq y\},$$
$$\mathcal{S}^{\boxtimes}_T(x,y)^{(i)} := \mathcal{S}^{\boxtimes}_{\lambda/\mu}(x,y) \cap T^{-1}(i) \text{ where $T^{-1}(i)$ denotes the preimage of $i$},$$
$$\mathcal{B}_T^{(i)}:=\{(x,y) \in D_{\lambda/\mu} \mid T(x,y) = i' \text{ and } T(x-1,y-1) \neq (i-1)'\},$$
$$\widehat{\mathcal{B}_T^{(i)}}:=\{(x,y) \in D_{\lambda/\mu} \mid T(x,y) = i' \text{ and } T(x+1,y+1) \neq (i+1)'\}$$
and $b_T^{(i)} = |\mathcal{B}_T^{(i)}|$ for all $i$.
Then let $\mathcal{B}_T^{(i)}(d)$ denote the set of the first $d$ boxes of $\mathcal{B}_T^{(i)}$.
\end{Def}

\begin{Rem}
The set $\mathcal{S}^{\boxtimes}_{\lambda/\mu}(x,y)$ above is the set of boxes that are simultaneously weakly above and weakly to the right of the box $(x,y)$.
The set $\mathcal{S}^{\boxtimes}_T(x,y)^{(i)}$ is the subset of boxes $(u,v)$ of $\mathcal{S}^{\boxtimes}_{\lambda/\mu}(x,y)$ such that $T(u,v) = i$.
\end{Rem}

\begin{Ex}
Let $\lambda = (11,9,6,5,4,2,1)$ and $\mu = (8,6,5,4,1)$ and let
$${\Yvcentermath1 T = \young(\times \times \times \times \times \times \times \times \dmeins \deins \deins ,:\times \times \times \times \times \times \dmeins \dmzwei \dzwei ,::\times \times \times \times \times \deins ,:::\times \times \times \times \mzwei ,::::\times \meins 12,:::::1\mzwei ,::::::2)}.$$
Then $\mathcal{S}^{\boxtimes}_{\lambda/\mu}(3,8)$ is the set of boxes with boldfaced entries.
Also, we have $\mathcal{S}^{\boxtimes}_T(3,8)^{(1)} = \{(1,10), (1,11), (3,8)\}$, $\mathcal{B}_T^{(2)} = \{(2,9), (4,8)\}$ and $\widehat{\mathcal{B}_T^{(1)}} = \{(1,9), (2,8)\}$.
\end{Ex}

\begin{Lem}\cite[Lemma 2.14]{Schure}\label{checklist}
Let $\lambda, \mu \in DP$ and $n = |D_{\lambda/\mu}|$.
Let $T$ be a tableau of $D_{\lambda/\mu}$.
Then the tableau $T$ is $k$-amenable if and only if either $c(T)_{k-1} = c(T)_k = 0$ or else it satisfies the following conditions:
\begin{enumerate}[(1)]
	\item $c(T)^{(u)}_{k-1} > c(T)^{(u)}_k$;
	\item when $T(x,y) = k$ then $|\mathcal{S}^{\boxtimes}_T(x,y)^{(k-1)}| \geq |\mathcal{S}^{\boxtimes}_T(x,y)^{(k)}|$;
	\item for each $(x,y) \in \mathcal{B}_T^{(k)}$ we have $|\mathcal{S}^{\boxtimes}_T(x,y)^{(k-1)}| > |\mathcal{S}^{\boxtimes}_T(x,y)^{(k)}|$;
	\item if $d = b_T^{(k)}+c^{(u)}_k-c^{(u)}_{k-1}+1 > 0$ then there is an injective map $\phi: \mathcal{B}_T^{(k)}(d) \rightarrow \widehat{\mathcal{B}_T^{(k-1)}}$ such that if $(x,y) \in \mathcal{B}_T^{(k)}(d)$ and $(u,v) = \phi(x,y)$ then for all $u < r < x$ we have $T(r,s) \notin \{k-1,k'\}$ for all $s$ such that $(r,s) \in D_{\lambda/\mu}$;
	\item $T^{(k-1)}$ is fitting;
  \item if $c(T)_k > 0$ then $T^{(k)}$ is fitting.
\end{enumerate}
\end{Lem}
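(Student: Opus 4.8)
The plan is to prove the equivalence by translating Stembridge's scanning procedure (Definition \ref{amenable}) into the geometry of the tableau, treating the right-to-left phase (steps $0 \le j \le n-1$, governed by conditions a) and c)) and the left-to-right phase (steps $n \le j \le 2n-1$, governed by conditions b) and d)) separately. First I would dispose of the degenerate case: if $c(T)_{k-1} = c(T)_k = 0$, then all four conditions of Definition \ref{amenable} are satisfied vacuously and $T$ is $k$-amenable, which matches the first alternative of the statement. From now on I assume the two contents are not both zero, so that the honest content of the claim is the equivalence of $k$-amenability with the list (1)--(6).

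The central geometric input I would isolate first is a monotonicity fact for shifted skew shapes: if $(a,b),(c,d) \in D_{\lambda/\mu}$ with $a<c$ and $b<d$, then $|T(a,b)| < |T(c,d)|$. To prove it I would show $(a+1,b+1) \in D_{\lambda/\mu}$, using that the rightmost column index $i+\lambda_i-1$ of row $i$ is weakly decreasing in $i$ (since $\lambda$ has distinct parts) and that $\mu$ is strictly decreasing, and then iterate Lemma \ref{diagonal} along the diagonal until meeting the row or column of $(c,d)$, finishing by row/column monotonicity. The role of this fact is the following. The set of boxes already scanned when the right-to-left scan reaches a box $(x,y)$ --- all higher rows together with the entries of row $x$ weakly to the right of $y$ --- differs from $\mathcal{S}^{\boxtimes}_{\lambda/\mu}(x,y)$ precisely by the strictly north-west region $\{(u,v): u<x,\ v<y\}$, and on that region every absolute value is strictly below $|T(x,y)|$. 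In particular, at a box carrying the value $k$ the discrepancy contributes \emph{no} unmarked $k$, so that $|\mathcal{S}^{\boxtimes}_T(x,y)^{(k)}|$ agrees with the running count $m_k$ at that step; the contribution of the discrepancy to the $(k-1)$-count is exactly the delicate point the remaining conditions must absorb.

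With this dictionary I would match the conditions as follows. Condition a) is equivalent to maintaining $m_{k-1}(j) \ge m_k(j)$ throughout the first phase together with the prohibition of scanning a letter $k'$ at a moment of equality; the unmarked part is binding at the boxes with value $k$, producing condition (2), while the prohibition on $k'$ is binding at the boxes of $\mathcal{B}_T^{(k)}$ (those $k'$ not continuing a $(k-1)'$-diagonal, where the inequality must be strict), producing condition (3). Condition c), that the first value-$k$ letter of the reading word is unmarked, I would translate via Lemma \ref{2fillings} and Lemma \ref{diagonal}: each component of $T^{(k)}$ is a border strip admitting exactly two fillings that differ only in its last box, and the earliest value-$k$ letter of the reading word occupies the last box of $T^{(k)}$, so c) becomes the fitting condition (6); condition d) becomes (5) in the same way. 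The second phase records how the marked letters preserve $m_{k-1} \ge m_k$: its end-of-scan inequality $c_{k-1}\ge c_k$ together with Lemma \ref{unmarked} yields the unmarked inequality (1), and the step-by-step maintenance --- each $k'$ that cannot be absorbed by the surplus of unmarked $k-1$ over unmarked $k$ must be paired with an earlier $(k-1)'$ with nothing of value $k-1$ or $k'$ strictly between their rows --- is exactly encoded by the injective map $\phi$ of condition (4), with $d = b_T^{(k)} + c^{(u)}_k - c^{(u)}_{k-1} + 1$ counting how many boxes of $\mathcal{B}_T^{(k)}$ must be matched into $\widehat{\mathcal{B}_T^{(k-1)}}$.

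The main obstacle, I expect, is twofold and lies in making the translation genuinely two-sided. First, the scan condition a) is a \emph{non-local} statement about every prefix, whereas (2) and (3) are \emph{local} statements at individual boxes; because the north-west discrepancy can carry unmarked $(k-1)$'s, the per-box implication does not hold naively, and one must argue via the monotonicity fact that these $(k-1)$'s are correctly attributed to the $\mathcal{S}^{\boxtimes}$-regions of boxes farther to the south-east, so that the global system (2)--(3) is equivalent to a) rather than merely implied by it. Second, and harder, is condition (4): unlike the unmarked comparisons, the marked letters $k'$ and $(k-1)'$ interact along diagonals, and one must show that the existence of an injective $\phi$ respecting the empty-corridor requirement is both necessary and sufficient for b) to hold at every step --- producing $\phi$ from amenability, and conversely reconstructing the validity of every second-phase scan step from $\phi$. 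Once these two reconciliations are in place, the remaining translations are routine consequences of Lemmas \ref{diagonal}, \ref{2fillings} and \ref{unmarked}.
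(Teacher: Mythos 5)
First, a point of reference: the paper does not prove Lemma \ref{checklist} at all --- it is imported verbatim from \cite[Lemma 2.14]{Schure} --- so there is no in-paper argument to measure your attempt against; it has to stand on its own. On its own terms, what you have written is a well-oriented \emph{plan} rather than a proof. The framing is right: the degenerate case $c(T)_{k-1}=c(T)_k=0$ is indeed vacuous; the diagonal monotonicity fact ($a<c$, $b<d$ implies $|T(a,b)|<|T(c,d)|$, via $(a+1,b+1)\in D_{\lambda/\mu}$ and Lemma \ref{diagonal}) is true and is the correct engine; the identification of the scanned prefix with $\mathcal{S}^{\boxtimes}_{\lambda/\mu}(x,y)$ up to the strictly north-west region, on which every value is below $|T(x,y)|$, does show that $m_k$ at the relevant scan step equals $|\mathcal{S}^{\boxtimes}_T(x,y)^{(k)}|$; and the translation of conditions c) and d) of Definition \ref{amenable} into the fitting conditions (5) and (6) via Lemmas \ref{diagonal} and \ref{2fillings} is sound. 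This much yields, essentially for free, the direction ``(1)--(6) imply $k$-amenable,'' since the north-west discrepancy can only inflate $m_{k-1}$, never $m_k$.

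The gap is that the two steps you yourself flag as ``the main obstacle'' are never carried out, and they are not incidental --- they are the entire content of the lemma. (i) For the converse of a) $\Leftrightarrow$ (2)--(3) you must show that amenability forces the \emph{local} inequalities even though the scan only certifies $|\mathcal{S}^{\boxtimes}_T(x,y)^{(k-1)}|+\#\{(k-1)\text{'s strictly north-west of }(x,y)\}\geq|\mathcal{S}^{\boxtimes}_T(x,y)^{(k)}|$; saying the surplus $(k-1)$'s ``are correctly attributed to boxes farther south-east'' is the statement to be proved, not an argument. Moreover, your pairing of condition (3) with exactly the boxes of $\mathcal{B}_T^{(k)}$ is asserted without justification: condition a) forbids scanning \emph{any} $k'$ at equality, so you owe an argument that at a box $(x,y)$ with $T(x,y)=k'$ and $T(x-1,y-1)=(k-1)'$ the strict inequality is automatic --- this requires examining what can occupy $(x-1,y)$ and is a genuine case analysis, not a formality. (ii) For condition (4) you give only a gloss; nothing explains why the relevant sets are precisely $\mathcal{B}_T^{(k)}(d)$ with $d=b_T^{(k)}+c^{(u)}_k-c^{(u)}_{k-1}+1$ and $\widehat{\mathcal{B}_T^{(k-1)}}$, why the empty-corridor requirement on the rows strictly between $u$ and $x$ is the correct pairing condition, or how one extracts $\phi$ from condition b) and, conversely, re-derives every second-phase scan step from the existence of $\phi$. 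Until (i) and (ii) are written out, you have a dictionary plus an accurate list of what remains to be checked, not a proof of the equivalence.
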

\begin{Co}\cite[Corollary 2.15]{Schure}\label{checklistco}
Let $\lambda, \mu \in DP$.
Let $T$ be a tableau of shape $D_{\lambda/\mu}$ such that either $c(T)_k = c(T)_{k-1} = 0$ or else it satisfies the following conditions:
\begin{enumerate}[(1)]
	\item there is some box $(x,y)$ such that $T(x,y) = k-1$ and $T(z,y) \neq k$ for all $z > x$;
	\item if $T(x,y) = k$ then there is some $z < x$ such that $T(z,y) = k-1$;
	\item if $T(x,y) = k'$ then $T(x-1,y-1) = (k-1)'$;
	\item $T^{(k-1)}$ is fitting;
  \item if $c^{(u)}_k > 0$ then $T^{(k)}$ is fitting.
\end{enumerate}
Then the tableau is $k$-amenable.
\end{Co}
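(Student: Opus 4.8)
The plan is to reduce the statement to Lemma~\ref{checklist} by showing that the hypotheses (1)--(5) of the corollary imply the six conditions (1)--(6) listed there. If $c(T)_k = c(T)_{k-1} = 0$, both statements hold by their respective first alternatives, so from now on I assume we are in the case where conditions (1)--(5) of the corollary are given, and I verify the six conditions of the lemma one by one.

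First I would record the structural consequence of condition (3): since every box $(x,y)$ with $T(x,y)=k'$ satisfies $T(x-1,y-1)=(k-1)'$, the set $\mathcal{B}_T^{(k)}$ is empty and $b_T^{(k)}=0$. This immediately gives condition (3) of Lemma~\ref{checklist}, which becomes vacuous. Next I would establish the strict inequality $c^{(u)}_{k-1} > c^{(u)}_k$, which is condition (1) of the lemma: by condition (2) of the corollary, sending each unmarked $k$ to the unmarked $k-1$ lying above it in the same column is a well-defined injection (each column carries at most one unmarked $k$ and at most one unmarked $k-1$), while condition (1) produces an unmarked $k-1$ whose column contains no unmarked $k$ at all---none below it by hypothesis and none above it by column monotonicity, since an unmarked $k$ above would force $k \le k-1$---so this injection is not surjective. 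With condition (1) of the lemma in hand, the quantity $d = b_T^{(k)}+c^{(u)}_k-c^{(u)}_{k-1}+1 = c^{(u)}_k - c^{(u)}_{k-1}+1$ is $\le 0$, so condition (4) of the lemma is vacuous as well. Condition (2) of the lemma follows from the same column-injection argument restricted to the region $\mathcal{S}^{\boxtimes}_{\lambda/\mu}(x,y)$: for a box with $T(x,y)=k$, each unmarked $k$ weakly northeast of it has its partnering unmarked $k-1$ in the same column strictly above, hence also weakly northeast of $(x,y)$, giving $|\mathcal{S}^{\boxtimes}_T(x,y)^{(k-1)}| \ge |\mathcal{S}^{\boxtimes}_T(x,y)^{(k)}|$. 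Finally, condition (5) of the lemma is exactly condition (4) of the corollary.

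This leaves condition (6) of Lemma~\ref{checklist}, and I expect it to be the main obstacle, because it is phrased in terms of the full content $c(T)_k$ whereas the corresponding hypothesis, condition (5), speaks only about the unmarked content $c^{(u)}_k$. When $c^{(u)}_k>0$ there is nothing to do. The crux is therefore to exclude the case $c^{(u)}_k=0<c(T)_k$, i.e.\ to show that the presence of an entry $k'$ forces the presence of an unmarked $k$; equivalently, that the bottom-left box of the leftmost component of $T^{(k)}$, which by Lemma~\ref{2fillings} is the only box of that component free to be marked or unmarked, must be unmarked. The approach I would take is to assume $T(u,v)=k'$ at this last box and derive a contradiction: condition (3) propagates a northwest diagonal chain of marked letters $k',(k-1)',\ldots$ (with Lemma~\ref{diagonal} controlling absolute values along diagonals), while the fitting of $T^{(k-1)}$ from condition (4) forces an unmarked $k-1$ at the bottom-left end of the leftmost component of $T^{(k-1)}$; reconciling these two with the shape constraints of the shifted skew diagram and with column monotonicity (a marked letter can never sit directly below an unmarked one of the same absolute value) should be impossible. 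Once this case is excluded, condition (5) of the corollary delivers condition (6) of the lemma, all six conditions of Lemma~\ref{checklist} hold, and $T$ is $k$-amenable.
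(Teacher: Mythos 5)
Your reduction to Lemma~\ref{checklist} is the right strategy, and your verification of conditions (1)--(5) of that lemma is correct: condition (3) of the corollary does empty $\mathcal{B}_T^{(k)}$, the column-injection argument (made non-surjective by the witness box of condition (1)) does give $c^{(u)}_{k-1}>c^{(u)}_k$ and hence makes condition (4) of the lemma vacuous, the restriction of that injection to $\mathcal{S}^{\boxtimes}_{\lambda/\mu}(x,y)$ gives condition (2), and condition (5) is hypothesis (4). You have also put your finger on exactly the right sore spot, namely the mismatch between ``$c^{(u)}_k>0$'' in the corollary and ``$c(T)_k>0$'' in condition (6) of the lemma.

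However, the resolution you propose --- that conditions (1)--(5) exclude the case $c^{(u)}_k=0<c(T)_k$ --- is not just unproven but false, so that step of your argument cannot be completed. Take $\lambda=(5,4,2)$, $\mu=(3,1)$ and the tableau $T$ with $T(1,4)=1'$, $T(1,5)=1$, $T(2,3)=1'$, $T(2,4)=1$, $T(2,5)=2'$, $T(3,3)=1$, $T(3,4)=2'$. This is a valid tableau; for $k=2$ it satisfies all five hypotheses of the corollary as printed: $(3,3)$ witnesses (1), (2) and (5) are vacuous since $c^{(u)}_2=0$, every $2'$ has a $1'$ diagonally above-left so (3) holds, and $T^{(1)}$ is a single border strip whose last box $(3,3)$ carries an unmarked $1$, so (4) holds. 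Yet $c(T)_2=2>0$ while $T^{(2)}=\{(2,5),(3,4)\}$ has last box $(3,4)$ filled with $2'$, so condition (6) of Lemma~\ref{checklist} fails; equivalently, the reading word $w(T)=1\,2'\,1'\,1\,2'\,1'\,1$ violates Definition~\ref{amenable}(c), and $T$ is not $2$-amenable. So the statement as transcribed here is defective: hypothesis (5) must be read with the full content, ``if $c(T)_k>0$ then $T^{(k)}$ is fitting'' (matching condition (6) of Lemma~\ref{checklist}), and with that reading your verification of conditions (1)--(5) of the lemma already completes the proof, with nothing left to do for condition (6). Your instinct to hunt for a contradiction from the diagonal chain of marked letters and the fitting of $T^{(k-1)}$ would only ever rule out configurations where those two sets interact in a single component; in the example above the $2'$ at $(2,5)$ sits over the interior of the border strip $T^{(1)}$, not over its last box, and no contradiction arises.
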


\begin{Ex}
We consider again the following tableau $T$ of shape $D_{(11,9,6,5,4,2,1)/(8,6,5,4,1)}$:
$${\Yvcentermath1 T = \young(\times \times \times \times \times \times \times \times \dmeins \deins \deins ,:\times \times \times \times \times \times \dmeins \dmzwei \dzwei ,::\times \times \times \times \times \deins ,:::\times \times \times \times \mzwei ,::::\times \meins 12,:::::1\mzwei ,::::::2)}\:.$$
We want to check the conditions of Lemma~\ref{checklist} for $k = 2$.
We have $c(T)^{(u)}_1 = 5 > 3 = c(T)^{(u)}_2$.
Since $T^{-1}(2) = \{(2,10), (5,8), (7,7)\}$ we need to check condition (2) of Lemma~\ref{checklist} for these boxes.
We have $|\mathcal{S}^{\boxtimes}_T(2,10)^{(1)}| = 2 \geq 1 = |\mathcal{S}^{\boxtimes}_T(2,10)^{(2)}|$, $|\mathcal{S}^{\boxtimes}_T(5,8)^{(1)}| = 3 \geq 2 = |\mathcal{S}^{\boxtimes}_T(5,8)^{(2)}|$ and $|\mathcal{S}^{\boxtimes}_T(7,7)^{(1)}| = 4 \geq 3 = |\mathcal{S}^{\boxtimes}_T(7,7)^{(2)}|$.
Since $\mathcal{B}_T^{(2)} = \{(2,9), (4,8)\}$ we need to check condition (3) of Lemma~\ref{checklist} for these boxes.
We have $|\mathcal{S}^{\boxtimes}_T(2,9)^{(1)}| = 2 > 1 = |\mathcal{S}^{\boxtimes}_T(2,9)^{(2)}|$ and $|\mathcal{S}^{\boxtimes}_T(4,8)^{(1)}| = 3 > 1 = |\mathcal{S}^{\boxtimes}_T(4,8)^{(2)}|$.
Since $d = 2+3-5+1 = 1$ we have to find a map as in condition (4) of Lemma~\ref{checklist} for the box $(2,9)$.
We have $\mathcal{B}_T^{(2)}(1) = \{(2,9)\}$ and $\widehat{\mathcal{B}_T^{(1)}} = \{(1,9), (2,8)\}$.
Both possible maps from $\mathcal{B}_T^{(2)}(1)$ to $\widehat{\mathcal{B}_T^{(1)}}$ satisfy the property of condition (4) of Lemma~\ref{checklist}.
Clearly, $T^{(1)}$ and $T^{(2)}$ are fitting.
Hence, the tableau $T$ is $2$-amenable.
\end{Ex}

In Section~4 we will start with a specific amenable tableau for a given diagram and change some entries to obtain new tableaux.
This specific tableau is obtained by an algorithm described by Salmasian in \cite[Section 3.1]{Salmasian}.
\begin{Def}\label{Salmasian'salgorithm}
Let $D_{\lambda/\mu}$ be a skew diagram.
The tableau $T_{\lambda/\mu}$ is determined by the following algorithm:
\begin{enumerate}[(1)]
	\item Set $k = 1$ and $U_1(\lambda/\mu) = D_{\lambda/\mu}$.
	\item Set $P_k = \{(x,y) \in U_k(\lambda/\mu) \mid (x-1,y-1) \notin U_k(\lambda/\mu)\}$.
	\item For each $(x,y) \in P_k$ set $T_{\lambda/\mu}(x,y) = k'$ if $(x+1,y) \in P_k$, otherwise set $T_{\lambda/\mu}(x,y) = k$.
	\item Let $U_{k+1}(\lambda/\mu) = U_k(\lambda/\mu)\setminus P_k$.
	\item Increase $k$ by one, and go to (2).
\end{enumerate}
\end{Def}

\begin{Ex}
For $\lambda = (6,5,3,2)$ and $\mu = (4,1)$ we obtain 
$$T_{\lambda/\mu} = {\Yvcentermath1 \young(::\meins 1,\meins 112,1\mzwei 2,:23)}.$$
\end{Ex}

The following definitions will be used in Proposition \ref{lambda/n}.

\begin{Def}
Let $\lambda \in DP$.
Then the \textbf{border} of $\lambda$ is defined by
$$B_{\lambda} := \{(x,y) \in D_{\lambda} \mid (x+1,y+1) \notin D_{\lambda}\}.$$
Define $B_{\lambda}^{(n)} := \{D_{\lambda/\mu} \mid D_{\lambda/\mu} \subseteq B_{\lambda} \text{ and } |D_{\lambda/\mu}| = n\}$.
\end{Def}

\begin{Def}\label{E}
Let $\lambda \in DP$.
Define $E_{\lambda}$ to be the set of all partitions whose diagram is obtained after removing a corner in $D_{\lambda}$.
\end{Def}

\begin{Prop}\cite[Proposition 3.2]{Schure}\label{lambda/n}
Let $\lambda \in DP$ and $1 \leq n \leq \lambda_1$ be an integer.
Then
$$Q_{\lambda/(n)} = \sum_{D_{\lambda/\nu} \in B_{\lambda}^{(n)} \hspace{1ex} (D_{\nu} \subseteq D_{\lambda})}{2^{comp(D_{\lambda/\nu})-1} Q_{\nu}}.$$
In particular, with $D_{\mu} = D_{\lambda} \setminus B_{\lambda}$ we have 
$$Q_{\lambda/(\lambda_1-1)} = \sum_{(x,y) \in B^{\times}_{\lambda}}{c^{(x,y)}_{B_{\lambda}} Q_{D_{\mu} \cup \{(x,y)\}}}$$
where $B^{\times}_{\lambda} := \{(x,y) \in B_{\lambda} \mid (x-1,y) \notin B_{\lambda} \text{ and } (x,y-1) \notin B_{\lambda}\}$ and
$$c^{(x,y)}_{B_{\lambda}} = \begin{cases}
1 &\mbox{if $(x,y)$ is the first or last box of $B_{\lambda}$} \\
2 &\mbox{otherwise},
\end{cases}$$
and
$$Q_{\lambda/(1)} = \sum_{\nu \in E_{\lambda}}{Q_{\nu}}.$$
\end{Prop}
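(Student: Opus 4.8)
The plan is to apply the shifted Littlewood--Richardson rule together with the symmetry $f^{\lambda}_{\mu\nu}=f^{\lambda}_{\nu\mu}$. Taking $\mu=(n)$, the rule gives $Q_{\lambda/(n)}=\sum_{\nu\in DP}f^{\lambda}_{(n)\nu}Q_{\nu}=\sum_{\nu\in DP}f^{\lambda}_{\nu(n)}Q_{\nu}$, so it suffices to prove that $f^{\lambda}_{\nu(n)}$, the number of amenable tableaux of shape $D_{\lambda/\nu}$ and content $(n)$, equals $2^{comp(D_{\lambda/\nu})-1}$ whenever $D_{\lambda/\nu}\in B^{(n)}_{\lambda}$ and $0$ otherwise. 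The point of passing to the transposed coefficient is that content $(n)$ forces every entry of $T$ to lie in $\{1',1\}$, which trivialises the amenability analysis.

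First I would pin down the relevant shapes. Since every entry has absolute value $1$, Lemma~\ref{diagonal} forbids two diagonally adjacent boxes, so each component of $D_{\lambda/\nu}$ is a border strip. I would then show that a shifted skew shape is a broken border strip if and only if $D_{\lambda/\nu}\subseteq B_{\lambda}$: if $(x,y)\in D_{\lambda/\nu}$ but $(x+1,y+1)\in D_{\lambda}$, then because $\nu$ has distinct parts one checks $(x+1,y+1)\notin D_{\nu}$, so $(x+1,y+1)\in D_{\lambda/\nu}$ and we obtain a forbidden diagonal adjacency; the converse is immediate from the definition of $B_{\lambda}$. Together with $|D_{\lambda/\nu}|=n$ this is exactly the condition $D_{\lambda/\nu}\in B^{(n)}_{\lambda}$, which already identifies the index set of the sum. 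I would also record that in row $x$ the boxes of $D_{\lambda/\nu}$ form the contiguous interval of columns from $x+\nu_x$ to $x+\lambda_x-1$ (and similarly for columns), so no two components share a row or a column.

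It remains to count amenable fillings of a fixed $D_{\lambda/\nu}\in B^{(n)}_{\lambda}$. By Lemma~\ref{2fillings} each of its $comp(D_{\lambda/\nu})$ components admits exactly two fillings by letters of absolute value $1$, differing only in the last box of that component; since distinct components share neither a row nor a column these choices are independent, producing $2^{comp(D_{\lambda/\nu})}$ tableaux of content $(n)$ in all. To impose amenability I would invoke Lemma~\ref{checklist} with $k=2$, where $c(T)_2=0$ while $c(T)_1=n>0$. One checks that conditions (2),(3),(6) are vacuous, that condition~(4) is vacuous as soon as (1) holds (there $d=1-c^{(u)}_1\le 0$), and that conditions (1) and (5) together reduce to the single requirement that the last box of $T^{(1)}=D_{\lambda/\nu}$ --- i.e.\ the last box of the leftmost component --- carries the unmarked letter $1$. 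This is precisely the fitting condition on that one distinguished component, satisfied by exactly one of its two fillings, so it halves the total count to $2^{comp(D_{\lambda/\nu})-1}$, giving $f^{\lambda}_{\nu(n)}$ and hence the general formula.

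Finally I would specialise. For $n=1$ a one-box subdiagram of $B_{\lambda}$ that is a genuine skew shape $D_{\lambda/\nu}$ is exactly the removal of a corner, so $B^{(1)}_{\lambda}$ is indexed by $E_{\lambda}$ and each coefficient is $2^{0}=1$, yielding $Q_{\lambda/(1)}=\sum_{\nu\in E_{\lambda}}Q_{\nu}$. For $n=\lambda_1-1$ one uses that $B_{\lambda}$ is a single border strip with $|B_{\lambda}|=\lambda_1$, so the complement $D_{\nu}$ contains $D_{\mu}=D_{\lambda}\setminus B_{\lambda}$ and exceeds it by a single box $(x,y)\in B^{\times}_{\lambda}$; removing that box from $B_{\lambda}$ leaves one component when $(x,y)$ is the first or last box of $B_{\lambda}$ and two components otherwise, so $2^{comp-1}$ matches $c^{(x,y)}_{B_{\lambda}}$. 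The main obstacle is the amenability bookkeeping of the third paragraph: one must verify rigorously that for $k=2$ and content $(n)$ no condition of Lemma~\ref{checklist} beyond the single fitting condition on the leftmost component constrains the filling, so that the naive count $2^{comp}$ is halved exactly once.
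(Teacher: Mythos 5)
Your argument is correct, and it follows the route one would expect: the paper itself does not prove this proposition but imports it from \cite[Proposition 3.2]{Schure}, so there is no in-paper proof to compare against. The three pillars of your proof all check out: the symmetry $f^{\lambda}_{(n)\nu}=f^{\lambda}_{\nu(n)}$ reduces the problem to content $(n)$; Lemma \ref{diagonal} together with the distinct-parts condition on $\nu$ shows $f^{\lambda}_{\nu(n)}\neq 0$ forces $D_{\lambda/\nu}\subseteq B_{\lambda}$; and for $k=2$ the conditions of Lemma \ref{checklist} collapse to condition (5) (with (1) implied by it and (4) vacuous once $c^{(u)}_1\geq 1$), so exactly the fillings whose leftmost component is fitting survive, giving $2^{comp(D_{\lambda/\nu})-1}$ by Lemma \ref{2fillings}. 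The one point you should make fully explicit is the independence of the component fillings: this rests on the fact that rows and columns of a shifted skew diagram are contiguous (for columns this uses that $i+\lambda_i$ and $i+\nu_i$ are weakly decreasing for partitions with distinct parts), so distinct components share no row and no column and the row/column constraints of Definition \ref{tableaudef} never couple two components.
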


\subsection{Equality of skew Schur $Q$-functions}

Before we analyze the $Q$-multiplicity-freeness of some given skew Schur $Q$-function $Q_{\lambda/\mu}$, we show that $Q_{\lambda/\mu} = Q_{\alpha/\beta}$ where $D_{\alpha/\beta}$ is a diagram obtained by
certain transformations of the diagram $D_{\lambda/\mu}$.
Hence it will be sufficient to analyze the skew Schur $Q$-function of one of these (transformed) diagrams to obtain statements for all of them.
This approach significantly reduces the effort in the proofs for the lemmas that lead to the classification of $Q$-multiplicity-free skew Schur $Q$-functions.

\begin{Def}
Let $D$ be an unshifted skew diagram.
The \textbf{transpose} of $D$, denoted by $D^t$, is the unshifted skew diagram obtained by reflecting the boxes of $D$ along the diagonal $\{(x,x) \mid x \in \NN\}$ and moving this arrangement of boxes such that the top row with boxes is in the first row and the lowest box of the leftmost column with boxes is on the diagonal $\{(x,x) \mid x \in \NN\}$.\\
The \textbf{rotation} of $D$, denoted by $D^o$, is the unshifted skew diagram obtained by rotating the boxes of $D$ through 180° and moving this arrangement of boxes such that the topmost row with boxes is in the first row and the lowest box of the leftmost column with boxes is on the diagonal $\{(x,x) \mid x \in \NN\}$.
\end{Def}

These are transformations of unshifted skew diagrams that do not change the corresponding $Q$-function, as is stated in the following lemma which is implied by \cite[Proposition 3.3]{BarekatvanWilligenburg} by Barekat and van Willigenburg.

\begin{Lem}\label{transposerotate}
Let $D=D_{\lambda/\mu}$ be an unshifted skew diagram.
There is a content-preserving bijection between tableaux of $D$ and tableaux of $D^t$,
as well as
a content-preserving bijection between the tableaux of $D$ and the tableaux of $D^o$.
Hence
$$Q_{\lambda/\mu} = Q_{(\lambda/\mu)^t} = Q_{(\lambda/\mu)^o}.$$
\end{Lem}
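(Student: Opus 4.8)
The plan is to reduce the statement to the transpose- and rotation-invariance of skew Schur $Q$-functions on \emph{ordinary} (non-shifted) skew diagrams, where Proposition 3.3 of Barekat and van Willigenburg applies, and then to translate the resulting equalities of functions back into content-preserving bijections. The first step is to note that whether a filling of $D$ is a tableau (Definition \ref{tableaudef}) and what its content is depend only on the local incidence data of the boxes of $D$ — which pairs are horizontally, vertically or diagonally adjacent — and on the letters used; none of this is affected by rigidly translating the boxes. Hence $Q_D=\sum_T x^{c(T)}$ is a translation invariant of the set of boxes. Now if $D=D_{\lambda/\mu}$ is unshifted, so $\ell(\mu)=\ell(\lambda)-1$, then the cells of $D$ form, up to translation, an ordinary skew diagram: the left edge of row $i$ sits in column $i+\mu_i$ for $i\le\ell(\mu)$ and in column $\ell(\lambda)$ for the last row, and since $\mu$ has distinct parts one checks $i+\mu_i\ge(i+1)+\mu_{i+1}$ and $\ell(\mu)+\mu_{\ell(\mu)}\ge\ell(\mu)+1$, so the left edge is weakly decreasing as one moves down; the outer edge $i+\lambda_i-1$ is weakly decreasing for the same reason. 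Both boundaries are therefore the boundaries of ordinary partitions, so $D$ is a translate of an ordinary skew diagram $D^{\mathrm{ord}}$, and by construction $D^t$ and $D^o$ are exactly the ordinary transpose and $180^\circ$-rotation of $D^{\mathrm{ord}}$, placed back into shifted position.

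Given this identification, I would invoke \cite[Proposition 3.3]{BarekatvanWilligenburg} to obtain the equalities of functions $Q_{\lambda/\mu}=Q_{(\lambda/\mu)^t}=Q_{(\lambda/\mu)^o}$. To upgrade each equality of functions to a content-preserving bijection, I would use that a skew Schur $Q$-function is the monomial generating function $Q_D=\sum_{\nu}\bigl(\#\{\,T \text{ of shape } D : c(T)=\nu\,\}\bigr)x^{\nu}$; equality of two such functions is equivalent to having equally many tableaux in every fixed content class, which is precisely the assertion that a content-preserving bijection between the two sets of tableaux exists. (In fact this last step also explains the word \emph{content-preserving}: although the natural geometric maps are content-reversing, the well-known symmetry of skew Schur $Q$-functions in the variables turns equal content-counts into an honestly content-preserving matching.)

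The main obstacle is the transpose. Rotation by $180^\circ$ sends rows to rows and columns to columns, so it interacts gently with the asymmetric marking rules of Definition \ref{tableaudef} (at most one unmarked $k$ per column, at most one marked $k'$ per row): the underlying map is a rotation combined with a reversal of the letter labels, and the only genuine work is to compensate for the induced order reversal. The transpose, by contrast, interchanges rows and columns and hence \emph{swaps} the two marking conditions, so a naive reflection does not carry tableaux to tableaux. One must twist the reflection by the alphabet involution exchanging $k$ and $k'$ and then verify that weak monotonicity, the column/row marking conditions, and the content are all preserved; this verification is exactly the content of \cite[Proposition 3.3]{BarekatvanWilligenburg}, and it is where all the care is needed.
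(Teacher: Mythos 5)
Your proposal is correct and matches the paper's approach: the paper offers no independent proof of this lemma, deriving it directly from \cite[Proposition 3.3]{BarekatvanWilligenburg} exactly as you do. Your additional observations --- that an unshifted $D_{\lambda/\mu}$ is a translate of an ordinary skew diagram because $\mu$ has distinct parts, and that equality of the two generating functions yields a content-preserving bijection since each content class is a finite set of tableaux --- correctly supply the glue that the paper leaves implicit.
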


\begin{Def}
Let $D$ be a (not necessarily unshifted) skew diagram.
The \textbf{orthogonal transpose} of $D$, denoted  by $D^{ot}$, is obtained as follows: reflect the boxes of $D$ along the diagonal $\{(z,-z) \mid z \in \NN\}$.
Move this arrangement of boxes such that the top row with boxes is in the first row and the lowermost box of the leftmost column with boxes is on the diagonal $\{(z,z) \mid z \in \NN\}$.
\end{Def}
\begin{Ex}
For
$${\Yvcentermath1 D = \young(::\none \none \none ,\none \none \none \none \none ,\none \none \none \none ,:\none \none \none ,::\none )}$$
we obtain
$${\Yvcentermath1 D^{ot} = \young(:::\none \none ,:\none \none \none \none ,\none \none \none \none \none ,:\none \none \none ,::\none \none )}.$$
\end{Ex}

As it turns out this is again a transformation on shifted skew diagrams that leaves the respective $Q$-function unchanged; this has been shown by DeWitt (\cite[Proposition IV.13]{DeWitt}) and independently in \cite[Lemma 3.11]{Schure}.

\begin{Lem}\label{ot}
Let $D$ be a skew diagram. 
There is a content-preserving bijection between the tableaux of $D$ and the tableaux of $D^{ot}$.
\end{Lem}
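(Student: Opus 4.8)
The plan is to realise the orthogonal transpose as an honest reflection of positions and then to repair the content, which such a reflection necessarily reverses. First I would make the geometry explicit: the reflection across $\{(z,-z)\}$ is the map $\rho(x,y) = (c-y,\,c-x)$ on $\ZZ^2$, where the constant $c$ is determined by the normalisation in the definition of $D^{ot}$ (top row in row $1$, lowermost box of the leftmost column on the main diagonal). One checks directly that $\rho$ is an involution, that it preserves every diagonal $\{y-x = \text{const}\}$ and the main diagonal $\{(z,z)\}$, that it carries $D$ bijectively onto $D^{ot}$, and that it interchanges the two local adjacencies: a step one column to the right (resp.\ one row down) in the image corresponds to a step one row up (resp.\ one column left) in $D$. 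In particular the rows of $D^{ot}$ correspond to the columns of $D$ and vice versa.

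Next I would transport tableaux along $\rho$ while reversing the alphabet. Fix $N$ larger than every absolute value occurring in a given tableau $T$ and let $\theta_N$ be the order-reversing involution of $\{1' < 1 < \cdots < N' < N\}$, namely $\theta_N(k) = (N-k+1)'$ and $\theta_N(k') = (N-k+1)$, so that $|\theta_N(a)| = N-|a|+1$. Define $\Phi(T)$ on $D^{ot}$ by $\Phi(T)(\rho(x,y)) = \theta_N(T(x,y))$. Because $\theta_N$ is order-reversing while $\rho$ sends ``right/down in the image'' to ``up/left in $D$'', condition a) of Definition \ref{tableaudef} for $\Phi(T)$ is exactly the weakly-increasing requirement for $T$; and because $\theta_N$ exchanges marked and unmarked letters while $\rho$ exchanges rows and columns, the column condition b) for $\Phi(T)$ becomes the row condition c) for $T$, and c) for $\Phi(T)$ becomes b) for $T$. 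Hence $\Phi$ is a well-defined involution between the tableaux of $D$ and those of $D^{ot}$. From $|\theta_N(a)| = N-|a|+1$ one reads off that $\Phi$ permutes content by a reversal within $\{1,\dots,N\}$; in particular it does \emph{not} preserve content.

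This content reversal is the main obstacle, and it is forced: by Lemma \ref{diagonal} the absolute values strictly increase along each diagonal, while $\rho$ reverses each diagonal, so any positional bijection must reverse absolute values and hence content. I would repair this using the symmetry of skew Schur $Q$-functions. Since these are symmetric functions (Stembridge \cite{Stembridge}), the number of tableaux of $D^{ot}$ of a given content depends only on the underlying multiset and is therefore unchanged under the reversal produced by $\Phi$. Writing $\alpha'$ for the content into which $\Phi$ sends $\alpha$, we obtain for every content $\alpha$
$$\#\{\,T \text{ tableau of } D : c(T)=\alpha\,\} = \#\{\,S \text{ tableau of } D^{ot} : c(S)=\alpha'\,\} = \#\{\,S \text{ tableau of } D^{ot} : c(S)=\alpha\,\}.$$
Matching tableaux of equal content on the two outer sides yields the desired content-preserving bijection, and in particular $Q_D = Q_{D^{ot}}$. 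One may make the symmetrisation step explicit, if preferred, by composing $\Phi$ with the shifted Bender--Knuth involutions that realise the longest permutation of the variables $x_1,\dots,x_N$.
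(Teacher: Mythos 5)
Your argument is correct, but note that the paper itself does not prove Lemma \ref{ot}: it defers to DeWitt \cite[Proposition IV.13]{DeWitt} and to \cite[Lemma 3.11]{Schure}, so there is no in-paper proof to match against. Your route is the standard one and it works: the reflection $\rho$ swaps the row and column structures while reversing both, so composing with the order-reversing alphabet involution $\theta_N$ (which exchanges marked and unmarked letters) turns condition a) of Definition \ref{tableaudef} into itself and interchanges conditions b) and c); this gives, for each fixed $N$, a bijection between tableaux of $D$ and of $D^{ot}$ with entries in $\{1',\dots,N\}$ that reverses content, and the symmetry of the skew Schur $Q$-function (available here, e.g., from Stembridge's expansion $Q_{\lambda/\mu}=\sum_\nu f^\lambda_{\mu\nu}Q_\nu$ into symmetric $Q_\nu$) then equates the count for the reversed content with the count for the original one, so a content-preserving bijection exists by matching equinumerous finite sets. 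Two small caveats. First, the normalisation in the definition of $D^{ot}$ fixes the row translation and the column translation by two independent conditions, so in general $\rho(x,y)=(s-y,\,t-x)$ with possibly $s\neq t$; your claims that $\rho$ is an involution and preserves each diagonal $\{y-x=\mathrm{const}\}$ need $s=t$ and are not automatic. This is harmless, since those claims are used only in your motivational paragraph, while the load-bearing facts (bijectivity onto $D^{ot}$ and the exchange of the two adjacencies) hold for any $s,t$. Second, the bijection you end with is non-constructive (an arbitrary matching of equal-cardinality content classes), whereas the cited sources give explicit bijections; but the lemma only asserts existence, and every later use in the paper needs only the identity $Q_D=Q_{D^{ot}}$, which your argument fully establishes.
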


The diagrams $U_i$ in the following lemma are defined by Salmasian's algorithm
as in Definition~\ref{Salmasian'salgorithm}.
\begin{Lem}\label{ot2}
Let $\lambda, \mu \in DP$, $\nu = c(T_{\lambda/\mu})$ and $n = \ell(\nu)$.
Let $D_{\lambda/\mu}^{ot}$ have shape $D_{\gamma/\delta}$.
Let $T' = T_{\gamma/\delta}$.
If $U_i(\lambda/\mu)$ has shape $D_{\alpha/\beta}$ then $U_i(\gamma/\delta)$ has shape $D_{\alpha/\beta}^{ot}$.
\end{Lem}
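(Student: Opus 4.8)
The plan is to analyse how the peeling procedure of Definition~\ref{Salmasian'salgorithm} interacts with the orthogonal transpose, exploiting that both operations are governed entirely by the diagonals $\{(x,y)\mid y-x=d\}$. First I would give a diagonal description of the sets $U_i$. For a skew diagram $D$ and a box $(x,y)\in D$, let $h_D(x,y)$ denote the largest $m$ such that $(x,y),(x-1,y-1),\dots,(x-m+1,y-m+1)$ all lie in $D$. A short induction on $k$ using steps (2)--(4) of the algorithm shows that $(x,y)\in P_k$ exactly when $h_D(x,y)=k$, and hence that $U_i(D)=\{(x,y)\in D\mid h_D(x,y)\ge i\}$. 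On a maximal diagonal segment (run) of length $\ell$ the function $h_D$ takes the values $1,2,\dots,\ell$ from top-left to bottom-right, so $U_i$ retains precisely the bottom-right $\ell-i+1$ boxes of such a run.

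Next I would record the action of the orthogonal transpose on diagonals. Writing the ot-bijection $\Phi\colon D_{\lambda/\mu}\to D_{\gamma/\delta}$ as the reflection $(x,y)\mapsto(-y,-x)$ followed by the normalising translation, one checks that $\Phi$ preserves the diagonal index $d=y-x$ and reverses the order of the boxes along each diagonal; equivalently, $\Phi$ sends the up-left neighbour of a box to the down-right neighbour of its image. Consequently each run $R$ of $D_{\lambda/\mu}$ is carried to a run $\Phi(R)$ of $D_{\gamma/\delta}$ of the same length on the same diagonal but with its order along that diagonal reversed, and the up-left depth of a box in $D_{\lambda/\mu}$ equals the down-right depth of its $\Phi$-image in $D_{\gamma/\delta}$.

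Combining these two observations gives the core of the argument. Fix $i$ and a run $R$ of length $\ell$ on a diagonal $d$. Since the up-left peeling of $D_{\gamma/\delta}$ keeps the bottom-right $\ell-i+1$ boxes of $\Phi(R)$ and $\Phi$ reverses the diagonal, $U_i(\gamma/\delta)$ restricted to $\Phi(R)$ is exactly $\Phi$ applied to the top-left $\ell-i+1$ boxes of $R$; whereas $(U_i(\lambda/\mu))^{ot}$ restricted to the same diagonal is $\Phi$ applied to the bottom-right $\ell-i+1$ boxes of $R$, up to the normalising translation. Both are length-$(\ell-i+1)$ segments of diagonal $d$, and comparing their positions along the diagonal shows that the former is obtained from the latter by sliding $i-1$ steps in the down-right direction. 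The main obstacle, and the one point that makes all the pieces fit together, is that this displacement is the same constant for every run: it depends only on $i$, not on the diagonal $d$, the length $\ell$, or the position of the run. Because the shift is uniform, a single translation along the main diagonal simultaneously matches all runs, so $U_i(\gamma/\delta)$ and the reflection of $U_i(\lambda/\mu)$ agree up to translation and therefore have the same shape; as $(U_i(\lambda/\mu))^{ot}=D_{\alpha/\beta}^{ot}$ is precisely this reflection after normalisation, the claim follows. The extreme cases are immediate: for $i=1$ both sides are the full diagrams, which is the definition of the orthogonal transpose, and for $i>n$ both sides are empty, since $n=\ell(\nu)$ bounds the number of nonempty peeling steps.
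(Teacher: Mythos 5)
Your proof is correct and follows essentially the same route as the paper: you characterise $U_i(D)$ as the set of boxes whose up-left diagonal depth is at least $i$, observe that the orthogonal transpose reverses each diagonal run so that this becomes the set of boxes of down-right depth at least $i$, and note that the latter is a uniform $(i-1,i-1)$-translate of the former. The paper's proof is just a compressed version of this same argument, stating the depth characterisation and the translation step without the explicit run-by-run bookkeeping you supply.
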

\begin{proof}
The diagram $U_i(\gamma/\delta)$ is also defined by $\{(x,y) \in D_{\gamma/\delta} \mid (x-i+1,y-i+1) \in D_{\gamma/\delta}\}$ and the image of this set of boxes after orthogonally transposing is given by the set of boxes $\{(u,v) \in D_{\lambda/\mu} \mid (u+i-1,v+i-1) \in D_{\lambda/\mu}\}$ which has the same shape as the set of boxes $\{(u,v) \in D_{\lambda/\mu} \mid (u-i+1,v-i+1) \in D_{\lambda/\mu}\} = U_i(\lambda/\mu)$.
\end{proof}
\begin{Rem}
For $i = n$ this means that $T'^{(n)}$ has the same shape as $P_n^{ot}$.
\end{Rem}

\section{Relations between the coefficients $f^{\lambda}_{\mu \nu}$}

In this section we will prove some inequalities satisfied by the shifted Littlewood-Richardson coefficients $f^{\lambda}_{\mu \nu}$ that will be helpful for the proofs in Section~4.

\begin{Lem}\label{parts}
Let $\lambda, \mu \in DP$.
Let $c(T_{\lambda/\mu})=\nu=(\nu_1,\ldots, \nu_n)$.
Let $k$ be such that $U_k(\lambda/\mu)$ has shape $D_{\alpha/\beta}$ for some $\alpha, \beta \in DP$.
Then
$$\prod_{j = 1}^{k-1}{2^{comp(P_j)-1}} f^{\alpha}_{\beta \gamma} = f^{\lambda}_{\mu (\nu_1, \ldots \nu_{k-1}, \gamma_1, \ldots, \gamma_{\ell(\gamma)})}.$$
In particular, we have $f^{\alpha}_{\beta \gamma} \leq f^{\lambda}_{\mu (\nu_1, \ldots \nu_{k-1}, \gamma_1, \ldots, \gamma_{\ell(\gamma)})}$.
\end{Lem}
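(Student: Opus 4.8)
The plan is to prove the statement by peeling off one diagonal layer at a time and inducting on $k$, so that the genuine content lies in the single case $k=2$. The engine for the induction is the observation that running Salmasian's algorithm (Definition \ref{Salmasian'salgorithm}) on $U_2(\lambda/\mu)$ reproduces the later stages of the algorithm on $D_{\lambda/\mu}$: one has $U_i(U_2(\lambda/\mu)) = U_{i+1}(\lambda/\mu)$ and $P_i(U_2(\lambda/\mu)) = P_{i+1}(\lambda/\mu)$, with $c(T_{U_2(\lambda/\mu)}) = (\nu_2,\nu_3,\ldots)$. Hence, once the single-layer identity
$$f^{\lambda}_{\mu(\nu_1,\gamma_1,\ldots,\gamma_{\ell(\gamma)})} = 2^{comp(P_1)-1}\, f^{\alpha'}_{\beta'\gamma}, \qquad U_2(\lambda/\mu) = D_{\alpha'/\beta'},$$
is established, applying it $k-1$ times telescopes to the asserted product $\prod_{j=1}^{k-1} 2^{comp(P_j)-1}$, and the accompanying inequality follows at once since every factor is at least $1$.

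First I would record that $\nu_i = |P_i|$, because $T_{\lambda/\mu}$ fills exactly the boxes of $P_i$ with letters of absolute value $i$. The backbone of the single-layer identity is a shape-forcing observation: for every tableau $T$ of $D_{\lambda/\mu}$ one has $T^{(1)} \subseteq P_1$, since a box $(x,y)$ with $|T(x,y)|=1$ cannot have $(x-1,y-1) \in D_{\lambda/\mu}$ by Lemma \ref{diagonal}. Consequently $c(T)_1 \le |P_1| = \nu_1$, with equality if and only if $T^{(1)} = P_1$. Thus any amenable $T$ of content $(\nu_1,\gamma_1,\ldots,\gamma_{\ell(\gamma)})$ satisfies $T^{(1)} = P_1$, and all of its boxes of absolute value $\ge 2$ occupy $U_2(\lambda/\mu) = D_{\alpha'/\beta'}$.

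This sets up the bijection. To such a $T$ I would associate a pair $(\varepsilon, T'')$, where $T''$ is the restriction of $T$ to $U_2(\lambda/\mu)$ with every letter shifted down by one level (so $c(T'') = \gamma$), and $\varepsilon$ records, for each of the $comp(P_1)-1$ non-leftmost components of $P_1$, whether its last box carries $1$ or $1'$. By Lemma \ref{2fillings} the filling of $P_1 = T^{(1)}$ is determined up to exactly these last-box choices together with the last box of the leftmost component, and the latter is pinned to the unmarked letter $1$ by the fitting requirement (Definition \ref{fitting}). Conversely, from an amenable $T''$ of $D_{\alpha'/\beta'}$ of content $\gamma$ and a choice of $\varepsilon$ I would rebuild $T$ by shifting $T''$ up one level and filling $P_1$ accordingly. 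This produces the factor $2^{comp(P_1)-1}$, provided the correspondence respects amenability.

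The heart of the argument, and the step I expect to be the main obstacle, is showing that $T$ is amenable if and only if $T''$ is amenable, uniformly over all admissible $\varepsilon$. Since every letter of absolute value $\ge 2$ in $T$ lies in $U_2(\lambda/\mu)$, the $j$-amenability of $T$ for $j \ge 3$ is literally the $(j-1)$-amenability of $T''$, so those levels transfer for free. What remains is the $2$-amenability of $T$. For the ``if'' direction I would verify the five geometric conditions of Corollary \ref{checklistco} with $k=2$: conditions (4) and (5) are precisely the fitting of $T^{(1)} = P_1$ (forced by $\varepsilon$) and of $T^{(2)}$ (inherited from $T''$), while conditions (1)--(3) are read off from $P_1$ being the topmost diagonal layer, so that every value-$2$ box lies strictly below-right of $P_1$ and every $2'$ meets a $1'$ on its northwest diagonal. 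For the ``only if'' direction, Lemma \ref{checklist}(5)--(6) forces the fitting that pins the leftmost component, and Lemma \ref{unmarked} supplies the strict inequality $c^{(u)}_1 > c^{(u)}_2$ of condition (1). The delicate point, which I would treat using the explicit Salmasian filling of the border strips comprising $P_1$, is that all of these checks hold simultaneously for every one of the $2^{comp(P_1)-1}$ choices of $\varepsilon$ and for every amenable completion $T''$; it is exactly this uniformity that makes the count factor as a clean product independent of $T''$.
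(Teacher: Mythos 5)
Your proposal is correct and follows essentially the same route as the paper: Lemma \ref{diagonal} forces $T^{(j)} = P_j$ for $j < k$, Lemma \ref{2fillings} together with the fitting condition yields the factor $2^{comp(P_j)-1}$, and Lemma \ref{checklist} and Corollary \ref{checklistco} handle the transfer of amenability. The only differences are organizational — you peel off one diagonal layer at a time and induct on $k$, and phrase the count as a bijection with pairs $(\varepsilon, T'')$, whereas the paper treats all $k-1$ layers at once and proves the two inequalities separately — but the substance is identical.
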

\begin{proof}
Let $f^{\alpha}_{\beta \gamma} = m$, i.e.,
there are exactly $m$ different amenable tableaux of $D_{\alpha/\beta}$ of content $\gamma=(\gamma_1, \ldots, \gamma_{\ell(\gamma)})$.
Then we can obtain $\prod_{i = 1}^{k-1}{2^{comp(P_i)-1}} f^{\alpha}_{\beta \gamma}$ different amenable tableaux of $D_{\lambda/\mu}$ of content $(\nu_1, \ldots \nu_{k-1}, \gamma_1, \ldots, \gamma_{\ell(\gamma)})$ as follows.
For each box of $D_{\alpha/\beta}$ replace its entry $i$ (respectively, $i'$) by $i+k-1$ (respectively, $(i+k-1)'$).
Use these as the filling of the boxes of $U_k(\lambda/\mu)$.
For all $1 \leq j \leq k-1$ fill the boxes of $P_j$ with entries from $\{j', j\}$.
By Lemma \ref{2fillings}, we have $2^{comp(P_j)-1}$ possible fillings of $P_j$ such that $P_j$ is fitting.
We only need to show $k$-amenability for each of these tableaux, which follows straightforwardly by Corollary \ref{checklistco}.
Hence, $\prod_{j = 1}^{k-1}{2^{comp(P_j)-1}} f^{\alpha}_{\beta \gamma} \leq f^{\lambda}_{\mu (\nu_1, \ldots \nu_{k-1}, \gamma_1, \ldots, \gamma_{\ell(\gamma)})}$.\par
Let $T$ be an amenable tableaux of $D_{\lambda/\mu}$ of content $(\nu_1, \ldots \nu_{k-1}, \gamma_1, \ldots, \gamma_{\ell(\gamma)})$.
Then, by Lemma \ref{diagonal}, each box $(x,y)$ of the $\nu_1$ entries from $\{1', 1\}$ is such that $(x-1,y-1) \notin D_{\lambda/\mu}$.
The set of all such boxes is $P_1$ and we have $|P_1| = \nu_1$.
Hence, $T^{(1)} = P_1$.
Then, by Lemma \ref{diagonal}, each box $(x,y)$ of the $\nu_2$ entries from $\{2', 2\}$ satisfy the property $(x-1,y-1) \notin D_{\lambda/\mu} \setminus P_1$.
The set of all such boxes is $P_2$ and we have $|P_2| = \nu_2$.
Hence, $T^{(2)} = P_2$.
Repeating this argument for all entries greater than $2$, we see that $T^{(j)} = P_j$ for all $1 \leq j \leq k-1$.
Hence, after removing $T^{(1)}, T^{(2)}, \ldots, T^{(k-1)}$ we obtain some tableau of $U_k(\lambda/\mu)$ of shape $D_{\alpha/\beta}$.
If for each box of this tableau we replace its entry $i$ (respectively, $i'$) by $i-k+1$ (respectively, $(i-k+1)'$) then we obtain a tableau $T'$ of $D_{\alpha/\beta}$ of content $\gamma$.
The amenability of the tableau $T'$ follows from the amenability of the tableau $T$.
After removing the ribbon strips $P_j$ for all $1 \leq j \leq k-1$, the tableau $T'$ is independent of the different possible fillings of these $P_j$.
By Lemma \ref{2fillings}, we have $2^{comp(P_j)-1}$ possible fillings of $P_j$.
Thus, for each of the $\prod_{j = 1}^{k-1}{2^{comp(P_j)-1}}$ tableaux of content $(\nu_1, \ldots \nu_{k-1}, \gamma_1, \ldots, \gamma_{\ell(\gamma)})$ with the same entries in $U_k(\lambda/\mu)$ we obtain $T'$.
Hence, we have $\prod_{j = 1}^{k-1}{2^{comp(P_j)-1}} f^{\alpha}_{\beta \gamma} \geq f^{\lambda}_{\mu (\nu_1, \ldots \nu_{k-1}, \gamma_1, \ldots, \gamma_{\ell(\gamma)})}$ and the statement follows.
\end{proof}

We will use the relation $f^{\alpha}_{\beta \gamma} \leq f^{\lambda}_{\mu (\nu_1, \ldots \nu_{k-1}, \gamma_1, \ldots, \gamma_{\ell(\gamma)})}$ in
Section~4 to show that $f^{\lambda}_{\mu \delta} \geq 2$ for some $\delta$ by showing that $f^{\alpha}_{\beta \gamma} \geq 2$ for some $\gamma$.
Then by setting $\delta = (\nu_1, \ldots \nu_{k-1}, \gamma_1, \ldots, \gamma_{\ell(\gamma)})$ we obtain the desired assertion.\par
The following example illustrates how to obtain tableaux of shape $D_{\lambda/\mu}$ from the tableaux of shape $D_{\alpha/\beta}$ as explained in the proof of Lemma \ref{parts}.

\begin{Ex}
Let $\lambda = (10,8,7,6,4,1)$ and $\mu = (5,3,2,1)$ and consider
$${\Yvcentermath1 T_{\lambda/\mu} = \young(:\meins 11111,\meins 1\mzwei 222,\meins \mzwei 2\mdrei 33,\meins \mzwei \mdrei 3\mvier 4,1\mzwei 344,:2).}$$
Let $k = 3$.
Then $${\Yvcentermath1 U_3(\lambda/\mu) = \young(:\none \none \none ,\none \none \none \none ,\none \none \none )\:.}$$
Two amenable tableaux of the same content are
$${\Yvcentermath1 \young(:\meins 11,1122,233)}\:, \hspace{1ex} {\Yvcentermath1 \young(:111,\meins 222,133)}\:.$$
We obtain two amenable tableaux of $D_{\lambda/\mu}$ of the same content:
$${\Yvcentermath1 \young(:\meins 11111,\meins 1\mzwei 222,\meins \mzwei 2\mdrei 33,\meins \mzwei 3344,1\mzwei 455,:2)}, \hspace{1ex} {\Yvcentermath1 \young(:\meins 11111,\meins 1\mzwei 222,\meins \mzwei 2333,\meins \mzwei \mdrei 444,1\mzwei 355,:2)}.$$
\end{Ex}

\begin{Def}
Let $\lambda, \mu \in DP$, let $2 \leq a \leq \ell(\mu)+2$, and let $b \geq \ell(\lambda)$.
Let $\Gamma^{\rightarrow}_a(D_{\lambda/\mu})$ be the diagram obtained from $D_{\lambda/\mu}$ by shifting all boxes above the $a^{\textrm{th}}$ row one box to the right.
Let $\Gamma^{\downarrow}_b(D_{\lambda/\mu})$ be the diagram obtained from $D_{\lambda/\mu}$ by shifting all boxes $(x,y)$ such that $y < b$ one box down.
\end{Def}
\begin{Ex}
For $\lambda = (8,7,4,3,1)$ and $\mu = (5,2,1)$ we have
$${\Yvcentermath1 D_{\lambda/\mu} = \young(\times \times \times \times \times \none \none \none ,:\times \times \none \none \none \none \none ,::\times \none \none \none ,:::\none \none \none ,::::\none )}$$
and
$${\Yvcentermath1 \Gamma^{\rightarrow}_5(D_{\lambda/\mu}) = \young(\times \times \times \times \times \times \none \none \none ,:\times \times \times \none \none \none \none \none ,::\times \times \none \none \none ,:::\times \none \none \none ,::::\none )}, {\Yvcentermath1 \Gamma^{\downarrow}_6(D_{\lambda/\mu}) = \young(\times \times \times \times \times \times \none \none \none ,:\times \times \times \times \times \none \none \none ,::\times \times \none \none \none ,:::\times \none \none \none ,::::\none \none ,:::::\none )}.$$
\end{Ex}

\begin{Lem}\label{shiftrowcolumn}
Let $\lambda, \mu \in DP$ and let $2 \leq a \leq \ell(\mu)+2$ and $b \geq \ell(\lambda)$.
Let $\Gamma^{\rightarrow}_a(D_{\lambda/\mu})$ have shape $D_{\alpha/\beta}$ and let $\Gamma^{\downarrow}_b(D_{\lambda/\mu})$ have shape $D_{\tilde{\alpha}/\tilde{\beta}}$.\par
Then $f^{\lambda}_{\mu \nu} \leq f^{\alpha}_{\beta \nu}$ and $f^{\lambda}_{\mu \nu} \leq f^{\tilde{\alpha}}_{\tilde{\beta} \nu}$.
\end{Lem}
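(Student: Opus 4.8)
The plan is to exhibit, in each case, a content-preserving injection from the amenable tableaux of $D_{\lambda/\mu}$ into the amenable tableaux of the transformed diagram. Since each of the two shifts is a bijection on the underlying sets of boxes, moving every entry along with its box induces a map on fillings that is automatically injective and preserves the content; hence the inequalities $f^{\lambda}_{\mu\nu} \leq f^{\alpha}_{\beta\nu}$ and $f^{\lambda}_{\mu\nu} \leq f^{\tilde\alpha}_{\tilde\beta\nu}$ reduce to checking that the image of an amenable tableau is again an amenable tableau. The reason one only obtains an inequality, and not an equality, is that pulling the diagram apart relaxes certain adjacency constraints, so the transformed diagram may admit amenable tableaux that do not arise from any legal filling of $D_{\lambda/\mu}$.

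For $\Gamma^{\rightarrow}_a$ I would argue directly. After moving each entry with its box, the entries within any single row keep their left-to-right order and every row keeps its vertical position, so the reading word is \emph{literally unchanged}; since amenability (Definition \ref{amenable}) depends only on the reading word, the image is amenable as soon as it is a genuine tableau. It therefore remains to verify the tableau conditions. All row conditions, and all column conditions internal to the shifted block (rows $1,\dots,a-1$) or to the unshifted block (rows $\geq a$), are inherited verbatim, because relative positions inside each block are preserved. The only new vertical adjacencies occur across the junction between row $a-1$ and row $a$: a box coming from $(a-1,y)$ now sits directly above a box coming from $(a,y+1)$. But $(a-1,y)$ and $(a,y+1)$ are diagonal neighbours in $D_{\lambda/\mu}$, so Lemma \ref{diagonal} gives $|T(a-1,y)| < |T(a,y+1)|$, which supplies both the weak increase down the new column and the distinctness of its unmarked entries. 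Hence the image is an amenable tableau and the first inequality follows.

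For $\Gamma^{\downarrow}_b$ the entries change rows, so the reading word is genuinely altered and the above shortcut fails; this is the main obstacle. I would circumvent it using the orthogonal transpose. Because $Q_D = Q_{D^{ot}}$ by Lemma \ref{ot}, and the coefficients $f$ are precisely the coefficients of the expansion of a skew Schur $Q$-function in the linearly independent family $\{Q_\nu\}$, the number of amenable tableaux of a skew diagram with a fixed content $\nu$ is invariant under $(\cdot)^{ot}$. The key geometric point to establish is that orthogonal transposition intertwines the two shifts, i.e.\ $\bigl(\Gamma^{\downarrow}_b(D_{\lambda/\mu})\bigr)^{ot} = \Gamma^{\rightarrow}_{a'}\bigl(D_{\lambda/\mu}^{ot}\bigr)$ for a suitable index $a'$: under the anti-diagonal reflection defining $(\cdot)^{ot}$ the roles of rows and columns are exchanged, so shifting the left columns down becomes shifting the top rows to the right, and the hypothesis $b \geq \ell(\lambda)$ translates exactly into the admissible range $2 \leq a' \leq \ell(\delta)+2$, where $D_{\lambda/\mu}^{ot} = D_{\gamma/\delta}$. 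Granting this identity, write $D_{\alpha'/\beta'} = \Gamma^{\rightarrow}_{a'}(D_{\gamma/\delta})$. Then $ot$-invariance gives $f^{\lambda}_{\mu\nu} = f^{\gamma}_{\delta\nu}$, the already-proved $\Gamma^{\rightarrow}$ inequality applied to $D_{\gamma/\delta}$ gives $f^{\gamma}_{\delta\nu} \leq f^{\alpha'}_{\beta'\nu}$, and since $D_{\alpha'/\beta'} = D_{\tilde\alpha/\tilde\beta}^{ot}$ a second application of $ot$-invariance gives $f^{\alpha'}_{\beta'\nu} = f^{\tilde\alpha}_{\tilde\beta\nu}$, whence $f^{\lambda}_{\mu\nu} \leq f^{\tilde\alpha}_{\tilde\beta\nu}$.

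I expect the main work to lie in verifying the intertwining identity of the previous paragraph. Should one prefer a direct route for $\Gamma^{\downarrow}_b$ instead, the same structure applies but amenability must be re-established through the criterion of Lemma \ref{checklist}: a short position analysis shows that under this downward shift the sets $\mathcal{S}^{\boxtimes}_T(x,y)^{(i)}$ can only \emph{gain} boxes lying one row below $(x,y)$ in columns $\geq b$, so the inequalities in conditions $(2)$ and $(3)$ are preserved, and the new row conditions at the junction (column $b-1$ to column $b$) are again furnished by Lemma \ref{diagonal}, exactly as for $\Gamma^{\rightarrow}_a$. The delicacy of this bookkeeping is what makes the orthogonal-transpose reduction the cleaner of the two approaches.
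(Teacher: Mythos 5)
Your handling of $\Gamma^{\rightarrow}_a$ is the paper's argument: move each entry along with its box, observe that the reading word is literally unchanged so amenability is automatic, and check injectivity; your verification of the tableau axioms at the junction between rows $a-1$ and $a$ via Lemma \ref{diagonal} is a detail the paper leaves tacit. For $\Gamma^{\downarrow}_b$ you genuinely diverge. The paper again moves entries with their boxes, setting $\tilde{T}(x,y)=T(x-1,y)$ on columns $<b$, and asserts amenability of the image by appealing to Lemma \ref{checklist} without elaboration; you instead conjugate by orthogonal transposition, using $Q_D=Q_{D^{ot}}$ (Lemma \ref{ot}) together with the linear independence of the $Q_\nu$ to transport the coefficients, and reduce to the already-proved $\Gamma^{\rightarrow}$ case via the identity $(\Gamma^{\downarrow}_b D_{\lambda/\mu})^{ot}=\Gamma^{\rightarrow}_{a'}(D_{\lambda/\mu}^{ot})$ with $a'=\lambda_1-b+2$. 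That identity does hold (a down-shift of the left columns becomes, after anti-diagonal reflection, a left-shift of the bottom rows, i.e.\ up to translation a right-shift of the top rows, and $b\geq\ell(\lambda)$ translates into the admissibility bound on $a'$), so your route is sound and arguably cleaner, since it sidesteps re-verifying the scanning conditions after a genuine change of reading word; the price is the geometric bookkeeping of the intertwining identity, which you correctly identify as the main work. One caution on your sketched direct alternative: the inference that conditions (2) and (3) of Lemma \ref{checklist} are preserved because the sets $\mathcal{S}^{\boxtimes}_T(x,y)^{(i)}$ ``only gain boxes'' is not valid as stated, since both $\mathcal{S}^{\boxtimes}_T(x,y)^{(k-1)}$ and $\mathcal{S}^{\boxtimes}_T(x,y)^{(k)}$ may gain boxes and an inequality of cardinalities is not stable under enlarging both sides; that variant would need a finer analysis of the entries in the gained row segment, but this does not affect your main argument.
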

\begin{proof}
For every given amenable tableau $T$ of shape $D_{\lambda/\mu}$ one obtains an amenable tableau $\hat{T}$ of shape $D_{\alpha/\beta}$ by setting $\hat{T}(x,y) = T(x,y-1)$ for all $1 \leq x \leq a-1$ and $\hat{T}(x,y) = T(x,y)$ for all $x \geq a$ such that $(x,y) \in D_{\alpha/\beta}$.
Since $w(\hat{T}) = w(T)$, the tableau $\hat{T}$ is amenable.
Let $T, T'$ be two amenable tableaux of shape $D_{\lambda/\mu}$ and $\hat{T}, \hat{T'}$ the tableaux obtained from $T, T'$ as above.
If $\hat{T} = \hat{T'}$ then $T(x,y) = \hat{T}(x,y+1) = \hat{T'}(x,y+1) = T'(x,y)$ for all $1 \leq x \leq a-1$ and $T(x,y) = \hat{T}(x,y) = \hat{T'}(x,y) = T'(x,y)$ for all $x \geq a$ and, hence, $T = T'$.
Thus, the statement $f^{\lambda}_{\mu \nu} \leq f^{\alpha}_{\beta \nu}$ follows.\par
For every given amenable tableau $T$ of shape $D_{\lambda/\mu}$ one obtains an amenable tableau $\tilde{T}$ of shape $D_{\tilde{\alpha}/\tilde{\beta}}$ by setting $\tilde{T}(x,y) = T(x-1,y)$ for all $1 \leq y \leq b-1$ and $\tilde{T}(x,y) = T(x,y)$ for all $y \geq b$ such that $(x,y) \in D_{\tilde{\alpha}/\tilde{\beta}}$.
By Lemma \ref{checklist}, the tableau $\tilde{T}$ is amenable.
Let $T, T'$ be two amenable tableaux of shape $D_{\lambda/\mu}$ and $\tilde{T}, \tilde{T'}$ the tableaux obtained from $T, T'$ as above.
If $\tilde{T} = \tilde{T'}$ then $T(x,y) = \tilde{T}(x+1,y) = \tilde{T'}(x+1,y) = T'(x,y)$ for all $1 \leq y \leq b-1$ and $T(x,y) = \tilde{T}(x,y) = \tilde{T'}(x,y) = T'(x,y)$ for all $y \geq b$ and, hence, $T = T'$.
Thus, the statement $f^{\lambda}_{\mu \nu} \leq f^{\tilde{\alpha}}_{\tilde{\beta} \nu}$ follows.
\end{proof}
\begin{Rem}
The statement $f^{\lambda}_{\mu \nu} \leq f^{\alpha}_{\beta \nu}$ in the sense of Lemma \ref{shiftrowcolumn} appeared in the proof of \cite[Theorem 2.2]{Bessenrodt} and is, hence, due to Bessenrodt.
In the same proof the statement $f^{\lambda}_{\mu \nu} \leq f^{\tilde{\alpha}}_{\tilde{\beta} \nu}$ for $b = \mu_1+2$ can be found (without explicitly stating that $\mu_1+2 \geq \ell(\lambda)$ is required).
\end{Rem}

\begin{Lem}\label{add1}
Let $w$ be an amenable word.
Let $\tilde{w}$ be a word such that after removing one letter $1$ the word obtained is $w$ (this means that $\tilde{w}$ can be obtained from $w$ by adding a letter $1$).
Then $\tilde{w}$ is amenable.
\end{Lem}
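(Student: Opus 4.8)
The plan is to verify directly, from Definition~\ref{amenable}, that $\tilde w$ is $k$-amenable for every $k>1$ by comparing the statistics $m_i$ of $w$ with the statistics $\tilde m_i$ of $\tilde w$. I would write $\tilde w = u\,1\,v$, where $w=uv$ and the inserted letter is $1$. The observation driving everything is that, in Stembridge's scanning procedure, the inserted $1$ increments a counter only for the value $i=1$ and only in the right-to-left phase: it is unmarked, hence irrelevant in the left-to-right phase (where one counts primed letters), and its absolute value is $1$, hence irrelevant for every value $i\ge 2$. Thus for each $i\ge 2$ the trajectory $j\mapsto\tilde m_i(j)$ is obtained from $j\mapsto m_i(j)$ by inserting one flat step at the scanning position of the new $1$, while for $i=1$ the trajectory is raised by $1$ from that position onward in the right-to-left phase and raised by $1$ throughout the left-to-right phase.

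For $k\ge 3$ I expect this to finish the argument immediately. The letters $k,k',k-1,(k-1)'$ governing $k$-amenability never coincide with the inserted $1$, so $m_k$ and $m_{k-1}$ are unchanged apart from a common flat step, and the letter scanned at each step agrees with that of $w$ (the single extra step scans a $1$, which lies in none of the forbidden sets). Hence conditions (a)--(d) for $\tilde w$ reduce verbatim to those for $w$; conditions (c) and (d) hold because inserting a $1$ changes neither the first letter of absolute value $k$ (resp.\ $k-1$) nor the relative order of the existing letters.

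The delicate case, and where I expect the real work to be, is $k=2$, since here the inserted $1$ genuinely changes $m_1$. First I would establish the auxiliary monotonicity $m_2(j)\le m_1(j)$ for all $0\le j\le 2n$, using amenability of $w$: whenever $m_2(j)=m_1(j)$ in the right-to-left phase, condition (a) forbids the next scanned letter from being $2$ or $2'$, so $m_2$ cannot overtake $m_1$; symmetrically, in the left-to-right phase condition (b) forbids a $2'$, and an induction from $m_2(0)=m_1(0)=0$ gives the claim (this is in the spirit of Lemma~\ref{unmarked}). Then I would compare the trajectories. Let $s$ be the number of letters scanned in the right-to-left phase before the inserted $1$ is reached. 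For $j<s$ we have $\tilde m_i(j)=m_i(j)$ and the next scanned letter agrees with that of $w$, so any equality $\tilde m_2(j)=\tilde m_1(j)$ and its consequence are inherited from condition (a) for $w$; at $j=s$ the next letter is the inserted $1\notin\{2,2'\}$, so (a) holds; and for $j\ge s+1$ we have $\tilde m_1(j)=m_1(j-1)+1>m_1(j-1)\ge m_2(j-1)=\tilde m_2(j)$, so no equality occurs there. Throughout the left-to-right phase $\tilde m_1$ already exceeds the corresponding value of $m_1$ by $1$ while $\tilde m_2$ is unchanged, so $\tilde m_2<\tilde m_1$ and condition (b) is vacuous. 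Finally conditions (c),(d) for $k=2$ persist, since inserting an unmarked $1$ creates no letter of absolute value $2$ and, should it become the first letter of absolute value $1$, it is unmarked, exactly as (d) demands.

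The step I expect to be the crux is this $k=2$ analysis, and within it the monotonicity $m_2\le m_1$: once it is in place, the increase of $m_1$ forced by the extra $1$ can only destroy equalities $m_2=m_1$, never create problematic new ones, which is precisely what keeps $\tilde w$ amenable.
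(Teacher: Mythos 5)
Your proof is correct and follows essentially the same route as the paper's: the inserted unmarked $1$ can only disturb $2$-amenability, and any equality $\tilde m_1=\tilde m_2$ it could trigger would force $m_1<m_2$ at the corresponding point of $w$, contradicting amenability. The paper's own proof is a two-sentence sketch that only addresses condition (b) for $k=2$ at the added letter and leaves both the monotonicity $m_2(j)\le m_1(j)$ and the harmlessness of the insertion for all other conditions and all $k\ge 3$ implicit, so your version is the same argument written out in full.
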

\begin{proof}
The number of letters equal to $1$ in $\tilde{w}$ is greater than the number of letters equal to $1$ in $w$.
Then the word $\tilde{w}$ is not amenable only if there is some $j \geq n = \ell(\tilde{w})$ such that $m_1(j) = m_2(j)$ and $w_{j-n+1}$ is this added $1$.
But then for the word $w$ we have $m_1(j-2) < m_2(j-2)$; a contradiction to the amenability of the word $w$.
\end{proof}

\begin{Def}
Let $\alpha \in DP$ and $a \in \NN$ such that $a \leq \ell(\alpha)+1$.
Then
$$\alpha + (1^a) := (\alpha_1+1, \alpha_2+1, \ldots, \alpha_a+1,\alpha_{a+1}, \alpha_{a+2}, \ldots, \alpha_{\ell(\alpha)}).$$
\end{Def}

\begin{Lem}\label{addrow}
Let $\lambda, \mu \in DP$ and let $1 \leq a \leq \ell(\mu)+1$.
Then $f^{\lambda}_{\mu \nu} \leq f^{\lambda + (1^a)}_{\mu + (1^{a-1}), \nu + (1)}$.
\end{Lem}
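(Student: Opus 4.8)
The plan is to exhibit an injection from the amenable tableaux of $D_{\lambda/\mu}$ of content $\nu$ into the amenable tableaux of $D_{\lambda+(1^a)/\mu+(1^{a-1})}$ of content $\nu+(1)$; the desired inequality then follows from Stembridge's rule. First I would record, comparing the two shifted skew diagrams row by row, that $D_{\lambda+(1^a)/\mu+(1^{a-1})}$ is obtained from $D_{\lambda/\mu}$ by shifting rows $1,\dots,a-1$ one box to the right (for $a\ge 2$ this is exactly $\Gamma^{\rightarrow}_a(D_{\lambda/\mu})$), leaving all rows below $a$ fixed, and appending a single box at the right-hand end of row $a$. Here I would use the distinctness $\lambda_{a-1}>\lambda_a$ and the basic inequality $\lambda_a\ge\mu_{a-1}-1$, and handle $a=\ell(\mu)+1$ (where $\mu_a=0$) on the same footing.

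Given an amenable tableau $T$ of $D_{\lambda/\mu}$ I would build $\hat T$ by copying $T$ onto the shifted rows $1,\dots,a-1$ and onto the unchanged rows below $a$, and, in row $a$, inserting one new box carrying the unmarked letter $1$ immediately after the last entry of absolute value $1$ in that row (equivalently, immediately before the first entry $\ge 2'$), pushing the remaining entries of that row one step to the right into the appended box. Then $c(\hat T)=\nu+(1)$, and because shifting whole rows and inserting an entry inside a row do not reorder the reading word, $w(\hat T)$ is precisely $w(T)$ with a single unmarked $1$ inserted; hence Lemma \ref{add1} yields the amenability of $\hat T$ for free.

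The real work is checking that $\hat T$ is a genuine tableau, and I expect the inserted box to be the main obstacle. For every box other than the inserted one, its vertical neighbour in $\hat T$ is either a box that already occupied the same column of $T$ or a box that lay on the diagonal just below-left of it in $T$; in the first case the column conditions of $T$ apply verbatim, and in the second Lemma \ref{diagonal} turns the strict diagonal inequality into the required strict column inequality. Thus weak monotonicity down columns and the ``at most one unmarked $k$ per column'' condition are inherited (the latter because two equal unmarked entries in a column would force an adjacent equal unmarked pair, which neither an old column pair nor an old diagonal pair can be). For the inserted $1$ itself I would argue: if its upper neighbour exists it is the image of the box of $T$ directly above the last absolute-value-$1$ entry of row $a$, so the column condition of $T$ forces that entry to be $\le 1$, and were it the unmarked $1$ it would create two unmarked $1$'s in a column of $T$, which is impossible; hence it is $1'$, sitting legally above the inserted $1$. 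If the upper neighbour is absent (in particular when row $a$ has no entry of absolute value $1$, where $\mu_{a-1}>\mu_a$ shows the slot above is empty) there is nothing to check. Its lower neighbour, when present, lies on the diagonal below-right of an absolute-value-$1$ entry, so Lemma \ref{diagonal} makes it $\ge 2'$, again legal below a $1$.

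Finally, for injectivity I would note that the number of entries of absolute value $1$ in row $a$ of $\hat T$ is one more than in row $a$ of $T$, which pins down the position of the inserted box; deleting it and shifting rows $1,\dots,a-1$ back recovers $T$ uniquely, so $T\mapsto\hat T$ is injective. Alternatively, one may split the argument into two steps, first applying Lemma \ref{shiftrowcolumn} to $\Gamma^{\rightarrow}_a$ to absorb the shift (keeping $\mu$ fixed) and then inserting the $1$ into row $a$; the verification is identical.
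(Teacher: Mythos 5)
Your proposal is correct and follows essentially the same route as the paper: the paper likewise realizes $D_{(\lambda+(1^a))/(\mu+(1^{a-1}))}$ by shifting the first $a-1$ rows and inserting a single box with entry $1$ into the $a^{\textrm{th}}$ row so that the row stays weakly increasing, observes that the reading word gains exactly one letter $1$, and invokes Lemma \ref{add1} for amenability and an immediate injectivity remark. Your verification of the tableau axioms and of injectivity is in fact more detailed than the paper's (which asserts both without comment); the only nitpick is that when row $a$ contains no entry of absolute value $1$, the lower neighbour of the inserted box is controlled by the column condition of $T$ at the first box of row $a$ rather than by Lemma \ref{diagonal}, but the conclusion is unchanged.
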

\begin{proof}
For this proof we will assume that for a tableau of shape $D_{\lambda/\mu}$ the boxes of $D_{\mu}$ are not removed but instead are filled with $0$.
Given an amenable tableau $T$ of shape $D_{\lambda/\mu}$ we obtain an amenable tableau $\bar{T}$ of shape $D_{(\lambda + (1^a))/(\mu + (1^{a-1}))}$ as follows.
Insert a box with entry $0$ into each of the first $a-1$ rows such that the rows are weakly increasing from left to right and insert a box with entry $1$ into the $a^{\textrm{th}}$ row such that this row is weakly increasing from left to right.\par
The word $w(\bar{T})$ differs from $w(T)$ only by one added $1$.
By Lemma \ref{add1}, the word $w(\bar{T})$ is amenable.
Clearly, if $T \neq T'$ for some tableaux $T, T' \in T(\lambda/\mu)$ then $\bar{T} \neq \bar{T'}$.
\end{proof}
\begin{Rem}
Note that $\Gamma^{\rightarrow}_{a+1}(D_{\lambda/\mu}) \cup \{(a,a+\mu_a)\}$ has shape $D_{(\lambda + (1^a))/(\mu + (1^{a-1}))}$.\par
The proof of Lemma \ref{addrow} is inspired by the proof of  \cite[Theorem 3.1]{Gutschwager} where Gutschwager proved a similar statement for classical Littlewood-Richardson coefficients.
\end{Rem}

\begin{Lem}\label{addcolumn}
Let $\lambda, \mu \in DP$ and let $b \geq \ell(\lambda)$.
Let $(a,b-1)$ be the uppermost box of $D_{\lambda/\mu}$ in the $(b-1)^{\textrm{th}}$ column.
Let $\Gamma^{\downarrow}_b(D_{\lambda/\mu}) \cup \{(a,b-1)\}$ have shape $D_{\alpha/\beta}$.\par
Then $f^{\lambda}_{\mu \nu} \leq f^{\alpha}_{\beta, \nu + (1)}$.
\end{Lem}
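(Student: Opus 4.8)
The plan is to build a content-preserving injection from the set of amenable tableaux of $D_{\lambda/\mu}$ of content $\nu$ into the set of amenable tableaux of $D_{\alpha/\beta}$ of content $\nu+(1)$; by the shifted Littlewood--Richardson rule this gives $f^{\lambda}_{\mu\nu}\leq f^{\alpha}_{\beta,\nu+(1)}$. The injection proceeds in two stages: first shift downward by $\Gamma^{\downarrow}_b$, then insert the single box $(a,b-1)$ filled with a letter of content $1$.

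For the first stage I would reuse the map from the proof of Lemma \ref{shiftrowcolumn}: sending $T$ to $\tilde T$ with $\tilde T(x,y)=T(x-1,y)$ for $y<b$ and $\tilde T(x,y)=T(x,y)$ for $y\geq b$ is a content-preserving injection from amenable tableaux of $D_{\lambda/\mu}$ to amenable tableaux of $\Gamma^{\downarrow}_b(D_{\lambda/\mu})$. Since $(a,b-1)$ is the uppermost box of $D_{\lambda/\mu}$ in column $b-1$, we have $(a-1,b-1)\notin D_{\lambda/\mu}$, and after the shift the boxes of column $b-1$ occupy rows $\geq a+1$; hence $(a,b-1)$ is an empty cell of $\tilde T$ with no box above it, and adding it yields the shape $\Gamma^{\downarrow}_b(D_{\lambda/\mu})\cup\{(a,b-1)\}=D_{\alpha/\beta}$. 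Filling it with any content-$1$ letter produces a tableau $\bar T$ of content $\nu+(1)$.

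The entry in $(a,b-1)$ is dictated by its lower neighbour $\tilde T(a+1,b-1)=T(a,b-1)$, the uppermost box of column $b-1$ in $T$. Recall from the proof of Lemma \ref{parts} (via Lemma \ref{diagonal}) that the content-$1$ boxes of an amenable tableau are exactly the diagonal-starting boxes, so $T(a,b-1)$ has content $1$ iff $(a-1,b-2)\notin D_{\lambda/\mu}$. In the generic case $(a-1,b-2)\in D_{\lambda/\mu}$ the lower neighbour has content $\geq 2$, so I would place the unmarked letter $1$; the conditions of Definition \ref{tableaudef} are immediate, and since $w(\bar T)$ arises from the amenable word $w(\tilde T)$ by inserting a single $1$, Lemma \ref{add1} gives amenability of $\bar T$. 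In the remaining case $(a-1,b-2)\notin D_{\lambda/\mu}$ the lower neighbour has content $1$, which forces the marked letter $1'$; here $(a,b-1)$ is the topmost and leftmost box of its row, so Definition \ref{tableaudef} again holds.

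The main obstacle is amenability in this last case, since Lemma \ref{add1} applies only to an inserted \emph{unmarked} $1$. The key simplification is that inserting a content-$1$ letter changes only the statistics $m_1$, hence can affect $k$-amenability only for $k=2$; for $k\geq 3$ the tableau $\bar T$ inherits $k$-amenability from $\tilde T$. It therefore remains to verify $2$-amenability of $\bar T$ from the positional criterion of Lemma \ref{checklist} with $k=2$, using the $2$-amenability of $\tilde T$; the delicate point is that the new $1'$ may merge two components of $T^{(1)}$ (it is edge-adjacent to the shifted content-$1$ box below it and possibly to a content-$1$ box at $(a,b)$), so conditions such as the fitting of $T^{(1)}$ and condition (3) must be re-examined, the new $1'$ lying on the diagonal north-west of $(a+1,b)$ only helping the latter. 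Finally, $T\mapsto \bar T$ is injective because deleting $(a,b-1)$ and reversing $\Gamma^{\downarrow}_b$ recovers $T$. An alternative route is to reduce to Lemma \ref{addrow} through the orthogonal transpose of Lemma \ref{ot}, under which this column insertion becomes a row insertion; the cost is the geometric check that the two operations correspond under $D\mapsto D^{ot}$.
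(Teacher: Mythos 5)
Your overall strategy (shift down via $\Gamma^{\downarrow}_b$, then insert one content-$1$ letter at the new top of column $b-1$, invoking Lemma \ref{shiftrowcolumn} and Lemma \ref{add1}) is the same as the paper's, but your rule for deciding between $1$ and $1'$ rests on a false claim and actually breaks the argument. You assert that in an amenable tableau the content-$1$ boxes are \emph{exactly} the diagonal-starting boxes, so that $T(a,b-1)$ has content $1$ iff $(a-1,b-2)\notin D_{\lambda/\mu}$. Only the forward implication is true: Lemma \ref{diagonal} gives $T^{(1)}\subseteq P_1$, and the equality $T^{(1)}=P_1$ established in the proof of Lemma \ref{parts} holds only for tableaux whose content begins with the maximal values $\nu_1,\ldots,\nu_{k-1}$ of $c(T_{\lambda/\mu})$, not for arbitrary amenable tableaux. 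Concretely, take $D_{(4,1)/(2)}=\{(1,3),(1,4),(2,2)\}$ and the amenable tableau $T$ of content $(2,1)$ with $T(1,3)=T(1,4)=1$, $T(2,2)=2$; with $b=3$ the box $(a,b-1)=(2,2)$ satisfies $(a-1,b-2)=(1,1)\notin D_{\lambda/\mu}$, yet its entry has content $2$. Your rule then inserts a $1'$ above it, and the resulting tableau has reading word $2,1',1,1$: the first letter from $\{1',1\}$ is marked, violating condition (d) of Definition \ref{amenable} for $k=2$ (equivalently, the new $1'$ is an isolated leftmost component of $\bar T^{(1)}$ whose last box is not fitting). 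So your map does not land in the set of amenable tableaux.

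The correct dichotomy, which is what the paper uses, is governed by the tableau $T$ rather than by the shape: insert an unmarked $1$ when column $b-1$ of $T$ contains no letter of content $1$ (equivalently $T(a,b-1)\notin\{1',1\}$), and insert $1'$ when it does. In the first case Lemma \ref{add1} applies exactly as you say; in the second case the new $1'$ sits directly above an existing content-$1$ entry of the same column, so it joins an existing component of $T^{(1)}$ without becoming its last box, and amenability is immediate. Your instinct that the marked case is the delicate one is reasonable, but with the correct case split it is the easy one, and the verification you leave open (re-examining condition (3) of Lemma \ref{checklist}, component merging, etc.) is not where the difficulty lies. As written, the proposal has a genuine error in the case analysis, not merely an incomplete verification.
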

\begin{proof}
Again, we assume that for a tableau of shape $D_{\lambda/\mu}$ the boxes of $D_{\mu}$ are not removed but instead are filled with $0$.
Given an amenable tableau $T$ of shape $D_{\lambda/\mu}$ we obtain an amenable tableau $\bar{T}$ of shape $D_{\alpha/\beta}$ as follows.
Insert a box with entry $0$ into each of the first $b-2$ columns such that the columns are weakly increasing from top to bottom.
If there is no $1'$ or $1$ in the $(b-1)^{\textrm{th}}$ column then insert a box with entry $1$ into the $(b-1)^{\textrm{th}}$ column such that this column is weakly increasing from top to bottom.
If there is a $1'$ or a $1$ in the $(b-1)^{\textrm{th}}$ column then insert a box with entry $1'$ into the $(b-1)^{\textrm{th}}$ column such that this column is weakly increasing from top to bottom.\par
Let $\hat{T}$ be the tableau defined by $\hat{T}(x,y) := T(x-1,y)$ for all $1 \leq y \leq b-1$ and $\hat{T}(x,y) = T(x,y)$ for all $y \geq b$ such that $(x,y) \in \Gamma^{\downarrow}_b(D_{\lambda/\mu})$.
By Lemma \ref{shiftrowcolumn}, the tableau $\hat{T}$ is amenable.
The word $w = w(\bar{T})$ differs from $w(\hat{T})$ only by an added $1'$ or an added $1$.
If a $1'$ is added then clearly, the tableau $\bar{T}$ is amenable.
If a $1$ is added then, by Lemma \ref{add1}, the word $w(\bar{T})$ is amenable.
Clearly, if $T \neq T'$ for some tableaux $T, T' \in T(\lambda/\mu)$ then $\bar{T} \neq \bar{T'}$.
\end{proof}

\section{$Q$-multiplicity-free skew Schur $Q$-functions}

With the tools provided in Section 3 we can finally start to prove results towards our desired classification.

\begin{Def}
A symmetric function $f \in span(Q_{\lambda} \mid \lambda \in DP)$ is called \textbf{$Q$-multiplicity-free} if the coefficients in the decomposition of $f$ into Schur $Q$-functions are from $\{0, 1\}$.
In particular, a skew Schur $Q$-function $Q_{\lambda/\mu}$ is called $Q$-multiplicity-free if $f^{\lambda}_{\mu \nu} \leq 1$ for all $\nu \in DP$.
\end{Def}

Lemma \ref{parts} will be crucial in this chapter.
It allows us to consider subdiagrams consisting of the boxes of $T_{\lambda/\mu}$ with entries bigger than some given $k$ and simplifies the proof of non-$Q$-multiplicity-freeness.
This follows from the fact that if the skew Schur $Q$-function to a diagram is not $Q$-multiplicity-free then the same must hold for each diagram that contains this diagram as a subdiagram of $T_{\lambda/\mu}$ of boxes with entries greater than $k$ for some $k$.\par
We will analyze diagrams $D_{\lambda/\mu}$ and show that the corresponding $Q_{\lambda/\mu}$  are not $Q$-multiplicity-free by finding two different amenable tableaux of the same content, derived by changing entries in the tableau $T_{\lambda/\mu}$ obtained in Definition \ref{Salmasian'salgorithm}.
Using this method, we are able to find many types of diagrams
$D_{\lambda/\mu}$ such that the corresponding $Q_{\lambda/\mu}$
are not $Q$-multiplicity-free; we will prove afterwards that all remaining diagrams do give $Q$-multiplicity-free skew Schur $Q$-functions,
hence then our classification is complete.

Let $\lambda, \mu \in DP$ and $\nu = c(T_{\lambda/\mu})$.
Lemma \ref{2fillings} and the definition of amenability, which requires $P_i$ to be fitting, state that $f^{\lambda}_{\mu \nu} = 1$ is only possible if each $P_i$ is connected.
\begin{Hyp}
From now on we will consider only diagrams that satisfy the property that each $P_i$ is connected.\par
\end{Hyp}

The following lemmas give restrictions for the border strips $P_i$; they will enable us to prove Lemma \ref{notthreecorners} and Corollary \ref{notthreecornersco} which lower the number of families of partitions $\lambda$ we have to consider to find $Q$-multiplicity-free skew Schur $Q$-functions.

\begin{Lem}\label{justhook}
Let $\lambda, \mu \in DP$.
Let $\nu = c(T_{\lambda/\mu})$ and $n = \ell(\nu)$.
If $P_n$ is neither a hook nor a rotated hook then $Q_{\lambda/\mu}$ is not $Q$-multiplicity-free.
\end{Lem}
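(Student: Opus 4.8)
The plan is to reduce everything to the single border strip $P_n$ and then exhibit, for $P_n$ regarded as its own skew diagram $D_{\alpha/\beta}$, two distinct amenable tableaux of one and the same content; pulling this back through Lemma \ref{parts} immediately destroys the $Q$-multiplicity-freeness of $Q_{\lambda/\mu}$.

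First I would apply Lemma \ref{parts} with $k=n$. Since $\ell(\nu)=n$, the algorithm of Definition \ref{Salmasian'salgorithm} stops after step $n$, so $U_n(\lambda/\mu)=P_n$; by Lemma \ref{diagonal} together with the standing hypothesis that every $P_i$ is connected, $P_n$ is a connected border strip. Writing $D_{\alpha/\beta}$ for its basic shape, Lemma \ref{parts} gives $f^{\alpha}_{\beta\gamma}\le f^{\lambda}_{\mu\,(\nu_1,\dots,\nu_{n-1},\gamma_1,\dots)}$, and one checks that for $\gamma=(|P_n|-1,1)$ the partition $\delta:=(\nu_1,\dots,\nu_{n-1},|P_n|-1,1)$ lies in $DP$: indeed $\nu\in DP$ forces $\nu_{n-1}>\nu_n=|P_n|>|P_n|-1>1$, the last inequality because a non-(rotated-)hook border strip has at least four boxes. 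Hence it suffices to produce two amenable tableaux of $D_{\alpha/\beta}$ of content $(|P_n|-1,1)$.

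For the construction I would start from the all-$1$ filling $T_{\alpha/\beta}$ of Definition \ref{Salmasian'salgorithm}, which is amenable and whose reading word uses only $1$ and $1'$ (a box carries $1'$ exactly when it has a neighbour directly below). The shape being neither a hook nor a rotated hook translates, through its row lengths, into the existence of an \emph{inner corner}: a box $\beta$ that is the leftmost box of its row and whose neighbour $\gamma$ directly below is the bottom box of its column. I promote $\gamma$ from $1$ to $2$; this is compatible with column strictness and frees $\beta$, so that $\beta$ may carry either $1$ or $1'$. The resulting tableaux $S_1$ (with $\beta=1$) and $S_2$ (with $\beta=1'$) are distinct, share the content $(|P_n|-1,1)$, and I would verify their amenability with the two checklists: $S_1$ satisfies the sufficient conditions of Corollary \ref{checklistco} directly (the unique $2$ has the unmarked $1$ at $\beta$ immediately above it), while $S_2$ must be handled by the ``if and only if'' criterion of Lemma \ref{checklist}, whose conditions then collapse to condition (1), i.e.\ $c^{(u)}_1\ge 2$, together with the requirement that an unmarked $1$ be read before $\beta$ and not be stranded at the end of the word.

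The hard part is precisely guaranteeing amenability of the \emph{marked} tableau $S_2$, and this is where the non-hook hypothesis does its work: for a hook or a rotated hook every candidate inner corner forces the promoted $2$ into the first or the penultimate slot of the reading word, violating condition (a) or (d) of Definition \ref{amenable}, whereas a genuinely (rotated-)hook-free strip always admits a corner avoiding this. I would make this precise by normalizing with the orthogonal transpose of Lemma \ref{ot} (which swaps rows and columns and preserves $Q$) so that $D_{\alpha/\beta}$ may be assumed to have at least three columns---possible because $|P_n|=t+m-1\ge 4$ rules out $m\le2$ and $t\le2$ simultaneously---and then running a short case analysis on whether the extreme rows have length $1$, choosing $\gamma$ inside an interior row of length $\ge 2$ when the bottom row is a single box. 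With the corner so chosen, condition (1) follows from the presence of $\ge 3$ column-bottoms (all unmarked $1$'s, exactly one of which becomes the $2$), and the remaining checklist conditions are routine. Combining everything yields $f^{\alpha}_{\beta(|P_n|-1,1)}\ge 2$, hence $f^{\lambda}_{\mu\delta}\ge 2$, so $Q_{\lambda/\mu}$ is not $Q$-multiplicity-free.
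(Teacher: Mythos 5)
Your route is genuinely different from the paper's: the paper does not construct tableaux on $P_n$ at all, but instead locates inside $P_n$ a $(p,q)$-hook ($p,q\ge 2$) plus one extra box, and uses Lemmas \ref{transposerotate}, \ref{shiftrowcolumn}, \ref{addrow} and \ref{addcolumn} to reduce everything to the single shape $D_{(4,2)/(2)}$, where $Q_{(4,2)/(2)}=Q_{(4)}+2Q_{(3,1)}$ is checked directly. Your first paragraph (reduction to $P_n$ via Lemma \ref{parts}, strictness of $\delta$) is fine, and the device of promoting the bottom box $\gamma$ of a column of height $\ge 2$ to a $2$ and toggling the mark on the box $\beta$ above it does work \emph{when the column is chosen correctly}. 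The gap is that your selection rule does not choose it correctly.

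For $S_2$ to be amenable the chosen column must be \emph{interior}: if it is the rightmost column then $\mathcal{S}^{\boxtimes}_{S_2}(\gamma)^{(1)}=\emptyset$ and condition (2) of Lemma \ref{checklist} fails (the $2$ is scanned while $m_1=m_2=0$, violating Definition \ref{amenable}(a)); if it is the leftmost column then $\gamma$ is the last box of the strip, the new last box of $T^{(1)}$ is $\beta=1'$, and $T^{(1)}$ is not fitting (Definition \ref{amenable}(d) fails). Your normalization ``at least three columns'' does not guarantee an interior tall column. Take
$$D_{(5,4,1)/(4,1)} = {\Yvcentermath1 \young(::::\none ,::\none \none \none ,::\none )},$$
a connected border strip with row lengths $(1,3,1)$ and column heights $(2,1,2)$, hence neither a hook nor a rotated hook, with three columns and with both extreme rows of length $1$. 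Its only columns of height $\ge 2$ are the leftmost and the rightmost, and the only box of the interior row that is the bottom of a two-box column is $(2,5)$, which lies in the rightmost column; so every choice permitted by your recipe yields a non-amenable $S_2$ (for $\gamma=(3,3)$ the reading word of $S_2$ is $2\,1'\,1\,1\,1'$, failing \ref{amenable}(d); for $\gamma=(2,5)$ it is $1\,1'\,1\,2\,1'$, failing \ref{amenable}(a) at $j=1$). Indeed the two amenable tableaux of content $(4,1)$ that this shape does possess have their $2$'s in \emph{different} boxes, so no mark-toggling pair can realize the multiplicity in this orientation. Thus your claim that a hook-free strip ``always admits a corner avoiding this'' is false as stated.

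The construction is salvageable: the correct invariant is the existence of a column of height $\ge 2$ that is neither leftmost nor rightmost, and one can show that after at most one orthogonal transposition such a column exists (write the strip as an alternating sequence of down-runs and left-runs; a non-hook strip has at least three runs, and if no down-run is interior the sequence is $D^aL^bD^c$, whose transpose $L^cD^bL^a$ has an interior down-run). But this is precisely the step your ``short case analysis'' elides, and it is where the non-(rotated-)hook hypothesis actually does its work; without it the proof does not close. The paper's reduction to $D_{(4,2)/(2)}$ avoids the issue entirely at the cost of invoking the monotonicity lemmas of Section~3.
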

\begin{proof}
By Lemma \ref{parts}, it is enough to find two amenable tableaux of $P_n$ of the same content.
Assume that the diagram $P_n$ is neither a hook nor a rotated hook.
Then we can find a subset of boxes of $P_n$ such that all but one of the boxes form a $(p,q)$-hook where $p, q \geq 2$ and there is either a single box above the rightmost box of the hook, or a single box to the left of the lowermost box of the hook.
By Lemmas \ref{transposerotate}, \ref{shiftrowcolumn}, \ref{addrow} and \ref{addcolumn}, it is enough to assume that $P_n$ has shape $D_{(4,2)/(2)}$.
Since $Q_{(4,2)/(2)} = Q_{(4)} + 2Q_{(3,1)}$, the statement follows.
\end{proof}

\begin{Lem}\label{threethreehook}
Let $\lambda, \mu \in DP$. Let $\nu = c(T_{\lambda/\mu})$ and $n = \ell(\nu) > 1$.
Let $P_n$ be a $(p,q)$-hook or a rotated $(p,q)$-hook where $p, q \geq 3$.
Suppose the last box of $P_{n-1}$ is not in the row directly above the row of the last box of $P_n$.
Then $Q_{\lambda/\mu}$ is not $Q$-multiplicity-free.
\end{Lem}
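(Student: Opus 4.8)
The plan is to peel off everything except the two uppermost border strips with Lemma \ref{parts}, read off the geometry that the hypothesis imposes on them, and then reduce to a single small diagram on which two amenable tableaux of equal content can be written down explicitly.

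First I would apply Lemma \ref{parts} with $k=n-1$, so that $U_{n-1}(\lambda/\mu)=P_{n-1}\cup P_n$ has some shape $D_{\alpha/\beta}$. Since each $P_i$ is connected by the standing Hypothesis, Lemma \ref{parts} gives $f^{\alpha}_{\beta\gamma}=f^{\lambda}_{\mu(\nu_1,\ldots,\nu_{n-2},\gamma_1,\ldots,\gamma_{\ell(\gamma)})}$; hence it suffices to exhibit a content $\gamma$ with $f^{\alpha}_{\beta\gamma}\ge 2$, i.e.\ to show that $Q_{\alpha/\beta}=Q_{U_{n-1}}$ is not $Q$-multiplicity-free.

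Next I would pin down the geometry of $D_{\alpha/\beta}$. By Lemmas \ref{transposerotate} and \ref{ot} we may assume $P_n$ is a genuine (non-rotated) $(p,q)$-hook, with corner $(u,v)$ and last box $(u+p-1,v)$. Because $P_n=\{(x,y)\in U_{n-1}\mid (x-1,y-1)\in U_{n-1}\}$, every box of $P_n$ has its diagonal neighbour in $P_{n-1}$, so $P_{n-1}$ contains the up-left shift of the hook $P_n$, whose last box sits in row $u+p-2$, the row directly above the last box of $P_n$. The distinct-parts condition on $\alpha,\beta$ forces $P_{n-1}$ to carry the box just beyond the right end of $P_n$'s arm; more importantly, the assumption that the last box of $P_{n-1}$ is \emph{not} in row $u+p-2$ forces its leg to descend one further step to $(u+p-1,v-1)$, immediately to the left of the last box of $P_n$. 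Thus the bottom two rows of the two hooks form a $2\times 2$ block whose left column lies in $P_{n-1}$ and whose right column lies in $P_n$.

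Then I would use Lemmas \ref{shiftrowcolumn}, \ref{addrow} and \ref{addcolumn} (together with \ref{transposerotate} and \ref{ot} for the rotated case) to strip the arms and legs down to the threshold lengths $p=q=3$ and the minimal overhang, reducing to a single diagram such as $D_{(7,6,3,2)/(3,2,1)}$, whose $T_{\lambda/\mu}$ has $P_2$ a $(3,3)$-hook and satisfies the hypothesis. For this diagram one checks directly that $f^{(7,6,3,2)}_{(3,2,1),(6,4,2)}\ge 2$: there are two amenable tableaux of content $(6,4,2)$ that agree outside the bottom $2\times 2$ block and there carry the two fillings $\left(\begin{smallmatrix}1&2\\3&3\end{smallmatrix}\right)$ and $\left(\begin{smallmatrix}1&3'\\2&3\end{smallmatrix}\right)$, both amenable by Lemma \ref{checklist}. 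By the first paragraph this produces $f^{\lambda}_{\mu\delta}\ge 2$ for the corresponding $\delta$, so $Q_{\lambda/\mu}$ is not $Q$-multiplicity-free. The main obstacle is the geometric step: proving, under the distinct-parts constraint, that the hypothesis forces exactly this bottom $2\times 2$ block, and that every admissible $D_{\alpha/\beta}$ (all arm and leg lengths at least $3$, every admissible extension of $P_{n-1}$, and the rotated-hook case) is obtained from the minimal diagram by the transformations of Section~3, so that the single computation above indeed settles all cases.
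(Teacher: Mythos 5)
Your core construction is exactly the paper's: reduce to $n=2$ via Lemma \ref{parts}, dispose of the rotated case by orthogonal transposition (Lemma \ref{ot2}), arrange that the last box of $P_{n-1}$ sits immediately to the left of the last box of $P_n$, and then produce two amenable tableaux of equal content that differ only in the resulting bottom $2\times 2$ block, carrying the fillings $1,2$ over $3,3$ versus $1,3'$ over $2,3$. Those are precisely the tableaux $T$ and $T'$ in the paper's proof, and your identification of $D_{(7,6,3,2)/(3,2,1)}$ with coefficient $f^{(7,6,3,2)}_{(3,2,1),(6,4,2)}=2$ matches the paper's worked example.

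The genuine gap is in the step you yourself flag as the main obstacle: the claimed reduction of the general case to the single diagram with $p=q=3$. Lemmas \ref{shiftrowcolumn}, \ref{addrow} and \ref{addcolumn} either move boxes without adding any or add a single box whose entry is $1$ or $1'$ (the content changes from $\nu$ to $\nu+(1)$); consequently they can only enlarge or reposition the outermost strip $P_1$, never the inner hook $P_2$. There is no lemma in Section~3 that lets you deduce non-$Q$-multiplicity-freeness of a diagram whose $P_n$ is a $(p,q)$-hook with $p>3$ or $q>3$ from the $(3,3)$-hook case, so the family of diagrams you must treat cannot be collapsed to one finite check. The paper avoids this by using the Section~3 lemmas only to normalize $P_1$ (so that its last box is at $(x,y-1)$) and then writing the two tableaux directly on the general diagram with arbitrary $p,q\ge 3$, verifying amenability through Lemma \ref{checklist} with conditions phrased in terms of "the first two boxes of $P_2$" -- which is where the hypothesis $p,q\ge 3$ is actually used. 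Your argument is repaired by doing the same: keep $p,q$ general, modify only the four boxes $(x,y-1),(x-1,y-1),(x,y),(x-1,y)$ of $T_{\lambda/\mu}$, and run the amenability check on the general shape rather than on the minimal one.
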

\begin{proof}
We may assume that $P_n$ is a $(p,q)$-hook where $p, q \geq 3$.
Otherwise, $P_n$ is a rotated $(p,q)$-hook where $p, q \geq 3$ and we may consider $D_{\lambda/\mu}^{ot}$ since if $D_{\lambda/\mu}^{ot}$ has shape $D_{\alpha/\beta}$ then, by Lemma \ref{ot2}, the set of boxes $T_{\alpha/\beta}^{(n)}$ is a $(q,p)$-hook where $p, q \geq 3$.\par
By Lemma \ref{parts}, we may assume that $n = 2$.
Let $(x,y)$ be the last box of $P_2$.
By Lemmas \ref{shiftrowcolumn}, \ref{addrow} and \ref{addcolumn}, we may assume that $(x,y-1)$ is the last box of $P_1$.
We get a new tableau $T$ if we set $T(x,y-1) = 3$, $T(x-1,y-1) = 1$, $T(x,y) = 3$, $T(x-1,y) = 2$ and $T(r,s) = T_{\lambda/\mu}(r,s)$ for every other box $(r,s) \in D_{\lambda/\mu}$.\par
By Corollary \ref{checklistco}, this tableau is $m$-amenable for $m \neq 3$.
We have $T(x,y-1) = 3$ but there is no $2$ in the $(y-1)^{\textrm{th}}$ column.
However, there are at least two $2$s with no $3$ below them in the first two boxes of $P_2$.
Hence, by Lemma \ref{checklist}, this tableau is amenable.\par
We get another tableau $T'$ if we set $T'(x,y) = 3$, $T'(x-1,y) = 3'$, $T'(x,y-1) = 2$, $T'(x-1,y-1) = 1$ and $T'(r,s) = T_{\lambda/\mu}(r,s)$ for every other box $(r,s) \in D_{\lambda/\mu}$.\par
By Corollary \ref{checklistco}, this tableau is $m$-amenable for $m \neq 2,3$.
Since there is a $1$ but no $2$ in the $y^{\textrm{th}}$ column, $2$-amenability follows.
We have $T'(x,y) = 3$ but there is no $2$ in the $y^{\textrm{th}}$ column.
Also, we have $T'(x-1,y) = 3'$ and $T'(x-2,y-1) \neq 2'$.
However, in the first two boxes of $P_n$ are $2$s with no $3$ below.
Additionally, there is another $2$ with no $3$ below in the $(y-1)^{\textrm{th}}$ column.
Thus, by Lemma \ref{checklist}, $3$-amenability follows.
\end{proof}
\begin{Ex}
For
$${\Yvcentermath1 T_{\lambda/\mu} = \young(\meins 111,\meins \mzwei 22,\meins \mzwei ,12)}$$
we obtain
$${\Yvcentermath1 T_1 = \young(\meins 111,\meins \mzwei 22,12,33)},\; \vspace{1ex} {\Yvcentermath1 T'_1 = \young(\meins 111,\meins \mzwei 22,1\mdrei ,23)}\:.$$
We have
$Q_{(7,6,3,2)/(3,2,1)} = Q_{(7,5)}+Q_{(7,4,1)}+Q_{(7,3,2)}+Q_{(6,5,1)}+2Q_{(6,4,2)}+Q_{(6,3,2,1)}+Q_{(5,4,3)}+Q_{(5,4,2,1)}$.
\end{Ex}

\begin{Lem}\label{Pnmid}
Let $\lambda, \mu \in DP$.
Let $\nu = c(T_{\lambda/\mu})$ and $n = \ell(\nu) \geq 2$.
Let there be some $k < n$ such that the last box of $P_k$ is in a row strictly lower than the last box of $P_n$ and some $i < n$ such that the first box of $P_i$ is in a column strictly to the right of the first box of $P_n$.
Then $Q_{\lambda/\mu}$ is not $Q$-multiplicity-free.
\end{Lem}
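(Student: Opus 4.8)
The plan is to produce a content $\delta$ with $f^{\lambda}_{\mu\delta}\ge 2$, so that $Q_{\lambda/\mu}$ is not $Q$-multiplicity-free. By Lemma \ref{parts}, applied with $k=n-1$, it suffices to prove that the two-strip subdiagram $U_{n-1}(\lambda/\mu)$, whose boxes are exactly those of $P_{n-1}\cup P_n$, is not $Q$-multiplicity-free; this effectively reduces the problem to the case $n=2$, with the new $P_1,P_2$ being the images of the old $P_{n-1},P_n$. It is important to note that one cannot instead invoke Lemma \ref{parts} to discard all strips below $P_{n-1}$: the witnessing strips $P_k$ (below) and $P_i$ (to the right) may carry small indices and would then be thrown away. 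The whole point of the argument is therefore to transport the two hypotheses, which a priori live at $P_k$ and $P_i$, to the local picture around $P_n$.

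First I would use Lemma \ref{ot}: the orthogonal transpose interchanges the relation ``lies in a strictly lower row'' with the relation ``lies in a strictly further-right column'', and hence swaps the two hypotheses of the lemma. This lets me fix a convenient orientation for $P_n$ and handle the ``first box'' situation by $ot$-symmetry once the ``last box'' situation has been treated.

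The technical core is to show that the two hypotheses force a $2\times 2$ block inside $U_{n-1}$. Write $(a,b)$ for the last box of $P_n$. By Lemma \ref{diagonal} its diagonal predecessor $(a-1,b-1)$ is the $(n-1)$-st box on its diagonal, hence lies in $P_{n-1}$ (here $n\ge 2$). I would then argue that the ``below'' hypothesis, through the interval shape of the rows of a shifted diagram, forces the left neighbour $(a,b-1)$ into $D_{\lambda/\mu}$ and in fact into $P_{n-1}\cup P_n$; symmetrically, after the $ot$-normalization, the ``right'' hypothesis forces the upper neighbour $(a-1,b)$ into $U_{n-1}$. Together with $(a-1,b-1)$ and $(a,b)$ these four boxes constitute a $2\times 2$ block in $U_{n-1}$. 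Once such a block is located, Lemmas \ref{shiftrowcolumn}, \ref{addrow}, \ref{addcolumn} and \ref{transposerotate} let me strip $U_{n-1}$ down to the base shape $D_{(4,2)/(2)}$, exactly as in the proof of Lemma \ref{justhook}; since $Q_{(4,2)/(2)}=Q_{(4)}+2Q_{(3,1)}$, the shape $D_{(4,2)/(2)}$ is not $Q$-multiplicity-free, and therefore neither is $U_{n-1}$ nor, by Lemma \ref{parts}, $Q_{\lambda/\mu}$.

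The main obstacle is precisely this block-forcing step. Because the witnesses $P_k$ and $P_i$ can sit at different small indices and spatially far from $P_n$, verifying that the two neighbours $(a,b-1)$ and $(a-1,b)$ really are present, and that they survive into $U_{n-1}$, requires careful bookkeeping of the nested interval structure of the shifted rows and columns together with Lemma \ref{diagonal}, including a case distinction according to whether $(a,b)$ (respectively the first box of $P_n$) lies against an edge of its row or column. A naive reduction to $n=2$ fails here, since it can discard exactly the strips that witness the hypotheses, and this is what makes Lemma \ref{Pnmid} more delicate than Lemma \ref{threethreehook}.
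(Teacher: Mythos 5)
Your opening reduction is where the proof breaks down. Lemma \ref{parts} only transports non-multiplicity-freeness \emph{from} $U_{n-1}(\lambda/\mu)$ \emph{to} $D_{\lambda/\mu}$; under the hypotheses of Lemma \ref{Pnmid} the two-strip diagram $U_{n-1}$ may perfectly well be $Q$-multiplicity-free, so no argument carried out inside it can succeed. The paper's own example for this lemma already refutes your first step: for $D_{(7,5,4,1)/(2,1)}$ we have $n=3$, the witnesses are $i=1$ (first box of $P_1$ at $(1,7)$, to the right of the first box $(3,6)$ of $P_3$) and $k=2$ (last box of $P_2$ at $(4,4)$, below the last box $(3,5)$ of $P_3$), and the repeated constituent is $Q_{(6,5,3)}$, whose first coordinate is $\nu_1-1=6$. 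By Lemma \ref{parts} every content detectable through $U_2$ has the form $(7,\gamma_1,\gamma_2,\dots)$, so this multiplicity is invisible there; and indeed $U_2$ (a $2\times 3$ block of boxes with one further box below its lower-left corner, i.e.\ $D_{(6,5,2)/(3,2,1)}$) satisfies $f\le 1$ for each of its possible contents $(5,2)$, $(4,3)$, $(4,2,1)$. The multiplicity is created precisely by removing a box from the \emph{outer} witness strip $P_i$ and letting the deficit migrate inwards, so the witnesses cannot be discarded. You flag this danger yourself (``the witnessing strips \dots would then be thrown away''), but passing to $U_{n-1}$ commits exactly that error whenever $i,k<n-1$, and no amount of ``transporting the hypotheses'' into $U_{n-1}$ can repair it, because the conclusion you would need about $U_{n-1}$ is simply false.

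The proposed base case is also not viable. A $2\times 2$ block is the shape $D_{(3,2)/(1)}$, and $Q_{(3,2)/(1)}=Q_{(3,1)}$ is $Q$-multiplicity-free, so locating such a block can never certify non-multiplicity-freeness; moreover $D_{(4,2)/(2)}$ is the zigzag border strip on the boxes $(1,3),(1,4),(2,2),(2,3)$, which contains no $2\times 2$ block, so Lemmas \ref{shiftrowcolumn}, \ref{addrow}, \ref{addcolumn} and \ref{transposerotate} cannot strip a block down to it --- the substructure exploited in Lemma \ref{justhook} is the zigzag, not the block. The paper's actual proof reduces only to $U_j$ with $j=\min\{i,k\}$ (for $i,k$ chosen maximal), thereby keeping both witnesses, and then constructs the two amenable tableaux globally: one deletes a suitable box of $P_i$ and lets it cascade through $P_{i+1},\dots,P_{\bar k}$ until an extra box lands in $P_{\bar k+1}$, and the other additionally toggles the mark on the last box of $P_{\bar k+1}$; both have content $(\nu_1,\dots,\nu_i-1,\dots,\nu_{\bar k},\nu_{\bar k+1}+1,\dots,\nu_n)$. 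Any correct proof must capture this long-range interaction between $P_i$, $P_k$ and the strips near $P_n$; a purely local analysis of $P_{n-1}\cup P_n$ cannot.
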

\begin{proof}
Let $k, i$ be maximal with respect to these conditions and let $j = \min\{k, i\}$.
By Lemma \ref{parts}, we may assume that $j = 1$.
First, we assume that $i \leq k$.
Then let $\bar{k}$ be minimal such that the last box of $P_{\bar{k}}$ is in a row strictly lower than the last box of $P_n$.
Let $(u,v)$ be the lowermost box in the rightmost column with a box of $P_{\bar{k}}$ in a row strictly lower than the last box of $P_{\bar{k}+1}$.
Let $x=u-\bar{k}+i$ and $y=v-\bar{k}+i$.
Then $(x,y)$ is the lowermost box of $P_i$ in the $y^{\textrm{th}}$ column.
We get a new tableau $T$ if after the $(i-1)^{\textrm{th}}$ step of the algorithm of Definition \ref{Salmasian'salgorithm} we use $P'_i := P_i \setminus\{(x,y)\}$ instead of $P_i$.\par
Let $P'_z = T^{(z)}$.
Then for $i+1 \leq r \leq \bar{k}$ if $(x+r-i,y+r-i) \in P_r$ then we have $(x+r-i-1,y+r-i-1) \in P'_r$.
Hence, $(x,y) \in P'_{i+1}$.
Clearly, by Corollary \ref{checklistco}, this tableau is $m$-amenable for $m \neq i+1$.
We possibly have $T(x,y) = (i+1)'$ and $T(x-1,y-1) \neq i'$.
But there is an $i$ with no $i+1$ below in the column of the first box of $P_i$.
Thus, by Lemma \ref{checklist}, $(i+1)$-amenability follows.\par
Let $(c,d)$ be the last box of $P_{\bar{k}+1}$.
We get another tableau $T'$ with the same content if we set $T'(c,d) = (\bar{k}+1)'$ and $T'(e,f) = T_{\lambda/\mu}(e,f)$ for every other box $(e,f) \in D_{\lambda/\mu}$\par
By Corollary \ref{checklistco}, it is clear that $T'$ is amenable if $T$ is and we have $c(T') = c(T) = (\nu_1, \ldots, \nu_{i-1}, \nu_i-1, \nu_{i+1}, \ldots, \nu_{\bar{k}},\nu_{\bar{k}+1}+1, \nu_{\bar{k}+2}, \ldots \nu_n)$.\par
If $k \leq i$ then $U_k(\lambda/\mu)$ is an unshifted diagram and we showed that two amenable tableaux of $U_k(\lambda/\mu)^t$ with the same content exist.
By Lemma \ref{transposerotate}, the statement follows.
\end{proof}
\begin{Ex}
For
$${\Yvcentermath1 T_{\lambda/\mu} = \young(\meins 1111,\meins \mzwei 22,1\mzwei 33,:2)}$$
we obtain $${\Yvcentermath1 T = \young(\meins 1111,1\mzwei 22,2233,:3)}, \hspace{1ex} {\Yvcentermath1 T' = \young(\meins 1111,1\mzwei 22,22\mdrei 3,:3)}.$$
We have $Q_{(7,5,4,1)/(2,1)} = Q_{(7,5,2)}+Q_{(7,4,3)}+Q_{(7,4,2,1)}+2Q_{(6,5,3)}+Q_{(6,5,2,1)}+Q_{(6,4,3,1)}$.
\end{Ex}

\begin{Lem}\label{cornerfirst}
Let $\lambda, \mu \in DP$.
Let $\nu = c(T_{\lambda/\mu})$ and $n = \ell(\nu) > 1$.
Let there be some $k < n$ such that there is a corner, $(x,y)$ say, in $P_k$ above the boxes of $P_n$ and let there be some $i \leq k$ such that the first box of $P_i$ is above the $(x-k+i)^{\textrm{th}}$ row.
Then $Q_{\lambda/\mu}$ is not $Q$-multiplicity-free.
\end{Lem}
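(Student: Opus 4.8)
The plan is to produce two distinct amenable tableaux of $D_{\lambda/\mu}$ having the same content; by definition this shows that $Q_{\lambda/\mu}$ is not $Q$-multiplicity-free. I work throughout with the strips $P_1, \dots, P_n$ of Salmasian's algorithm (Definition~\ref{Salmasian'salgorithm}) and with the canonical tableau $T_{\lambda/\mu}$, noting that the corner $(x,y)$ of $P_k$ carries the entry $k$ (since $(x+1,y) \notin D_{\lambda/\mu}$), and that the box directly above it carries $k'$ whenever it too lies in $P_k$. As $i \le k < n$, the smallest index occurring in the hypothesis is $i$, so I first apply Lemma~\ref{parts} to the subdiagram $U_i(\lambda/\mu)$: this deletes the outer strips $P_1, \dots, P_{i-1}$ and reduces the assertion to the corresponding subdiagram, so that after relabelling I may assume $i = 1$. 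Then $P_1$ (the former $P_i$) has its first box strictly above row $x-k+1$, while $P_k$ retains the corner $(x,y)$ lying strictly above every box of $P_n$.

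Next I would trim the diagram to a canonical minimal shape. Since inserting boxes and shifting rows or columns only increases the coefficients $f^{\lambda}_{\mu\nu}$ (Lemmas~\ref{shiftrowcolumn}, \ref{addrow} and~\ref{addcolumn}), any multiplicity found on the reduced diagram forces one on $D_{\lambda/\mu}$; and Lemma~\ref{ot2} lets me standardise the orientation of the corner, with Lemma~\ref{transposerotate} available should a reduced subdiagram become unshifted. Using these I would shrink $P_1$ to essentially a single box sitting above the diagonal through the corner, trim $P_k$ to the shortest strip still carrying $(x,y)$, and trim the continuation $P_{k+1}, \dots, P_n$ to a minimal diagonal descending below row $x$. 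The result is a small explicit diagram on which the two tableaux can be written by hand, in the spirit of the shape $D_{(4,2)/(2)}$ of Lemma~\ref{justhook} and the shapes used in Lemmas~\ref{threethreehook} and~\ref{Pnmid}.

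On this reduced shape I would construct the first tableau $T$ exactly as in the proof of Lemma~\ref{Pnmid}: I delete the box of $P_1$ lying on the diagonal that leads to the corner and then rerun Salmasian's algorithm from that point, so that the deletion cascades one step down the diagonal through $P_2, \dots, P_k$, past the corner, and into the continuation $P_{k+1}, \dots, P_n$. Because the corner sits above $P_n$, this diagonal genuinely reaches below row $x$, so the cascade is valid and transfers exactly one unit of content from level $1$ to the deepest affected level while keeping every intermediate strip fitting. I would then form $T'$ from $T$ by changing the last box of the strip at that deepest level from its unmarked entry to the corresponding marked entry; as marked and unmarked letters are counted together in $c(T)$, this leaves the content unchanged yet yields $T' \neq T$, so $c(T') = c(T)$ and only amenability remains to be checked.

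The main obstacle is verifying that both $T$ and $T'$ are amenable. Away from the corner the strip structure of $T_{\lambda/\mu}$ is untouched, so Corollary~\ref{checklistco} applies level by level. At the levels adjacent to the corner, and at the level where the entry was marked, I would instead apply Lemma~\ref{checklist}: conditions~(1)--(3) and the fitting conditions~(5)--(6) follow from the cascade construction, but the existence of the injective map of condition~(4) is the crux. This is precisely where the hypothesis that the first box of $P_i$ lies above row $x-k+i$ is used: after the reductions it supplies a box of the required entry positioned above the corner, which serves as the image of the single problematic box and makes condition~(4) hold, just as the high box in the column of the first box of $P_i$ secured $(i+1)$-amenability in Lemma~\ref{Pnmid}. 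Checking that this box is available in both $T$ and $T'$, and that no empty column violates conditions~(2) and~(3), is the delicate point of the argument.
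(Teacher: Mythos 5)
Your overall strategy (two amenable tableaux of equal content obtained by shifting one unit of content from level $i$ to level $k+1$, the two tableaux differing only by a mark at the corner) is the right one, and you correctly identify that the hypothesis on the first box of $P_i$ is what rescues amenability. But the mechanism you propose for producing the first tableau does not work. You want to delete the box of $P_1$ on the diagonal through the corner and let the deletion cascade down that diagonal ``past the corner, and into the continuation $P_{k+1},\dots,P_n$''. This is geometrically impossible: since $(x,y)$ is a corner, $(x+1,y+1)\notin D_{\lambda/\mu}$, so the diagonal through $(x,y)$ terminates at the corner and never reaches $P_{k+1},\dots,P_n$, which lie in a different part of the diagram below the corner. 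Your assertion that ``this diagonal genuinely reaches below row $x$'' is exactly what the corner hypothesis rules out. Even if one truncates the cascade at the corner (so that the corner receives entry $k+1$, which would give the right content and, after marking, the right pair), the re-partition is in general not a valid tableau: the box of $P_i$ on that diagonal typically has boxes of $P_i$ to its right in the same row, and bumping its entry from $i$ to $i+1$ violates weak increase along the row. This already fails on the paper's own example $D_{(7,6,5,4,2,1)/(6)}$, where the corner $(4,7)$ lies in $P_3$ and the diagonal box $(2,5)$ of $P_1$ sits in the middle of row $2$ of $P_1$. The cascade of Lemma~\ref{Pnmid} is legitimate only because there the deleted box is chosen to be the lowermost box of its strip in its column.

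The paper's proof avoids all of this by modifying the \emph{column} above the corner rather than the diagonal through it: using the minimality of $k$ and the maximality of $i$ it shows that the boxes $(x-k+a,y)$ for $i+1\le a\le k-1$ are the first (topmost--rightmost) boxes of the strips $P_a$, stacked in column $y$ directly above the corner, and it simply raises each entry in that column by one (ending with $k+1$ at the corner), then marks the corner entry to obtain the second tableau; because these are first boxes of their strips, the increments cannot violate the row conditions. Your preliminary ``trimming to a canonical minimal shape'' is also not justified: Lemmas~\ref{shiftrowcolumn}, \ref{addrow} and~\ref{addcolumn} pass from a smaller diagram to a larger one, so to invoke them you must exhibit the given diagram as built up from a fixed minimal shape, and the configuration here (arbitrary strips $P_{k+1},\dots,P_n$ away from the corner) does not reduce to one; this is why the paper works on the general diagram directly, with only the reduction via Lemma~\ref{parts} that you also use.
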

\begin{proof}
Let $k$ be minimal and $i$ be maximal with respect to these conditions.
Then for all $i+1 \leq a \leq k$ the first box of $P_a$ has no box of $P_a$ below.
Let $(x-k+a,y)$ be the first box of $P_a$ for $i+1 \leq a \leq k-1$ and let $(x-k+i,y)$ be the rightmost box of $P_i$ in the $(x-k+i)^{\textrm{th}}$ row.
We get a new tableau $T$ if we set $T(x-k+i,y) = i+1$, $T(x-k+i-1,y) = i$, for all $i+1 \leq a \leq k$ set $T(x-k+a,y) = a+1$, $T(x,y) = k+1$ and $T(u,v) = T_{\lambda/\mu}(u,v)$ for every other box $(u,v) \in D_{\lambda/\mu}$.
By Corollary \ref{checklistco}, this tableau is amenable.\par
We get a new tableau $T'$ if we set $T'(x,y) = (k+1)'$ and $T'(u,v) = T(u,v)$ for every other box $(u,v) \in D_{\lambda/\mu}$.
We have $T'(x,y) = (k+1)'$ and $T'(x-1,y-1) \neq k'$.
However, we have $T'(x-1,y) = k$ and there is no $k+1$ in the $y^{\textrm{th}}$ column.
Hence, by Lemma \ref{checklist}, $T'$ is $m$-amenable for all $m$.\par
Clearly, we have that $c(T) = c(T') = (\nu_1, \ldots, \nu_{i-1}, \nu_i-1, \nu_{i+1} \ldots, \nu_k, \nu_{k+1}+1,\linebreak \nu_{k+2}, \ldots, \nu_n)$.
\end{proof}
\begin{Ex}
For
$${\Yvcentermath1 T_{\lambda/\mu} = \young(:::\meins ,1111,:22)}$$
we obtain
$${\Yvcentermath1 T = \young(:::1,1112,:22)}\:, \hspace{1ex} {\Yvcentermath1 T' = \young(:::1,111\mzwei ,:22)}\:.$$
We have $Q_{(5,4,2)/(4)} = Q_{(5,2)}+2Q_{(4,3)}+Q_{(4,2,1)}$.\par
For
$${\Yvcentermath1 T_{\lambda/\mu} = \young(:::::\meins ,111111,:22222,::3333,:::44,::::5)}$$
we obtain
$${\Yvcentermath1 T = \young(:::::1,111112,:22223,::3334,:::44,::::5)}\:, \hspace{1ex} {\Yvcentermath1 T' = \young(:::::1,111112,:22223,::333\mvier ,:::44,::::5)}\:.$$
We have $Q_{(7,6,5,4,2,1/(6)} = Q_{(7,5,4,2,1)}+2Q_{(6,5,4,3,1)}$.
\end{Ex}

\begin{Lem}\label{almosthook2}
Let $\lambda, \mu \in DP$.
Let $\nu = c(T_{\lambda/\mu})$ and $n = \ell(\nu) > 1$.
Let there be some $k > 1$ such that the first box of $P_{k-1}$ is to the right of the column of first box of $P_k$, and $P_{k-1}$ is not a hook.
Then $Q_{\lambda/\mu}$ is not $Q$-multiplicity-free.
\end{Lem}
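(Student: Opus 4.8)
The plan is to follow the strategy of the proof of Lemma \ref{justhook}: reduce the general situation to a single small diagram whose Schur $Q$-function is visibly not $Q$-multiplicity-free, and then transport non-$Q$-multiplicity-freeness back up along the monotonicity inequalities of Lemmas \ref{parts}, \ref{shiftrowcolumn}, \ref{addrow} and \ref{addcolumn}.

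First I would delete the strips $P_1,\dots,P_{k-2}$ using Lemma \ref{parts}: passing to the shape $D_{\alpha/\beta}$ of $U_{k-1}(\lambda/\mu)$, the strips of $T_{\alpha/\beta}$ are exactly $P_{k-1},P_k,\dots$ relabelled so that $P_{k-1}$ becomes the first strip, and $f^{\alpha}_{\beta\gamma}\le f^{\lambda}_{\mu(\nu_1,\dots,\nu_{k-2},\gamma_1,\dots,\gamma_{\ell(\gamma)})}$. Since the two hypotheses on $P_{k-1},P_k$ are preserved by this relabelling, it suffices to treat $D_{\alpha/\beta}$; that is, we may assume $k=2$, so that $P_1$ is not a hook and the first box of $P_1$ lies strictly to the right of the column of the first box of $P_2$.

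Next comes the geometric heart. Since $P_1$ is not a hook it has at least two bends, and the first-box condition forces its north-easternmost arm to overhang the region occupied by $P_2$. I would normalize the orientation of this overhang by Lemma \ref{ot} (and, where the relevant piece is unshifted, by Lemma \ref{transposerotate}) so that it is a horizontal arm lying on top. As in Lemma \ref{justhook}, I would then extract from $P_1$ a minimal witnessing sub-configuration — a bottom horizontal step, a vertical step, and a top horizontal arm reaching one column past the first box of $P_2$ — together with the single box of $P_2$ sitting below the overhang. Collapsing the superfluous shifts, rows and columns by Lemmas \ref{shiftrowcolumn}, \ref{addrow} and \ref{addcolumn} (each of which only increases the relevant $f^{\alpha}_{\beta\nu}$) reduces everything to the diagram $D_{(5,3)/(2)}$. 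Finally, $T_{(5,3)/(2)}$ has content $(5,1)$, while the two fillings
$$ {\Yvcentermath1 \young(::111,:122)} \qquad \text{and} \qquad {\Yvcentermath1 \young(::\meins 11,:122)} $$
differ only in whether the first box of the top arm carries a mark and are both amenable of content $(4,2)$ by Corollary \ref{checklistco}; equivalently, Proposition \ref{lambda/n} gives $Q_{(5,3)/(2)}=Q_{(5,1)}+2\,Q_{(4,2)}$. Hence $f^{(5,3)}_{(2),(4,2)}=2$, and by the chain of inequalities above $f^{\lambda}_{\mu\delta}\ge 2$ for the corresponding $\delta$, so $Q_{\lambda/\mu}$ is not $Q$-multiplicity-free.

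The main obstacle is the third paragraph: converting the two qualitative hypotheses (``$P_1$ is not a hook'' and ``first box of $P_1$ strictly right of first box of $P_2$'') into the one canonical diagram. One must check that after the orthogonal-transpose normalization the overhang really is horizontal-on-top, that the extracted sub-configuration still satisfies both hypotheses (in particular that the strict column inequality survives the truncation), and that each invocation of Lemmas \ref{shiftrowcolumn}--\ref{addcolumn} is legitimate (for instance that the column collapsed by Lemma \ref{addcolumn} has the required uppermost box). By contrast, the amenability check in the last step is routine through the checklist of Lemma \ref{checklist}.
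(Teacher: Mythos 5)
Your opening move (pass to $U_{k-1}(\lambda/\mu)$ via Lemma \ref{parts} so that $k=2$) matches the paper, and your endpoint is correct: $Q_{(5,3)/(2)}=Q_{(5,1)}+2Q_{(4,2)}$, and the two displayed fillings are indeed amenable of content $(4,2)$. The problem is the step you yourself flag as ``the main obstacle'': it is a genuine gap, and it is not of the routine kind that the analogous step in Lemma \ref{justhook} is. In Lemma \ref{justhook} the diagram being reduced is $U_n=P_n$, a single border strip with empty interior, and Lemmas \ref{shiftrowcolumn}, \ref{addrow} and \ref{addcolumn} suffice to grow any border strip from a minimal bent sub-configuration. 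Here, after passing to $U_{k-1}$, the diagram still contains all of the inner strips $P_k,P_{k+1},\dots,P_n$, which are completely unconstrained by the hypotheses, whereas your seed $D_{(5,3)/(2)}$ has $U_2$ equal to a single box. No cited lemma deletes inner strips (Lemma \ref{parts} only removes outer ones), and each growth operation only lengthens by one box a single row among the first $\ell(\mu)+1$ rows or a single column among those of index at least $\ell(\lambda)-1$, with accompanying shifts. Concretely, for the paper's own second example of this lemma, $D_{(8,6,5,4)/(3,2)}$ (where $P_4\neq\emptyset$), the only reverse operation producing a shape with distinct parts is the removal of the box $(1,8)$, which leads to $D_{(7,6,5,4)/(3,2)}$; on that diagram no reverse of $\Gamma^{\rightarrow}$, $\Gamma^{\downarrow}$, Lemma \ref{addrow} or Lemma \ref{addcolumn} yields a valid shape at all. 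So without a carefully chosen interleaving of orthogonal transpositions the peeling gets stuck at $17$ boxes, far from your $6$-box seed, and you give no procedure (nor any reason to believe one exists for arbitrary inner strips).

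A second omission: the paper must first invoke Lemma \ref{cornerfirst} (once directly and once in orthogonally transposed form) to dispose of the configurations in which the first box of $P_1$ is not a corner of the diagram, or is not in the row directly above the first box of $P_2$; only in the remaining case is the local picture the ``overhang'' you describe. The paper then finishes not by any reduction to a canonical shape but by a direct construction on the general diagram: it locates the lowest bend $(v-1,w),(v,w),(v,w-1)$ of $P_1$ with the first box of $P_1$ outside column $w$, moves the box $(v,w)$ from $P_1$ into $P_2$ to get one amenable tableau $T$, and toggles the mark at $(v-1,w)$ to get a second one of the same content. I would recommend replacing your third paragraph with that explicit two-tableau argument (preceded by the Lemma \ref{cornerfirst} case split), since the reduction you propose would itself require a structural classification of the diagrams reachable from $D_{(5,3)/(2)}$ that is at least as hard as the lemma.
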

\begin{proof}
Let $k$ be maximal with respect to this property.
By Lemma \ref{parts}, we may assume that $k = 2$.
If the first box of $P_1$ is not a corner then, by Lemma \ref{cornerfirst}, $Q_{\lambda/\mu}$ is not $Q$-multiplicity-free.
Thus, consider that the first box of $P_1$ is a corner.
If the first box of $P_1$ is not in the row above the first box of $P_2$ then an orthogonally transposed version of Lemma \ref{cornerfirst} states that $Q_{\lambda/\mu}$ is not $Q$-multiplicity-free.
Since $P_1$ is not a hook, there are $v, w$ such that the boxes $(v-1,w), (v,w), (v,w-1) \in P_1$ and the first box of $P_1$ is not in the $w^{\textrm{th}}$ column.
Let $v$ be maximal with respect to this property.\par
We get a new tableau $T$ if we use $P'_1 := P_1 \setminus \{(v,w)\}$ instead of $P_1$ in the algorithm of Definition \ref{Salmasian'salgorithm}.
By Corollary \ref{checklistco}, it is clear that $T$ is $i$-amenable for $i \neq 2$.
We possibly have $T(v,w) = 2'$ and $T(v-1,w-1) \neq 1'$.
However, in the column containing the first box of $P_1$ there is a $1$ and no $2$.
Thus, by Lemma \ref{checklist}, this tableau is amenable.\par
We get another tableau $T'$ if we set $T'(v-1,w) = 1'$ and $T'(r,s) = T(r,s)$ for every other box $(r,s) \in D_{\lambda/\mu}$.
By Corollary \ref{checklistco}, $T'$ is $i$-amenable for $i \neq 2$.
There is a $2$ but no $1$ in the $w^{\textrm{th}}$ column.
However, in the column containing the first box of $P_1$ there is a $1$ and no $2$.
We possibly have $T(v,w) = 2'$ and $T(v-1,w-1) \neq 1'$.
However, we have $T(v-1,w) = 1'$.
Thus, by Lemma \ref{checklist}, $T'$ is amenable.\par
It is easy to see that $c(T) = c(T')$.
\end{proof}
\begin{Ex}
For $${\Yvcentermath1 T_{\lambda/\mu} = \young(:\meins 1111,:\meins \mzwei 22,11233)}$$
and $k = 2$ we obtain
$${\Yvcentermath1 T = \young(:\meins 1111,:1\mzwei 22,12233)}, \hspace{1ex} {\Yvcentermath1 T' = \young(:\meins 1111,:\meins \mzwei 22,12233)}.$$
We have $Q_{(8,6,5)/(3,2)} = Q_{(8,4,2)}+2Q_{(7,5,2)}+2Q_{(7,4,3)}+2Q_{(6,5,3)}$.\par
For
$${\Yvcentermath1 T_{\lambda/\mu} = \young(:\meins 1111,:\meins \mzwei 22,11\mzwei \mdrei 3,:2234)}$$
and $k = 2$ we obtain
$${\Yvcentermath1 T = \young(:\meins 1111,:1\mzwei 22,1\mzwei 2\mdrei 3,:2334)}, \hspace{1ex} {\Yvcentermath1 T' = \young(:\meins 1111,:\meins \mzwei 22,1\mzwei 2\mdrei 3,:2334)}.$$
We have $Q_{(8,6,5,4)/(3,2)} = Q_{(8,6,3,1)}+Q_{(8,5,4,1)}+Q_{(8,5,3,2)}+2Q_{(7,6,4,1)}+2Q_{(7,6,3,2)}+2Q_{(7,5,4,2)}$.
\end{Ex}

\begin{Lem}\label{fatcolumn}
Let $\lambda, \mu \in DP$.
Let $\nu = c(T_{\lambda/\mu})$ and $n = \ell(\nu) > 1$.
Let $P_n$ be a $(p,q)$-hook where $p,q \geq 2$ and let $(x,y)$ be the first box of $P_n$.
Let there be some $k < n$ and some $i \geq y$ such that there are at least two boxes of $P_k$ in the $i^{\textrm{th}}$ column.
Then $Q_{\lambda/\mu}$ is not $Q$-multiplicity-free.
\end{Lem}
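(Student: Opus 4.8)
The plan is to follow the strategy of the preceding lemmas: produce two distinct amenable tableaux of $D_{\lambda/\mu}$ of the same content, whence $f^{\lambda}_{\mu\delta}\ge 2$ for that common content $\delta$. First I would reduce. By Lemma \ref{parts} it suffices to treat the subdiagram $U_k(\lambda/\mu)$, so after discarding the layers $P_1,\dots,P_{k-1}$ and relabelling the entries I may assume $k=1$; the hypothesis then reads that $P_1$ contains two boxes $(a,i)$ and $(a+1,i)$ in a single column $i\ge y$, while the last layer $P_n$ is a $(p,q)$-hook with $p,q\ge 2$ and first box $(x,y)$. If necessary I would apply Lemma \ref{ot} to normalise the orientation of the hook, and Lemmas \ref{shiftrowcolumn}, \ref{addrow} and \ref{addcolumn} to strip the diagram down to the essential configuration, namely the hook together with the vertical pair and the diagonal of boxes joining them. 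Here the inequality $i\ge y$ is exactly what guarantees that the pair lies weakly to the right of the hook's arm, so that the modifications made near the hook will not disturb it.

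Next I would exhibit the two tableaux, starting from the canonical filling $T_{\lambda/\mu}$ of Definition \ref{Salmasian'salgorithm}. For the first tableau $T$ I would reroute the algorithm just as in Lemmas \ref{Pnmid} and \ref{cornerfirst}: run it with $P'_1:=P_1\setminus\{(a+1,i)\}$ in place of $P_1$, so that the box $(a+1,i)$ is pushed into the next layer. By Lemma \ref{diagonal} this reroute propagates down the diagonal chain linking the pair to the hook, and because $P_n$ is a genuine $(p,q)$-hook with $p,q\ge 2$ it can be absorbed at the hook's corner, yielding an amenable tableau whose content differs from $\nu=c(T_{\lambda/\mu})$ by moving one unit from the value $1$ to a higher value. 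For the second tableau $T'$ I would take $T$ and replace a single suitable unmarked entry by its marked version (the analogue of the $3\mapsto 3'$ step in the example following Lemma \ref{Pnmid}); this leaves the content unchanged, so $c(T)=c(T')$, while $T\neq T'$.

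It then remains to verify that both $T$ and $T'$ are amenable and genuinely distinct. For every value $m$ away from the one or two layers touched by the modification, amenability is immediate from Corollary \ref{checklistco}. At the critical layer(s) I would invoke Lemma \ref{checklist}: the newly created marked entry is a box with value $m'$ whose up-left diagonal neighbour is not $(m-1)'$, so conditions (3) and (4) must be checked, and this is precisely where the vertical pair enters—the two boxes of $P_1$ in column $i$ furnish an additional occurrence of $m-1$ with no $m$ below it in its column, which supplies the strict inequality in condition (3) and the box needed for the injection in condition (4). I expect this checklist verification at the critical layer to be the main obstacle, together with making rigorous the claim that the reroute propagates cleanly all the way down to the hook; the reductions of the first paragraph are designed exactly to cut the diagram down to a configuration small enough that these two points can be settled directly.
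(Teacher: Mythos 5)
Your high-level strategy coincides with the paper's (two amenable tableaux of equal content obtained by perturbing $T_{\lambda/\mu}$, Lemma \ref{parts} to pass to $U_k(\lambda/\mu)$, Lemma \ref{checklist} and Corollary \ref{checklistco} to certify amenability), but both of your concrete constructions fail, and they fail already on the paper's own illustrating example $D_{(6,5,4,3,1)/(5,1)}$, where $k=1$, the vertical pair is $(1,6),(2,6)$ and $P_3$ is the $(2,2)$-hook $\{(4,5),(4,6),(5,5)\}$. For your first tableau: rerunning the algorithm with $P'_1=P_1\setminus\{(2,6)\}$ does \emph{not} propagate down a diagonal to the hook --- the displaced box simply attaches to the top of the new second layer, giving $P'_2=\{(2,6),(3,4),(3,5),(3,6),(4,4)\}$ and content $(5,5,3)$; the resulting filling violates condition (4) of Lemma \ref{checklist} at level $2$ (here $d=1$ but $\widehat{\mathcal{B}^{(1)}}=\emptyset$), so it is not amenable, and its content is not even a partition with distinct parts. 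The cascade you are counting on is the mechanism of Lemma \ref{Pnmid}, where the removed box sits on a long diagonal; in the configuration of this lemma the pair sits near the top right with nothing to its lower right, and the surplus must instead be pushed down the chain of \emph{first boxes} of $P_{k+1},\dots,P_n$, which is what the paper does (set the lower box of the pair to $k+1$, increment the first box of each subsequent layer, so that a new entry $n+1$ appears at the first box of the hook; the pair's role is then to place a $k$ directly above the new $k+1$, which is what makes Corollary \ref{checklistco} applicable at level $k+1$).

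For your second tableau the gap is worse: in this configuration no single mark-toggle of the first tableau works. Toggling a mark at level $m$ changes $c^{(u)}_m$, and in the example every choice either produces an illegal filling, destroys the fitting conditions (5)--(6), or violates condition (1) of Lemma \ref{checklist} at level $m$ or $m+1$ (e.g.\ marking the $2$ in box $(2,6)$ gives $c^{(u)}_2=c^{(u)}_3=2$). The paper's second tableau is obtained by a genuinely different move: the extra $n+1$ is relocated from the first box of the hook to its last box, with compensating changes of marks in the column part of $P_n$ and at the rightmost box of $P_{n-1}$ in the row above the hook; verifying $n$-amenability of this tableau is the one place where Lemma \ref{checklist} rather than Corollary \ref{checklistco} is genuinely needed. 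Finally, the proposed reduction ``to the essential configuration'' via Lemmas \ref{shiftrowcolumn}, \ref{addrow} and \ref{addcolumn} is not available as it is in Lemma \ref{justhook}: the hypothesis here couples two features in different layers separated by the arbitrary intermediate border strips $P_{k+1},\dots,P_{n-1}$, which the row/column-insertion lemmas cannot rebuild from a single small model, and you never specify what that model would be. The paper performs no such reduction and works on the full diagram; as written, your argument defers precisely the steps that constitute the proof.
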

\begin{proof}
Let $k$ be maximal with respect to this property.
Let $(u,v)$ be the lowermost box of $P_k$ in the $i^{\textrm{th}}$ column and let $(a_r,b_r)$ be the first box of $P_r$ for all $r$.
We get a new tableau $T$ if we set $T(u,v) = k+1$, $T(u-1,v) = k$, for all $k+1 \leq r \leq n$ set $T(a_r,b_r) = r+1$ and $T(c,d) = T_{\lambda/\mu}(c,d)$ for every other box $(c,d) \in D_{\lambda/\mu}$.
By Corollary \ref{checklistco}, $T$ is amenable.\par
Let $(e,f)$ be the last box of $P_n$ and let $(x-1,z)$ be the rightmost box of $P_{n-1}$ in the $(x-1)^{\textrm{th}}$ row.
We get another tableau $T'$ if we set $T'(e,f) = n+1$,\linebreak
$T'(e-1,f) = n$, $T'(a_n,b_n) = n$, $T'(x-1,z) = n'$ and $T'(c,d) = T(c,d)$ for every other box $(c,d) \in D_{\lambda/\mu}$.
By Corollary \ref{checklistco}, $T'$ is $m$-amenable for $m \neq n$.
We have $T'(x-1,z) = n'$ and $T'(x-2,z-1) \neq (n-1)'$.
However, if $(g,h)$ is the last box of $P_n$ then we have $T'(g-2,h-1) = (n-1)'$ and $T'(g-1,h) \neq n'$.
Thus, by Lemma \ref{checklist}, amenability follows.
\end{proof}
\begin{Ex}
For
$${\Yvcentermath1 T_{\lambda/\mu} = \young(:::\meins ,\meins 111,1\mzwei 22,:2\mdrei 3,::3)}$$
we obtain
$${\Yvcentermath1 T = \young(:::1,\meins 112,1\mzwei 23,:2\mdrei 4,::3)}\:, \hspace{1ex} {\Yvcentermath1 T' = \young(:::1,\meins 112,1\mzwei 2\mdrei ,:233,::4)}\:.$$
We have $Q_{(6,5,4,3,1)/(5,1)} = Q_{(6,4,3)}+Q_{(6,4,2,1)}+2Q_{(5,4,3,1)}$.
\end{Ex}
\begin{Co}\label{fatcolumnco}
Let $\lambda, \mu \in DP$.
Let $\nu = c(T_{\lambda/\mu})$ and $n = \ell(\nu) > 1$.
Let $P_n$ be a $(p,q)$-hook where $p,q \geq 2$ and let $(x,y)$ be the first box of $P_n$.
Let there be some $k < n$ and some $i \geq x$ such that there are at least two boxes of $P_k$ in the $i^{\textrm{th}}$ row.
Then $Q_{\lambda/\mu}$ is not $Q$-multiplicity-free.
\end{Co}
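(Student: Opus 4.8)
The plan is to deduce Corollary \ref{fatcolumnco} from Lemma \ref{fatcolumn} by passing to the orthogonal transpose, which for shifted skew diagrams plays the role of interchanging rows and columns. Since the orthogonal transpose is a content-preserving bijection on tableaux (Lemma \ref{ot}), we have $Q_{\lambda/\mu} = Q_{D_{\lambda/\mu}^{ot}}$, so $Q_{\lambda/\mu}$ is $Q$-multiplicity-free if and only if the skew Schur $Q$-function attached to $D_{\lambda/\mu}^{ot}$ is. Thus it suffices to verify that $D_{\lambda/\mu}^{ot}$ satisfies the hypotheses of Lemma \ref{fatcolumn} and then to invoke that lemma directly.

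First I would translate the data of the Salmasian tableau through the transpose. By Lemma \ref{ot2} (applied to $U_k = P_k \dotcup U_{k+1}$) the peeling strips transform as $P_k(D_{\lambda/\mu}^{ot}) = P_k(D_{\lambda/\mu})^{ot}$ for every $k$; in particular the index $k < n$ is preserved and the number $n$ is unchanged. The orthogonal transpose reflects the diagram in the antidiagonal, so a $(p,q)$-hook is carried to a $(q,p)$-hook (exactly as used in the proof of Lemma \ref{threethreehook}); hence $P_n^{ot}$ is again a hook with both legs of length at least $2$, and the first box of $P_n$ is carried to the first box of $P_n^{ot}$. Moreover two boxes of $P_k$ lying in a common row are carried to two boxes of $P_k^{ot}$ lying in a common column, so the fat-row hypothesis of the corollary becomes a fat-column statement about $D_{\lambda/\mu}^{ot}$, which is precisely the shape of the hypothesis of Lemma \ref{fatcolumn}.

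The main point to verify — and the step I expect to be the crux — is the positional condition. Lemma \ref{fatcolumn} requires the fat column to lie weakly to the right of the first box of the transposed hook, whereas the reflection in the antidiagonal sends \emph{weakly below the first box} (the inequality $i \ge x$ of the corollary) to a statement about a column on one definite side of the image of the first box. I would therefore track the orientation of the reflection (right $\leftrightarrow$ up, down $\leftrightarrow$ left) together with the explicit image of the first box computed above, to confirm that $i \ge x$ produces the inequality needed in Lemma \ref{fatcolumn}. Should the landing turn out to be on the opposite side, the fallback is to transport the \emph{proof} of Lemma \ref{fatcolumn} through the orthogonal transpose: the two explicit tableaux built there for $D_{\lambda/\mu}^{ot}$ — raising one box of $P_k^{ot}$ to $k+1$ and its upper neighbour to $k$, incrementing the first boxes of the later strips, and modifying the last box of $P_n^{ot}$ together with a box of $P_{n-1}^{ot}$ — pull back under the bijection of Lemma \ref{ot} to two distinct amenable tableaux of $D_{\lambda/\mu}$ of equal content, their amenability being checked verbatim by Corollary \ref{checklistco} and Lemma \ref{checklist}. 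Either way two distinct amenable tableaux of the same content are produced, so $Q_{\lambda/\mu}$ is not $Q$-multiplicity-free.
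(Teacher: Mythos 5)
The paper states this corollary without proof, so the only question is whether your reduction works; it does not, and the step you flagged as the crux is indeed where it breaks --- but there is also a more basic obstruction that you assert away incorrectly. The orthogonal transpose is the reflection in the antidiagonal, so it sends a $(p,q)$-hook (corner at the top left, arms going right and down) to a \emph{rotated} $(q,p)$-hook (corner at the bottom right). That is exactly how it is used in the proof of Lemma \ref{threethreehook}: there one \emph{starts} from a rotated hook and passes to $D_{\lambda/\mu}^{ot}$ precisely in order to obtain a genuine hook. So your claim that ``$P_n^{ot}$ is again a hook'' is false, and Lemma \ref{fatcolumn}, whose hypothesis requires $P_n$ to be a $(p,q)$-hook and not a rotated one, does not apply to $D_{\lambda/\mu}^{ot}$ at all. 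Independently of this, the positional condition lands on the wrong side: writing the reflection as $(r,s)\mapsto(c_1-s,\,c_2-r)$, the first box of a border strip is sent to the first box of its image, and the rows $i\ge x$ of the corollary are sent to the columns $c_2-i\le c_2-x$, i.e.\ to columns weakly to the \emph{left} of the first box of the image of $P_n$, whereas Lemma \ref{fatcolumn} needs columns weakly to the \emph{right}.

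The fallback you sketch does not rescue the argument. The hypothesis of the corollary is not the mirror image of the hypothesis of Lemma \ref{fatcolumn} under any of the available reflections: the lemma is anchored at the column of the first box of $P_n$, which is the far end of the horizontal arm of the hook, while the corollary is anchored at the row of the first box, which is the top end of the vertical arm; the true mirror of the lemma would be anchored at the row of the \emph{last} box, i.e.\ rows $i\ge x+p-1$ rather than $i\ge x$. Hence the corollary admits configurations (two boxes of $P_k$ in a row cutting through the middle of the vertical arm of $P_n$) that no reflected instance of Lemma \ref{fatcolumn} covers. Nor can the proof of Lemma \ref{fatcolumn} be ``checked verbatim'' after interchanging rows and columns: the tableau axioms and the criterion of Lemma \ref{checklist} are not row--column symmetric (columns constrain unmarked letters, rows constrain marked ones), which is precisely why Lemma \ref{ot} is a nontrivial statement. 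A correct proof must either construct the two amenable tableaux directly in the row setting, with the roles of $k$ and $k'$ adjusted and amenability reverified via Lemma \ref{checklist}, or first reduce the rows $x\le i<x+p-1$ to a case already covered; neither is done here.
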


Now we are able to show an intermediate result that bounds the number of corners of $D_{\lambda/\mu}$ in the case of $Q$-multiplicity-freeness and, hence, of $D_{\lambda}$ for $\mu \neq \emptyset, (1)$.
The number of corners of $D_{\mu}$ is then also bounded for most $D_{\lambda/\mu}$ because of orthogonal transposition.
This restricts the number of cases we have to analyze.

\begin{Lem}\label{notthreecorners}
Let $\lambda, \mu \in DP$ where $\mu \neq \emptyset, (1)$.
If $D_{\lambda}$ has more than two corners then $Q_{\lambda/\mu}$ is not $Q$-multiplicity-free.
\end{Lem}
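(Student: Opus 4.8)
The plan is to prove the statement directly: assuming $D_\lambda$ has at least three corners, I would exhibit one of the forbidden configurations of Lemmas~\ref{justhook}--\ref{fatcolumnco} and conclude that $Q_{\lambda/\mu}$ is not $Q$-multiplicity-free. The first observation is that, since $D_\mu$ lies weakly to the north-west of $D_{\lambda/\mu}$, a box $(x,y)\in D_{\lambda/\mu}$ has neither $(x+1,y)$ nor $(x,y+1)$ in $D_\lambda$ exactly when it has neither in $D_{\lambda/\mu}$; hence the corners of $D_\lambda$ are precisely the corners of the skew shape $D_{\lambda/\mu}$, and it suffices to produce three such corners incompatible with $Q$-multiplicity-freeness. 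Throughout I keep the standing hypothesis that each $P_i$ is connected and use Lemma~\ref{parts} (as in the preceding lemmas) to pass to the relevant innermost subdiagrams. By Lemma~\ref{justhook} I may assume that $P_n$ is a hook or a rotated hook, since otherwise we are already done.

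Next I would locate the corners inside the diagonal ribbons $P_1,\dots,P_n$ of $T_{\lambda/\mu}$. Each corner $(x,y)$ of $D_{\lambda/\mu}$ is the south-east end of its anti-diagonal, hence a south-east corner of the unique ribbon $P_k$ containing it, where $k$ is its anti-diagonal depth. The innermost ribbon $P_n$ contributes two corners when it is a genuinely bent $(p,q)$-hook ($p,q\ge 2$), namely its first box $f_n$ and its last box $l_n$, and a single corner when it is straight. Since $D_\lambda$ has at least three corners, at least one corner must therefore lie in a ribbon $P_k$ with $k<n$, and the whole argument reduces to showing that such an extra corner, in any position relative to $f_n$ and $l_n$, forces a forbidden configuration.

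The heart of the proof is this correlation, carried out by cases on the position of the extra corner. If one extra corner lies south-west of $P_n$ and another corner lies north-east of it, then the last box of some $P_k$ is strictly below $l_n$ while the first box of some $P_i$ is strictly to the right of $f_n$, so Lemma~\ref{Pnmid} applies. If instead all extra corners lie on one side, an extra corner forces one of the remaining obstructions: a ribbon $P_k$ with two boxes in a single column (respectively row) weakly to the right of (respectively weakly below) $f_n$, giving Lemma~\ref{fatcolumn} (respectively Corollary~\ref{fatcolumnco}); or a non-hook ribbon $P_{k-1}$ whose first box lies to the right of that of $P_k$, giving Lemma~\ref{almosthook2}; or a corner of some $P_k$ lying above $P_n$ together with a sufficiently early first box, giving Lemma~\ref{cornerfirst}. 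The delicate sub-case in which $P_n$ is a $(p,q)$-hook with $p,q\ge 3$ whose neighbour $P_{n-1}$ has a displaced last box is exactly Lemma~\ref{threethreehook}. A straight $P_n$ only lowers the corner count coming from $P_n$ and so is absorbed by the same obstructions, while a rotated hook $P_n$ is reduced to the hook case by passing to the orthogonal transpose via Lemmas~\ref{ot} and~\ref{ot2}.

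The main obstacle I anticipate is the bookkeeping of exhaustiveness: one must check that every possible location of a third corner, measured against the (possibly rotated) hook $P_n$ and the ribbons immediately surrounding it, matches the hypotheses of at least one of Lemmas~\ref{threethreehook}--\ref{fatcolumnco}, with no configuration escaping. Choosing the right case division --- for instance, according to whether the deepest anti-diagonal $P_n$ is flanked by corners on both sides or only one --- and verifying that it is complete is where the real effort lies; the individual invocations of the earlier lemmas are then routine. A secondary subtlety is that orthogonal transposition interchanges south-east and north-west corners, so when reducing the rotated case via Lemma~\ref{ot} the number of corners of the outer shape must be tracked explicitly rather than assumed invariant.
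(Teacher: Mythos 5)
Your reduction strategy --- locate the extra corners relative to the hook $P_n$ and match each configuration against the hypotheses of Lemmas~\ref{justhook}--\ref{fatcolumnco} --- is exactly how the paper's proof \emph{begins}, but your central claim that every escaping corner ``forces one of the remaining obstructions'' is false, and this is precisely the gap you yourself flag as ``bookkeeping of exhaustiveness.'' After the earlier lemmas have been applied, genuinely new configurations survive that none of them covers. Concretely: suppose the lowermost corner sits in $P_n$, which is a $(2,q)$-hook, all $P_i$ are hooks (forced by Lemma~\ref{fatcolumn} and Corollary~\ref{fatcolumnco}), the uppermost corner is the \emph{first} box of $P_k$ (forced by Lemma~\ref{cornerfirst} --- note that when the corner \emph{is} the first box, Lemma~\ref{cornerfirst} gives no contradiction), and the last box of $P_{n-1}$ lies in the same row as the last box of $P_n$. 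Then Lemma~\ref{justhook} fails ($P_n$ is a hook), Lemma~\ref{threethreehook} fails ($p=2$, not $p\ge 3$), Lemma~\ref{Pnmid} fails (all extra corners are on one side), Lemma~\ref{almosthook2} fails (all $P_i$ are hooks), and Lemmas~\ref{fatcolumn}/\ref{fatcolumnco} have already been exhausted. A similar escape occurs when the last box of $P_{n-1}$ sits in the row directly above that of $P_n$ (the case explicitly excluded from Lemma~\ref{threethreehook}), and again when $P_n$ is straight so that two extra corners lie in outer ribbons.

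The paper's proof handles these residual configurations not by citing earlier lemmas but by four fresh explicit constructions (the tableau pairs $T_1/T'_1$, $T_2/T'_2$, $T_3/T'_3$, $T_4/T'_4$ in Cases 1.1, 1.2, 2.1, 2.2), each producing two distinct amenable tableaux of equal content and each requiring its own amenability verification via Lemma~\ref{checklist} and Corollary~\ref{checklistco}. Your proposal contains no mechanism for producing these tableaux, so as written the argument cannot be completed: the case division you describe is not a proof outline but a reduction to a set of obstructions that is provably not exhaustive. (Your preliminary observations --- that corners of $D_\lambda$ coincide with corners of the basic skew shape, and that the rotated-hook case reduces to the hook case by orthogonal transposition --- are correct and agree with the paper.)
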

\begin{proof}
Assume $D_{\lambda}$ has more than two corners, $\mu \neq \emptyset, (1)$, and $Q_{\lambda/\mu}$ is $Q$-multiplicity-free.
We will construct two amenable tableaux of shape $D_{\lambda/\mu}$ and of the same content to arrive at a contradiction.
Let $\nu = c(T_{\lambda/\mu})$ and $n = \ell(\nu)$.
Let $k$ be maximal such that $U_k(\lambda/\mu)$ has at least three corners.
Thus, at least one corner is in $P_k$.
By Lemma \ref{justhook} $P_n$ must be a hook or a rotated hook, so $P_n$ can have at most two corners and, hence, $k < n$.
By Lemma \ref{Pnmid} either the uppermost or the lowermost corner must be in $P_n$, so we only consider diagrams such that the uppermost or the lowermost corner is in $P_n$.
Without loss of generality we may assume that the lowermost corner of $U_k(\lambda/\mu)$ is in $P_n$, otherwise $U_k(\lambda/\mu)$ is an unshifted diagram and we may transpose $U_k(\lambda/\mu)$.
Thus, the uppermost corner is in $P_k$.
By Lemma \ref{cornerfirst}, which forbids to have boxes of $P_k$ to the left and above a corner in $P_k$ at the same time, the uppermost corner is the first box of $P_k$ and it is the only corner of the diagram $U_k(\lambda/\mu)$ that is in $P_k$.\par
Case 1: two corners are in $P_n$.\par
Then $P_n$ is a $(p,q)$-hook where $p \geq 2$ and $q \geq 2$.
By Lemma \ref{fatcolumn} and Corollary \ref{fatcolumnco}, which in this case for all $k \leq i \leq n-1$ forbid to have more than one box of $P_i$ in the column of the first box of $P_n$ and in the row of the last box of $P_n$, all $P_i$ are hooks.\par
Case 1.1: the last box of $P_{n-1}$ is in the same row as the last box of $P_n$.\par
Let $(u_a,v_a)$ be the last box of $P_a$ for all $a$.
We get a new tableau $T_1$ if for all $k \leq a \leq n$ we set $T_1(u_a,v_a) = a+1$, $T_1(u_a-1,v_a) = a$ and $T_1(r,s) = T_{\lambda/\mu}(r,s)$ for every other box $(r,s) \in D_{\lambda/\mu}$.
By Corollary \ref{checklistco}, $T_1$ is $m$-amenable for $m \neq k+1$.
Also by Corollary \ref{checklistco}, the tableau $T_1$ is also $(k+1)$-amenable because in the column of the first box of $P_k$ is a $k$ and no $k+1$.\par
We get another tableau $T'_1$ if we set $T'_1(u_n-1,v_n) = n'$ and $T'_1(r,s) = T_1(r,s)$ for every other box $(r,s) \in D_{\lambda/\mu}$.
By Corollary \ref{checklistco}, $T'_1$ is $m$-amenable for $m \neq n+1$.
We have $T'_1(u_n,v_n) = n+1$ and $T'_1(u_n-1,v_n) < n$, however, there is an $n$ with no $n+1$ below in the first box of $P_n$, and we have $T'_1(u_{n-1},v_{n-1}) = n$.
Thus, by Lemma \ref{checklist}, $(n+1)$-amenability follows.
We have $c(T_1) = c(T'_1)$.\par
Case 1.2: the last box of $P_{n-1}$ is in the row above the row of the last box of $P_n$.\par
For $p = 2$ we get $\mu = (1)$, which is a contradiction.
Thus, we have $p > 2$.
Let $(u_a,v_a)$ be the last box of $P_a$ for all $a$.
We get a new tableau $T_2$ if we set $T_2(u_n,v_n) = n+1$, $T_2(u_n-1,v_n) = (n+1)'$, for all $k \leq a \leq n-1$ set $T_2(u_a,v_a) = a+1$, $T_1(u_a-1,v_a) = a$ and $T_2(r,s) = T_{\lambda/\mu}(r,s)$ for every other box $(r,s) \in D_{\lambda/\mu}$.
By Corollary \ref{checklistco}, $T_2$ is $m$-amenable for $m \neq n+1$.
We have $T_2(u_n-1,v_n) = (n+1)'$ and $T_2(u_n-2,v_n-1) \neq n'$.
However, we have $T_2(u_n-2,v_n) = n'$.
Thus, by Lemma \ref{checklist}, $(n+1)$-amenability follows.\par
We get another tableau $T'_2$ if we set $T'_2(u_n-2,v_n) = n$ and $T'_2(r,s) = T_2(r,s)$ for every other box $(r,s) \in D_{\lambda/\mu}$.
By Lemma \ref{checklist}, it is clear that $T'_2$ is amenable if $T_2$ is amenable.
We have $c(T_2) = c(T'_2)$.\par
Case 2: only one corner is in $P_n$.\par
Let the second uppermost corner be in $P_i$.
Then by Lemma \ref{cornerfirst}, the second uppermost corner is the first box of $P_i$ and the uppermost corner is the first box of $P_k$.
If $P_i$ has all boxes in a row then $\mu = \emptyset$; a contradiction.
Thus, the diagram $P_i$ has at least two corners.
By Lemma \ref{almosthook2}, $P_i$ is a hook.
Then for all $i \leq j < n$ each $P_j$ is a $(p,q)$-hook for some $p,q \geq 2$.\par
Case 2.1: The last box of $P_{i-1}$ is in the same row as the last box of $P_i$.\par
Let $(g,h)$ be the last box of $P_i$ and $(c_a,d_a)$ be the rightmost box of $P_a$ in the lowermost row with boxes from $P_a$ for all $k \leq a \leq i-1$.
We get a new tableau $T_3$ if for all $k \leq a \leq i-1$ we set $T_3(c_a,d_a) = a+1$ if $(c_a+1,d_a) \notin D_{\lambda/\mu}$ or else set $T_3(c_a,d_a) = (a+1)'$ if $(c_a+1,d_a) \in D_{\lambda/\mu}$,
set $T_3(c_a-1,d_a) = a$, $T_3(g,h) = i+1$ and $T_3(r,s) = T_{\lambda/\mu}(r,s)$ for every other box $(r,s) \in D_{\lambda/\mu}$.
By Corollary \ref{checklistco}, the tableau $T_3$ is $m$-amenable for $m \neq k+1, i+1$.
We possibly have $T_3(c_k,d_k) = (k+1)'$ and $T_3(c_k-1,d_k-1) \neq k'$.
If not, then there is possibly a $k+1$ in the ${d_k}^{\textrm{th}}$ column.
Anyway, there is a $k$ with no $k+1$ below in the first box of $P_k$.
Thus, by Lemma \ref{checklist}, $(k+1)$-amenability follows.
We have $T_3(g,h) = i+1$ and $T_3(g-1,h) < i$.
However, there is an $i$ with no $i+1$ below in the first box of $P_i$.
Thus, by Lemma \ref{checklist}, $(i+1)$-amenability follows.\par
We get another tableau $T'_3$ if we set $T'_3(g-1,h) = i$ and $T'_3(r,s) = T_3(r,s)$ for every other box $(r,s) \in D_{\lambda/\mu}$.
Clearly, $T'_3$ is amenable if $T_3$ is and we have $c(T_3) = c(T'_3)$.\par
Case 2.2: The last box of $P_{i-1}$ is in the row above the row of the last box of $P_i$.\par
If in the column of the last box of $P_i$ there are only two boxes of $P_i$ then we have $\mu = (1)$, which is a contradiction.
Thus, there are at least three boxes of $P_i$ in the column of the last box of $P_i$.
Let $(c_a,d_a)$ be the last box of $P_a$ for all $k \leq a \leq i+1$.
We get a new tableau $T_4$ if for all $k \leq a \leq i-1$ we set $T_4(c_a,d_a) = a+1$, $T_4(c_a-1,d_a) = a$, $T_4(c_i,d_i) = i+1$, $T_4(c_i-1,d_i) = (i+1)'$, $T_4(c_{i+1},d_{i+1}) = i+2$, $T_4(c_{i+1}-1,d_{i+1}) = i+1$ and $T_4(r,s) = T_{\lambda/\mu}(r,s)$ for every other box $(r,s) \in D_{\lambda/\mu}$.\par
By Corollary \ref{checklistco}, the tableau $T_4$ is $m$-amenable for $m \neq k+1, i+1$.
There is a $k$ with no $k+1$ below in the first box of $P_k$.
Thus, by Corollary \ref{checklistco}, $(k+1)$-amenability follows.
We have $T_4(c_i,d_i) = i+1$ and there is no $i$ in the ${d_i}^{\textrm{th}}$ column.
However, there is an $i$ with no $i+1$ below in the first box of $P_i$.
We have $T_4(c_i-1,d_i) = (i+1)'$ and $T_4(c_i-2,d_i-1) \neq i'$.
However, we have $T_4(c_i-2,d_i) = i'$.
Thus, by Lemma \ref{checklist}, $(i+1)$-amenability follows.\par
We get another tableau $T'_4$ if we set $T'_4(c_i-2,d_i) = i$ and $T'_4(r,s) = T_4(r,s)$ for every other box $(r,s) \in D_{\lambda/\mu}$.\par
The tableau $T'_4$ is $m$-amenable for $m \neq i+1$.
We have $T'_4(c_i-1,d_i) = (i+1)'$ and $T'_4(c_i-2,d_i-1) \neq i'$.
However, there is an $i$ with no $i+1$ below in the first box of $P_i$.
Thus, by Lemma \ref{checklist}, $(i+1)$-amenability follows.
We have $c(T_4) = c(T'_4)$.
\end{proof}
\begin{Ex}
For
$${\Yvcentermath1 T_{\lambda/\mu} = \young(\meins 111111,\meins \mzwei 2222,\meins \mzwei \mdrei 333,1\mzwei \mdrei \mvier 44,:2\mdrei \mvier \mfuenf5,::345)}$$
we obtain
$${\Yvcentermath1 T_1 = \young(\meins 111111,\meins \mzwei 2222,1\mzwei \mdrei 333,22\mdrei \mvier 44,:33455,::456)}, \hspace{1ex} {\Yvcentermath1 T'_1 = \young(\meins 111111,\meins \mzwei 2222,1\mzwei \mdrei 333,22\mdrei \mvier 44,:334\mfuenf 5,::456)}.$$
We have $Q_{(10,8,7,6,5,3)/(3,2,1)} = Q_{(10,8,7,5,3)}+Q_{(10,8,7,5,2,1)}+Q_{(10,8,7,4,3,1)}+Q_{(10,8,6,5,3,1)}+Q_{(9,8,7,6,3)}+Q_{(9,8,7,6,2,1)}+Q_{(9,8,7,5,4)}+3Q_{(9,8,7,5,3,1)}+Q_{(9,8,7,4,3,2)}+Q_{(9,8,6,5,4,1)}+Q_{(9,8,6,5,3,2)}$.\par
For $${\Yvcentermath1 T_{\lambda/\mu} = \young(\meins 1111,\meins \mzwei 22,1\mzwei \mdrei 3,:2\mdrei ,::3)}$$
we obtain
$${\Yvcentermath1 T_2 = \young(\meins 1111,1\mzwei 22,22\mdrei 3,:3\mvier ,::4)}, \hspace{1ex} {\Yvcentermath1 T'_2 = \young(\meins 1111,1\mzwei 22,2233,:3\mvier ,::4)}.$$
We have $Q_{(7,5,4,2,1)/(2,1)} = Q_{(7,5,4)}+Q_{(7,5,3,1)}+Q_{(6,4,3,2,1)}+2Q_{(6,5,3,2)}+Q_{(7,4,3,2)}+Q_{(6,5,4,1)}$.\par
For
$${\Yvcentermath1 T_{\lambda/\mu} = \young(\meins 1111,\meins \mzwei 22,\meins \mzwei \mdrei ,123)}$$
we obtain
$${\Yvcentermath1 T_3 = \young(\meins 1111,\meins \mzwei 22,1\mzwei \mdrei ,233)}, \hspace{1ex} {\Yvcentermath1 T'_3 = \young(\meins 1111,\meins \mzwei 22,12\mdrei ,233)}.$$
We have $Q_{(8,6,4,3)/(3,2,1)} = Q_{(8,5,2)}+Q_{(8,4,3)}+Q_{(7,6,2)}+Q_{(8,4,2,1)}+2Q_{(7,5,3)}+Q_{(6,4,3,2)}+2Q_{(6,5,3,1)}+Q_{(6,5,4)}+2Q_{(7,4,3,1)}+2Q_{(7,5,2,1)}$.\par
For
$${\Yvcentermath1 T_{\lambda/\mu} = \young(\meins 1111,\meins \mzwei 22,1\mzwei \mdrei ,:23)}$$
we obtain
$${\Yvcentermath1 T_4 = \young(\meins 1111,1\mzwei 22,2\mdrei 3,:34)}, \hspace{1ex} {\Yvcentermath1 T'_4 = \young(\meins 1111,1222,2\mdrei 3,:34)}.$$
We have $Q_{(7,5,3,2)/(2,1)} = Q_{(7,5,2)}+Q_{(7,4,3)}+Q_{(7,4,2,1)}+Q_{(6,5,3)}+Q_{(6,5,2,1)}+2Q_{(6,4,3,1)}+Q_{(5,4,3,2)}$.
\end{Ex}
\begin{Co}\label{notthreecornersco}
Let $\lambda, \mu \in DP$.
Let $\nu = c(T_{\lambda/\mu})$ and $n = \ell(\nu) > 1$.
If $D_{\lambda/\mu}^{ot}$ has shape $D_{\alpha/\beta}$ where $\beta \neq \emptyset, (1)$ and $D_{\alpha}$ has more than two corners then $Q_{\lambda/\mu}$ is not $Q$-multiplicity-free.
If $D_{\lambda/\mu}$ is an unshifted diagram and $D^o_{\lambda/\mu}$ has more than two corners then $Q_{\lambda/\mu}$ is not $Q$-multiplicity-free.
\end{Co}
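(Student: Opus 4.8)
The plan is to reduce both statements to Lemma~\ref{notthreecorners} by passing to a companion diagram whose \emph{outer} straight shape has more than two corners, exploiting that the two transformations involved preserve the skew Schur $Q$-function. Since the $Q_\nu$ are linearly independent, an equality of symmetric functions $Q_{\lambda/\mu}=Q_{\alpha/\beta}$ forces $f^\lambda_{\mu\nu}=f^\alpha_{\beta\nu}$ for every $\nu$; hence $Q_{\lambda/\mu}$ is $Q$-multiplicity-free exactly when $Q_{\alpha/\beta}$ is. Thus in each case it suffices to exhibit one companion diagram to which Lemma~\ref{notthreecorners} applies.

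For the first assertion I would invoke Lemma~\ref{ot}: the content-preserving bijection between the tableaux of $D_{\lambda/\mu}$ and those of $D_{\lambda/\mu}^{ot}=D_{\alpha/\beta}$ gives $Q_{\lambda/\mu}=Q_{\alpha/\beta}$. The assumptions $\beta\neq\emptyset,(1)$ and that $D_\alpha$ has more than two corners are precisely the hypotheses of Lemma~\ref{notthreecorners} with $(\alpha,\beta)$ in place of $(\lambda,\mu)$, so that lemma shows $Q_{\alpha/\beta}$, and therefore $Q_{\lambda/\mu}$, is not $Q$-multiplicity-free.

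For the second assertion, since $D_{\lambda/\mu}$ is unshifted, Lemma~\ref{transposerotate} gives $Q_{\lambda/\mu}=Q_{(\lambda/\mu)^o}$; I write $D^o_{\lambda/\mu}=D_{\gamma/\delta}$, which is again unshifted, so $\ell(\delta)=\ell(\gamma)-1$. The crucial observation is that the corners of the skew diagram $D_{\gamma/\delta}$ (boxes with no neighbour to the right or below) coincide with the corners of the straight shape $D_\gamma$: any such box lies on the outer rim of $\gamma$, while the inner shape $\delta$ sits weakly to the upper-left along the shifted staircase, so (using that $\delta$ has distinct parts) neither the box below nor the box to the right can belong to $D_\gamma$. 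Hence the hypothesis that $D^o_{\lambda/\mu}$ has more than two corners says exactly that $D_\gamma$ has more than two corners. Before applying Lemma~\ref{notthreecorners} to $(\gamma,\delta)$ I must verify $\delta\neq\emptyset,(1)$: a shape with at most two rows has at most two corners, so more than two corners forces $\ell(\gamma)\geq 3$, whence $\ell(\delta)=\ell(\gamma)-1\geq 2$ and indeed $\delta\neq\emptyset,(1)$. Lemma~\ref{notthreecorners} then shows $Q_{\gamma/\delta}=Q_{\lambda/\mu}$ is not $Q$-multiplicity-free.

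I expect the only real subtlety --- hence the main, if modest, obstacle --- to be the bookkeeping of how corners behave under the transformations: confirming that the outer corners of the transformed skew diagram agree with the corners of its straight outer shape, and, in the second part, ruling out the degenerate value $\delta=(1)$, which by Proposition~\ref{lambda/n} would make $Q_{\gamma/\delta}$ multiplicity-free. Deriving $\ell(\gamma)\geq 3$ from the corner count is precisely what disposes of this degenerate case, so no separate argument is required.
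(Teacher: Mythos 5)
Your proposal is correct and is essentially the argument the paper intends: the corollary is stated without proof precisely because it follows from Lemma \ref{notthreecorners} combined with the $Q$-function invariance under orthogonal transposition (Lemma \ref{ot}) and rotation of unshifted diagrams (Lemma \ref{transposerotate}). Your extra care in the second part --- identifying the corners of the rotated skew diagram with the corners of its outer straight shape and deducing $\ell(\delta)\geq 2$ so that $\delta\neq\emptyset,(1)$ --- correctly fills in the bookkeeping the paper leaves implicit.
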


\begin{Rem}
As it will turn out (and will be proved in Lemma \ref{casea}), for $\mu = \emptyset$ or $\mu = (1)$ the skew Schur $Q$-function $Q_{\lambda/\mu}$ is $Q$-multiplicity-free.
Thus, we will only consider the case $\mu \neq \emptyset, (1)$.
Since we want to find all $\lambda, \mu$ such that $Q_{\lambda/\mu}$ is $Q$-multiplicity-free, by Lemma \ref{notthreecorners} from now on we will assume that $\lambda$ has at most two corners.
\end{Rem}

The case that the diagram $D_{\lambda}$ or the diagram $D_{\mu}$ has at most two corners also occurs in the classical setting of Schur functions $s_{\lambda/\mu}$.
Gutschwager proved \cite[Theorem 3.5]{Gutschwager} where the cases in condition (2) have this property.
However, this property is not enough in the classical case, where further restrictions need to be imposed for the classification of (s-)multiplicity-free skew Schur functions.
For the classification of $Q$-multiplicity-free skew Schur $Q$-functions we also need to find further restrictions since the properties from Lemma \ref{notthreecorners} and Corollary \ref{notthreecornersco} are not sufficient.\par
We will introduce some new notation for partitions in $DP$ with at most two corners and then obtain restrictions until we can exclude all non-$Q$-multiplicity-free skew Schur $Q$-functions in Proposition \ref{mf1}.

\begin{Def}\label{shapepath}
Let $DP^{\leq 2} \subseteq DP$ be the set of partitions $\lambda$ with distinct parts such that $D_{\lambda}$ has at most two corners.
For $\lambda \in DP^{\leq 2}$ the \textbf{shape path} $\pi_\lambda$ is defined as follows.
Let $a$ be the row of the upper corner of $D_{\lambda}$.
Let
$$b = \begin{cases}
\lambda_a \text{ if } a = \ell(\lambda);\\
\lambda_a - \lambda_{a+1} -1 \text{ otherwise}.
\end{cases}$$
If there is a lower corner in $D_{\lambda}$ let $c = \ell(\lambda)-a$ and $d = \lambda_{\ell(\lambda)}$.
Then the shape path to~$\lambda$ is $\pi_\lambda=[a,b]$ if $D_{\lambda}$ has one corner and $\pi_\lambda=[a,b,c,d]$ if $D_{\lambda}$ has two corners.
\end{Def}
\begin{Rem}
The numbers $a,b,c,d$ of the shape path can be visualized as follows.\par
\begin{figure}[ht!]
  \centering
    \includegraphics[width=250px]{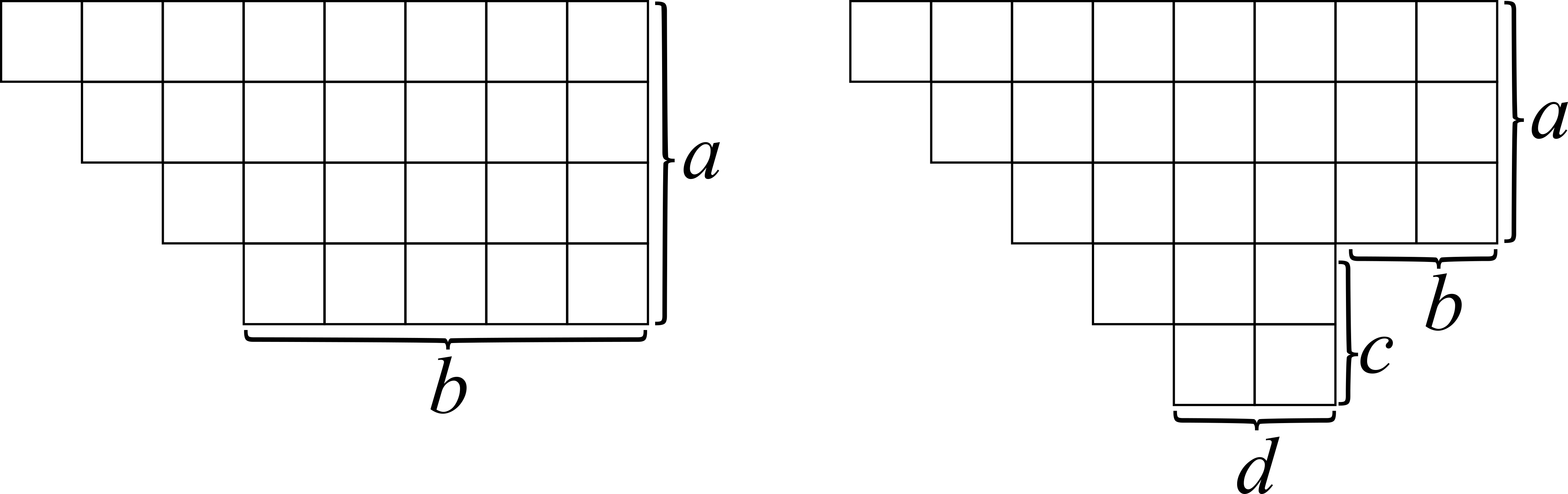}
\end{figure}
In particular, we see that 
for $\lambda=(8,7,6,5)$ we have $\pi_\lambda=[4,5]$,
and for $\lambda=(8,7,6,3,2)$ we have $\pi_\lambda=[3,2,2,2]$
\end{Rem}
\begin{Rem}
For a given $\lambda \in DP^{\leq 2}$ the cardinality of the border $B_{\lambda}$ can be derived from the shape path.
If $\lambda = [a,b]$ then $|B_{\lambda}| = a+b-1$.
If $\lambda = [a,b,c,d]$ then $|B_{\lambda}| = a+b+c+d-1$.
\end{Rem}

\begin{Lem}\label{shapepathbijection}
The map $DP^{\leq 2} \rightarrow \NN^4 \cup \NN^2: \lambda \mapsto \pi_\lambda$ is a bijection.
\end{Lem}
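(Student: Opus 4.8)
The plan is to exhibit an explicit two-sided inverse of $\pi$, after first pinning down exactly where the corners of a shifted diagram sit. First I would record the elementary fact that every corner of $D_\lambda$ is the rightmost box of its row, namely $(i,i+\lambda_i-1)$, and that this box is a corner precisely when $i=\ell(\lambda)$ or $\lambda_i-\lambda_{i+1}\geq 2$. This uses only the distinctness of the parts: for $i<\ell(\lambda)$ the box $(i+1,i+\lambda_i-1)$ below it lies in $D_\lambda$ iff $\lambda_i=\lambda_{i+1}+1$, so the rightmost box of row $i$ fails to be a corner exactly when consecutive parts differ by $1$. Consequently a nonempty $\lambda\in DP$ has exactly one corner iff $\lambda_i-\lambda_{i+1}=1$ for all $1\leq i<\ell(\lambda)$, and exactly two corners iff there is a unique index $a<\ell(\lambda)$ with $\lambda_a-\lambda_{a+1}\geq 2$ while all other successive differences equal $1$. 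Since the shape path presupposes an upper corner, I would throughout restrict to nonempty $\lambda$, the empty diagram being the one element of $DP^{\leq 2}$ without a corner.

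This characterization identifies the fibres immediately. A one-corner $\lambda$ is forced to be the staircase of $a:=\ell(\lambda)$ consecutive integers ending at $b:=\lambda_a$, and a two-corner $\lambda$ is the concatenation of two such staircases: rows $1,\dots,a$ decreasing by $1$, a single drop of size $\lambda_a-\lambda_{a+1}=b+1$, then rows $a+1,\dots,a+c$ (with $c:=\ell(\lambda)-a$) again decreasing by $1$ down to $d:=\lambda_{\ell(\lambda)}$. I would therefore define the candidate inverse $\psi$ on $\NN^2$ by $\psi(a,b):=(b+a-1,b+a-2,\dots,b)$, and on $\NN^4$ by letting $\psi(a,b,c,d)$ be the partition $\lambda$ with $\lambda_i=a+b+c+d-i$ for $1\leq i\leq a$ and $\lambda_{a+j}=c+d-j$ for $1\leq j\leq c$.

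It then remains to check that $\psi$ is well defined into $DP^{\leq 2}$ and that $\pi$ and $\psi$ are mutually inverse. For well-definedness I would verify that each output of $\psi$ is strictly decreasing with positive smallest part ($b$, respectively $d$), hence lies in $DP$, and that by the criterion of the first paragraph it has exactly one, respectively exactly two, corners. Reading off the shape path of $\psi(a,b)$ recovers $a=\ell(\lambda)$ and $b=\lambda_a$, giving $\pi\circ\psi=\mathrm{id}$ on $\NN^2$; for $\psi(a,b,c,d)$ the upper-corner row is $a$, and then $\lambda_a-\lambda_{a+1}-1=(b+c+d)-(c+d-1)-1=b$, together with $\ell(\lambda)-a=c$ and $\lambda_{\ell(\lambda)}=d$, so $\pi\circ\psi=\mathrm{id}$ on $\NN^4$. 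The reverse identity $\psi\circ\pi=\mathrm{id}$ is exactly the reconstruction of $\lambda$ from its corner data established in the first paragraph. Since one-corner shapes are sent to $\NN^2$ and two-corner shapes to $\NN^4$, and these are disjoint, the two bijections assemble into the claimed bijection $DP^{\leq 2}\to\NN^4\cup\NN^2$.

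The argument is essentially bookkeeping, and the only step needing genuine care is confirming that $\psi(a,b,c,d)$ has exactly two corners and no accidental third one; but this is immediate from the successive-difference criterion, since within each of the two staircases all differences equal $1$ while the single junction has difference $b+1\geq 2$. The mild awkwardness of the two-case definition of $b$ dissolves once one observes that the one-corner case is simply the degenerate situation $a=\ell(\lambda)$, in which $b=\lambda_a$ is the smallest part rather than a gap size.
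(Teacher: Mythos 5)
Your proof is correct and follows essentially the same route as the paper: the paper's proof simply writes down the explicit preimage $(a+b+c+d-1,\ldots,b+c+d,c+d-1,\ldots,d)$ of $[a,b,c,d]$ and $(a+b-1,\ldots,b)$ of $[a,b]$, which is exactly your inverse $\psi$; your additional verification via the successive-difference criterion for corners is a more careful write-up of the same idea. (Both you and the paper quietly set aside the empty partition, which lies in $DP^{\leq 2}$ but has no shape path; you at least acknowledge this.)
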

\begin{proof}
For a given $[a,b,c,d]$, its unique preimage is $\lambda = (a+b+c+d-1, a+b+c+d-2, \ldots, b+c+d+1, b+c+d, c+d-1, c+d-2, \ldots, d)$.
For a given $[a,b]$, its unique preimage is $\lambda = (a+b-1, a+b-2, \ldots, b)$.
Hence the map sending a partition in $DP^{\leq 2}$ to its shape path is bijective.
\end{proof}

\begin{Not}
From now on we will identify a partition $\lambda \in DP^{\leq 2}$ with at most two corners with its shape path $\pi_\lambda$.
\end{Not}

\begin{Lem}\label{notthreecornersmu}
Let $ \mu \in DP$, $\lambda \in DP^{\leq 2}$, and suppose  $\lambda \neq [a,b]$ where $b \leq 2$.
If $D_{\mu}$ has more than two corners then $Q_{\lambda/\mu}$ is not $Q$-multiplicity-free.
\end{Lem}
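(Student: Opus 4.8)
The plan is to use the orthogonal transpose of Lemma~\ref{ot} to interchange the roles of the inner shape $\mu$ and the outer shape $\lambda$, so that the hypothesis on the corners of $D_\mu$ becomes the hypothesis of Corollary~\ref{notthreecornersco} (equivalently of Lemma~\ref{notthreecorners}). Throughout I assume, as in the rest of Section~4, that $D_{\lambda/\mu}$ is basic; in particular $D_\mu\subseteq D_\lambda$ and $\mu_1<\lambda_1$. Write $D_{\alpha/\beta}:=D_{\lambda/\mu}^{ot}$, so that $Q_{\lambda/\mu}=Q_{\alpha/\beta}$ by Lemma~\ref{ot}. It then suffices to check two things: that $D_\alpha$ has more than two corners, and that $\beta\neq\emptyset,(1)$.

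For the first point I would argue that the orthogonal transpose carries the lower-left (inner) boundary of $D_{\lambda/\mu}$, which is traced by $\mu$, onto part of the upper-right (outer) boundary of $D_{\alpha/\beta}$, which is traced by $\alpha$. Since the underlying reflection sends the cells of $D_\mu$ adjacent to the main diagonal into the bottom-right of the transposed diagram, the convex corners of $D_\mu$ are carried to distinct corners of $D_\alpha$. Hence $D_\alpha$ has at least as many corners as $D_\mu$, so more than two.

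The substance of the proof lies in the second point, for which I would show that $\beta$ depends only on $\lambda$. Reflecting the straight shape $D_\lambda$ gives $D_\lambda^{ot}=D_{\rho/\beta_0}$, where $\rho=(\lambda_1,\lambda_1-1,\ldots,1)$ is the full staircase and $\beta_0$ is the resulting inner shape. Because $D_{\lambda/\mu}=D_\lambda\setminus D_\mu$ and $D_\mu$ occupies exactly the cells of $D_\lambda$ adjacent to the main diagonal, the reflection places $D_\mu$ into the bottom-right of $D_{\rho/\beta_0}$; since $\mu_1<\lambda_1$, this region is disjoint from the leftmost column and from the cells that pin the diagonal, so deleting it only shrinks the outer shape and leaves the inner shape untouched. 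Thus $\beta=\beta_0$, and comparing box counts via $|D_{\rho/\beta_0}|=|D_\lambda|$ gives $|\beta|=\binom{\lambda_1+1}{2}-|\lambda|$. Consequently $\beta\in\{\emptyset,(1)\}$ is equivalent to $|\beta|\le 1$, i.e.\ to $|\lambda|\ge\binom{\lambda_1+1}{2}-1$, which holds only for the staircase $(\lambda_1,\ldots,1)$ and for $(\lambda_1,\ldots,2)$. By Lemma~\ref{shapepathbijection} these are precisely the partitions $[a,b]$ with $b\le 2$. As $\lambda\in DP^{\leq 2}$ is of the form $[a,b]$ or $[a,b,c,d]$ and the case $[a,b]$ with $b\le 2$ is excluded, we conclude $\beta\neq\emptyset,(1)$.

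With both facts established, Lemma~\ref{notthreecorners} applied to $D_{\alpha/\beta}$ (whose inner shape $\beta\neq\emptyset,(1)$ and whose outer shape $D_\alpha$ has more than two corners) shows that $Q_{\alpha/\beta}$, hence $Q_{\lambda/\mu}=Q_{\alpha/\beta}$, is not $Q$-multiplicity-free; this is exactly the content of Corollary~\ref{notthreecornersco}. I expect the main obstacle to be the third step: one must check carefully that removing the inner cells $D_\mu$ does not alter the inner shape of the orthogonal transpose, which is precisely where the basic condition $\mu_1<\lambda_1$ is needed, and then convert the size bound $|\beta|\le 1$ into the shape-path language so that it matches exactly the excluded family $[a,b]$ with $b\le 2$.
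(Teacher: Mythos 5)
Your proposal is correct and follows essentially the same route as the paper: orthogonally transpose $D_{\lambda/\mu}$, note that the corners of $D_\mu$ (together with the first row) produce at least three corners of the outer shape $D_\alpha$ of the transpose, check that the inner shape $\beta$ is not $\emptyset$ or $(1)$, and invoke Corollary~\ref{notthreecornersco}. The only difference is one of emphasis: the paper pins down the corner count by exhibiting the explicit boxes $(x+1,z)$ with no neighbour above or to the left, while you instead devote most of the work to the counting argument $|\beta|=\binom{\lambda_1+1}{2}-|\lambda|$ justifying $\beta\neq\emptyset,(1)$, a point the paper asserts without proof.
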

\begin{proof}
For each corner $(x,y)$ of $D_{\mu}$ except for the lowermost, there is a box $(x+1,z) \in D_{\lambda/\mu}$ such that $(x,z), (x+1,z-1) \notin D_{\lambda/\mu}$.
Also there is a box $(1,w) \in D_{\lambda/\mu}$ such that $(1,w-1) \notin D_{\lambda/\mu}$ and there is no box above because $(1,w)$ is in the first row.
After transposing this diagram orthogonally, the image of these boxes are corners of $D_{\lambda/\mu}^{ot}$.
The diagram $D_{\lambda/\mu}^{ot}$ has shape $D_{\alpha/\beta}$ where $\beta \neq \emptyset, (1)$ and $D_{\alpha}$ has more than two corners.
By Corollary \ref{notthreecornersco}, $Q_{\lambda/\mu}$ is not $Q$-multiplicity-free.
\end{proof}

\begin{Lem}\label{twocornersmu}
Let $ \mu \in DP$, $\lambda \in DP^{\leq 2}$, and suppose  $\lambda \neq
[a,b]$ where $b \leq 2$.
If $\mu = [w,x,y,z]$ where $z > 1$ then $Q_{\lambda/\mu}$ is not $Q$-multiplicity-free.
\end{Lem}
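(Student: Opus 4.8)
The plan is to mimic the proof of Lemma \ref{notthreecornersmu}: orthogonally transpose $D_{\lambda/\mu}$ and apply Corollary \ref{notthreecornersco}. Write $m=\ell(\mu)=w+y$ and let $D_{\lambda/\mu}^{ot}$ have shape $D_{\alpha/\beta}$. I will reuse the mechanism from that earlier proof: any box of $D_{\lambda/\mu}$ having neither a box directly above it nor a box directly to its left is carried by orthogonal transposition to a corner of $\alpha$. It therefore suffices to produce three such boxes (and to check the side hypotheses of Corollary \ref{notthreecornersco}); the third box is exactly what the assumption $z>1$ provides.

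Before that I would treat the degenerate case $n=\ell(\nu)=1$ separately. Here $D_{\lambda/\mu}=P_1$ is a single connected border strip, and the two corners of $D_\mu$ create two distinct concave turns in it, so $P_n=P_1$ is neither a hook nor a rotated hook; Lemma \ref{justhook} then yields the claim. Hence I may assume $n>1$, as required by Corollary \ref{notthreecornersco}.

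Next I would exhibit three boxes of $D_{\lambda/\mu}$ with nothing above and nothing to the left, namely the first boxes of rows $1$, $w+1$ and $m+1$. For rows $1$ and $w+1$ this is automatic: the box above the first box of row $w+1$ lies in $D_\mu$ because $D_\mu$ has its upper corner in row $w$ (here $x\geq 1$ is used), and row $1$ has no row above it. The crucial box is the first box $(m+1,m+1)$ of row $m+1$: the box $(m,m+1)$ above it lies in $D_\mu$ precisely because $\mu_m=z>1$, while nothing lies to its left since row $m+1$ of $D_\lambda$ begins in column $m+1$. When $z=1$ this box fails to be such a corner, which is why the hypothesis $z>1$ is indispensable. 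As these three boxes lie in the three distinct rows $1<w+1<m+1$, their images under orthogonal transposition are three distinct corners of $\alpha$, so $D_\alpha$ has more than two corners.

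It remains to verify $\beta\neq\emptyset,(1)$. Under orthogonal transposition the corners of the outer shape $\lambda$ become the top-left corners of $D_{\alpha/\beta}$ and thereby control $\beta$; this is where the hypothesis $\lambda\neq[a,b]$ with $b\leq 2$ enters. If $\lambda$ has two corners then $\beta$ inherits a concave corner and is automatically nontrivial, while if $\lambda=[a,b]$ has a single corner the inner staircase removed by the transpose has width governed by $b$, so $b\geq 3$ forces $\beta$ to contain at least the boxes of $(2,1)$. With all hypotheses of Corollary \ref{notthreecornersco} in place, $Q_{\lambda/\mu}$ is not $Q$-multiplicity-free. I expect this last step to be the main obstacle: turning the shape-path inequality $b\geq 3$ into the precise statement $\beta\neq\emptyset,(1)$ requires carefully tracking how the arm lengths and corners of $\lambda$ transform under the orthogonal transpose.
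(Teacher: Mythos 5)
Your proof is correct and follows essentially the same route as the paper: it identifies the leftmost boxes of rows $1$, $w+1$ and $w+y+1$ (the last one existing as a "no box above, no box to the left" box precisely because $z>1$), orthogonally transposes, and invokes Corollary \ref{notthreecornersco}. You are in fact somewhat more careful than the paper, which silently assumes $\ell(\nu)>1$ and simply asserts $\beta\neq\emptyset,(1)$; your separate treatment of the $n=1$ case via Lemma \ref{justhook} and your verification that $b\geq 3$ (resp.\ two corners of $\lambda$) forces $\beta\supseteq(2,1)$ (resp.\ $\beta_1\geq 2$) are both sound.
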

\begin{proof}
The leftmost box of the first row of $D_{\lambda/\mu}$, which is $(1,w+x+y+z)$, has no box to the left or above.
Also, the leftmost box of the $(w+1)^{\textrm{th}}$ row of $D_{\lambda/\mu}$, which is $(w+1,w+y+z)$, has no box to the left or above.
In addition, the leftmost box of the $(w+y+1)^{\textrm{th}}$ row of $D_{\lambda/\mu}$, which is $(w+y+1,w+y+1)$, has no box to the left or above.
After transposing this diagram orthogonally, the images of these boxes are corners of $D_{\lambda/\mu}^{ot}$.
Then the diagram $D_{\lambda/\mu}^{ot}$ has shape $D_{\alpha/\beta}$ where $\beta \neq \emptyset, (1)$ and $D_{\alpha}$ has more than two corners.
By Corollary \ref{notthreecornersco}, $Q_{\lambda/\mu}$ is not $Q$-multiplicity-free.
\end{proof}

\begin{Lem}\label{restrictions}
Suppose $\lambda = [a,b,c,d]$ and $\mu = [w,x]$ where $x > 1$ or $\mu = [w,x,y,1]$.
Then $Q_{\lambda/\mu}$ is not $Q$-multiplicity-free.
\end{Lem}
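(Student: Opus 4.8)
The plan is to exhibit, for every pair in the two families, two distinct amenable tableaux of $D_{\lambda/\mu}$ with the same content, so that $f^{\lambda}_{\mu\nu}\ge 2$ for some $\nu$ and $Q_{\lambda/\mu}$ fails to be $Q$-multiplicity-free. First I would dispose of one family by duality: tracking the shape paths through orthogonal transposition, one checks that a diagram with $\lambda=[a,b,c,d]$ and $\mu=[w,x]$, $x>1$, has $D_{\lambda/\mu}^{ot}$ of shape $D_{\alpha/\beta}$ with $\alpha$ again of the two-corner type and $\beta$ of the form $[w',x',y',1]$, and conversely (for instance $D_{(7,6,3,2)/(4,3,2)}^{ot}=D_{(7,6,5,1)/(5,4,1)}$, where $(4,3,2)=[3,2]$ with $x=2$ and $(5,4,1)=[2,2,1,1]$). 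Hence by Lemma \ref{ot} it suffices to treat $\mu=[w,x]$ with $x>1$.

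Fix such a pair and form the Salmasian tableau $T_{\lambda/\mu}$ with its border strips $P_i$ (Definition \ref{Salmasian'salgorithm}); write $L=(a+c,\,a+c+d-1)$ for the lower corner of $\lambda$. I would first observe that if the deepest strip $P_n$ is neither a hook nor a rotated hook, then Lemma \ref{justhook} already gives the claim; this is exactly the situation in which both corners of $\lambda$ attain the maximal diagonal depth, so $P_n$ has two bends. The remaining, essential case is that $P_n$ is a (possibly degenerate, e.g.\ single-box or domino) hook, which comes from a single corner and forces the other corner to lie at strictly smaller depth; after possibly applying Lemma \ref{transposerotate} I may assume that this shallower corner is $L$.

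In that case I would modify $T_{\lambda/\mu}$ along the short (up-left) diagonal through $L$, raising those entries so that $L$ carries the maximal value $n$ and becomes a one-box component of $T^{(n)}$ lying strictly to the left of the first-block component of $T^{(n)}$. Because $x>1$, the cell $L$ has enough entries weakly above and to its right (the first block together with the extra column of $\mu$) that conditions (1)--(3) of Lemma \ref{checklist} hold, and by Corollary \ref{checklistco} the raised tableau is amenable; its content is a strict partition ending in a single box. Now the fitting requirement of Lemma \ref{checklist}(6) is discharged by the new leftmost component $L$, so the last box of the first-block component of $T^{(n)}$ is no longer forced to be unmarked; taking it marked or unmarked gives two distinct amenable tableaux of the same content (for $D_{(7,6,3,2)/(4,3,2)}$ this produces the two tableaux of content $(6,3)$ differing only in the marking of $(2,6)$). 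Throughout I would use Lemma \ref{parts} to pass to a subdiagram $U_k$ and Lemmas \ref{shiftrowcolumn}, \ref{addrow}, \ref{addcolumn} to shrink $\lambda$ and $\mu$, so that only finitely many shapes of the truncated diagonal through $L$ remain to be examined.

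The main obstacle is the amenability bookkeeping for the raised tableau under Lemma \ref{checklist}: verifying the strict inequality (1), the $\mathcal{S}^{\boxtimes}$-inequalities (2)--(3) for $L$ and for the pushed cells, and (where $b_T^{(k)}+c^{(u)}_k-c^{(u)}_{k-1}+1>0$) the injection (4). This is precisely the place where the hypothesis $x>1$ is consumed, since the extra column of $\mu$ is what supplies the cells needed to keep these counts in the right order and to free the flip; when $x=1$ these inequalities fail, consistent with $Q_{\lambda/\mu}$ being $Q$-multiplicity-free in the surviving staircase case. Organizing the case split so that a single raise-and-flip works for every admissible relative size of $w,a,c,d$ (equivalently every shape the diagonal through $L$ can take, including the degenerate $P_n$), and confirming in each case that the two resulting tableaux are genuinely distinct and equicontent, is the delicate part of the argument.
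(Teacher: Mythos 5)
Your opening reduction---that by orthogonal transposition it suffices to treat $\mu=[w,x]$ with $x>1$---is false, and it leaves an entire subfamily of the lemma unproved. Under $D\mapsto D^{ot}$ the inner shape of the image is governed by $\lambda$, not by $\mu$: for $\lambda=[a,b,c,d]$ the image $D_{\alpha/\beta}$ has $\beta=[b,c,d-1,1]$ when $d\geq 2$ and $\beta=[b,c+1]$ when $d=1$. Hence the $ot$-image of your family $\{\lambda=[a,b,c,d],\ \mu=[w,x],\ x>1\}$ meets the family $\mu'=[w',x',y',1]$ only in those pairs whose \emph{outer} shape has last shape-path entry $1$; the subfamily with $\lambda=[a,b,c,d]$, $d\geq 2$, and $\mu=[w,x,y,1]$ is closed under orthogonal transposition and is never reached. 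A concrete witness is $D_{(8,7,4,3)/(4,1)}$: here $\lambda=(8,7,4,3)=[2,2,2,3]$ and $(4,1)=[1,2,1,1]$, all the strips $P_i$ are connected (so the case is not dismissed by the standing hypothesis), and the orthogonal transpose is $D_{(8,7,6,5,3,2)/(6,5,2,1)}$ with inner shape $(6,5,2,1)=[2,2,2,1]$, again of the form $[w',x',y',1]$. Your example works only because the relevant outer shape $(7,6,5,1)=[3,3,1,1]$ happens to have $d=1$. The paper avoids this trap by running a single argument for both forms of $\mu$ at once: it locates the stage $k$ at which $U_k(\lambda/\mu)$ retains exactly one box on the diagonal determined by the notch of $\mu$ (the diagonal $t-s=x-1$, respectively $t-s=x+y$), and exploits the resulting bend $(p,q),(p-1,q),(p,q-1)\in P_k$, reducing most cases to Lemmas \ref{justhook}, \ref{fatcolumn} and \ref{cornerfirst} via Lemma \ref{parts}.

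For the family you do treat, the raise-and-flip construction is only a sketch, and the part you defer is where all the work lies. Promoting the lower corner $L$ to the maximal value $n$ changes entries along the entire up-left diagonal through $L$, so $k$-amenability must be re-established for every intermediate $k$, not just for $k=n$; you acknowledge that checking conditions (1)--(4) of Lemma \ref{checklist} there is ``the delicate part'' but offer no argument that it always succeeds. The paper's Case 2.2 does something in this spirit but far more locally: it deletes a single box from one well-chosen strip $P_h$ near the pinch point and toggles one marking, so that only $(h+1)$-amenability needs to be rechecked. As written, the proposal neither covers all cases of the statement nor completes the amenability verification in the cases it does cover.
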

\begin{proof}
Let $k$ be such that $U_k(\lambda/\mu)$ has only one box in the diagonal $\{(s,t) \mid t-s = x-1\}$ for the case $\mu = [w,x]$ or in the diagonal $\{(s,t) \mid t-s = x+y\}$ for the case $\mu = [w,x,y,1]$.
Let this single remaining box be $(p,q)$.
Then $(p,q) \in P_k$ and also $(p-1,q), (p,q-1) \in P_k$.
Let $n = \ell(c(T_{\lambda/\mu}))$.\par
Case 1: $k = n$.\par
If $P_n$ is not a rotated hook, then by Lemma \ref{justhook}, $Q_{\lambda/\mu}$ is not $Q$-multiplicity-free.
If $P_n$ is a rotated $(l,m)$-hook where $l, m \geq 2$ then, since $\lambda = [a,b,c,d]$, there is some $j < n$ such that either the first box of $P_j$ is in a column to the right of the boxes of $P_n$ or the last box of $P_j$ is in a row below the boxes of $P_n$.
Let $j$ be maximal with respect to this condition.\par
We may assume that the first box of $P_j$ is in a column to the right of the boxes of $P_n$, otherwise $U_j(\lambda/\mu)$ is unshifted and we may consider $U_j(\lambda/\mu)^t$.
By Lemma \ref{ot2}, if $D_{\lambda/\mu}^{ot}$ has shape $D_{\alpha/\beta}$ then $T_{\alpha/\beta}^{(n)}$ is an $(m,l)$-hook where $l, m \geq 2$ and the diagram $U_j(\alpha/\beta)$ satisfies the conditions of Lemma \ref{fatcolumn}.
By Lemma \ref{parts}, it follows that $Q_{\lambda/\mu}$ is not $Q$-multiplicity-free.\par
Case 2: $k \neq n$.\par
If $U_{k+1}(\lambda/\mu)$ has at least two components then the last box of the second component can be filled with $(k+1)'$ or $k+1$ and, by Lemma \ref{checklist}, $Q_{\lambda/\mu}$ is not $Q$-multiplicity-free.
Thus, we may consider that all boxes of $U_{k+1}(\lambda/\mu)$ are either above or below the diagonal $\{(s,t) \mid t-s = x-1\}$ for the case $\mu = [w,x]$ or are above or below the diagonal $\{(s,t) \mid t-s = x+y\}$ for the case $\mu = [w,x,y,1]$.\par
Case 2.1: $P_n$ is an $(l,m)$-hook where $l, m \geq 2$.\par
Then either $U_k(\lambda/\mu)$ or $U_k(\lambda/\mu)^t$ satisfies the conditions of Lemma \ref{fatcolumn} and $Q_{\lambda/\mu}$ is not $Q$-multiplicity-free.\par
Case 2.2: only one corner is in $P_n$.\par
Let $(f,g)$ be this corner.
Then there is some $e$ such that there are two boxes of $P_e$ either in a row weakly below the $f^{\textrm{th}}$ row or in a column weakly to the right of $g^{\textrm{th}}$ column.
There is also some $h$ such that either the first box of $P_h$ is to the right of the $g^{\textrm{th}}$ column or the last box of $P_h$ is below the $f^{\textrm{th}}$ row.
Let $e,h$ be maximal with respect to these conditions.\par
By orthogonal transposition, transposition or rotation of $U_{\min\{e, h\}}(\lambda/\mu)$, we may assume that $h \leq e$ and that the first box of $P_h$ is to the right of the $g^{\textrm{th}}$ column.
By Lemma \ref{cornerfirst}, if $h = e$ then $Q_{\lambda/\mu}$ is not $Q$-multiplicity-free.
Hence, we assume $h < e$.\par
There is a box $(r,u) \in P_h$ in the diagonal $\{(s,t) \mid t-s = x-1\}$ for the case $\mu = [w,x]$ or in the diagonal $\{(t,s) \mid t-s = x+y\}$ for the case $\mu = [w,x,y,1]$.\par
We get a tableau $T$ if after the $(h-1)^{\textrm{th}}$ step of the algorithm of Definition \ref{Salmasian'salgorithm} we use $P'_h := P_h \setminus \{(r,u)\}$ instead of $P_h$.
By Corollary \ref{checklistco}, this tableau is $m$-amenable for $m \neq h+1$.
We have $T(r,u) = (h+1)'$ and $T(r-1,u-1) \neq h'$.
However, there is an $h$ with no $(h+1)$ below in the first box of $P_h$.
Thus, by Lemma \ref{checklist}, this tableau is amenable.\par
We get another tableau $T'$ with the same content if we set $T'(r-1,u) = h'$ and $T'(f,g) = T(f,g)$ for every other box $(f,g) \in D_{\lambda/\mu}$.
By Corollary \ref{checklistco}, this tableau is $m$-amenable for $m \neq h+1$.\par
We have $T'(r,u) = (h+1)'$ and $T'(r-1,u-1) \neq h'$.
However, we have $T'(r-1,u) = h'$.
In the $u^{\textrm{th}}$ column is an $h+1$ but no $h$.
However, there are $h$s with no $(h+1)$s below in the first box and in the last box of $P_h$.
Thus, by Lemma \ref{checklist}, this tableau is amenable.\par
By Lemma \ref{parts}, $Q_{\lambda/\mu}$ is not $Q$-multiplicity-free.
\end{proof}
\begin{Ex}
For $\lambda = [1,1,4,1]$ and $\mu = [1,1,1,1]$ we have ${\Yvcentermath1 T_{\lambda/\mu} = \young(:\meins 11,\meins 1\mzwei ,1\mzwei 2,:2\mdrei ,::3)}$.\\[1ex]
Then we obtain ${\Yvcentermath1 T = \young(:111,\meins \mzwei 2,1\mzwei \mdrei ,:2\mdrei ,::3)}, \vspace{1ex} {\Yvcentermath1 T' = \young(:\meins 11,\meins \mzwei 2,1\mzwei \mdrei ,:2\mdrei ,::3)}$.\\[1ex]
We have $Q_{(6,4,3,2,1)/(3,1)} = Q_{(6,4,2)}+2Q_{(5,4,3)}+Q_{(5,4,2,1)}$.
\end{Ex}

Now for $Q$-multiplicity-free skew Schur $Q$-functions $Q_{\lambda/\mu}$, for a given $\lambda$, the partition $\mu$ is restricted to certain families of partitions.
The following two lemmas and their corollaries restrict $\lambda$ and $\mu$ further until Proposition \ref{mf1} can be proved.

\begin{Lem}\label{fathook}
Let $\lambda = [a,b,c,1]$ and $\mu = [w,1]$.
If $a \geq 3$, $b \geq 3$, $c \geq 3$ and $4 \leq w \leq a+c-2$ then $Q_{\lambda/\mu}$ is not $Q$-multiplicity-free.
\end{Lem}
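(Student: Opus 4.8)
The plan is to prove non-$Q$-multiplicity-freeness in the usual way of this section, namely by producing a content $\delta$ together with two distinct amenable tableaux of shape $D_{\lambda/\mu}$ and content $\delta$, so that $f^{\lambda}_{\mu\delta}\geq 2$. First I would make the geometry explicit. Since $\mu=[w,1]$ is the staircase $(w,w-1,\ldots,1)$, its shifted diagram is the triangle $D_{\mu}=\{(i,j)\mid 1\leq i\leq j\leq w\}$, so deleting it from $D_{\lambda}$ leaves a fat hook whose upper-left boundary is the \emph{vertical} segment in column $w+1$: the first $a$ rows of $\lambda$ contribute a block of rows of common width $a+b+c-w$ reaching out to column $a+b+c$, while the lower corner contributes a staircase tail of $c$ rows. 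The hypotheses $a,b,c\geq 3$ and $4\leq w\leq a+c-2$ are exactly what guarantee that both arms of this hook are genuinely two-dimensional and that the tail reaches far enough left to meet the column $w+1$, so that the junction of the overhang and the staircase boundary carries the structure needed below.

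Following the template of Lemma \ref{restrictions} and Lemma \ref{fatcolumn}, I would run Salmasian's algorithm to get $T_{\lambda/\mu}$ and its strips $P_i$, and then use Lemma \ref{parts} to peel off $P_1,\ldots,P_{k-1}$ and reduce to the subdiagram $U_k(\lambda/\mu)$ at the step $k$ where the staircase boundary has collapsed to a single box $(r,u)$ on the diagonal $\{(s,t)\mid t-s=w\}$. On $U_k$ I would build the first tableau $T$ by running the algorithm on $P'_h:=P_h\setminus\{(r,u)\}$, which forces $(r,u)$ to receive $(h+1)'$, and the second tableau $T'$ by toggling the marking of an adjacent box (replacing some $h$ by $h'$, or moving where the unmarked entry of a fitting strip sits); since the toggle does not change any $|x|$, the two tableaux share a content, and Lemma \ref{parts} then lifts $f^{\alpha}_{\beta\gamma}\geq 2$ back to $f^{\lambda}_{\mu\delta}\geq 2$. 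The transpose, rotation and orthogonal-transpose lemmas (\ref{transposerotate}, \ref{ot}, \ref{ot2}), together with the shift and add lemmas (\ref{shiftrowcolumn}, \ref{addrow}, \ref{addcolumn}), would be used to normalise which arm of the hook the modification sits in, so that I only have to carry out one representative construction.

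The hard part will be verifying that both $T$ and $T'$ are genuinely amenable. Because $d=1$ the final strip $P_n$ is thin (a single row or column, or at best a very narrow hook), so Lemma \ref{fatcolumn} and Corollary \ref{fatcolumnco} do not apply and the checking cannot be outsourced to them; it must go directly through Lemma \ref{checklist}. The two delicate conditions are condition (1), the \emph{strict} inequality $c^{(u)}_{k-1}>c^{(u)}_{k}$ on the numbers of unmarked entries, and condition (4), the existence of the injective map $\phi$ into $\widehat{\mathcal{B}^{(k-1)}_T}$ with an unobstructed path; my expectation is that the spare unmarked $(k-1)$-entry needed for condition (1) is supplied precisely by the tail (whence $c\geq 3$) and that the room for $\phi$ is supplied by the overhang (whence $b\geq 3$), which is why these hypotheses enter. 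I anticipate the proof to split into cases according to whether the surviving diagonal box lies in the horizontal arm or in the tail, equivalently according to the position of $w$ inside $[4,a+c-2]$, with the symmetry transformations above reducing one case to the other; keeping the unmarked-count bookkeeping consistent across these cases is the step I expect to require the most care.
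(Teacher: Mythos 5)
Your proposal is a plan rather than a proof: the two tableaux witnessing $f^{\lambda}_{\mu\delta}\geq 2$ are never constructed and the amenability check, which you yourself identify as the hard part, is deferred ("I would", "I expect", "I anticipate"). Beyond that, two of the concrete choices you commit to would not work. First, the reduction you propose -- peel strips until the boundary "has collapsed to a single box on the diagonal $\{(s,t)\mid t-s=w\}$", in imitation of Lemma \ref{restrictions} -- does not match the geometry here. That pinch point exists precisely because in Lemma \ref{restrictions} the partition $\mu$ has a bottom row of length $x>1$ or a second corner; for $\mu=[w,1]$ the staircase $D_\mu$ produces a skew diagram whose left boundary is a vertical segment in column $w+1$ followed by a staircase, with no diagonal ever reduced to a single box. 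The reduction the paper actually uses is different: removing $P_1$ turns $[a,b,c,1]/[w,1]$ into $[a-1,b,c,1]/[w,1]$, so by Lemma \ref{parts} one peels until either $a=3$ or $w=a+c-2$, and these two base cases are then handled by explicit constructions (the hypothesis $w\le a+c-2$ guarantees the peeling terminates in one of them).

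Second, the modification you propose -- toggling a marking, or moving the unmarked entry of a fitting strip -- will in general not yield a second amenable tableau: under the standing hypothesis that each $P_i$ is connected, Lemma \ref{2fillings} together with the fitting requirement essentially pins down all markings, so a pure marking toggle destroys amenability rather than producing a new witness. The paper's two tableaux instead differ by exchanging \emph{unmarked} entries between the inner-corner box $(w+1,w+1)$ of the staircase and the box $(w,w+2)$ (whose existence is exactly what $w\le a+c-2$ buys), combined with a reassignment of several boxes among modified strips $P_1',P_2',P_3'$ near that corner; the conditions $a,b,c\ge 3$ and $w\ge 4$ are used to guarantee that the specific boxes and the spare entries needed for conditions (1)--(4) of Lemma \ref{checklist} exist there, not, as you guess, to feed condition (1) from the tail and condition (4) from the overhang. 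Without the explicit pair of tableaux and the verification of Lemma \ref{checklist} for each, the argument does not establish the lemma.
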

\begin{proof}
We will show that for case $a = 3$ and for case $w = a+c-2$ the statement holds.
Afterwards we will explain case $a > 3$ and $w < a+c-2$ by these two cases.\par
Case 1: $a = 3$.\par
Let $b \geq 3$, $c \geq 3$ and $4 \leq w \leq a+c-2$.
The lowermost box in the leftmost column of the diagram is $(w+1,w+1)$.
Since $w < a+c-1$, we have $(w,w+2) \in D_{\lambda/\mu}$.\par
We get a new tableau $T_1$ as follows:
In the algorithm of Definition \ref{Salmasian'salgorithm} use $P'_1 := P_1 \setminus \{(w+1,w+1)\}$, $P'_2 := P_2 \setminus \{(w+1,w+2), (w+2,w+2)\}$ and $P'_3 := P_3 \setminus \{(w,w+3),\linebreak
(w+1,w+3), (w+2,w+3), (w+3,w+3)\}$ (for $w = a+c-2$ this means $P'_3 = P_3$) instead of $P_1$, $P_2$ and $P_3$, respectively, and stop after the third step in the algorithm.
Then replace the entry $3$ in the last box of $P'_3$ with $3'$ and set $T_1(w+1,w+1) = 3$.
Afterwards fill the remaining boxes using the algorithm of Definition \ref{Salmasian'salgorithm} starting with $k = 4$.
By Corollary \ref{checklistco}, it is clear that $T_1$ is $m$-amenable for $m \neq 3, 4$.
There is a $3$ but no $2$ in the $(w+1)^{\textrm{th}}$ column.
However, there is a $2$ and no $3$ in the column of the last box of $P'_3$ and there is a $2$ and no $3$ in the column to the left of it.
Thus, by Lemma \ref{checklist}, this tableau is $3$-amenable.
In the $(w+2)^{\textrm{th}}$ column and possibly in the $(w+3)^{\textrm{th}}$ column, there are $4$s and no $3$s.
However, there are $3$s and no $4$s in the columns of the first two boxes of $P'_3$.
We have $T_1(w+1,w+2) = 4'$ and $T_1(w,w+1) \neq 3'$.
However, if $(y,z)$ is the third box of $P'_3$ then we either have $T_1(y,z) = 3$ and there is no $4$ in the $z^{\textrm{th}}$ column or if $w = a+c-2$ we have $T_1(y,z) = 3'$ and $T_1(y+1,z+1) \neq 4'$.
If $w < a+c-2$ then we have $T_1(w,w+3) = 4'$ and $T_1(w-1,w+2) \neq 3'$.
However, we have $T_1(w-1,w+3) = 3'$.
Thus, by Lemma \ref{checklist}, this tableau is $4$-amenable.\par
We get another tableau $T'_1$ of the same content if we set $T'_1(w+1,w+1) = 3$, $T'_1(w,w+2) = 2$ and $T_1'(r,s) = T_1(r,s)$ for every other box $(r,s) \in D_{\lambda/\mu}$.
It is easy to see that, by Corollary \ref{checklistco}, $T'_1$ is $m$-amenable for $m \neq 2, 3, 4$.
There is a $1$ with no $2$ below in the $(w+2)^{\textrm{th}}$ column.
Thus, by Lemma \ref{checklist}, $2$-amenability follows.
There is a $3$ with no $2$ above in the $(w+2)^{\textrm{th}}$ column.
However, there is a $2$ with no $3$ below in the column of the last box of $P_3$.
Thus, by Lemma \ref{checklist}, this tableau is $3$-amenable.
By Lemma \ref{checklist}, it is clear that $T'_1$ is $4$-amenable if $T_1$ is.\par
Case 2: $w = a+c-2$.\par
By Case 1, we may assume $a > 3$.
The lowermost box in the leftmost column of the diagram is $(w+1,w+1)$.
Since $w < a+c-1$, we have $(w,w+2) \in D_{\lambda/\mu}$.\par
Let $(y,z)$ be the last box of $P_3$.
We get a new tableau $T_2$ if we set $T_2(w+1,w+1) = 3$, $T_2(w,w+1) = 1$, $T_2(w,w+2) = 2$, $T_2(w+1,w+2) = 4$, $T_2(w+2,w+2) = 5$, $T_2(y,z) = 3'$, $T_2(y,z+1) = 4'$, for the case $P_5 \neq \emptyset$ set $T_2(y,z+2) = 5'$ (in this case $(y,z+2)$ is the last box of $P_5$), and set $T_2(r,s) = T_{\lambda/\mu}(r,s)$ for every other box $(r,s) \in D_{\lambda/\mu}$.\par
By Corollary \ref{checklistco}, $T_2$ is $m$-amenable for $m \neq 3, 4, 5$.
There is a $3$ and no $2$ in the $(w+1)^{\textrm{th}}$ column.
However, there is are $2$s and no $3$s in the $z^{\textrm{th}}$ and in the $(w+2)^{\textrm{th}}$ column.
Thus, by Lemma \ref{checklist}, this tableau is $3$-amenable.
There is a $4$ with no $3$ above in the $(w+2)^{\textrm{th}}$ column.
However, there are $3$s and no $4$s in the $(w+1)^{\textrm{th}}$ column and in the $(z+1)^{\textrm{th}}$ column.
Thus, by Lemma \ref{checklist}, $4$-amenability follows.
The $5$-amenability is clear for $P_5 = \emptyset$.
If $P_5 \neq \emptyset$ then there is a $4$ and no $5$ in the $(z+2)^{\textrm{th}}$ column.
Thus, by Lemma \ref{checklist}, this tableau is $5$-amenable.\par
We get another tableau $T'_2$ of the same content if we set $T'_2(w+1,w+1) = 2$, $T'_2(w,w+2) = 3$ and $T'_2(r,s) = T_2(r,s)$ for every other box $(r,s) \in D_{\lambda/\mu}$.
By Corollary \ref{checklistco}, $T'_2$ is $m$-amenable for $m \neq 2, 3, 4$.
There is a $1$ and no $2$ in the $(w+2)^{\textrm{th}}$ column.
Thus, by Lemma \ref{checklist}, $2$-amenability follows.
There is a $3$ and no $2$ in the $(w+2)^{\textrm{th}}$ column.
However, there is a $2$ with no $3$ below in the $z^{\textrm{th}}$ column.
Thus, by Lemma \ref{checklist}, this tableau is $3$-amenable.
By Lemma \ref{checklist}, it is clear that $T'_2$ is $4$-amenable if $T_2$ is.\par
Case 3: $a > 3$ and $w < a+c-2$.\par
The diagram $U_2(\lambda/\mu)$ has shape $D_{\lambda'/\mu}$ where $\lambda' = [a',b,c,1]$ where $a' = a-1$.
Either we have $a' = a-1 = 3$ or $w = a'+c-2$ or else there is some $j$ such that $U_j(\lambda/\mu)$ has shape $D_{\lambda''/\mu}$ where $\lambda'' = [a'',b,c,1]$ where $a'' = a-j$ such that either $a'' = 3$ or $w = a''+c-2$.
Then, by Case 1 and Case 2, we find two different amenable tableaux of the same content and, by Lemma \ref{parts}, $Q_{\lambda/\mu}$ is not $Q$-multiplicity-free.
\end{proof}
\begin{Ex}
For $\lambda = [3,3,6,1]$ and $\mu = [5,1]$ the tableaux are
$$T_1 = {\Yvcentermath1 \young(\meins 111111,\meins \mzwei 22222,\meins \mzwei \mdrei 3333,\meins \mzwei \mdrei \mvier ,12\mvier 4,3\mvier 4\mfuenf ,:4\mfuenf 5,::5\msechs,:::6)}, \hspace{1ex} T'_1 = {\Yvcentermath1 \young(\meins 111111,\meins \mzwei 22222,\meins \mzwei \mdrei 3333,\meins \mzwei \mdrei \mvier ,13\mvier 4,2\mvier 4\mfuenf ,:4\mfuenf 5,::5\msechs,:::6)}.$$
For $\lambda = [4,5,3,1]$ and $\mu = [5,1]$ the tableaux are
$$T_2 = {\Yvcentermath1 \young(\meins 111111,\meins \mzwei 22222,\meins \mzwei \mdrei 3333,\meins \mzwei \mdrei \mvier 444,12,34,:5)}, \hspace{1ex} T'_2 = {\Yvcentermath1 \young(\meins 111111,\meins \mzwei 22222,\meins \mzwei \mdrei 3333,\meins \mzwei \mdrei \mvier 444,13,24,:5)}.$$
\end{Ex}
\begin{Co}\label{fathookco}
Let $\lambda = [a,b]$ and $\nu = [w,x]$.
If $w \geq 3$, $x \geq 4$, $a \geq w+2$, $b \geq 5$ and $a+b-w-x \geq 3$ then $Q_{\lambda/\mu}$ is not multiplicity-free.
\end{Co}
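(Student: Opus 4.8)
The plan is to reduce this to Lemma \ref{fathook} by means of orthogonal transposition: since $Q_{\lambda/\mu}=Q_{(\lambda/\mu)^{ot}}$ by Lemma \ref{ot}, it suffices to show that $D_{[a,b]/[w,x]}^{ot}$ has a shape to which Lemma \ref{fathook} applies. (Here I read the statement with $\mu=[w,x]$, as the conclusion concerns $Q_{\lambda/\mu}$.)

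First I would make $D_{\lambda/\mu}$ explicit. Using Lemma \ref{shapepathbijection}, write $\lambda=[a,b]$ as $(a+b-1,a+b-2,\ldots,b)$ and $\mu=[w,x]$ as $(w+x-1,\ldots,x)$. Then $D_{\lambda/\mu}$ is a rectangle of height $w$ and width $a+b-w-x$ occupying the last columns of rows $1,\ldots,w$, together with the shifted staircase formed by rows $w+1,\ldots,a$ (row $i$ running from column $i$ to $a+b-1$). Reflecting in the antidiagonal sends a box $(r,c)$ to $(a+b-c,\,a+1-r)$; carrying this out and then repositioning as prescribed in the definition of the orthogonal transpose, I would verify that $D_{\lambda/\mu}^{ot}$ has shape $D_{\alpha/\beta}$ with $\beta=(b-1,b-2,\ldots,1)=[b-1,1]$ and $\alpha=(a+b-1,\ldots,w+x,\,x-1,\ldots,1)$, i.e.\ by Lemma \ref{shapepathbijection}, $\alpha=[a+b-w-x,\,w,\,x-1,\,1]$.

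With this identification the hypotheses match exactly. In the notation of Lemma \ref{fathook} (applied to $\alpha=[a',b',c',1]$ and $\beta=[w',1]$) we have $a'=a+b-w-x$, $b'=w$, $c'=x-1$ and $w'=b-1$. The five conditions $a'\geq 3$, $b'\geq 3$, $c'\geq 3$, $w'\geq 4$, $w'\leq a'+c'-2$ then translate, respectively, into the corollary's hypotheses $a+b-w-x\geq 3$, $w\geq 3$, $x\geq 4$, $b\geq 5$, $a\geq w+2$; for the last one note $b-1\leq (a+b-w-x)+(x-1)-2$ is equivalent to $a\geq w+2$. Hence Lemma \ref{fathook} shows $Q_{\alpha/\beta}$ is not $Q$-multiplicity-free, and by Lemma \ref{ot} we conclude $Q_{\lambda/\mu}=Q_{\alpha/\beta}$ is not $Q$-multiplicity-free.

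The main obstacle is the geometric bookkeeping in the second paragraph: one must track the left and right boundaries of $D_{\lambda/\mu}^{ot}$ across the three row-ranges (a full-width block, then a block of width $a-w$, then a staircase) to read off $\alpha$ and $\beta$, and check that the gap of size $w+1$ between $w+x$ and $x-1$ in $\alpha$ yields precisely the two corners recorded by $a'$ and $b'$. Once the shape path of the transpose is pinned down, the remainder is the routine inequality matching above.
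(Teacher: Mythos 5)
Your proposal is correct and follows essentially the same route as the paper: both pass to the orthogonal transpose $D_{\lambda/\mu}^{ot}=D_{\alpha/\beta}$ with $\alpha=[a+b-w-x,\,w,\,x-1,\,1]$ and $\beta=[b-1,1]$, and then verify that the hypotheses of Lemma \ref{fathook} translate exactly into the stated inequalities (in particular $b-1\leq a'+c'-2$ being equivalent to $a\geq w+2$). The only difference is cosmetic: the paper computes $a'$ as $\lambda_1-\mu_1=|B_\lambda|-|B_\mu|$ rather than by explicit coordinate bookkeeping.
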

\begin{proof}
If $\lambda, \mu$ satisfy these properties then the diagram $D^{ot}_{\lambda/\mu}$ is equal to $D_{\alpha/\beta}$ where $\alpha = [a',b',c',1]$ and $\beta = [w',1]$.
Then $b' = w \geq 3$ and $c' = x-1 \geq 3$.
The number $a'$ is the number of boxes of the first row of $D_{\lambda/\mu}$ and can be calculated by $a' = \lambda_1 - \mu_1 = |B_{\lambda}| - |B_{\mu}| = a+b-w-x \geq 3$.
Since $a \geq w+2$, we have $a-w-2 \geq 0$ and, hence, $b \leq a+b-w-2$.
Then we get $4 \leq b-1 = w' = b-1 \leq a+b-w-2-1 = a+b-w-x+x-1-2 = a'+c'-2$.
By Lemma \ref{fathook}, $Q_{D_{\lambda/\mu}^{ot}}$ is not $Q$-multiplicity-free and, thus, $Q_{\lambda/\mu}$ is not $Q$-multiplicity-free.
\end{proof}
\begin{Ex}
The smallest diagram satisfying the properties of Corollary \ref{fathookco} is $D_{(9,8,7,6,5)/(6,5,4)}$.\par
We have $Q_{(9,8,7,6,5)/(6,5,4)} = Q_{(9,8,3)}+Q_{(9,7,4)}+Q_{(9,7,3,1)}+Q_{(9,6,4,1)}+Q_{(9,6,3,2)}+Q_{(9,5,4,2)}+Q_{(8,7,5)}+Q_{(8,7,4,1)}+Q_{(8,7,3,2)}+Q_{(8,6,5,1)}+2Q_{(8,6,4,2)}+Q_{(8,6,3,2,1)}+Q_{(8,5,4,3)}+Q_{(8,5,4,2,1)}+Q_{(7,6,5,2)}+Q_{(7,6,4,3)}+Q_{(7,6,4,2,1)}+Q_{(7,5,4,3,1)}$.
\end{Ex}

\begin{Lem}\label{fathook1}
Let $\lambda = [a,b,c,d]$ and $\mu = [w,1]$.
If $a,b,c,d \geq 2$ and $3 \leq w \leq a+c-1$ then $Q_{\lambda/\mu}$ is not $Q$-multiplicity-free.
\end{Lem}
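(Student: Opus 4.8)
The plan is to follow the strategy of Lemma~\ref{fathook}: for the extreme configurations I construct by hand two distinct amenable tableaux of $D_{\lambda/\mu}$ of the same content $\delta$, which forces $f^{\lambda}_{\mu\delta}\geq 2$, and I reduce every other configuration to these by peeling off outer diagonals via Lemma~\ref{parts}. The starting point is always the canonical tableau $T_{\lambda/\mu}$ of Definition~\ref{Salmasian'salgorithm}, whose entry at a box records its diagonal depth, so that $P_k$ is the $k$-th diagonal strip. All modifications touch only a bounded region near the tip of the triangle $\mu$; amenability of the modified tableaux is then checked with Corollary~\ref{checklistco} away from the affected values, and with Lemma~\ref{checklist} at the two or three critical values.

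First I would set up the reduction. Since $\mu=[w,1]$ is the full shifted staircase triangle of size $w$, the diagram $D_{\lambda/\mu}$ has rows $1,\dots,w$ beginning one column to the right of the triangle and rows $w+1,\dots,a+c$ beginning on the main diagonal; note $\ell(\lambda)=a+c$, so $w\leq a+c-1$ exactly says $\ell(\mu)<\ell(\lambda)$ and leaves a genuine bottom arm. As in the proof of Lemma~\ref{fathook}, removing the north-west border $P_1$ (equivalently, passing to $U_2(\lambda/\mu)$) leaves a skew shape with the \emph{same} inner triangle $\mu=[w,1]$ and outer shape $\lambda'=[a-1,b,c,d]$, the bottom arm $(c,d)$ being untouched because $P_1$ meets only the top-left box of each diagonal. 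Iterating, either we reach $a=2$ or at some stage $w=a''+c-1$ for the shortened $\lambda''=[a'',b,c,d]$. Thus by Lemma~\ref{parts} it suffices to treat the two base cases $a=2$ and $w=a+c-1$, isolated exactly as Cases~1 and~2 of Lemma~\ref{fathook}.

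For the base case $w=a+c-1$ the triangle reaches down to row $\ell(\lambda)-1$, so only the last row starts on the diagonal; the lowermost box of the leftmost column is $(w+1,w+1)$, and using $b,c,d\geq 2$ and $w\geq 3$ there is room to reroute the entries of $P_k,P_{k+1},\dots$ around this corner to produce $T$, and then to trade one unmarked entry for a marked one one diagonal over to produce $T'$ with $c(T)=c(T')$, in the manner of the swap relating $T_2,T_2'$ in Lemma~\ref{fathook}. For the base case $a=2$ the top arm has only two rows, but the construction site $(w+1,w+1)$ is unchanged, so I would branch there and at the last boxes of $P_1,P_2,P_3$ exactly as in Case~1 of Lemma~\ref{fathook}; again $T$ and $T'$ differ only in whether one box carries $k$ or $k'$ (and possibly the position of a single mark), which leaves the content fixed. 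In both base cases the hypotheses $a,b,c,d\geq 2$ and $3\leq w$ are precisely what guarantee the local region is large enough to host two fillings.

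The hard part will be the amenability bookkeeping rather than the combinatorial idea. Conditions (1)--(3),(5),(6) of Lemma~\ref{checklist} are local and follow from Corollary~\ref{checklistco} once the affected columns are tracked, but condition (4) --- the existence of the injective map $\phi\colon\mathcal{B}_T^{(k)}(d)\to\widehat{\mathcal{B}_T^{(k-1)}}$ --- is the delicate point, exactly as in Lemmas~\ref{fathook} and~\ref{notthreecorners}: after inserting a marked entry $k'$ I must exhibit, for each newly created box of $\mathcal{B}_T^{(k)}$, an unmarked $k-1$ with no $k$ below it in a suitable column, and verify that no forbidden $k-1$ or $k'$ intervenes along the relevant rows. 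I expect this to need the same care as Case~2 of Lemma~\ref{fathook}, distinguishing whether $P_{k+1}$ is empty, and I would discharge it by locating the two $(k-1)$-entries sitting above the first two boxes of $P_k$, as is done there.
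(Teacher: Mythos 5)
Your overall strategy (peel off outer diagonals with Lemma~\ref{parts} until a small base configuration is reached, then exhibit two amenable tableaux of equal content there) is indeed the paper's strategy, but there is a genuine gap: the base cases are never actually constructed, and the reduction does not terminate where you say it does. First, the claim that passing to $U_2(\lambda/\mu)$ leaves the inner staircase $[w,1]$ and the bottom arm $(c,d)$ untouched is only valid while $w<a+c-1$; once the iteration reaches $w=a''+c-1$ the strip $P_1$ wraps around the tip of the triangle into the bottom arm, and removing it decreases $d$ and $w$ as well as $a$. So your base case $w=a+c-1$ is not terminal: it needs its own secondary reduction (in the paper, down to $a=2$ or $d=2$, with the subcase $d=2$ dispatched by Lemma~\ref{threethreehook}, since $P_n$ is then a $(b+1,c+1)$-hook). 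Second, ``reroute the entries of $P_k,P_{k+1},\dots$ around this corner \dots in the manner of the swap relating $T_2,T_2'$ in Lemma~\ref{fathook}'' is not a construction: the tableaux of Lemma~\ref{fathook} live on shapes with $d=1$ and require $a,b,c\geq 3$ and $w\geq 4$, none of which is available here (the present lemma allows $a,b,c=2$ and $w=3$), so they cannot be transplanted ``exactly''.

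Third, and most seriously, the modifications do not stay in a bounded region near the tip of the triangle. Replacing $P_1$ by $P_1\setminus\{(w+1,w+1)\}$ (or any analogous branching) shifts the values along entire diagonals, so the two tableaux differ all the way down the south-east arm, whose width is $d$. This is exactly why the paper must split the case $3\leq w<a+c-1$, $a=2$ into $d=2$ (explicit construction) and $d>2$ (an explicit algorithm appending $d-2$ extra columns to the $d=2$ tableaux and re-verifying amenability of the extended fillings). Your proposal has no mechanism for handling $d>2$, and the amenability bookkeeping you correctly flag as the hard part cannot even be started until the tableaux are written down. As it stands this is a plausible plan whose essential content --- the explicit pair of tableaux in each base case and the extension in $d$ --- is missing.
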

\begin{proof}
Let $n = \ell(c(T_{\lambda/\mu}))$.
First, we assume $w = a+c-1$ and prove the result when $a = 2$ or $d = 2$.
Then, we show that the case where $a, d \geq 3$ can be explained by
the cases where $a = 2$ or $d = 2$.
Afterwards, we tackle the case $w < a+c-1$ using the case $w = a+c-1$ where we first prove the subcase $d = 2$ and then show how to add boxes with entries to obtain diagrams such that $d > 2$.\par
Case 1: $w = a+c-1$ and $2 \in \{a, d\}$.\par
We may assume $a = 2$, otherwise we transpose the diagram.
If $d = 2$ then $P_n$ is a $(b+1,c+1)$-hook and, by Lemma \ref{threethreehook}, which in this case forbids to have a box directly to the left of the last box of $P_n$, $Q_{\lambda/\mu}$ is not $Q$-multiplicity-free.
Thus, we may now assume $d \geq 3$.\par
The box $(w+1,w+1)$ is the last box of $P_1$.
We get a new tableau $T_1$ if we set $T_1(w,w+1) = 1$, $T_1(w+1,w+1) = 3$, $T_1(w,w+2) = 2$, $T_1(w+1,w+2) = 3$, $T_1(w,w+3) = 3$, $T_1(w+1,w+3) = 4$ and $T_1(r,s) = T_{\lambda/\mu}(r,s)$ for every other box $(r,s) \in D_{\lambda/\mu}$.\par
By Corollary \ref{checklistco}, $T_1$ is $m$-amenable for $m \neq 3$.
There is a $3$ and no $2$ in the $(w+1)^{\textrm{th}}$ column.
However, there are $2$s and no $3$s in the columns of the first two boxes of $P_2$.
Thus, by Lemma \ref{checklist}, $T_1$ is amenable.\par
We get another tableau $T'_1$ if we set $T'_1(w+1,w+1) = 2$, $T'_1(w,w+2) = 3'$ and $T'_1(r,s) = T_1(r,s)$ for every other box $(r,s) \in D_{\lambda/\mu}$.\par
By Corollary \ref{checklistco}, $T'_1$ is $m$-amenable for $m \neq 2, 3$.
In the $(w+2)^{\textrm{th}}$ column is a $1$ with no $2$ below.
Thus, by Corollary \ref{checklistco}, $2$-amenability follows.
We have $T'_1(w,w+2) = 3'$ and $T'_1(w-1,w+1) \neq 2'$ and there is a $3$ and no $2$ in the $(w+2)^{\textrm{th}}$ column.
However, there are two $2$s and no $3$s in the columns of the first two boxes of $P_2$.
Thus, by Lemma \ref{checklist}, $3$-amenability follows.
It is clear that $T'_1$ has the same content as $T_1$.
Hence, $Q_{\lambda/\mu}$ is not $Q$-multiplicity-free.\par
Case 2: $w = a+c-1$ and $a, d \geq 3$.\par
The diagram $U_2(\lambda/\mu)$ has shape $D_{\alpha/\beta}$ where $\alpha = [a',b,c,d']$ and $\beta = [a'+c-1,1]$, and $a' = a-1$ and $d' = d-1$.
If $a' = 2$ or $d' = 2$ then Case 1 proves the statement.
Otherwise, there is some $j$ such that $U_j(\lambda/\mu)$ has shape $D_{\alpha'/\beta'}$ where $\alpha = [a'',b,c,d'']$ and $\beta = [a''+c-1,1]$, and $a'' = 2$ or $d'' = 2$.
By Lemma \ref{parts} and Case 1, $Q_{\lambda/\mu}$ is not $Q$-multiplicity-free.\par
Case 3: $3 \leq w < a+c-1$.\par
Assume $a > 2$.
Let $(x,y)$ be the lower corner.
Since $w < a+c-1$, the last box of $P_1$ is not in the $x^{\textrm{th}}$ row.
Then the diagram $U_2(\lambda/\mu)$ has shape $D_{\lambda/\mu}$ where $\lambda' = [a-1,b,c,d]$ and $\mu' = [w,1]$.
Then there is some $j$ such that $U_j(\lambda/\mu)$ has shape $D_{\alpha'/\beta'}$ where either $\alpha' = [2,b,c,d]$ and $\beta' = [w,1]$ or where $\alpha' = [e,b,c,2]$ and $\beta' = [w',1]$ where $a > e \geq 3$ and $w' = e+c-1$.
In the latter case the transpose of the diagram is covered in Case 2.
Thus, it suffices to consider the case $D_{\alpha/\beta}$ where $\alpha = [2,b,c,d]$ and $\beta = [w,1]$ and $3 \leq w < 2+c-1 = c+1$.\par
Case 3.1: $d = 2$.\par
The box $(w+1,w+1)$ is the last box of $P_1$.
We get a new tableau $T_2$ as follows:
In the algorithm of Definition \ref{Salmasian'salgorithm} use $P'_1 := P_1 \setminus \{(w+1,w+1)\}$ and $P'_2 := P_2 \setminus \{(w+1,w+2),(w+2,w+2)\}$ instead of $P_1$ and $P_2$, respectively.
By Corollary \ref{checklistco}, $T_2$ is $m$-amenable for $m \neq 3$.
There is a $3$ and no $2$ in the $(w+1)^{\textrm{th}}$ column.
However, there are $2$s and no $3$s in the columns of the first two boxes of $P_2$.
Thus, by Lemma \ref{checklist}, $3$-amenability follows.\par
We get another tableau $T'_2$ as follows:
\begin{itemize}
	\item Set $T'_2(r,s) = T_2(r,s)$ for every $(r,s) \in P'_1 \cup (P'_2 \setminus \{(w,w+2)\})$ where $P'_1$ and $P'_2$ as above.
	\item Set $T'_2(w+1,w+1) = 2$.
	\item Fill the remaining boxes using the algorithm of Definition \ref{Salmasian'salgorithm} starting with $k = 3$.
\end{itemize}
By Corollary \ref{checklistco}, $T'_2$ is $m$-amenable for $m \neq 2, 3$.
There is a $1$ and no $2$ in the $(w+2)^{\textrm{th}}$ column.
Thus, by Corollary \ref{checklistco}, $2$-amenability follows.
There is a $3$ and no $2$ in the $(w+2)^{\textrm{th}}$ column.
However, there is a $2$ and no $3$ in the column of the first box of $P_2$.
We have $T'_2(w+1,w+2) = 3'$ and $T'_2(w,w+1) \neq 2'$.
However, there is a $2$ and no $3$ in the column of the second box of $P_2$.
We have $T'_2(w,w+2) = 3'$ and $T'_2(w-1,w+1) \neq 2'$.
However, we have $T'_2(w-1,w+2) = 2'$.
Thus, by Lemma \ref{checklist}, $3$-amenability follows.\par
We have $|T_2(w+1+j,w+1+j)| = j+3$ and $|T_2(w+j,w+2+j)| = j+2$ for $0 \leq j \leq n-2$ and we have $|T'_2(w+1+j,w+1+j)| = j+2$ and $|T'_2(w+j,w+2+j)| = j+3$ for $0 \leq j \leq n-2$.
The entries of the other boxes in $T_2$ and $T'_2$ can only differ by markings.
Thus, $T'_2$ has the same content as $T_2$.\par
Case 3.2: $d > 2$.\par
Let $(x,y)$ be the lower corner.
We get two tableaux $\tilde{T}_2$ and $\tilde{T}'_2$ of shape $D_{\alpha/\beta}$ where $\alpha = [2,b,c,d]$ and $\beta = [w,1]$ if we take the two tableaux from Case 3.1 of shape $D_{\alpha'/\beta'}$ where $\alpha' = [2,b,c,2]$ and $\beta' = [w,1]$ and add $d-2$ columns using the following algorithm:
\begin{enumerate}
	\item Set $\tilde{T}_2(e,f) = T_2(e,f)$ and $\tilde{T}'_2(e,f) = T'_2(e,f)$ for all $f \leq y$ and for all $e$ such that $(e,f) \in D_{\lambda/\mu}$.
	\item Set $\tilde{T}_2(p,q) = T_2(p,q-d+2)$ and $\tilde{T}'_2(p,q) = T'_2(p,q-d+2)$ for all $q > y$ and for all $p$ such that $(p,q) \in D_{\lambda/\mu}$.
	\item For $1 \leq j \leq n$ set $\tilde{T}_2(j,y+1) = \tilde{T}'_2(j,y+1) = j$.
	\item For $n+1 \leq r \leq x-2$ set $\tilde{T}_2(r,y+1) = \tilde{T}'_2(r,y+1) = (n+1)'$.
	\item Set $\tilde{T}_2(x-1,y+1) = \tilde{T}'_2(x-1,y+1) = n+1$ and set $\tilde{T}_2(x,y+1) = \tilde{T}'_2(x,y+1) = n+2$.
	\item Do the following algorithm:
	\begin{enumerate}[(i)]
		\item Set $i = y+2$:
		\item Scan the $(i-1)^{\textrm{th}}$ column of $\tilde{T}_2$ from top to bottom and find the uppermost marked letter, $z$ say. If there is no marked letter in the $(i-1)^{\textrm{th}}$ column then set $z = 2+c$.
		\item For $1 \leq r \leq |z|$ set $\tilde{T}_2(r,i) = \tilde{T}'_2(r,i) = r$.
		\item For $|z|+1 \leq s \leq 2+c$ set $\tilde{T}_2(s,i) = \tilde{T}'_2(s,i) = t+1$ if $\tilde{T}_2(s-1,i) = \tilde{T}_2(s-1,i) = t$ or else set $\tilde{T}_2(s,i) = \tilde{T}'_2(s,i) = (t+1)'$ if $\tilde{T}_2(s-1,i) = \tilde{T}_2(s-1,i) = t'$.
		\item Increment $i$.
		\item If $i \leq d-2$ go to (ii) or else stop.
	\end{enumerate}
\end{enumerate}
It is easy to see that these tableaux are amenable if the tableaux for $d = 3$ are amenable.
By definition of the algorithm, if we have $T_2(u,y+1) = T'_2(u,y+1) = (n+1)'$ then $T_2(u-1,y) = T'_2(u-1,y) = n'$.
Hence, by Lemma \ref{checklist}, these tableaux are amenable.\par
For $d > 3$, since the $(y+1)^{\textrm{th}}$ column has the same entries in both tableaux, the algorithm fills the other $d-3$ columns in the same amenable way.
Clearly, the contents of $\tilde{T}_2$ and $\tilde{T}'_2$ are equal.
\end{proof}
\begin{Ex}
For $\lambda = [2,2,3,5]$ and $\mu = [4,1]$ we have
$${\Yvcentermath1 T_1 = \young(\meins 111111,\meins \mzwei 22222,\meins \mzwei \mdrei 33,123\mvier 4,33445), \hspace{1ex} T'_1 = \young(\meins 111111,\meins \mzwei 22222,\meins \mzwei \mdrei 33,1\mdrei 3\mvier 4,23445)}.$$
For $\lambda = [2,2,6,4]$ and $\mu = [5,1]$ we first take the tableaux for $\lambda' = [2,2,6,2]$ and $\mu' = [5,1]$:
$${\Yvcentermath1 T_2 = \young(\meins 11111,\meins \mzwei 2222,\meins \mzwei \mdrei 3,\meins \mzwei \mdrei \mvier ,12\mdrei \mvier ,333\mvier ,:444,::55), \hspace{1ex} T'_2 = \young(\meins 11111,\meins \mzwei 2222,\meins \mzwei \mdrei 3,\meins \mzwei \mdrei \mvier ,1\mdrei 3\mvier ,2\mdrei \mvier 4,:3\mvier \mfuenf,::45)}.$$
Then we add two columns using the algorithm of Lemma \ref{fathook1}:
$${\Yvcentermath1 \tilde{T}_2 = \young(\meins 1111111,\meins \mzwei 222222,\meins \mzwei \mdrei 333,\meins \mzwei \mdrei \mvier 44,12\mdrei \mvier \mfuenf 5,333\mvier \mfuenf \msechs,:44456,::5567), \hspace{1ex} \tilde{T}'_2 = \young(\meins 1111111,\meins \mzwei 222222,\meins \mzwei \mdrei 333,\meins \mzwei \mdrei \mvier 44,1\mdrei 3\mvier \mfuenf 5,2\mdrei \mvier 4\mfuenf \msechs,:3\mvier \mfuenf56,::4567)}.$$
\end{Ex}
\begin{Co}\label{fathook1co}
Let $\lambda = [a,b]$ and $\mu = [w,x,y,1]$.
If $w \geq 2$, $x \geq 2$, $b \geq 4$ and $a+b-1-w-x-y \geq 2$ then $Q_{\lambda/\mu}$ is not $Q$-multiplicity-free.
\end{Co}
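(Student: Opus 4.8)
The plan is to follow the strategy of Corollary \ref{fathookco}: apply an orthogonal transpose to move $D_{\lambda/\mu}$ into the family handled by Lemma \ref{fathook1}. By Lemma \ref{ot} we have $Q_{\lambda/\mu} = Q_{D_{\lambda/\mu}^{ot}}$, so it suffices to prove that $Q_{D_{\lambda/\mu}^{ot}}$ is not $Q$-multiplicity-free. First I would show that $D_{\lambda/\mu}^{ot}$ has shape $D_{\alpha/\beta}$ with a two-corner outer shape $\alpha = [a',b',c',d']$ and a one-corner inner shape $\beta = [w',1]$, and then determine the five parameters explicitly.

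To read off $\alpha$ and $\beta$, observe that for $\lambda = [a,b]$ the quantity $i+\lambda_i$ is constant, so every row of $D_\lambda$ ends in the same column $\lambda_1 = a+b-1$; hence $D_{\lambda/\mu}$ is right-aligned, and the orthogonal reflection carries it to a top-aligned diagram whose inner boundary is inherited from the outer boundary of $D_\mu$. Tracking the two corners of $D_\mu$ and the single corner of $D_\lambda$ through the reflection, I expect the identification $b' = w$, $c' = x$, $d' = y+1$, $w' = b-1$, and (exactly as in Corollary \ref{fathookco}) $a'$ equal to the length of the first row of $D_{\lambda/\mu}$, that is $a' = \lambda_1 - \mu_1 = |B_\lambda| - |B_\mu| = a+b-1-w-x-y$. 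In shape-path form this reads $D_{\lambda/\mu}^{ot} = D_{[a+b-1-w-x-y,\,w,\,x,\,y+1]\,/\,[b-1,1]}$.

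It then remains to verify the hypotheses of Lemma \ref{fathook1}, namely $a',b',c',d' \ge 2$ and $3 \le w' \le a'+c'-1$. The first group is immediate from the assumptions: $b' = w \ge 2$, $c' = x \ge 2$, $d' = y+1 \ge 2$ (since $y \ge 1$), and $a' = a+b-1-w-x-y \ge 2$. For the second, $w' = b-1 \ge 3$ is exactly $b \ge 4$, while $w' \le a'+c'-1$ unwinds to $b-1 \le a+b-2-w-y$, i.e. $a \ge w+y+1$; this is precisely the basicness condition $\ell(\lambda) > \ell(\mu)$, which holds under the standing restriction to basic diagrams. Applying Lemma \ref{fathook1} to $D_{\alpha/\beta}$ then shows that $Q_{D_{\lambda/\mu}^{ot}}$, and hence $Q_{\lambda/\mu}$, is not $Q$-multiplicity-free.

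The main obstacle is the second step, namely the rigorous proof of the shape-path identification of $D_{\lambda/\mu}^{ot}$. Unlike Corollary \ref{fathookco}, where $\mu$ had a single corner, here $\mu = [w,x,y,1]$ contributes \emph{both} of its corners to the outer shape $\alpha$, so one must follow two corners (together with the diagonal run produced by the entry $d=1$ of $\mu$) through the reflection and confirm that the resulting diagram has neither empty rows nor empty columns. I would argue on the boundary lattice path of $D_{\lambda/\mu}$ rather than box-by-box, and pay particular attention to the boundary case $a = w+y+1$, where the bottom staircase of $D_{\lambda/\mu}$ degenerates to a single row, in order to confirm that $d' = y+1$ there; the genuinely degenerate possibility $a = w+y$, which would force $d' = 1$, is ruled out by basicness.
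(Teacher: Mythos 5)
Your proposal is correct and follows essentially the same route as the paper: orthogonally transpose to get $D_{\alpha/\beta}$ with $\alpha=[a+b-1-w-x-y,\,w,\,x,\,y+1]$ and $\beta=[b-1,1]$, then invoke Lemma \ref{fathook1}. Your explicit verification that $w'\leq a'+c'-1$ reduces to the basicness condition $a\geq w+y+1$ is a detail the paper's proof merely asserts, so it is a welcome addition rather than a deviation.
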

\begin{proof}
If $\lambda, \mu$ satisfy these properties then the diagram $D^{ot}_{\lambda/\mu}$ is equal to $D_{\alpha/\beta}$ where $\alpha = [a',b',c',d']$ and $\beta = [w',1]$ where $b' = w \geq 2$, $c' = x \geq 2$, $d' = y+1 \geq 2$ and additionally $a'+c'-1 \geq w' = b-1 \geq 3$.
The number $a'$ is the number of boxes of the first row of $D_{\lambda/\mu}$ and can be calculated by $a' = \lambda_1 - \mu_1 = |B_{\lambda}| - |B_{\mu}| = a+b-1-w-x-y \geq 2$.
By Lemma \ref{fathook1}, $Q_{D_{\lambda/\mu}^{ot}}$ is not $Q$-multiplicity-free and, thus, $Q_{\lambda/\mu}$ is not $Q$-multiplicity-free.
\end{proof}

As we will see soon we have already determined all non-$Q$-multiplicity-free skew Schur $Q$-functions.
The following proposition gives a list of all skew Schur $Q$-functions that are possibly $Q$-multiplicity-free.
This is half of the classification of $Q$-multiplicity-free skew Schur $Q$-functions.

\begin{Prop}\label{mf1}
Let $\lambda, \mu \in DP$ such that $D_{\lambda/\mu}$ is basic.
Let $a,b,c,d,w,x,y \in \NN$.
If $Q_{\lambda/\mu}$ is $Q$-multiplicity-free then $\lambda$ and $\mu$ satisfy one of the following conditions:
\begin{enumerate}[(i)]
	\item $\lambda$ is arbitrary and $\mu \in \{\emptyset, (1)\}$,\label{a}
	\item $\lambda = [a,b]$ where $b \in \{1,2\}$ and $\mu$ is arbitrary,\label{b}
	\item $\lambda = [a,b]$ and $\mu = [w,x,y,1]$ where $a+b-w-x-y-1 = 1$ or $w = 1$ or $x = 1$ or $b \leq 3$,\label{c}
	\item $\lambda = [a,b,c,d]$ where $d \neq 1$ and $\mu = [w,1]$ where $1 \in \{a,b,c\}$ or $w \leq 2$,\label{d}
	\item $\lambda = [a,b,c,1]$ and $\mu = [w,1]$ where $a \leq 2$ or $b \leq 2$ or $c \leq 2$ or $w \leq 3$ or $w = a+c-1$.\label{e}
	\item $\lambda = [a,b]$ and $\mu = [w,x]$ where $2 \leq b \leq 4$ or $w \leq 2$ or $x \leq 3$ or $a = w+1$ or $a+b-w-x \leq 2$.\label{f}
\end{enumerate}
Some of these cases overlap.\par
The cases (iii) - (vi) are depicted as diagrams in the remark after the proof of this proposition.\par
We want to note that Case (\ref{a}) is the orthogonal transposition of Case (\ref{b}).
Also, Case (\ref{c}) is the orthogonal transposition of Case (\ref{d}).
Case (\ref{e}) is the orthogonal transposition of Case (\ref{f}) for $x > 1$.
The orthogonal transposition of Case (\ref{f}) for $x = 1$ is also covered in Case (\ref{f}).
\end{Prop}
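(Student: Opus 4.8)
The plan is to prove the contrapositive: assuming $Q_{\lambda/\mu}$ is $Q$-multiplicity-free, I would successively narrow the possible shapes of $\lambda$ and $\mu$ until only the configurations (\ref{a})--(\ref{f}) remain. The whole argument is a bookkeeping reduction that feeds the \emph{negations} of the listed conditions into the ``not $Q$-multiplicity-free'' lemmas proved above; the real work is checking that the basicness of $D_{\lambda/\mu}$ supplies exactly the numerical inequalities those lemmas demand.

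First I would dispose of the corner reductions. If $\mu \in \{\emptyset,(1)\}$ we are in case (\ref{a}), so assume $\mu \neq \emptyset,(1)$. Then Lemma \ref{notthreecorners} forces $D_\lambda$ to have at most two corners, i.e.\ $\lambda \in DP^{\leq 2}$. If $\lambda = [a,b]$ with $b \leq 2$ we are in case (\ref{b}); otherwise $\lambda \neq [a,b]$ with $b \leq 2$, so Lemma \ref{notthreecornersmu} forces $D_\mu$ to have at most two corners as well, and thus $\mu = [w,x]$ or $\mu = [w,x,y,z]$. Lemma \ref{twocornersmu} then rules out $z > 1$, leaving $\mu \in \{[w,x],[w,x,y,1]\}$. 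At this point $\lambda$ is either $[a,b]$ with $b \geq 3$ or $[a,b,c,d]$, and the problem splits into four shape combinations.

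Next I would treat the two-corner case $\lambda = [a,b,c,d]$. Lemma \ref{restrictions} immediately excludes $\mu = [w,x]$ with $x>1$ and $\mu = [w,x,y,1]$, so necessarily $\mu = [w,1]$, which is the $\mu$-shape of cases (\ref{d}) and (\ref{e}). Here I would use that $\ell(\lambda) = a+c$ and $\ell(\mu) = w$, so basicness ($\ell(\lambda) > \ell(\mu)$) gives $w \leq a+c-1$ for free. If $d \neq 1$, the negation of case (\ref{d}) is $a,b,c \geq 2$ (with $d \geq 2$) and $w \geq 3$; then $a,b,c,d \geq 2$ and $3 \leq w \leq a+c-1$, so Lemma \ref{fathook1} yields non-$Q$-multiplicity-freeness, a contradiction. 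If $d = 1$, the negation of case (\ref{e}) is $a,b,c \geq 3$, $w \geq 4$ and $w \neq a+c-1$; combined with $w \leq a+c-1$ this gives $4 \leq w \leq a+c-2$, so Lemma \ref{fathook} applies and again contradicts $Q$-multiplicity-freeness.

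Finally I would treat the one-corner case $\lambda = [a,b]$ with $b \geq 3$, using the orthogonally transposed corollaries. For $\mu = [w,x]$ (case (\ref{f})), basicness gives $a \geq w+1$ and $a+b-w-x \geq 1$; the negation of (\ref{f}) forces $b \geq 5$, $w \geq 3$, $x \geq 4$, $a \neq w+1$ (hence $a \geq w+2$) and $a+b-w-x \geq 3$, which are precisely the hypotheses of Corollary \ref{fathookco}. For $\mu = [w,x,y,1]$ (case (\ref{c})), basicness gives $a+b-1-w-x-y \geq 1$; the negation of (\ref{c}) forces $w \geq 2$, $x \geq 2$, $b \geq 4$ and $a+b-1-w-x-y \geq 2$, exactly the hypotheses of Corollary \ref{fathook1co}. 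In each case the corollary gives non-$Q$-multiplicity-freeness, completing the contradiction and hence the classification into (\ref{a})--(\ref{f}). The main obstacle is not any single lemma but the uniformity of this reduction: one must verify, shape by shape, that the four basic inequalities ($w \leq a+c-1$; $a \geq w+1$; $a+b-w-x \geq 1$; $a+b-1-w-x-y \geq 1$) arising from $D_{\lambda/\mu}$ being basic are precisely what upgrade the negated case conditions into the strict numerical hypotheses required by Lemmas \ref{fathook} and \ref{fathook1} and Corollaries \ref{fathookco} and \ref{fathook1co}.
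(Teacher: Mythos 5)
Your proposal is correct and follows essentially the same route as the paper: reduce via Lemmas \ref{notthreecorners}, \ref{notthreecornersmu}, \ref{twocornersmu} and \ref{restrictions} to the three shape combinations $([a,b],[w,x])$, $([a,b],[w,x,y,1])$, $([a,b,c,d],[w,1])$, then apply Corollaries \ref{fathookco} and \ref{fathook1co} and Lemmas \ref{fathook} and \ref{fathook1} to the negated conditions. Your explicit verification that basicness supplies exactly the inequalities $w \leq a+c-1$, $a \geq w+1$, $\lambda_1 - \mu_1 \geq 1$ needed to meet those lemmas' hypotheses is a welcome elaboration of a step the paper only spells out for the $w \leq a+c-1$ case.
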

\begin{proof}
If $\mu = \emptyset, (1)$ we have no restrictions for $\lambda$.
We also have no restrictions for $\mu$ if $\lambda = [a,b]$ where $b \in \{1,2\}$.\par
Now consider $\mu \notin \{\emptyset, (1)\}$ and if $\lambda = [a,b]$ then $b \geq 3$.
Then by Lemma \ref{notthreecornersmu}, Lemma \ref{twocornersmu} and Lemma \ref{restrictions}, if $Q_{\lambda/\mu}$ is $Q$-multiplicity-free then $\lambda$ and $\mu$ satisfy one of the following cases:
\begin{itemize}
	\item $\lambda = [a,b]$ and $\mu = [w,x]$
	\item $\lambda = [a,b]$ and $\mu = [w,x,y,1]$
	\item $\lambda = [a,b,c,d]$ and $\mu = [w,1]$
\end{itemize}
for some $a,b,c,d,w,x,y \in \NN$.
Note that in the last case if $w \geq a+c$ then $\ell(\mu) \geq \ell(\lambda)$ and the diagram $D_{\lambda/\mu}$ is either not defined or is not basic since it has an empty column.
Hence, we will only consider $w \leq a+c-1$.\\
By Corollary \ref{fathookco}, for the case $\lambda = [a,b]$ and $\mu = [w,x]$, we have the restriction $b \leq 4$ or $w \leq 2$ or $x \leq 3$ or $a = w+1$ or $a+b-w-x \leq 2$.\\
By Corollary \ref{fathook1co}, for the case $\lambda = [a,b]$ and $\mu = [w,x,y,1]$, we have the restriction $w = 1$ or $x = 1$ or $b \leq 3$ or $a+b-w-x-y-1 = 1$.\\
By Lemma \ref{fathook1}, for the case $\lambda = [a,b,c,d]$ where $d \neq 1$ and $\mu = [w,1]$, we have the restriction $1 \in \{a,b,c\}$ or $w \leq 2$.\\
By Lemma \ref{fathook}, for the case $\lambda = [a,b,c,1]$ and $\mu = [w,1]$, we have the restriction $a \leq 2$ or $b \leq 2$ or $c \leq 2$ or $w \leq 3$ or $w = a+c-1$.
\end{proof}

\begin{Rem}
The following shows the diagrams in cases (iii) - (vi) of Proposition \ref{mf1};  here all boxes of a diagram belong to $\lambda$ and the gray boxes
belong to the diagram of $\mu$:
{\flushleft Case \ref{mf1} (\ref{c}):\\}
{\centering \includegraphics[width=3.5cm]{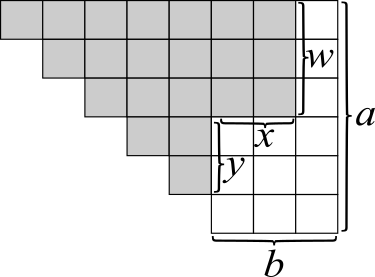}\\
$a+b-w-x-y-1 = 1$ or $w = 1$ or $x = 1$ or $b \leq 3$.\\}
{\flushleft Case \ref{mf1} (\ref{d}):\\}
{\centering \includegraphics[width=3.5cm]{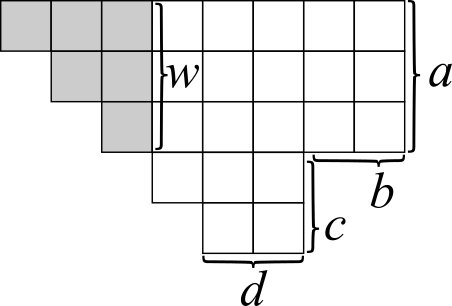}\\
If $d \geq 2$ then $1 \in \{a,b,c\}$ or $w \leq 2$.\\}
{\flushleft Case \ref{mf1} (\ref{e}):\\}
{\centering \includegraphics[width=3.5cm]{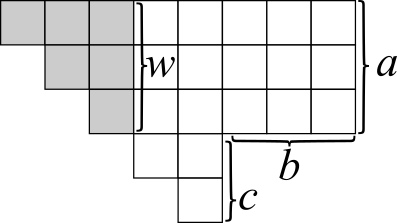}\\
$a \leq 2$ or $b \leq 2$ or $c \leq 2$ or $w \leq 3$ or $w = a+c-1$.\\}
{\flushleft Case \ref{mf1} (\ref{f}):\\}
{\centering \includegraphics[width=3.5cm]{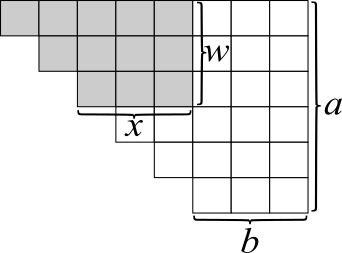}\\
$2 \leq b \leq 4$ or $w \leq 2$ or $x \leq 3$ or $a = w+1$ or $a+b-w-x \leq 2$.\\}
\end{Rem}

To show that the list in Proposition \ref{mf1} gives the classification of $Q$-multiplicity-free skew Schur $Q$-functions we have to prove the $Q$-multiplicity-freeness in each of these cases.
We will do this in the following until we are able to state the
classification result as Theorem \ref{listmf}.

The next lemma shows the $Q$-multiplicity-freeness of \ref{mf1} (\ref{a}).

\begin{Lem}\label{casea}
If $\lambda$ is arbitrary and $\mu = \emptyset$ then $Q_{\lambda/\mu} = Q_{\lambda}$ and, thus, $Q_{\lambda/\mu}$ is $Q$-multiplicity-free.\par
If $\lambda$ is arbitrary and $\mu = (1)$ then
$$Q_{\lambda/\mu} = \sum_{\nu \in E_{\lambda}}{Q_{\nu}},$$
where $E_{\lambda}$ is the set from Definition \ref{E}.
In particular, $Q_{\lambda/\mu}$ is $Q$-multiplicity-free.
\end{Lem}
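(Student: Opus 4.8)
The plan is to treat the two cases separately; in both, $Q$-multiplicity-freeness will drop out of identities that are already available, so the lemma is essentially a bookkeeping statement.

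For $\mu=\emptyset$ there is nothing to compute. By the convention fixed immediately after the definition of the skew Schur $Q$-function we have $Q_{\lambda/\emptyset}=Q_\lambda$, since $D_{\lambda/\emptyset}=D_\lambda$. A single Schur $Q$-function is trivially $Q$-multiplicity-free, because its expansion in the basis $(Q_\nu)_{\nu\in DP}$ has coefficient $f^\lambda_{\emptyset\nu}$ equal to $1$ when $\nu=\lambda$ and $0$ otherwise; thus $f^\lambda_{\emptyset\nu}\in\{0,1\}$ for all $\nu$.

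For $\mu=(1)$, the displayed formula $Q_{\lambda/(1)}=\sum_{\nu\in E_\lambda}Q_\nu$ is precisely the last identity of Proposition \ref{lambda/n}, so I would invoke it directly rather than reprove it. From here $Q$-multiplicity-freeness is immediate: the sum ranges over the \emph{set} $E_\lambda$, so every $Q_\nu$ occurring in it does so with coefficient $1$, whence $f^\lambda_{(1)\nu}\le 1$ for all $\nu\in DP$. That already settles the claim.

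The only point that deserves a word is that the index set is genuinely a set of pairwise distinct partitions, so that no two summands silently merge. This follows from the distinct-parts hypothesis: the corners of $D_\lambda$ lie in pairwise distinct rows (each row of a shifted diagram carries at most one corner), and deleting the corner in row $i$ decrements $\lambda_i$ by one while leaving the other parts fixed. Since $\lambda$ has distinct parts and $(i,i+\lambda_i-1)$ being a corner forces $\lambda_i>\lambda_{i+1}+1$ (or $i=\ell(\lambda)$), the outcome is again a strict partition in $DP$, and distinct rows give distinct outputs; hence corner removal is injective and $E_\lambda$ carries exactly one summand per corner. I anticipate no real obstacle in this lemma: its entire content is the reduction of the $\mu=(1)$ case to Proposition \ref{lambda/n}, together with this elementary observation that deleting distinct corners of a strict shifted shape yields distinct strict partitions.
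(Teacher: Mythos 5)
Your proposal is correct and follows essentially the same route as the paper: the $\mu=\emptyset$ case is the trivial observation that $f^{\lambda}_{\emptyset\nu}\in\{0,1\}$, and the $\mu=(1)$ case is handled by citing the last identity of Proposition \ref{lambda/n}. The extra remark that $E_{\lambda}$ consists of pairwise distinct strict partitions is a harmless (and correct) added check that the paper leaves implicit.
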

\begin{proof}
For $\mu = \emptyset$ we have $Q_{\lambda/\emptyset} = Q_{\lambda}$.
Thus, $f^{\lambda}_{\emptyset \lambda} = 1$ and $f^{\lambda}_{\emptyset \nu} = 0$ for $\nu \neq \lambda$.
Hence, $Q_{\lambda/\emptyset}$ is $Q$-multiplicity-free.\par
The case $\mu = (1)$ is a case of Proposition \ref{lambda/n}.
\end{proof}
\begin{Ex}
Since $E_{(8,6,5,1)} = \{(7,6,5,1),(8,6,4,1),(8,6,5)\}$ we have
$$Q_{(8,6,5,1)/(1)} = Q_{(7,6,5,1)}+Q_{(8,6,4,1)}+Q_{(8,6,5)}.$$
\end{Ex}

Before showing the $Q$-multiplicity-freeness of \ref{mf1} (\ref{b}) we will define a notation in order to describe the decomposition for a subcase of \ref{mf1} (\ref{b}).

\begin{Def}
Let $\lambda = (\lambda_1, \lambda_2, \ldots, \lambda_{\ell(\lambda)}) \in DP$.
Let $\mu = (\lambda_{i_1}, \lambda_{i_2}, \ldots, \lambda_{i_{\ell(\mu)}})$ such that $\{i_1, i_2, \ldots, i_{\ell(\mu)}\} \subseteq \{1, 2, \ldots, \ell(\lambda)\}$.
Then $\lambda \setminus \mu$ is defined as the partition obtained by removing the parts of $\mu$ from $\lambda$.
\end{Def}
\begin{Ex}
For $\lambda = (9,7,5,4,3,1)$ and $\mu = (5,3,1)$ we obtain $\lambda \setminus \mu = (9,7,4)$.
\end{Ex}

\begin{Lem}\label{caseb}
If $\lambda = [a,b]$ where $b \in \{1,2\}$ and $\mu$ is arbitrary then $Q_{\lambda/\mu}$ is $Q$-multiplicity-free.
In particular, if $\lambda = [a,1]$ then $Q_{\lambda/\mu} = Q_{\lambda \setminus \mu}$.
\end{Lem}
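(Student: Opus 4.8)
The plan is to analyze amenable tableaux of $D_{\lambda/\mu}$ directly, using Lemma \ref{shapepathbijection} to recognize the shapes: $\lambda=[a,1]$ is the shifted staircase $\delta_a=(a,a-1,\ldots,1)$, and $\lambda=[a,2]=(a+1,a,\ldots,2)$ is this staircase enlarged by one row, i.e. $D_{[a,2]}=D_{\delta_{a+1}}\setminus\{(a+1,a+1)\}$. The structural observation driving everything is that, since $\mu$ has distinct parts, on each SE-diagonal $\{(i,i+d)\}$ the boxes deleted by $D_\mu$ form an initial segment; hence every diagonal of $D_{\lambda/\mu}$ is a contiguous strip, and the diagonal lengths $L_0\ge L_1\ge\cdots$ are non-increasing (passing from $d$ to $d+1$ the number of deleted boxes drops by at most one).

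First I would treat $\lambda=[a,1]$ and prove the sharper statement $Q_{\lambda/\mu}=Q_{\lambda\setminus\mu}$. By Lemma \ref{diagonal} the absolute values strictly increase along each (contiguous) diagonal, so any two boxes sharing a value $i$ lie on different diagonals. Writing $m_k$ for the number of diagonals of length at least $k$, I would establish that for an amenable $T$ one has $c(T)_k\le m_k$ for every $k$; since $\sum_k c(T)_k=|D_{\lambda/\mu}|=\sum_k m_k$, these inequalities must all be equalities, so every amenable tableau has content $(m_1,m_2,\ldots)$, which a direct count identifies with $\lambda\setminus\mu$. Uniqueness of the tableau then follows: with the absolute values pinned down we have $T^{(i)}=P_i$, and since $\{$diagonals of length $\ge i\}$ is an initial run whose successive $i$-th boxes are edge-adjacent, each $P_i$ is connected; hence Lemma \ref{2fillings} together with the fitting requirement leaves a unique marking. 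Thus $f^{\lambda}_{\mu\nu}=\delta_{\nu,\,\lambda\setminus\mu}$.

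For $\lambda=[a,2]$ I would exploit $D_{\lambda/\mu}=D_{\delta_{a+1}/\mu}\setminus\{(a+1,a+1)\}$: deleting the bottom corner releases the value on the principal diagonal, so, unlike the previous case, several contents $\nu$ genuinely occur (already $Q_{(3,2)/(2)}=Q_{(3)}+Q_{(2,1)}$), and the point is now to show that each occurs with multiplicity one. I would again fix the absolute values diagonal by diagonal, using Lemmas \ref{diagonal} and \ref{firstrows} together with the constraint that an amenable content lies in $DP$ to check that for each target content $\nu$ there is at most one admissible pattern of absolute values, and then invoke the same connectivity-and-fitting argument to pin down the markings.

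The main obstacle is precisely the inequality $c(T)_k\le m_k$ (respectively, the enumeration of admissible absolute-value patterns in the $[a,2]$ case): a purely geometric diagonal count does not forbid a large entry from sitting on a short diagonal, and this exclusion has to be extracted from amenability. Here I would combine Lemma \ref{diagonal} with the amenability criterion of Lemma \ref{checklist} — crucially the fact that the content of an amenable tableau is strictly decreasing — to rule out such configurations uniformly in $\mu$. By contrast, the uniqueness of the marking via Lemma \ref{2fillings}, and the combinatorial identity $(m_1,m_2,\ldots)=\lambda\setminus\mu$, are routine.
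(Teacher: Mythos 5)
Your architecture for $\lambda=[a,1]$ (show every amenable tableau has content $(m_1,m_2,\ldots)=\lambda\setminus\mu$, then pin down the marks via connectedness of the $P_i$ and the fitting condition) would indeed prove the lemma, but the step you yourself single out as the main obstacle, namely $c(T)_k\le m_k$, is not established, and the tools you propose for it provably do not suffice. Lemma \ref{diagonal} only says that the values on each (contiguous) diagonal are distinct and increasing, and strictness of the content only gives $c(T)_1>c(T)_2>\cdots$; combining these yields the inequality $\sum_{j\ge k}c(T)_j\ge\sum_{j\ge k}m_j$ (each diagonal of length $L$ carries at most $k-1$ values below $k$), which is a dominance comparison pointing in the \emph{wrong} direction and does not force $c(T)=m$ even among strict partitions. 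Concretely, take $\lambda=(5,4,3,2,1)=[5,1]$ and $\mu=(4,2)$, so $m=\lambda\setminus\mu=(5,3,1)$: the strict content $(4,3,2)$ passes every diagonal-counting test and every consequence of Lemma \ref{firstrows}, and is realized by genuine tableaux of shape $D_{\lambda/\mu}$ in the sense of Definition \ref{tableaudef}; these fillings are excluded only by the second (marked-letter) phase of the amenability scan, i.e.\ by condition b) of Definition \ref{amenable} (equivalently condition (4) of Lemma \ref{checklist}), and the violation occurs at a marked $2'$ interacting with a later $1$ --- nothing a count of diagonals can see. So the central inequality needs a genuinely different argument. The $[a,2]$ half is in worse shape: there $T^{(i)}$ can be disconnected (already for $Q_{(3,2)/(2)}$ and content $(2,1)$ the set $T^{(1)}$ has two components), so your ``connectivity-and-fitting'' step does not determine the marks, and one again needs the marked-letter conditions to eliminate the extra fillings; moreover ``at most one admissible pattern of absolute values per content'' is asserted, not proved.

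For comparison, the paper sidesteps all of this. For $b=2$ it observes that $D_{\lambda/\mu}^{ot}$ has shape $D_{\alpha/(1)}$, so by Lemma \ref{ot} and Proposition \ref{lambda/n} (via Lemma \ref{casea}) the function is a multiplicity-free sum $\sum_{\nu\in E_\alpha}Q_\nu$; for $b=1$ it cites \cite[Lemma 4.19]{Schure}. If you want a self-contained direct proof along your lines, the place to look for the missing ingredient is the argument of Lemma \ref{uniquefilling}, which handles exactly this kind of ``unique content, unique filling'' statement by repeatedly invoking Lemma \ref{unmarked} and the scanning statistics rather than diagonal counts.
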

\begin{proof}
Case 1: $b = 2$.\\
Then $D_{\lambda/\mu}^{ot}$ has shape $D_{\alpha/(1)}$ for an $\alpha \in DP$.
So, by Lemma \ref{ot}, $Q_{\lambda/\mu} = Q_{D_{\lambda/\mu}^{ot}} = Q_{\alpha/(1)}$.
By Lemma \ref{casea}, $Q_{\alpha/(1)}$ is $Q$-multiplicity-free.\\
Case 2: $b = 1$.\\
The $Q$-multiplicity-freeness as well as the decomposition of $Q_{\lambda/\mu}$ is is proved in \cite[Lemma 4.19]{Schure}.
\end{proof}
\begin{Rem}
In \cite{Salmasian} there is already a proof for the statement that $Q_{[a,1]/\mu} = Q_{\nu}$ for some $\nu$.
Using Lemma \ref{ot}, this statement follows immediately and Lemma \ref{caseb} helps to obtain this $\nu$.
\end{Rem}

We postpone to prove the $Q$-multiplicity-freeness of \ref{mf1} (\ref{c}).
We will first show the $Q$-multiplicity-freeness of \ref{mf1} (\ref{d}) and then prove that \ref{mf1} (\ref{c}) is the orthogonally transposed version of \ref{mf1} (\ref{d}).

\begin{Lem}\label{uniquefilling}
Let $D$ be a basic diagram of shape $D_{\lambda/[s,1]}$ for some $s$.
If the first $a$ rows of $D$ form a diagram $D_{\alpha/\beta}$ where $\alpha = [a,b]$ and $\beta = [w,1]$ then the filling of the boxes of the first $a$ rows of $D$ in any amenable tableau $T$ of $D$ is unique up to marks.
\end{Lem}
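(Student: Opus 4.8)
The plan is to prove the stronger, shape-intrinsic statement that for every amenable tableau $T$ of $D$ and every box $(x,y)$ in the first $a$ rows, $|T(x,y)|$ equals the number $p(x,y)$ of boxes of $D$ that lie on the diagonal through $(x,y)$ weakly above it. Equivalently, writing $P_1,P_2,\dots$ for the border strips produced by Salmasian's algorithm (Definition~\ref{Salmasian'salgorithm}), I want $T^{(k)}$ and $P_k$ to coincide on the first $a$ rows for all $k$; since $T_{\lambda/\mu}$ realises precisely these absolute values, that is exactly the assertion that the restriction of $T$ agrees with $T_{\lambda/\mu}$ up to marks. One inequality is free: iterating Lemma~\ref{diagonal} down the diagonal from its top box (which has value at least $1$) gives $|T(x,y)|\ge p(x,y)$.

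For the reverse inequality I would first use Lemma~\ref{firstrows}, which gives $|T(x,y)|\le x$. If the diagonal through $(x,y)$ meets the first row then its top box sits in row $1$, so $p(x,y)=x$ and the two bounds coincide, forcing $|T(x,y)|=p(x,y)$. In particular the whole first row, and more generally the top row of each successively peeled diagram, is pinned this way. The feature of $\mu=[s,1]$ that I would exploit is that it is a staircase: the column immediately to the left of the leftmost column $w+1$ of $D_{\alpha/\beta}$ contains no box, i.e.\ $(i-1,w)\notin D$ for all $i$, so every box of column $w+1$ is the top box of its own diagonal and hence lies in $P_1$.

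I would then induct on $|R|$, where $R$ is the set of boxes in the first $a$ rows. The inductive step is to establish $T^{(1)}\cap R=P_1\cap R$; granting this, removing the border strip $P_1$ and relabelling $k\mapsto k-1$ turns $T$ into an amenable tableau of $U_2(\lambda/\mu)$ (as in the first half of the proof of Lemma~\ref{parts}), whose top rows again form a rectangle-plus-staircase diagram of the same type but with strictly fewer boxes, so the induction hypothesis identifies $T^{(k)}\cap R$ with $P_k\cap R$ for all $k\ge 2$. Because both $\{T^{(k)}\cap R\}_k$ and $\{P_k\cap R\}_k$ partition $R$, the termwise inclusions so obtained are forced to be equalities. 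Now $P_1\cap R$ is the union of the first row (handled by the previous paragraph) and the segment of column $w+1$ below it, so the entire content of the inductive step is to show that no box of this segment can carry an absolute value exceeding $1$.

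This last point is the crux, and it is exactly where Lemma~\ref{firstrows} is silent and genuine amenability must enter. I would apply Lemma~\ref{checklist} at the topmost box $(i,w+1)$ of column $w+1$ whose entry had absolute value $k\ge 2$: since $w+1$ is the leftmost column, $\mathcal{S}^{\boxtimes}_{\lambda/\mu}(i,w+1)$ is precisely the block formed by the first $i$ rows, and the box $(i,w+1)$ forces the comparison $|\mathcal{S}^{\boxtimes}_T(i,w+1)^{(k-1)}|\ge|\mathcal{S}^{\boxtimes}_T(i,w+1)^{(k)}|$ on this block --- via condition~(2) if it is unmarked, and via $(i,w+1)\in\mathcal{B}_T^{(k)}$ together with condition~(3) if it is marked, using $(i-1,w)\notin D$. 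Weighing the unmarked counts in this block against Lemma~\ref{unmarked} and the fitting condition~(5) then yields the contradiction. A cleaner alternative for this nugget is to pass to the orthogonal transpose through Lemma~\ref{ot}: the leftmost column of $D_{\alpha/\beta}$ becomes a top row of $D^{ot}$, on which Lemma~\ref{firstrows} bounds the absolute values directly. Either way, forcing the lower-left (leftmost-column) boxes down to their minimal values is the main obstacle; the rest is bookkeeping with Lemmas~\ref{diagonal} and~\ref{firstrows}.
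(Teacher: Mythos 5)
Your global strategy --- pin the first row with Lemma~\ref{firstrows} and the diagonal bound from Lemma~\ref{diagonal}, show that $T^{(1)}$ coincides with $P_1$ on the first $a$ rows, then peel and induct --- is sound and parallels the reduction the paper performs in its Case~2 ($w<a-1$), where the top $w+1$ rows force $P_1$ to be a hook which is then removed. The problem is the crux you yourself single out, namely ruling out a value $k\ge 2$ in the leftmost column $w+1$; this is exactly where the paper spends most of its effort (its Case~1.1), and it is where your proposal has a genuine gap. Conditions~(2) and~(3) of Lemma~\ref{checklist}, Lemma~\ref{unmarked} and the fitting condition~(5) do \emph{not} suffice: each of them asserts an inequality of the form (number of unmarked $(k-1)$s) $\ge$ (number of unmarked $k$s), which is the same direction as what every amenable tableau satisfies anyway, so no contradiction can be extracted from them without further structural information about rows $2,\dots,i-1$. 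Concretely, for the $4\times 4$ square $D_{(7,6,5,4)/(3,2,1)}$ the filling whose rows (left to right, top to bottom) read $1'\,1\,1\,1$, then $1'\,2'\,2\,2$, then $1\,2\,3'\,3$, then $2'\,3\,4'\,4$ is a valid tableau of content $(6,5,3,2)$ whose leftmost column ends in a $2'$; it satisfies conditions (1), (2), (3) and (5) of Lemma~\ref{checklist} for $k=2$ as well as Lemma~\ref{unmarked}, and is excluded only by conditions (4) and (6), which you never invoke. Your fallback via Lemma~\ref{ot} also rests on a false premise: reflection along $\{(z,-z)\}$ sends the \emph{rightmost} column of $D$ to the top row of $D^{ot}$, so the leftmost column becomes the \emph{bottom} row of $D^{ot}$, where Lemma~\ref{firstrows} gives no useful bound.

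For comparison, the paper closes this step by an inner induction on the row index $j$ inside the block, carrying the hypothesis that $T^{(1)},T^{(2)},\dots$ restricted to the first $j-1$ rows are hooks at $(1,1),(2,2),\dots$; this yields exact counts of the unmarked $(t-1)$s and $t$s lying above row $j$, and the contradiction is then obtained by combining Lemma~\ref{unmarked} with a direct appeal to condition~(a) of Definition~\ref{amenable} applied to the reading word at the offending box. Some argument of that calibre (or a correct deployment of conditions (4) and (6) of Lemma~\ref{checklist}) is needed to complete your inductive step; as written, the proposal does not prove the lemma.
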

\begin{proof}
Let the diagram be shifted such that the uppermost leftmost box is $(1,1)$, the uppermost rightmost box is $(1,a+b-w-1)$ and the lowermost rightmost box is the box $(a,a+b-w-1)$.
Let $T$ be an amenable tableau of $D$.\par
Case 1: $w = a-1$.\par
Then the uppermost leftmost box is $(1,1)$, the uppermost rightmost box is $(1,b)$, the lowermost leftmost box is $(a,1)$ and the lowermost rightmost box is $(a,b)$.
Let $T_{(j)}$ be the subtableau of $T$ consisting of the boxes with their entries of the first $j$ rows.
We need to show that $T_{(j)} \cap T^{(i)}$ is a $(j+1-i,b+1-i)$-hook at $(i,i)$ for $1 \leq i \leq \min\{b,j\}$ where $T^{(i)}$ is as in Definition \ref{boxeswithi}.\par
Case 1.1: $T_{(j)} \cap T^{(1)}$ is not a $(j,b)$-hook at $(1,1)$ but $T_{(j-1)} \cap T^{(i)}$ is a $(j-i,b+1-i)$-hook at $(i,i)$ for all $1 \leq i \leq \min\{b,j-1\}$ for some $j$.\par
Then we have $T(j,1) > 1$.
Let $t = T(j,1)$.
For $t \in \{j', j\}$, by Lemma \ref{firstrows}, all boxes in the $j^{\textrm{th}}$ row are then filled with entries from $\{j', j\}$.
The remark after Definition \ref{amenable} implies that $c^{(u)}(T)_j = b \geq c^{(u)}(T)_{j-1}$; a contradiction to Lemma \ref{unmarked}.Thus, we have $1 < t < j'$.
Then the last box of $T_{(j-1)} \cap T^{(t)}$ contains a $|t|'$, for otherwise, by the remark after Definition \ref{amenable}, we have at least as many $|t|$s as $(|t|-1)$s, which contradicts Lemma \ref{unmarked}.
We have $|T(j,2)| > |t|$.
Otherwise, we would have at least as many $|t|$s as $(|t|-1)$s, which contradicts Lemma \ref{unmarked}.\par
Repeating this argument, we get $|T(j,s)| > |T(j,s-1)|$ for $2 \leq s \leq r$ where $r$ is such that $T(j,r+1)$ is the leftmost box with an entry that does not appear in the first $(j-1)^{\textrm{th}}$ rows.\par
By Lemma \ref{firstrows}, $T(j,r+1) \in \{j', j\}$ and, hence $T(j,k) \in \{j', j\}$ for $r+1 \leq k \leq b$.
If $T(j-1,r+1) \notin \{(j-1)', j-1\}$ then the remark after Definition \ref{amenable} implies that $c^{(u)}(T)_j > c^{(u)}(T)_{j-1}$; a contradiction to Lemma \ref{unmarked}.
Hence, we have $T(j-1,r+1) \in \{(j-1)', j-1\}$.
If $T(j,r) \notin \{(j-1)', j-1\}$ then, again, the remark after Definition \ref{amenable} implies that $c^{(u)}(T)_j \geq c^{(u)}(T)_{j-1}$; a contradiction to Lemma \ref{unmarked}.
If $T(j,r) \in \{(j-1)', j-1\}$ then $T(j-1,r+1) = (j-1)'$.
Let $(j,r+1)$ be the box of the $l^{\textrm{th}}$ letter of the reading word $w(T)$.
Then $m_{j-1}(n-l) = m_j(n-l)$ and $w(T)_l \in \{j', j\}$, contradicting Definition \ref{amenable} $a)$.\par
Case 1.2: $T_{(j)} \cap T^{(v)}$ is not a $(j+1-v,b+1-v)$-hook at $(v,v)$ but $T_{(j-1)} \cap T^{(i)}$ is a $(j+1-i,b-i)$-hook at $(i,i)$ for all $1 \leq i \leq \min\{b-1,j\}$ for some $j$ and some minimal $v \leq j-1$.\par
Let $j$ be minimal with respect to this property.
By Case 1.1, we may assume that $v > 1$.
Let $v$ be minimal with respect to this property.
Then we may take $T_{(j)}$, remove $P_1, P_2, \ldots, P_{v-1}$, and replace each entry $x$ by $x-v+1$ for all $x \geq v$.
In this way, we get a tableau $U$ of shape $D_{\alpha'/\beta'}$ where $\alpha' = [a-v+1,b-v+1]$ and $\beta' = [(a-v+1)-1,1]$ such that $U_{(j-v+1)} \cap U^{(1)}$ is not a $(j-v+1,b-v+1)$-hook at $(1,1)$; a contradiction to the proven fact that $T_{(j)} \cap T^{(1)}$ is a $(j,b)$-hook at $(1,1)$ for each $1 \leq j \leq \min\{a,b\}$ if $T$ is of shape $D_{[a,b]/[a-1,1]}$.\par
Case 2: $w < a-1$.\par
The tableau $T_{(w+1)}$ is a tableau of shape $D_{\alpha'/\beta'}$ where $\alpha' = [w+1,a+b-w-1]$ and $\beta' = [w,1]$.
Thus, $P_1$ is a $(a+b-w-1,b)$-hook at $(1,1)$.
After removing $P_1$ and replacing each entry $x$ by $x-1$ and $x'$ by $(x-1)'$ for all $2 \leq x \leq \ell(c(T))$, we get a tableau of shape $D_{\alpha''/\beta''}$ where $\alpha'' = [a-1,b]$ and $\beta'' = [w,1]$ where $w \leq a-2$.
Using the same argument, $P_2$ is a $(w+1,a+b-w-2)$-hook at $(2,2)$.\par
Repeating this argument, we find that all non-empty $P_i$s are hooks at $(i,i)$ and, therefore, the filling of the boxes of the first $k$ rows of $D$ in any amenable tableau $T$ is unique up to marks.
\end{proof}
\begin{Rem}
Since, by the remark after Definition \ref{amenable}, each hook $T^{(i)}$ must be fitting, this shows that there is only one amenable filling for diagrams of shape $D_{\lambda/\mu}$ where $\lambda = [a,b]$ and $\mu = [w,1]$.
Different proofs of this fact were given by Salmasian \cite[Proposition 3.29]{Salmasian} and DeWitt \cite[Theorem IV.3]{DeWitt}.
\end{Rem}

\begin{Lem}\label{cased3}
Let $\lambda = [a,b,1,d]$ and $\mu = [w,1]$.
Then $Q_{\lambda/\mu}$ is $Q$-multiplicity-free.
\end{Lem}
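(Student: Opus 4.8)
The plan is to prove that each $\nu\in DP$ is the content of at most one amenable tableau of $D_{\lambda/\mu}$, which yields $f^{\lambda}_{\mu\nu}\le 1$ for all $\nu$ and hence the $Q$-multiplicity-freeness. From Definition \ref{shapepath}, $\lambda=[a,b,1,d]$ means $\lambda=(a+b+d,a+b+d-1,\dots,b+d+1,d)$, so $D_{\lambda/\mu}$ consists of its first $a$ rows, which form the fat hook $D_{[a,\,b+d+1]/[w,1]}$, together with one bottom row of $d$ boxes occupying columns $a+1,\dots,a+d$ of row $a+1$. (We may assume $2\le w\le a$: the value $w=1$ gives $\mu=(1)$, covered by Lemma \ref{casea}, and $w\le a$ is forced by $D_{\lambda/\mu}$ being basic.) First I would apply Lemma \ref{uniquefilling} to the first $a$ rows: in every amenable tableau $T$ their filling is unique up to marks, so the values $|T(x,y)|$ for $x\le a$ agree in all amenable tableaux. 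When $w=a$ these rows form a shifted rectangle, which I would dispatch by the same reasoning (or by the uniqueness results cited after Lemma \ref{uniquefilling}). I would then record the entries of row $a$, which are non-decreasing along the columns, as they govern the bottom row.

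Second, I would pin down the bottom row. Its entries are weakly increasing and bounded by $a+1$ (Lemma \ref{firstrows}); comparing $(a+1,a+j)$ with the box above it and, via Lemma \ref{diagonal}, with $(a,a+j-1)$, and inserting the now-fixed entries of row $a$, forces each bottom entry to lie above an explicit threshold increasing with $j$. Together with weak increase this determines the multiset of bottom-row values --- equivalently the numbers $s_i$ of boxes of value $i$ in row $a+1$ --- from $\nu$ and the fixed content of the first $a$ rows. Thus the bottom-row values and their positions are functions of $\nu$, and any two amenable tableaux of content $\nu$ can differ only in their marks.

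Third comes the marking. Each $T^{(i)}$ decomposes as a disjoint union of a hook $H_i$ lying in the first $a$ rows and a horizontal strip $S_i$ in the bottom row (either possibly empty); one checks that when $S_i\neq\emptyset$ it lies strictly to the left of $H_i$, hence is the leftmost component of $T^{(i)}$. The fitting requirement (Definition \ref{fitting}, forced by Lemma \ref{checklist}(5),(6)) fixes the mark on the last box of $S_i$, and Lemma \ref{2fillings} fixes all marks of each component except the last box of the non-leftmost part $H_i$. These last marks are determined by the strict inequalities $c^{(u)}_{k-1}>c^{(u)}_k$ of Lemma \ref{checklist}(1) (equivalently Lemma \ref{unmarked}): computing the unmarked counts shows that a value $i\le a$ can appear in the bottom row at most once, and that the last box of $H_i$ must be marked precisely when value $i$ occurs there. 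Hence the entire marking, and so the whole tableau, is a function of $\nu$, giving uniqueness.

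The crux is this final step. One must show that the amenability conditions of Lemma \ref{checklist} leave no latitude in the marks, and in particular that the configurations which could otherwise produce two tableaux of equal content --- a value $\le a$ repeated in the bottom row, or an undetermined mark on some $H_i$ --- are incompatible with the chain of strict inequalities among the $c^{(u)}_i$. The bookkeeping of these unmarked counts for general $w,a,b,d$ (not only the rectangle case $w=a$) is where the effort concentrates; the transformation lemmas of Section 3 do not appear to lighten it, so I anticipate a direct case analysis in the spirit of the examples elsewhere in the paper.
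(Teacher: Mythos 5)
Your outline follows the paper's own proof of Lemma \ref{cased3} step for step: Lemma \ref{uniquefilling} pins down the first $a$ rows up to marks, the bottom row's content (hence its filling up to marks) is forced by $\nu$, and the only remaining latitude sits in the marks on the last boxes of the strips $T^{(i)}$. But the proposal stops exactly where the real work begins. The two assertions that carry the whole lemma --- that a value $i$ can occur in row $a+1$ at most once, and that the last box of the hook of $T^{(i)}$ inside the first $a$ rows must be marked whenever $i$ occurs in the bottom row --- are announced as consequences of ``computing the unmarked counts'' and then, in your own closing paragraph, conceded to be undone bookkeeping. That computation is the entire content of the lemma, and it is not a routine case check: the paper closes it with a specific descending induction. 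If the last box of the hook of $T^{(k)}$ is unmarked while $k$ also occurs in the bottom row, then comparing $c^{(u)}_{k-1}$ with $c^{(u)}_{k}$ (consecutive hooks have horizontal arms whose lengths differ by exactly one, so the fitting condition leaves no slack) shows via Lemma \ref{unmarked} that $k-1$ must also occur in the bottom row, in a column strictly further to the left; iterating down to $1$ forces an occurrence of $1$ in a column to the left of the first column of the diagram, which is absurd. Without this chain, or an equivalent argument, your third step is an assertion rather than a proof, and the lemma is not established.

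A secondary inaccuracy in the setup: $T^{(i)}$ need not split into a hook in the first $a$ rows plus a \emph{disconnected} strip lying strictly to its left. For $i$ in the range where the vertical arm of the $i$-th hook occupies a column between $a+1$ and $a+d$, the bottom-row box in that column can sit directly below the arm, so that $T^{(i)}$ is a single border strip reaching into row $a+1$. That configuration is harmless for uniqueness (a single component is completely determined by Lemma \ref{2fillings} together with the fitting condition), but your claim that $S_i$, when nonempty, is always the leftmost component is precisely what you feed into the fitting argument, so this case split has to be made explicit before the mark analysis can be set up correctly.
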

\begin{proof}
Let the diagram $D = D_{\lambda/\mu}$ be shifted such that the uppermost leftmost box is $(1,1)$.
Since case $w = 1$ is shown in Lemma \ref{casea}, we only have to show case $w \geq 2$.
The subdiagram consisting of the first $a$ rows is $D_{\alpha/\beta}$ where $\alpha = [a,p]$ and $\beta = [q,1]$ for some $p,q$.
By Lemma \ref{uniquefilling}, it has a unique filling up to marks in the $a^{\textrm{th}}$ row.\par
Suppose there are two amenable tableaux $T_1$ and $T_2$ of $D$ of the same content.
Then the difference between these two tableaux are marks since the content of the $(a+1)^{\textrm{th}}$ row and, therefore, the filling of this row up to marks is determined.
Thus, there is a minimal $k$ such that an entry $k$ is in the lowermost row and there is a box $(a,k)$ with entry $k'$ in $T_1$, say, and with entry $k$ in $T_2$.
Since the $k$ in the $(a+1)^{\textrm{th}}$ row must be in a column to the left of the $k^{\textrm{th}}$ column, we have $k > 1$.
In $T_2$, if there is no $k-1$ in the $(a+1)^{\textrm{th}}$ row then $c^{(u)}(T_2)_k = b = c^{(u)}(T_2)_{k-1}$, which is a contradiction to Lemma \ref{unmarked}.
Thus, there is a $k-1$ in the $(a+1)^{\textrm{th}}$ row in a box to the left of the $(k-1)^{\textrm{th}}$ column.
If there is no $k-2$ in the $(a+1)^{\textrm{th}}$ row then $c^{(u)}(T_2)_{k-1} = b = c^{(u)}(T_2)_{k-2}$, which is a contradiction to Lemma \ref{unmarked}.
Thus, there is a $k-2$ in the $(a+1)^{\textrm{th}}$ row in a box to the left of the $(k-2)^{\textrm{th}}$ column.\par
Repeating this argument for $k-3, k-4, \ldots 1$, there must be a $1$ in a box to the left of the first column; a contradiction.
Thus, there are no two amenable tableaux $T_1$ and $T_2$ of $D$ of the same content.
\end{proof}
\begin{Ex}
For $\lambda = [4,2,1,3]$ and $\mu = [3,1]$ we have
$Q_{(9,8,7,6,3)/(3,2,1)} = Q_{(9,8,6,4)}+Q_{(9,8,6,3,1)}+Q_{(9,8,5,4,1)}+Q_{(9,8,5,3,2)}+Q_{(9,7,6,4,1)}+Q_{(9,7,6,3,2)}+Q_{(9,7,5,4,2)}.$
\end{Ex}
\begin{Co}\label{cased1}
Let $\lambda = [1,b,c,d]$ and $\mu = [w,1]$.
Then $Q_{\lambda/\mu}$ is $Q$-multiplicity-free.
\end{Co}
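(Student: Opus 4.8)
The plan is to strip off the long first row and reduce the statement to a shape already settled by Lemma \ref{uniquefilling}. The case $w=1$ is immediate from Lemma \ref{casea}, so I would assume $w\ge 2$ throughout.

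First I would normalise $D=D_{[1,b,c,d]/[w,1]}$ so that its top-left box is $(1,1)$. Then row $1$ occupies the columns $w+1,\dots,b+c+d$, while rows $2,\dots,c+1$ form the staircase reaching out to column $c+d$; in other words $D$ is exactly the staircase-over-staircase diagram $D_{[c+1,d]/[w,1]}$ together with a horizontal strip of $b$ extra boxes glued to the right end of row $1$ (and $b\ge 1$, since $a=1$ really is a corner). By Lemma \ref{firstrows} every entry of row $1$ lies in $\{1',1\}$, and because a row carries at most one marked letter and is weakly increasing, the $b$ rightmost boxes of row $1$ must carry the unmarked letter $1$ in every amenable tableau.

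This sets up a content-shifting correspondence between amenable tableaux of $D_{[1,b,c,d]/[w,1]}$ of content $\nu$ and amenable tableaux of $D_{[c+1,d]/[w,1]}$ of content $\nu-(b)$, obtained by deleting the extra strip, the inverse being to glue back a strip of $b$ ones. Gluing back preserves amenability by repeated use of Lemma \ref{add1}. Since $[c+1,d]$ has a single corner, $D_{[c+1,d]/[w,1]}$ is of the form $D_{\lambda'/\mu'}$ with $\lambda'=[a',b']$ and $\mu'=[w,1]$, so by Lemma \ref{uniquefilling} and the remark following it there is at most one amenable tableau of each content. Consequently $f^{[1,b,c,d]}_{[w,1]\,\nu}=f^{[c+1,d]}_{[w,1]\,(\nu-(b))}\le 1$ for every $\nu$, which is precisely $Q$-multiplicity-freeness.

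The step that needs genuine care — and which I expect to be the main obstacle — is the deletion direction of this correspondence: removing trailing ones does not preserve amenability for an arbitrary word (for instance the word $211$ is amenable while $21$ is not), so Lemma \ref{add1} cannot simply be run in reverse. I would instead verify amenability of the deleted tableau directly via the tableau criterion of Lemma \ref{checklist}. For $k\ge 3$ the conditions are untouched, since they never involve the letter $1$; for $k=2$ one must check that deleting the strip keeps $c^{(u)}_1>c^{(u)}_2$ together with the inequalities on the sets $\mathcal{S}^{\boxtimes}_T(x,y)^{(1)}$, and this holds because the staircase shape forces row $1$ to supply a strict surplus of unmarked $1$'s over $2$'s weakly above and to the right of every box labelled $2$. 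Alternatively, one could bypass the bijection and mimic the hook-peeling argument behind Lemma \ref{uniquefilling} and Lemma \ref{cased3}, showing that each $T^{(i)}$ is forced to be a hook once the forced all-$1$ overhang of row $1$ is absorbed into $T^{(1)}$.
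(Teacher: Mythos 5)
Your reduction is the wrong one, and the flaw is fatal rather than a matter of "care". You propose to delete only the $b$ overhang boxes at the right end of row $1$ and to land in $D'=D_{[c+1,d]/[w,1]}$, claiming $f^{[1,b,c,d]}_{[w,1]\,\nu}=f^{[c+1,d]}_{[w,1]\,(\nu-(b))}$. But by the remark after Lemma \ref{uniquefilling}, a diagram of shape $D_{[a',b']/[w,1]}$ has exactly \emph{one} amenable tableau in total (one fixed content), not merely one per content. So if your deletion map really carried amenable tableaux of $D$ injectively to amenable tableaux of $D'$, then $D$ would have at most one amenable tableau altogether and $Q_{\lambda/\mu}$ would equal a single $Q_\nu$. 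That is false: for $\lambda=[1,4,5,2]$, $\mu=[3,1]$ the paper records six summands in $Q_{(11,6,5,4,3,2)/(3,2,1)}$. The reason is that $c(T)_1$ is not constant over amenable tableaux of $D$: besides the forced row of $1$'s, the set $T^{(1)}$ may additionally contain a variable number of boxes at the top of column $w+1$ (the boxes $(2,w+1),(3,w+1),\dots$ have no box to their upper-left in $D_{\lambda/\mu}$), and different choices give different contents $\nu$ and genuinely different tableaux. Your deletion sends all of these to fillings of the single shape $D'$, of which at most one can be amenable; so the map neither preserves amenability nor yields the claimed identity, and no patch of the $k=2$ conditions in Lemma \ref{checklist} can rescue it. Your closing "alternative" (peeling hooks after absorbing the overhang into $T^{(1)}$) points in the right direction but is not carried out; the shapes $T^{(i)}$ for $i\ge 2$ are not hooks at positions independent of the content, precisely because the complement of $T^{(1)}$ varies.

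The paper's proof instead removes \emph{all} of $T^{(1)}$ (row $1$ together with whatever initial segment of column $w+1$ carries entries from $\{1',1\}$), observes that two amenable tableaux of equal content must remove the same set, so the residual diagram $R_T$ is determined by $\nu$, and then identifies $R_T^{ot}$ as a shape of the form $D_{[a',b',\dots]/[y-1,1]}$ covered by Lemma \ref{uniquefilling} or Lemma \ref{cased3}, each of which admits at most one amenable tableau per content. If you want to salvage your approach, you must let the stripped-off layer be the full, content-dependent set $T^{(1)}$ rather than the fixed $b$-box overhang, and then you are essentially forced into the paper's case analysis of the resulting inner shapes $\beta\in\{[v,1],[v,2],[z,1,v,1]\}$.
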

\begin{proof}
For each tableau $T$ of shape $D_{\lambda/\mu}$ let $R_T$ be the diagram of the tableau after removing the boxes of $T^{(1)}$.
By Lemma \ref{firstrows}, the first row has only entries from $\{1', 1\}$.
Two amenable tableaux $T_1$ and $T_2$ of shape $D_{\lambda/\mu}$ such that $R_{T_1} \neq R_{T_2}$ cannot have the same content because then $c(T_1)_1 \neq c(T_2)_1$.
Thus, $R_T = R_{T_1} = R_{T_2}$ has shape $D_{\alpha/\beta}$ where $\alpha = [c,y]$ and $\beta \in \{[v,1], [v,2], [z,1,v,1]\}$ for some $v$ and $z$.
If for all $T$ the diagram $R_T$ has no two amenable tableaux of the same content then $Q_{\lambda/\mu}$ is $Q$-multiplicity-free.\par
We have $R_T^{ot} = D_{\alpha'/\beta'}$ where $\alpha' = [c+y-v-1,v+1]$ and $\beta' = [y-1,1]$ for $\alpha = [c,y]$ and $\beta = [v,1]$.
We have $R_T^{ot} = D_{\alpha'/\beta'}$ where $\alpha' = [c+y-v-2,v,1,1]$ and $\beta' = [y-1,1]$ for $\alpha = [c,y]$ and $\beta = [v,2]$.
In addition, we have $R_T^{ot} = D_{\alpha'/\beta'}$ where $\alpha' = [c+y-z-v-2,z,1,v+1]$ and $\beta' = [y-1,1]$ for $\alpha = [c,y]$ and $\beta = [z,1,v,1]$.\par
By Lemmas \ref{uniquefilling} and \ref{cased3}, in each of these cases $R_T^{ot}$ does not have two amenable tableaux of the same content.
Thus, $Q_{\lambda/\mu}$ is $Q$-multiplicity-free.
\end{proof}
\begin{Ex}
For $\lambda = [1,4,5,2]$ and $\mu = [3,1]$ we have
$Q_{(11,6,5,4,3,2)/(3,2,1)} = Q_{(11,6,5,3)}+Q_{(10,6,5,4)}+Q_{(10,6,5,3,1)}+Q_{(9,6,5,4,1)}+Q_{(9,6,5,3,2)}+Q_{(8,6,5,4,2)}.$
\end{Ex}

\begin{Lem}\label{cased2}
Let $\lambda = [a,1,c,d]$, $d \neq 1$ and $\mu = [w,1]$.
Then $Q_{\lambda/\mu}$ is $Q$-multiplicity-free.
\end{Lem}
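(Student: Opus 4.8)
The plan is to induct on $a$, the length of the upper staircase block. The case $w=1$ is $\mu=(1)$ and is already settled by Lemma \ref{casea}, so I assume $w\ge 2$; the base case $a=1$ is exactly Corollary \ref{cased1}. For the inductive step I would shift the diagram so that its uppermost leftmost box is $(1,1)$ and record the geometry forced by $b=1$: the first $a$ rows form a staircase whose right edge is the vertical line in column $a+c+d$, the rows $a+1,\dots,a+c$ form a second staircase whose right edge is the vertical line in column $a+c+d-1$, and the two blocks meet with a one-column offset. Throughout, $\nu=c(T_{\lambda/\mu})$ and each $P_i$ is connected by the standing Hypothesis.

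The heart of the argument is the claim that in \emph{every} amenable tableau $T$ of $D_{\lambda/\mu}$ the first diagonal is filled entirely with $1$'s, that is, $T^{(1)}=P_1$. By Lemma \ref{diagonal} we always have $T^{(1)}\subseteq P_1$, and by Lemma \ref{firstrows} the first row consists only of letters from $\{1',1\}$; the remaining boxes of $P_1$ all lie in the leftmost column. I would show that if some such box carried an entry $\ge 2$, then Lemma \ref{firstrows} together with the column condition of Definition \ref{tableaudef} would force an entire row of $2$'s, producing (through the count $c^{(u)}_2$ and Lemma \ref{unmarked}) either a violation of condition (1) of Lemma \ref{checklist}, or, after marking enough of these $2$'s, a set $\mathcal{B}_T^{(2)}$ too large to be matched injectively into $\widehat{\mathcal{B}_T^{(1)}}$ in condition (4); the latter set is empty because, by condition (c) of Definition \ref{amenable} together with weak increase along the top row, no $1'$ can occur there. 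Hence no amenable tableau can omit a $1$ from $P_1$, so $T^{(1)}=P_1$. I expect this verification, in particular controlling the markings in the leftmost column for all admissible $w$ (both $w\le a$ and $w>a$), to be the main obstacle.

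Once $T^{(1)}=P_1$ holds for every amenable tableau, every content $\delta$ occurring in $Q_{\lambda/\mu}$ satisfies $\delta_1=\nu_1=|P_1|$. Then Lemma \ref{parts} with $k=2$ (so that the factor $2^{comp(P_1)-1}$ equals $1$) gives $f^{\lambda}_{\mu\delta}=f^{\alpha}_{\beta\gamma}$, where $U_2(\lambda/\mu)$ has shape $D_{\alpha/\beta}$ and $\delta=(\nu_1,\gamma_1,\dots,\gamma_{\ell(\gamma)})$. A direct inspection of shape paths shows that removing the first diagonal lowers the upper corner by one row while preserving $b=1$ and the inner staircase $[w,1]$, so $D_{\alpha/\beta}$ is again of the form $[a-1,1,c',d']/[w',1]$; when $w>a$ an empty column is deleted during reduction, which only lowers the upper corner further and keeps us inside a case already treated (eventually the case $a'=1$ of Corollary \ref{cased1}). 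By the induction hypothesis $Q_{\alpha/\beta}$ is $Q$-multiplicity-free, so $f^{\lambda}_{\mu\delta}\le 1$; contents with $\delta_1\ne\nu_1$ contribute $0$ since $T^{(1)}\subseteq P_1$ rules out $\delta_1>\nu_1$ and the key claim rules out $\delta_1<\nu_1$. Therefore $f^{\lambda}_{\mu\delta}\le 1$ for all $\delta$, and $Q_{\lambda/\mu}$ is $Q$-multiplicity-free. The hypothesis $d\neq 1$ is used only to keep $\lambda$ inside case \ref{mf1}(\ref{d}) rather than case \ref{mf1}(\ref{e}); it is preserved (or harmlessly lowered) under the reduction.
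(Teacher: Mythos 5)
Your proof has a genuine gap: the key claim that every amenable tableau $T$ of $D_{\lambda/\mu}$ satisfies $T^{(1)}=P_1$ is false. Take $a=2$, $c=1$, $d=2$, $w=2$, so $\lambda=[2,1,1,2]=(5,4,2)$ and $\mu=[2,1]=(2,1)$; here $P_1$ is the hook $\{(1,3),(1,4),(1,5),(2,3),(3,3)\}$ and $\nu_1=5$. The filling
$$T={\Yvcentermath1 \young(::\meins 11,::1\mzwei 2,::23)}$$
is a tableau of this shape, and one checks with Lemma \ref{checklist} that it is amenable (for $k=2$: $c^{(u)}_1=3>2=c^{(u)}_2$, the set $\mathcal{B}_T^{(2)}$ is empty since $T(1,3)=1'$ sits diagonally above the unique $2'$, both $T^{(1)}$ and $T^{(2)}$ are fitting, and the counting conditions (2)--(4) are immediate; the check for $k=3$ is just as easy). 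Its content is $(4,3,1)$, so $T^{(1)}$ omits the box $(3,3)$ of $P_1$. The mechanism you propose for excluding such tableaux does not apply: the offending box lies deep in the leftmost column, no entire row of $2$'s is forced (here row $3$ reads $2,3$), and $\widehat{\mathcal{B}_T^{(1)}}$ is not empty in general -- conditions (c) and (d) of Definition \ref{amenable} constrain the \emph{first} $1$-letter of the reading word, which is read starting from the bottom row, and say nothing about $1'$s in the top row; indeed $T_{\lambda/\mu}$ itself typically has a $1'$ there.

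Because of this, your reduction via Lemma \ref{parts} only controls the coefficients $f^{\lambda}_{\mu\delta}$ with $\delta_1=\nu_1$, and you are left with no bound at all on the multiplicities of the contents with $\delta_1<\nu_1$, which do occur. The paper takes an entirely different route: it orthogonally transposes to $D_{\lambda'/\mu'}$ with $\lambda'=[a+c+d-w,w+1]$ and $\mu'=[1,c,d-1,1]$, i.e.\ $\mu'=(c+d,d-1,\dots,1)$, and then uses the symmetry $f^{\lambda'}_{\mu'\nu}=f^{\lambda'}_{\nu\mu'}$ to count amenable tableaux of the one-corner shapes $D_{\lambda'/\nu}$ with the \emph{fixed} content $\mu'$; there the entries $2,\dots,d$ are forced (unmarked, stacked above the unique corner and in the bottoms of the adjacent columns), and the markings on the $1$'s are then shown to be forced as well. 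If you want to keep an induction on $a$, you would first need a genuinely new argument bounding the contents with $\delta_1<\nu_1$ -- that is where the actual difficulty of this lemma lies.
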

\begin{proof}
Consider $D_{\lambda/\mu}^{ot} = D_{\lambda'/\mu'}$ where $\lambda' = [a+c+d-w,w+1]$ and $\mu' = [1,c,d-1,1]$.
Thus, we have $\lambda' = (a+c+d,a+c+d-1,\ldots,w+1)$ and $\mu' = (c+d,d-1,d-2,\ldots,1)$.\par
Since $f^{\lambda'}_{\mu' \nu} = f^{\lambda'}_{\nu \mu'}$, we need to look at tableaux of shape $D_{\lambda'/\nu}$ and content $\mu'$.
See Example \ref{cased2ex} for a depiction of the proof.\par
Let $T$ and $T'$ be two different amenable tableaux of shape $D_{\lambda'/\nu}$ and content $\mu'$.
By Lemma \ref{unmarked}, all $2, 3, \ldots, d = \ell(\mu')$ are unmarked.
Since $d$ is the largest entry, it must be in a corner.
Since there is only one corner, say $(x,y)$, we have $T(x,y) = T'(x,y) = d$.
Next insert the $(d-1)$s.
Both $(d-1)$s must be unmarked and at least one $d-1$ must be in the $y^{\textrm{th}}$ column, otherwise the tableau is not amenable.
Thus, we have $T(x-1,y) = T'(x-1,y) = d-1$ and the other $d-1$ is in the lowermost box in the $(y-1)^{\textrm{th}}$ column.
Repeating this argument, we see that the numbers $2, 3, \ldots, d$ are distributed as follows:
For $0 \leq i \leq d-2$ in the $(y-i)^{\textrm{th}}$ column the lowermost boxes are filled from bottom to top with $d-i, d-i-1, \ldots, 2$.
This is fixed for all amenable tableaux of the given shape.
To get an amenable tableau there must be an unmarked $1$ in each column with a $2$ and in at least one column with no $2$.\par
If there are two amenable tableaux of the same shape then they differ only by markings on some $1$s.
Let $(u,v)$ be such that $T(u,v) = 1'$ and $T'(u,v) = 1$ or vice versa.
Then $T(u+1,v), T'(u+1,v), T(u,v-1), T'(u,v-1) \notin \{1', 1\}$.
Thus, either $(u,v)$ is in the lowermost row of the $v^{\textrm{th}}$ column or $T(u+1,v) = T'(u+1,v) = 2$.
If $T(u+1,v) = T'(u+1,v) = 2$ then $T(u,v) = T'(u,v) = 1$ as mentioned above.
By the remark after Definition \ref{amenable}, the leftmost box of the lowermost row with boxes that are filled with entry from $\{1', 1\}$ must contain a $1$.
Thus, there is no such box $(u,v)$ and, therefore, there are no two amenable tableaux of the same shape.
\end{proof}
\begin{Ex}\label{cased2ex}
Let $\lambda = (12,11,10,8,7,6,5,4)$ and $\mu = (4,3,2,1)$.
Then we have
$${\Yvcentermath1 D_{\lambda/\mu} = \young(\none \none \none \none \none \none \none \none ,\none \none \none \none \none \none \none \none ,\none \none \none \none \none \none \none \none ,\none \none \none \none \none \none \none ,\none \none \none \none \none \none \none ,:\none \none \none \none \none \none ,::\none \none \none \none \none ,:::\none \none \none \none )}.$$
Since $f^{\lambda}_{\mu \nu} = f^{\lambda'}_{\mu' \nu}$ and $D_{\lambda/\mu}^{ot} = D_{\lambda'/\mu'}$ where $\lambda'/\mu' = (12,11,10,9,8,7,6,5)/(9,3,2,1)$ we can consider $D_{\lambda'/\mu'}$:
$${\Yvcentermath1 D_{\lambda'/\mu'} = \young(:::::\none \none \none ,\none \none \none \none \none \none \none \none ,\none \none \none \none \none \none \none \none ,\none \none \none \none \none \none \none \none ,\none \none \none \none \none \none \none \none ,:\none \none \none \none \none \none \none ,::\none \none \none \none \none \none ,:::\none \none \none \none \none )}.$$
Since $f^{\lambda'}_{\mu' \nu} = f^{\lambda'}_{\nu \mu'}$ we can consider amenable tableaux of shape $D_{(12,11,10,9,8,7,6,5)/\nu}$ and content $(9,3,2,1)$.
We know fixed entries:
$${\Yvcentermath1 \tilde{T} = \young(\none \none \none \none \none \none \none \none \none \none \none \none ,:\none \none \none \none \none \none \none \none \none \none \none ,::\none \none \none \none \none \none \none \none \none \none ,:::\none \none \none \none \none \none \none \none \none ,::::\none \none \none \none \none \none \none 1,:::::\none \none \none \none \none 12,::::::\none \none \none 123,:::::::\none 1234)}.$$
Now we have five entries from $\{1', 1\}$ left to put into boxes such that we get an amenable tableau.
For example we obtain
$${\Yvcentermath1 T = \young(\none \none \none \none \none \none \none \none \none \none \none \none ,:\none \none \none \none \none \none \none \none \none \none \none ,::\none \none \none \none \none \none \none \none \none \meins ,:::\none \none \none \none \none \none \none \none \meins ,::::\none \none \none \none \none \none \meins 1,:::::\none \none \none \none \none 12,::::::\none \none \meins 123,:::::::11234)},$$
which is the only tableau of shape $D_{(12,11,10,9,8,7,6,5)/(12,11,9,8,6,5,2)}$ and content $(9,3,2,1)$.
Thus, we have
$$f^{(12,11,10,8,7,6,5,4)}_{(4,3,2,1) (12,11,9,8,6,5,2)} = f^{(12,11,10,9,8,7,6,5)}_{(9,3,2,1) (12,11,9,8,6,5,2)} = f^{(12,11,10,9,8,7,6,5)}_{(12,11,9,8,6,5,2) (9,3,2,1)} = 1.$$
\end{Ex}

\begin{Lem}\label{cased4}
Let $\lambda = [a,b,c,d]$ and $\mu = [w,1]$ where $w \leq 2$.
Then $Q_{\lambda/\mu}$ is $Q$-multiplicity-free.
\end{Lem}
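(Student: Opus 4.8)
The plan is to peel off the first border strip with Lemma \ref{parts} and induct downward on the number of rows in the upper part of $\lambda$, until the outer shape degenerates into a case already settled. Since $\mu=[w,1]$ with $w\le 2$ means $\mu=(1)$ (for $w=1$) or $\mu=(2,1)$ (for $w=2$), and the case $\mu=(1)$ is exactly Lemma \ref{casea}, I would immediately reduce to $w=2$, i.e. $\mu=(2,1)$. The crucial feature of this $\mu$ is that its diagram $\{(1,1),(1,2),(2,2)\}$ is a staircase, and staircases reproduce themselves under the diagonal peeling of Definition \ref{Salmasian'salgorithm}; this is what will make the induction self-similar.

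I would then induct on $a$. The base case $a=1$ is $\lambda=[1,b,c,d]$, $\mu=[w,1]$, which is precisely Corollary \ref{cased1}, so nothing remains to prove there. For the inductive step $a\ge 2$, the key computation is that
$$U_2\bigl(D_{[a,b,c,d]/(2,1)}\bigr)=D_{[a-1,b,c,d]/(2,1)}.$$
Concretely, using $U_2=\{(x,y)\in D_{\lambda/\mu}\mid (x-1,y-1)\in D_{\lambda/\mu}\}$ one checks that $P_1$ consists of the entire first row together with the two boxes lying immediately to the south-east of the removed staircase, so $P_1$ is a connected border strip, and that stripping it off removes only the top row of the outer shape while leaving the staircase hole $(2,1)$ and the parameters $b,c,d$ unchanged, lowering $a$ to $a-1$. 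Since $P_1$ is connected we have $comp(P_1)=1$, so Lemma \ref{parts} (with $k=2$) gives $f^{[a,b,c,d]}_{(2,1)\,(\nu_1,\gamma)}=f^{[a-1,b,c,d]}_{(2,1)\,\gamma}$ for every $\gamma$. Moreover every amenable tableau $T$ of $D_{\lambda/\mu}$ satisfies $T^{(1)}=P_1$ (as in the proof of Lemma \ref{parts}), hence every content that occurs is of the form $(\nu_1,\gamma)$. Consequently $Q_{[a,b,c,d]/(2,1)}$ is $Q$-multiplicity-free if and only if $Q_{[a-1,b,c,d]/(2,1)}$ is, and the induction closes on the base case.

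The main obstacle I anticipate is the bookkeeping in the identity $U_2(D_{[a,b,c,d]/(2,1)})=D_{[a-1,b,c,d]/(2,1)}$: one has to pin down $P_1$ exactly near the inner staircase, verify its connectedness, and then reposition $U_2$ to read off that the hole is again $(2,1)$ and that only the shape-path entry $a$ decreases. Everything else — the reduction $w=1\mapsto$ Lemma \ref{casea}, the transfer of multiplicity-freeness through Lemma \ref{parts}, and the landing in Corollary \ref{cased1} — is then routine, and no analysis of marks (as in the proofs of Lemmas \ref{uniquefilling} and \ref{cased3}) is needed.
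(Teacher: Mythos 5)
Your reduction of $w=1$ to Lemma \ref{casea} and your computation that $P_1$ is a connected $(3,\lambda_1-2)$-hook with $U_2\bigl(D_{[a,b,c,d]/(2,1)}\bigr)=D_{[a-1,b,c,d]/(2,1)}$ are both correct. The gap is in the transfer step. Lemma \ref{parts} gives $f^{\alpha}_{\beta\gamma}=f^{\lambda}_{\mu(\nu_1,\gamma)}$ only for contents whose \emph{first part equals} $\nu_1=|P_1|$; it says nothing about $f^{\lambda}_{\mu\delta}$ when $\delta_1<\nu_1$. Your claim that every amenable tableau satisfies $T^{(1)}=P_1$ is false: Lemma \ref{diagonal} only forces $T^{(1)}\subseteq P_1$, and the argument in the proof of Lemma \ref{parts} that you cite establishes $T^{(1)}=P_1$ only under the standing hypothesis that the content already begins with $\nu_1$. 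Contents with $c(T)_1<|P_1|$ genuinely occur in your family: for $\lambda=[2,1,1,1]=(4,3,1)$ and $\mu=(2,1)$ we have $|P_1|=4$ (the first row together with $(2,3)$ and $(3,3)$), yet the filling $T(1,3)=1'$, $T(1,4)=1$, $T(2,3)=1$, $T(2,4)=2'$, $T(3,3)=2$ is amenable with content $(3,2)$. Consequently the equivalence ``$Q_{[a,b,c,d]/(2,1)}$ is $Q$-multiplicity-free iff $Q_{[a-1,b,c,d]/(2,1)}$ is'' does not follow: Lemma \ref{parts} propagates \emph{non}-multiplicity-freeness upward (as the paper uses it throughout Section~4), but it cannot push multiplicity-freeness from the peeled diagram back to the original one, which is exactly the direction your induction needs.

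The paper avoids this entirely by exploiting the symmetry $f^{\lambda}_{\mu\nu}=f^{\lambda}_{\nu\mu}$: for $w=2$ it counts amenable tableaux of shape $D_{\lambda/\nu}$ with content $(2,1)$ and shows that two distinct such tableaux would force the three boxes to lie in three distinct one-box components, hence $\lambda$ would need at least three corners, contradicting $\lambda=[a,b,c,d]$. If you want to salvage your approach you would have to control all the coefficients $f^{\lambda}_{\mu\delta}$ with $\delta_1<|P_1|$ by a separate argument, at which point the content-symmetry route is considerably shorter.
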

\begin{proof}
Case $w = 1$ follows from Lemma \ref{casea}.
Thus, consider case $w = 2$.
Since $f^{\lambda}_{\mu \nu} = f^{\lambda}_{\nu \mu}$, we may consider tableaux of shape $D_{\lambda/\nu}$ and content $(2,1)$.
There are two words with content $(2,1)$, namely $w^{(1)} = 121$ and $w^{(2)} = 211$.
If $Q_{\lambda/\mu}$ is not $Q$-multiplicity-free then there must be some $\nu$ such that $D_{\lambda/\nu}$ is a diagram with two tableaux $T_1$ and $T_2$ where $c(T_1) = w^{(1)}$ and $c(T_2) = w^{(2)}$.
If $(x(2),y(2)) = (x(3),y(3)-1)$ then $T_1(x(2),y(2)) = 2$ and $T_1(x(3),y(3)) = 1$ and $T_1$ is not a tableau; a contradiction.
If $(x(2),y(2)) = (x(3)+1,y(3))$ then $T_2(x(2),y(2)) = 1$ and $T_2(x(3),y(3)) = 1$ and $T_2$ is not a tableau; a contradiction.
Similarly, we have $(x(1),y(1)) \neq (x(2),y(2)-1)$ and $(x(1),y(1)) \neq (x(2)+1,y(2))$.
Thus, these three boxes are all in different components consisting of one box.
Each component of a diagram has a corner, hence, $\lambda$ has at least three corners; a contradiction to $\lambda = [a,b,c,d]$.
\end{proof}

Lemma \ref{cased3}, Corollary \ref{cased1}, Lemma \ref{cased2} and Lemma \ref{cased4} together prove that \ref{mf1} (\ref{d}) is $Q$-multiplicity-free.

\begin{Lem}\label{casec}
Let $\lambda = [a,b]$ and $\mu = [w,x,y,1]$ where $w = 1$ or $x = 1$ or $2 \leq b \leq 3$ or $a+b-w-x-y-1 = 1$.
Then $Q_{\lambda/\mu}$ is $Q$-multiplicity-free.
\end{Lem}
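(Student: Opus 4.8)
The plan is to reduce Lemma \ref{casec} to the already-established $Q$-multiplicity-freeness of case \ref{mf1} (\ref{d}) by means of orthogonal transposition. First I would orthogonally transpose $D_{\lambda/\mu}$. The shape of $D_{\lambda/\mu}^{ot}$ for $\lambda=[a,b]$ and $\mu=[w,x,y,1]$ was already computed in the proof of Corollary \ref{fathook1co}: it equals $D_{\alpha/\beta}$ where $\alpha=[a',b',c',d']$ and $\beta=[w',1]$ with $b'=w$, $c'=x$, $d'=y+1$, $a'=a+b-1-w-x-y$ and $w'=b-1$. Since $d'=y+1\geq 2$, the partition $\alpha$ has two corners with last entry different from $1$, while $\beta=[w',1]$ has last part $1$; hence the pair $(\alpha,\beta)$ is exactly of the form treated in case \ref{mf1} (\ref{d}). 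By Lemma \ref{ot} we have $Q_{\lambda/\mu}=Q_{D_{\lambda/\mu}^{ot}}=Q_{\alpha/\beta}$, so it suffices to show that $Q_{\alpha/\beta}$ is $Q$-multiplicity-free.

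Next I would match each of the four hypotheses of Lemma \ref{casec} with the defining condition of one of the subcases of \ref{mf1} (\ref{d}), whose $Q$-multiplicity-freeness is established by Lemma \ref{cased3}, Corollary \ref{cased1}, Lemma \ref{cased2} and Lemma \ref{cased4}. Under the dictionary above, the condition $w=1$ is equivalent to $b'=1$, so that $\alpha=[a',1,c',d']$ with $d'\neq 1$ and $Q_{\alpha/\beta}$ is $Q$-multiplicity-free by Lemma \ref{cased2}; the condition $x=1$ is equivalent to $c'=1$, so that $\alpha=[a',b',1,d']$ and $Q_{\alpha/\beta}$ is $Q$-multiplicity-free by Lemma \ref{cased3}; the condition $a+b-w-x-y-1=1$ is equivalent to $a'=1$, so that $\alpha=[1,b',c',d']$ and $Q_{\alpha/\beta}$ is $Q$-multiplicity-free by Corollary \ref{cased1}; and the condition $2\leq b\leq 3$ is equivalent to $w'=b-1\in\{1,2\}$, i.e.\ $w'\leq 2$, so that $Q_{\alpha/\beta}$ is $Q$-multiplicity-free by Lemma \ref{cased4}. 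In each case $Q_{\alpha/\beta}$, and hence $Q_{\lambda/\mu}$, is $Q$-multiplicity-free, which is the claim.

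Once the orthogonal-transpose computation is recalled, the argument is essentially bookkeeping, so I do not expect a serious obstacle. The one point that needs care is checking that every degenerate value of a shape-path parameter yields a shape path of precisely the form demanded by the corresponding lemma: for instance, that $w=1$ produces $b'=1$ rather than a collapsed or undefined shape, and that when $b=2$ the value $w'=1$ corresponds to $\mu'=(1)$, which is handled inside Lemma \ref{cased4} through Lemma \ref{casea}. Because the orthogonal transpose of a basic diagram is again basic, all these shape paths are well-defined, and the only thing left to verify is that the four defining conditions of \ref{mf1} (\ref{c}) are exactly the orthogonal-transpose images of the four defining conditions of \ref{mf1} (\ref{d}).
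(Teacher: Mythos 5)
Your proposal is correct and follows essentially the same route as the paper: orthogonally transpose to get $D_{\alpha/\beta}$ with $\alpha=[a+b-w-x-y-1,w,x,y+1]$ and $\beta=[b-1,1]$, then dispatch the four hypotheses to Lemma \ref{cased2}, Lemma \ref{cased3}, Lemma \ref{cased4} and Corollary \ref{cased1} respectively, exactly as in the paper's proof. The case matching and the observation that $d'=y+1\geq 2$ (so that Lemma \ref{cased2} applies) are both consistent with the paper.
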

\begin{proof}
Let $D = D_{\lambda/\mu}$, where $\lambda = [a,b]$ and $\mu = [w,x,y,1]$.
Then $D^{ot}$ has shape $D_{\alpha/\beta}$ where $\alpha=[a+b-w-x-y-1,w,x,y+1]$ and $\beta=[b-1,1]$.
For each of the given restrictions we have one of the following cases.\par
Case $w = 1$: Then we have $\alpha = [a+b-x-y-2,1,x,y+1]$ and Lemma \ref{cased2} proves $Q$-multiplicity-freeness.\par
Case $x = 1$: Then we have $\alpha = [a+b-w-y-2,w,1,y+1]$ and Lemma \ref{cased3} proves $Q$-multiplicity-freeness.\par
Case $2 \leq b \leq 3$: Then we have $\beta=[z,1]$ where $1 \leq z \leq 2$ and Lemma \ref{cased4} proves $Q$-multiplicity-freeness.\par
Case $a+b-w-x-y-1 = 1$: Then we have $\alpha = [1,w,x,y+1]$ and Corollary \ref{cased1} proves $Q$-multiplicity-freeness.
\end{proof}

Thus, we have shown that \ref{mf1} (\ref{c}) is $Q$-multiplicity-free by showing that \ref{mf1} (\ref{c}) is the orthogonal transpose of \ref{mf1} (\ref{d}).
Now we will prove the $Q$-multiplicity-freeness of \ref{mf1} (\ref{f}) and afterwards we will show that the orthogonal transpose of \ref{mf1} (\ref{e}) is included in \ref{mf1} (\ref{f}) which means that the last remaining case of Proposition \ref{mf1} is proved to be $Q$-multiplicity-free.

\begin{Lem}\label{casee1}
Let $\lambda = [a,b,c,1]$ and $\mu = [w,1]$ where $a \leq 2$.
Then $Q_{\lambda/\mu}$ is $Q$-multiplicity-free.
\end{Lem}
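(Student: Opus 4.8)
The plan is to split on the value of $a$. When $a=1$ the partition $\lambda=[1,b,c,1]$ falls directly under Corollary \ref{cased1} (which allows any $d$, in particular $d=1$), so no new argument is required. The real content is the case $a=2$, i.e. $\lambda=[2,b,c,1]$. One could orthogonally transpose via Lemma \ref{ot}, but $D^{ot}_{\lambda/\mu}$ turns out to be a ``thick hook'' $D_{[A,B]/[W,X]}$ with $X\ge 2$, a shape of type \ref{mf1} (\ref{f}) that is not yet available, so I would instead argue directly. Using the symmetry $f^{\lambda}_{\mu\nu}=f^{\lambda}_{\nu\mu}$, it suffices to show that for every $\nu$ there is at most one amenable tableau of shape $D_{\lambda/\nu}$ whose content is the fixed staircase $\mu=(w,w-1,\ldots,1)$; this is the same maneuver as in Lemma \ref{cased2}.

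The first step is to show that any such tableau is entirely unmarked. By parts (c) and (d) of Definition \ref{amenable} the first occurrence of each value $k$ in the reading word is an unmarked $k$, so $c^{(u)}_k\ge 1$ for all $k\le w$; in particular the single $w$ is unmarked. Feeding this into Lemma \ref{unmarked} and descending through $k=w,w-1,\ldots,2$ gives $c^{(u)}_{k-1}>c^{(u)}_k$, hence $c^{(u)}_{k-1}$ already equals the full multiplicity $w+2-k$ of $k-1$; thus no entry is primed. After this reduction the object in question is an unmarked filling with weakly increasing rows, strictly increasing columns (Definition \ref{tableaudef} b)), staircase content, and a reading word satisfying the lattice (ballot) condition coming from amenability.

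The remaining and decisive step is to prove that such an unmarked ballot filling of $D_{[2,b,c,1]/\nu}$ with staircase content, if it exists, is unique. This is where $a\le 2$ must be used in an essential way: the outer shape consists only of the two long top rows followed by a staircase tail terminating in the single box forced by $d=1$, and this thinness pins down the position of the largest value $w$; descending through $w-1,w-2,\ldots,1$ and invoking Lemma \ref{diagonal} together with the ballot condition then forces each value into a uniquely determined set of boxes. Making this rigidity precise is the main obstacle, and it is genuinely special to small $a$: for $a\ge 3$ the analogous fillings are not unique, in agreement with the non-$Q$-multiplicity-free family exhibited in Lemma \ref{fathook}.
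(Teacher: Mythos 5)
Your case $a=1$ is exactly the paper's (Corollary \ref{cased1} does apply with $d=1$), and your reduction in the case $a=2$ to unmarked fillings is sound: with content the staircase $(w,w-1,\ldots,1)$, conditions (c), (d) of Definition \ref{amenable} force $c^{(u)}_k\geq 1$ for every $k$, and Lemma \ref{unmarked} then pushes $c^{(u)}_{k-1}$ up to the full multiplicity of $k-1$, so nothing is primed. This is the same maneuver used in Lemmas \ref{cased2} and \ref{casee4}. The problem is that everything after that is a promissory note. The entire content of the lemma for $a=2$ is precisely the rigidity statement you defer -- that an unmarked lattice filling of $D_{[2,b,c,1]/\nu}$ with staircase content is unique -- and you explicitly write that ``making this rigidity precise is the main obstacle'' without overcoming it. The claim is true (it must be, given Lemma \ref{fathook} sits just outside this family), but no mechanism is given for pinning down the positions of the values $w, w-1, \ldots$ when $\nu$ is an arbitrary inner shape; for general $\nu$ the skew shape $D_{\lambda/\nu}$ need not be connected and need not place the unique $w$ in an \emph{a priori} determined corner, so the descent you sketch does not run by itself. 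As it stands this is a gap, not a proof.

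For comparison, the paper does not pass to $f^{\lambda}_{\nu\mu}$ here at all. It keeps the shape $D_{\lambda/\mu}$ and an arbitrary content, uses Lemma \ref{firstrows} to confine all letters from $\{1',1,2',2\}$ to the first two rows, checks that the $1$'s form a hook and the $2$'s form a border strip (or two components whose marks are then forced by the fitting condition and Lemma \ref{unmarked}), so that the top part of the tableau is rigid; the remaining diagram $R_T$, after orthogonal transposition via Lemma \ref{ot}, is a straight shape $D_{\alpha'}$, which admits exactly one amenable tableau. That last observation is what replaces your missing uniqueness argument, and it is where the hypothesis $a\leq 2$ enters: only for $a\leq 2$ does stripping the letters $\leq 2$ leave a diagram whose orthogonal transpose is non-skew. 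If you want to salvage your route, you would need an induction on the staircase content in the spirit of the paper's Lemma \ref{cased2}, locating the unique $w$ in the unique available corner of $D_{\lambda/\nu}$ and peeling off one diagonal at a time; but that argument still has to be written.
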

\begin{proof}
Since case $a = 1$ is shown in Corollary \ref{cased1}, we only have to show case $a = 2$.
For each tableau $T$ of shape $D_{\lambda/\mu}$ let $R_T$ be the diagram of the remaining tableau after removing the boxes with entry from $\{1', 1, 2', 2\}$.
By Lemma \ref{firstrows}, the first two rows only have entries from $\{1', 1, 2', 2\}$.
The boxes with entry from $\{1', 1\}$ form a hook.
If the boxes with entry from $\{2', 2\}$ form a border strip all the marks of the entries are determined.
If the boxes with entry from $\{2', 2\}$ form a diagram with more than one component then it must have precisely two components.
The first component has boxes only in the $(w+1)^{\textrm{th}}$ column and the second component has boxes in all other columns.
In this case the last box of the second component must contain a $2'$ by the remark after Definition \ref{amenable} and by Lemma \ref{unmarked}.
Thus, there are no two tableaux differing just by marks on the entries from $\{1', 1, 2', 2\}$.\par
If no $R_T$ for any $T$ has two amenable tableaux of the same content then $Q_{\lambda/\mu}$ is $Q$-multiplicity-free.
$R_T^{ot}$ is a diagram of shape $D_{\alpha'}$ for some $\alpha' \in DP$.
Such a diagram has only one amenable tableau, namely the one that has just $i$s in the $i^{\textrm{th}}$ row for $1 \leq i \leq \ell(\alpha')$.
Thus, $Q_{\lambda/\mu}$ is $Q$-multiplicity-free.
\end{proof}
\begin{Ex}
For $\lambda = [1,5,6,1]$ and $\mu = [4,1]$ we have
$$Q_{(12,6,5,4,3,2,1)/(4,3,2,1)} = Q_{(12,6,5)}+Q_{(11,6,5,1)}+Q_{(10,6,5,2)}+Q_{(9,6,5,3)}+Q_{(8,6,5,4)}.$$
For $\lambda = [2,5,5,1]$ and $\mu = [4,1]$ we have\par
$Q_{(12,11,5,4,3,2,1)/(4,3,2,1)} = Q_{(12,11,5)}+Q_{(11,10,5,2)}+Q_{(11,9,5,3)}+Q_{(11,9,5,2,1)}+Q_{(11,8,5,4)}+Q_{(11,8,5,3,1)}+Q_{(11,7,5,4,1)}+Q_{(10,9,5,3,1)}+Q_{(10,8,5,4,1)}+Q_{(10,8,5,3,2)}+Q_{(10,7,5,4,2)}+Q_{(9,8,5,4,2)}+Q_{(9,7,5,4,3)}+Q_{(12,10,5,1)}+Q_{(12,9,5,2)}+Q_{(12,8,5,3)}+Q_{(12,7,5,4)}$.
\end{Ex}

\begin{Lem}\label{casee2}
Let $\lambda = [a,b,c,1]$ and $\mu = [w,1]$ where $b \leq 2$.
Then $Q_{\lambda/\mu}$ is $Q$-multiplicity-free.
\end{Lem}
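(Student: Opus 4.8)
The plan is to reduce to a single-corner numerator by orthogonal transposition and then to exploit the rigidity of triangular shapes. By Lemma \ref{ot} we have $Q_{\lambda/\mu} = Q_{D_{\lambda/\mu}^{ot}}$, and by a computation inverse to the one in the proof of Corollary \ref{fathookco} the diagram $D_{\lambda/\mu}^{ot}$ has shape $D_{\lambda'/\mu'}$ with $\lambda' = [a+b+c-w,\,w+1]$ and $\mu' = [b,\,c+1]$. The gain from this move is twofold: $\lambda'$ is a shifted staircase in which every row ends in the same column (its diagram is ``triangular''), and $\mu'$ has at most two parts because $b \le 2$. Using the symmetry $f^{\lambda'}_{\mu'\nu} = f^{\lambda'}_{\nu\mu'}$, it then suffices to show that for every $\nu$ there is at most one amenable tableau of shape $D_{\lambda'/\nu}$ and content $\mu'$, a content using at most the two letters $1$ and $2$.

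The key structural observation is that, since every row of $D_{\lambda'}$ ends in the same column and the content uses at most two values, Lemma \ref{diagonal} forces every northeast diagonal of $D_{\lambda'/\nu}$ to contain at most two boxes, and a two-box diagonal must read $1$ above $2$. First I would dispose of the case $b=1$, where the content is $(c+1)$: then every diagonal has at most one box and $D_{\lambda'/\nu}$ is a broken border strip. Here the strictness of $\nu$ together with the common right edge of the triangle forces the number of boxes deleted from successive rows to be weakly increasing downward (and at most one per non-final row, since the deletion must lie in the border), so the removed region is a single connected $L$-shape or segment; by Lemma \ref{2fillings} its unique leftmost-fitting filling is the only amenable one, giving $f^{\lambda'}_{\nu\mu'} \le 1$. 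Equivalently, this is the assertion that every term of $Q_{\lambda'/(c+1)}$ in Proposition \ref{lambda/n} occurs with coefficient one.

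For $b=2$ the content is $(c+2,c+1)$ and $D_{\lambda'/\nu}$ is at most two diagonals thick: the $1$s occupy the upper box and the $2$s the lower box of each length-two diagonal, while the length-one diagonals must be labelled so that the overall content is $(c+2,c+1)$. The remaining work, which I expect to be the main obstacle, is to show that this labelling together with all the markings is forced. Amenability requires, via Lemma \ref{checklist}, that each $2$ be dominated to its upper right by a $1$, and, via Lemma \ref{unmarked}, it constrains where marked letters may appear; one then checks that in the triangular geometry these conditions pin down the value of every length-one diagonal and the mark of every entry uniquely. Granting this, there is again at most one amenable tableau of shape $D_{\lambda'/\nu}$ and content $\mu'$ for each $\nu$, so $Q_{\lambda/\mu} = Q_{\lambda'/\mu'}$ is $Q$-multiplicity-free.
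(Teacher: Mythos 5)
Your reduction is exactly the one the paper uses: orthogonally transpose to get $\lambda'=[a+b+c-w,w+1]$, $\mu'=[b,c+1]$ (so $\mu'=(c+1)$ or $(c+2,c+1)$), and then invoke $f^{\lambda'}_{\mu'\nu}=f^{\lambda'}_{\nu\mu'}$ to study amenable tableaux of shape $D_{\lambda'/\nu}$ with the two-letter content $\mu'$. Your treatment of $b=1$ is adequate and matches the paper, which observes that $B_{\lambda'}$ is a rotated hook so that every diagram in $B_{\lambda'}^{(c+1)}$ is connected and Proposition \ref{lambda/n} gives all coefficients equal to $1$.

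The problem is the case $b=2$, where you have identified the correct setup (each diagonal of $D_{\lambda'/\nu}$ has at most two boxes, two-box diagonals carry a $1$ over a $2$, and the one-box diagonals and the marks are the remaining degrees of freedom) but then written ``one then checks \dots'' and ``Granting this \dots''. That deferred check \emph{is} the lemma: the entire content of the $b=2$ case is to prove that two amenable tableaux of the same shape and content $(c+2,c+1)$ cannot differ, and this requires two separate nontrivial arguments, neither of which you supply. First, one must rule out that $T$ and $T'$ assign different values $\{1,2\}$ to some one-box diagonal; the paper does this by noting that such a discrepancy would have to occur at the first or last box of $T^{(2)}$, and that placing a $1$ or $1'$ there would force a $1'$ above the uppermost $1$ of a column, which is excluded by a reading-word computation ($m_2(\ell(w)+j-1)\geq m_1(\ell(w)+j-1)$ with $w_j=1$). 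Second, one must rule out that $T$ and $T'$ differ only by marks on $1$s; the paper takes the uppermost rightmost box $(u,v)$ where they differ, shows $(u+1,v)\notin D_{\lambda'/\nu}$ via another statistics argument, and then contradicts the fitting condition on $T^{(1)}$. Without these two steps the proof is incomplete; ``the triangular geometry pins everything down'' is an expectation, not an argument, and the analogous expectation fails for nearby shapes (this is exactly where the non-multiplicity-free examples of Corollary \ref{fathookco} live), so the specific constraints coming from amenability must actually be exploited.
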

\begin{proof}
Case 1: $b = 1$.\par
The diagram $D_{\lambda/\mu}^{ot}$ has shape $D_{\alpha/\beta}$ where $\alpha = [a+c+1-w,w+1]$ and $\beta = [1,c+1]$.
Thus, $\alpha = (a+c+1,a+c,\ldots,w+1)$ and $\beta = (c+1)$.
Then $B_{\alpha}$ is a rotated hook and every diagram from $B_{\alpha}^{(n)}$ is connected.
By Proposition \ref{lambda/n}, $Q_{\alpha/\beta} = Q_{\lambda/\mu}$ is $Q$-multiplicity-free.\par
Case 2: $b = 2$.\par
The diagram $D_{\lambda/\mu}^{ot}$ has shape $D_{\alpha/\beta}$ where $\alpha = [a+c-w+2,w+1]$ and $\beta = [2,c+1]$.
Thus, $\alpha = (a+c+2,a+c+1,\ldots,w+1)$ and $\beta = (c+2, c+1)$.
Since $f^{\alpha}_{\beta \nu} = f^{\alpha}_{\nu \beta}$, we need to look at amenable tableaux of shape $D_{\alpha/\nu}$ and content $(c+2, c+1)$.
The boxes with an entry from $\{2', 2\}$ form a border strip (in fact a rotated hook) where marks are determined.
In every column with a box of this border strip there is a box filled with $2$.
To obtain an amenable tableau in each of these columns there must be a box filled with a $1$.
Above the uppermost box filled with a $1$ there cannot be a box filled with a $1'$.
Otherwise, if $w$ is the reading word of this tableau and the uppermost box filled with $1$ is $(x(j),y(j))$ then $c+1 = m_2(\ell(w)+j-1) \geq m_1(\ell(w)+j-1)$ and $w_j = 1$; a contradiction to the amenability of the tableau.\par
Suppose we have two amenable tableaux $T$ and $T'$ of shape $D_{\lambda/\nu}$.
If there are boxes $(x,y)$ such that $T(x,y) \in \{2', 2\}$ and $T'(x,y) \in \{1', 1\}$ then one of these boxes is either the first or the last box of $T^{(2)}$.
But then there is a box $(r,s)$ such that $T(r,s) \in \{1', 1\}$ and $T'(r,s) \in \{2', 2\}$ is the last box or the first box of $T'^{(2)}$, respectively.
Without loss of generality we may assume that $(x,y)$ is the first box of $T^{(2)}$.
Then $T(x-1,y) = 1$ and $(x-2,y)$ is not part of the diagram.
Since $T'(x,y) \in \{1', 1\}$, we have $T'(x-1,y) = 1'$; a contradiction to the fact that there cannot be a box filled with a $1'$ above the uppermost box filled with a $1$.\par
Hence, $T$ and $T'$ differ only by markings on $1$s.
Let $(u,v)$ be the uppermost rightmost box such that $T'(u,v) = 1'$, say, and $T(u,v) = 1$.
Then the boxes $(u+1,v), (u,v-1) \notin T^{(1)}=T'^{(1)}$.
Thus, either $(u+1,v) \notin D_{\lambda/\nu}$ or $T(u+1,v) = T'(u+1,v) \in T^{(2)} = T'^{(2)}$.
Suppose $T(u+1,v) = T'(u+1,v) \in T^{(2)} = T'^{(2)}$.
If we have $(u+1,v) = (x(k),y(k))$ then for $w(T')$ we have $m_2(\ell(w(T'))-k) = m_1(\ell(w(T'))-k)$ and $w_k \in \{2', 2\}$; a contradiction to the amenability of $T'$.
Hence, $(u+1,v) \notin D_{\lambda/\nu}$.
By the remark after Definition \ref{amenable}, $T^{(1)} = T'^{(1)}$ is fitting.
It follows that there is no box $(u,v)$ and, therefore, there are no two amenable tableaux of $D_{\lambda/\nu}$.
\end{proof}
\begin{Ex}
For $\lambda = [3,1,6,1]$ and $\mu = [6,1]$ we have
$$Q_{(10,9,8,6,5,4,3,2,1)/(6,5,4,3,2,1)} = Q_{(10,9,8)}+Q_{(10,9,7,1)}+Q_{(10,8,7,2)}+Q_{(9,8,7,3)}.$$
For $\lambda = [3,2,6,1]$ and $\mu = [6,1]$ we have\par
$Q_{(11,10,9,6,5,4,3,2,1)/(6,5,4,3,2,1)} = Q_{(11,10,9)}+Q_{(11,10,8,1)}+Q_{(11,10,7,2)}+Q_{(11,9,8,2)}+Q_{(11,9,7,3)}+Q_{(11,9,7,2,1)}+Q_{(11,8,7,3,1)}+Q_{(10,9,8,3)}+Q_{(10,9,7,3,1)}+Q_{(10,9,7,4)}+\linebreak
Q_{(10,8,7,4,1)}+Q_{(10,8,7,3,2)}+Q_{(9,8,7,4,2)}$.
\end{Ex}

\begin{Lem}\label{casee3}
Let $\lambda = [a,b,c,1]$ and $\mu = [w,1]$ where $c \leq 2$.
Then $Q_{\lambda/\mu}$ is $Q$-multiplicity-free.
\end{Lem}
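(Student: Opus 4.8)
The plan is to pass to the orthogonal transpose and then analyse a rigid content, exactly as in the proofs of Lemmas \ref{casee2} and \ref{cased2}. Since the case $w=1$ is covered by Lemma \ref{casea}, I would assume $w\geq 2$. A direct computation of the orthogonal transpose (using Lemma \ref{ot}) shows that $D_{\lambda/\mu}^{ot}$ has shape $D_{\alpha/\beta}$ with $\alpha=[a+b+c-w,\,w+1]$ and $\beta=[b,\,c+1]$; in particular the hypothesis $c\leq 2$ becomes the statement that the smallest part of $\beta$ equals $c+1\leq 3$. Here $\alpha$ is a single-corner shape, which keeps the outer diagram simple, and by Lemma \ref{ot} we have $Q_{\lambda/\mu}=Q_{\alpha/\beta}$.

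Next I would use the symmetry $f^{\alpha}_{\beta\nu}=f^{\alpha}_{\nu\beta}$, so that it suffices to bound by $1$ the number of amenable tableaux of shape $D_{\alpha/\nu}$ and content $\beta=(b+c,b+c-1,\ldots,c+1)$ for each $\nu\in DP$. This content is a block of consecutive multiplicities whose largest entry $b$ occurs only $c+1\leq 3$ times. By Lemma \ref{unmarked} all entries $2,3,\ldots,b$ are unmarked, and since the multiplicities decrease by consecutive integers, each set $T^{(i)}$ is forced to be a border strip whose position is pinned down, layer by layer from $i=b$ downwards, by the requirement in Lemma \ref{checklist} that every column carrying an $i$ also carry an $i-1$ weakly to its upper left. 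The minimal width $c+1\leq 3$ of the top layer is what starts this rigidity and prevents the border strips from branching.

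The remaining freedom, as in Lemma \ref{casee2}, lies entirely in the markings of the $1$'s (the layer of maximal multiplicity). I would show that two amenable tableaux of the same content can differ only in whether some box carries $1$ or $1'$, and then rule this out: such a toggleable box $(u,v)$ would have to be either the lowest box of its column or sit directly above a box with entry $2$, and in both situations the remark after Definition \ref{amenable} (the leftmost box of the lowermost $1$-row must be unmarked, and $T^{(1)}$ must be fitting) together with the fact that no $1'$ can lie directly above the topmost $1$ forces the entry. Hence no such box exists, so $f^{\alpha}_{\beta\nu}\leq 1$ for all $\nu$ and $Q_{\lambda/\mu}$ is $Q$-multiplicity-free.

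The main obstacle I anticipate is the second step: because $\beta$ may have many parts (unlike the two-part content handled in Lemma \ref{casee2}), the rigidity of the border strips $T^{(i)}$ must be propagated through all layers rather than checked in a single case, and one must verify carefully that the thinness $c+1\leq 3$ of the outermost layer is genuinely what forbids the $2\times 2$ freedom responsible for non-$Q$-multiplicity-freeness when $c\geq 3$ (cf. Lemma \ref{fathook}). Making this layer-by-layer propagation precise, and confirming that it breaks down exactly at $c=3$, is the delicate part.
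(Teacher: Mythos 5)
Your reduction is set up correctly: $D_{\lambda/\mu}^{ot}$ does have shape $D_{\alpha/\beta}$ with $\alpha=[a+b+c-w,w+1]$ and $\beta=[b,c+1]$ (this matches the formulas used in Lemma \ref{casee2} and Lemma \ref{casef}), and passing to tableaux of shape $D_{\alpha/\nu}$ with content $\beta=(b+c,b+c-1,\ldots,c+1)$ via $f^{\alpha}_{\beta\nu}=f^{\alpha}_{\nu\beta}$ is the same device the paper uses in Lemmas \ref{cased2}, \ref{cased4}, \ref{casee2} and \ref{casee4}. The problem is that the entire weight of the proof rests on the rigidity step, and that step is not established. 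In particular, the claim that Lemma \ref{unmarked} forces all entries $2,\ldots,b$ to be unmarked is false for this content: Lemma \ref{unmarked} only yields the chain $c^{(u)}_1>c^{(u)}_2>\cdots>c^{(u)}_b\geq 1$, and since the multiplicities here are $c+1+j$ for the letter $b-j$ (smallest multiplicity $c+1\geq 2$, not $1$ as in Lemma \ref{cased2}), this chain leaves room for up to $c$ marked copies of every letter $j\geq 2$. For example, with content $(5,4,3)$ the unmarked profile $(5,4,1)$ with two marked $3$'s is perfectly consistent with Lemma \ref{unmarked}. Consequently the "pinned down layer by layer" claim, the assertion that the only remaining freedom is in the markings of the $1$'s, and the final toggling argument (which presupposes knowing where the $2$'s sit) are all unsupported. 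The mechanism you invoke for the pinning — that every column carrying an $i$ must carry an $i-1$ above it — is condition (2) of Corollary \ref{checklistco}, which is sufficient but not necessary for amenability; the necessary condition from Lemma \ref{checklist} is the weaker inequality on $|\mathcal{S}^{\boxtimes}_T(x,y)^{(i-1)}|$ versus $|\mathcal{S}^{\boxtimes}_T(x,y)^{(i)}|$, so it cannot be used to force positions. You flag this step yourself as the delicate part; as written it is a genuine gap, not merely a detail to be routinized.

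For comparison, the paper avoids the transposed picture entirely. It observes that in $D_{\lambda/\mu}$ with $\lambda=[a,b,c,1]$ and $c\leq 2$ there are at most three boxes below the $a^{\textrm{th}}$ row, invokes Lemma \ref{uniquefilling} to conclude that the filling of the first $a$ rows is unique up to marks (these rows form a shape of type $D_{[a,p]/[q,1]}$), and then runs a short case analysis on the possible entries of the remaining one or three boxes, using Lemma \ref{unmarked} and the reading-word statistics to kill every potential mark toggle. If you want to salvage your route, you would need to prove the layer-by-layer rigidity for the content $(b+c,\ldots,c+1)$ on one-corner staircase shapes directly — essentially a separate lemma of comparable difficulty to Lemma \ref{uniquefilling} — rather than cite Lemma \ref{unmarked} for it.
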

\begin{proof}
Let $n = |D_{\lambda/\mu}|$.\par
Case 1: $c = 1$.\par
The only box in the $(a+1)^{\textrm{th}}$ row is $(a+1,a+1)$.
By Lemma \ref{uniquefilling}, the filling of the first $a$ rows is unique up to markings.
In fact, the filling consists entirely of hooks at the diagonal $\{(s,t) \mid t-s = w\}$.
Thus, two different amenable tableaux of the same content differ only by markings.
Suppose we have two such tableaux $T$ and $T'$.
Let $(y,z)$ be a box such that $T'(y,z) = k'$, say, and $T(y,z) = k$.
Then there must be a box below and to the left of this box with a $k$.
This box is $(a+1,a+1)$ and $y = a$.
However, since $T(a,z) = k$, we have $m_{k-1}(n) = m_k(n)$; a contradiction to Lemma \ref{unmarked}.
Thus, there are no two different amenable tableaux of the same content.\par
Case 2: $c = 2$.\par
Let $T$ be an amenable tableau of shape $D_{\lambda/\mu}$.
By Lemma \ref{uniquefilling}, the filling of the first $a$ rows is unique up to markings.
In fact, the filling consists entirely of hooks at the diagonal $\{(s,t) \mid t-s = w\}$.
The three boxes below the $a^{\textrm{th}}$ row are $(a+1,a+1)$, $(a+1,a+2)$ and $(a+2,a+2)$.\par
Case 2.1: $|T(a+1,a+1)| = |T(a+1,a+2)| = k$ for some $k$.\par
Then, by Lemma \ref{diagonal}, we have $|T(a+2,a+2)| > k$.
Since $(a,a+1) \in D_{\lambda/\mu}$ we have $k > 1$.
If $k'$ or $k$ occur in the first $a$ rows, it follows that $m_k(2n) \geq m_{k-1}(2n)$; a contradiction to the amenability of $T$.
Thus, $k = j+1$, where $j = \min\{a, b+3\}$.
This is only possible if there are at least three unmarked $j$s, otherwise there is no amenable tableau with these properties.
Then $T(a+2,a+2) = k+1 = j+2$ follows and $T(a+1,a+1)$, $T(a+1,a+2)$ and $T(a+2,a+2)$ are unmarked.
Additionally, each of the entries in the $a^{\textrm{th}}$ row is unmarked and, therefore, there is no other amenable tableau of the same content.\par
Case 2.2: $|T(a+1,a+2)| = |T(a+2,a+2)| = k$ for some $k$.\par
Since $(a,a+1) \in D_{\lambda/\mu}$ we have $k > 1$.
If $k'$ or $k$ occur in the first $a$ rows it follows that $T(a+1,a+1) = k-1$, otherwise $m_k(2n) \geq m_{k-1}(2n)$; a contradiction to the amenability of $T$.
Assume there are two different amenable tableaux $T$ and $T'$ of $D_{\lambda/\mu}$ of the same content such that $|T(a+1,a+1)| = |T'(a+1,a+1)| = k-1$, $|T(a+1,a+2)| = |T'(a+1,a+2)| = k$ and $|T(a+2,a+2)| = |T'(a+2,a+2)| = k$.
It follows that these tableaux differ only by markings.
Then there is some $i$ such that $T'(y,z) = i'$, say, and $T(y,z) = i$.
It follows that $y = a$ since the entries in the other rows are determined.
It also follows that there is an $i$ in a box which is lower and to the left of $(a,z)$.
Thus, we have $i \in \{k-1, k\}$ and, therefore, $k > 2$.
If $i = k-1$ then, since $T(a,z) = k-1$, for $w(T)$ we have $m_{k-2}(n) = m_{k-1}(n)$; a contradiction to Lemma \ref{unmarked}.
Hence, we have $i = k$.
If $T(a,z-1) = (k-1)'$, then, since $T(a,z) = k$, for $w(T')$ we have $m_{k-1}(n) = m_k(n)$; again a contradiction to Lemma \ref{unmarked}.
If $T(a,z-1) = k-1$, then we have $m_{k-2}(n) = m_{k-1}(n)$; a contradiction to Lemma \ref{unmarked} as well.
Thus, there are no such two different amenable tableaux of $D_{\lambda/\mu}$.\par
Case 2.3: $|T(a+1,a+1)| = u$, $|T(a+1,a+2)| = v$ and $|T(a+2,a+2)| = t$ where $u \neq v$, $u \neq t$ and $v \neq t$.\par
Then we have $u < v < t$.
Assume there are two different amenable tableaux $T$ and $T'$ of $D_{\lambda/\mu}$ of the same content in which the boxes $(a+1,a+1)$, $(a+1,a+2)$ and $(a+2,a+2)$ are filled as above.
It follows that these tableaux differ only by markings.
Then there is some $i$ such that $T'(y,z) = i'$, say, and $T(y,z) = i$.
It follows that $y = a$ since the entries in the other rows are determined.
It also follows that there is an $i$ in a box which is lower and to the left of the box $(a,z)$.
The only possible case is that $i \in \{u, v, t\}$.
Arguing as in the cases above, we see that for $T$ we either have $m_{t-1}(n) = m_t(n)$ or $m_{v-1}(n) = m_v(n)$ or $m_{u-1}(n) = m_u(n)$.
This contradicts Lemma \ref{unmarked}.\par
Hence, there are no such two different amenable tableaux of $D_{\lambda/\mu}$.
\end{proof}
\begin{Ex}
For $\lambda = [5,3,1,1]$ and $\mu = [4,1]$ we get
$$Q_{(9,8,7,6,5,1)/(4,3,2,1)} = Q_{(9,8,5,3,1)}+Q_{(9,7,6,3,1)}+Q_{(9,7,5,4,1)}+Q_{(9,7,5,3,2)}.$$
For $\lambda = [4,3,2,1]$ and $\mu = [4,1]$ we get\par
$Q_{(9,8,7,6,2,1)/(4,3,2,1)} = Q_{(9,7,5,2)}+Q_{(9,8,4,2)}+Q_{(8,6,5,4)}+Q_{(8,6,5,3,1)}+Q_{(8,6,4,3,2)}+Q_{(8,7,4,3,1)}+Q_{(8,7,5,2,1)}+Q_{(8,7,6,2)}+Q_{(8,7,5,3)}+Q_{(9,6,4,3,1)}+Q_{(9,6,5,2,1)}+Q_{(9,7,4,3)}+Q_{(9,7,4,2,1)}+Q_{(9,6,5,3)}$.
\end{Ex}

\begin{Lem}\label{casee4}
Let $\lambda = [a,b,c,1]$ and $\mu = [w,1]$ where $w \leq 3$ or $w = a+c-1$.
Then $Q_{\lambda/\mu}$ is $Q$-multiplicity-free.
\end{Lem}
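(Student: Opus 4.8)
The plan is to split into the two stated regimes, $w\le 3$ and $w=a+c-1$, and to reduce both to a single clean counting problem. First I would dispose of the small values of $w$: the case $w=1$ is $\mu=(1)$ and is covered by Lemma \ref{casea}, while $w=2$ is covered by Lemma \ref{cased4} (whose proof uses only that $\lambda$ has two corners, not that $d\neq 1$). This leaves the genuinely new cases $w=3$ and $w=a+c-1$, for which I would invoke the symmetry $f^\lambda_{\mu\nu}=f^\lambda_{\nu\mu}$ and count, for each $\nu$, the amenable tableaux of shape $D_{\lambda/\nu}$ whose content is the staircase $\mu=(w,w-1,\ldots,1)$; the goal becomes to show that at most one such tableau exists.

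The key first reduction is that any amenable tableau of staircase content is \emph{entirely unmarked}. I would prove this by downward induction on the entry value: conditions (c) and (d) of Definition \ref{amenable} force the first occurrence in the reading word of each value to be unmarked, so the number of unmarked copies of every present value is positive, and then Lemma \ref{unmarked} gives, for each $k$, that the number of unmarked $(k-1)$'s strictly exceeds the number of unmarked $k$'s. Since the content is $(w,w-1,\ldots,1)$, starting from the single top value $w$ (forced unmarked) these inequalities are tight at every step, so all $w-i+1$ copies of each value $i$ are unmarked. Consequently the tableaux to be counted are ordinary column-strict fillings (rows weakly, columns and diagonals strictly increasing by Lemma \ref{diagonal}) satisfying the lattice form of amenability, and the problem becomes classical in flavour.

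For $w=a+c-1$ I would exploit the extremal geometry. Here $\ell(\mu)=\ell(\lambda)-1$, so $D_{\lambda/\mu}$ is unshifted and, after deleting empty rows and columns, equals the fat hook $((b+1)^a,1^c)$, whose first column is a leg of height $a+c$. Running Salmasian's algorithm (Definition \ref{Salmasian'salgorithm}) shows every $P_i$ is a hook (the first being the row-and-leg hook that swallows the entire tail, each later one the corner hook of the remaining rectangle), so in the spirit of Lemma \ref{uniquefilling} and Lemma \ref{cased3} the long leg pins the entries down the first column and the rectangle then propagates them rightward with no freedom; together with the all-unmarked reduction this forces a unique filling for each content.

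For $w=3$ the content is simply $(3,2,1)$: three $1$'s, two $2$'s and one $3$, all unmarked. I would place the unique $3$ (confined to a deepest corner by Lemma \ref{firstrows} and Lemma \ref{diagonal}), then the two $2$'s, then the three $1$'s, using column-strictness and the lattice condition; because $\lambda=[a,b,c,1]$ has only two corners, the shape $D_{\lambda/\nu}$ is very restricted and at most one legal placement survives. The main obstacle, common to both regimes, is precisely this uniqueness: the all-unmarked reduction is valid for \emph{every} $w$, yet multiplicity-freeness genuinely fails for intermediate $w$ by Lemma \ref{fathook}, so the argument must use either the smallness of the content ($w=3$) or the rigidity of the single long leg ($w=a+c-1$). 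Carefully enumerating the possible shapes $D_{\lambda/\nu}$ and checking that no second lattice filling exists is where the real work lies.
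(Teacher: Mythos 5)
Your reductions for $w\le 2$ coincide with the paper's (Lemma \ref{casea} for $w=1$, Lemma \ref{cased4} for $w=2$, and you are right that the latter's proof only needs $\lambda$ to have two corners), and your framing of $w=3$ --- pass to shape $D_{\lambda/\nu}$ and content $(3,2,1)$ via $f^{\lambda}_{\mu\nu}=f^{\lambda}_{\nu\mu}$, then force all entries unmarked via Definition \ref{amenable}(c),(d) and Lemma \ref{unmarked} --- is exactly the paper's opening move. But the proposal stops where the proof actually begins. For $w=3$ the entire content of the paper's argument is the case analysis you defer in your last sentence: one must show the single $3$ sits in the \emph{same} corner (upper corner $(a,a+b+c)$ or lower corner $(a+c,a+c)$) in any two amenable tableaux of the same shape, and then, corner by corner, that the positions of the $2$s and $1$s are forced, using that $D_{\lambda/\nu}$ has at most two corners. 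Saying "at most one legal placement survives" is the claim to be proved, not an argument for it; as you yourself note, the all-unmarked reduction alone cannot suffice since multiplicity-freeness fails for intermediate $w$ by Lemma \ref{fathook}.

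The case $w=a+c-1$ has a second, more substantive problem. Your identification of $D_{\lambda/\mu}$ as the unshifted fat hook $((b+1)^a,1^c)$ is correct, but the argument you attach to it conflates the two dual counting problems: the "all-unmarked reduction" applies to tableaux of shape $D_{\lambda/\nu}$ with staircase content $\mu$, whereas here you are filling the fat hook $D_{\lambda/\mu}$ with an \emph{arbitrary} content $\nu$, for which entries may well be marked. Moreover, knowing that Salmasian's strips $P_i$ are hooks only describes the one tableau $T_{\lambda/\mu}$; it does not pin down the sets $T^{(i)}$ of an arbitrary amenable tableau, so "the rectangle propagates the entries rightward with no freedom" is an assertion standing in for the inductive argument that Lemma \ref{uniquefilling} actually carries out. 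The paper sidesteps all of this with a one-line reduction you should adopt: since $\ell(\mu)=\ell(\lambda)-1$ the diagram is unshifted, so by Lemma \ref{transposerotate} one may transpose it to obtain the shape $D_{[1,c,b,a]/[b,1]}$, which is covered by Corollary \ref{cased1}.
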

\begin{proof}
Case $w = 1$ follows from Lemma \ref{casea} and case $w = 2$ follows from Lemma \ref{cased4}.
For case $w = a+c-1$ the diagram $D_{\lambda/\mu}^t$ has shape $D_{\alpha/\beta}$ where $\alpha = [1,c,b,a]$ and $\beta = [b,1]$ and follows from Corollary \ref{cased1}.
Thus, we only have to prove case $w = 3$.\par
Since $f^{\lambda}_{\mu \nu} = f^{\lambda}_{\nu \mu}$, we just need to look at tableaux of shape $D_{\lambda/\nu}$ and content $\mu = (3,2,1)$.
By Lemma \ref{unmarked}, all entries must be unmarked.
Assume there are two different amenable tableaux $T_1$, $T_2$ of $D_{\lambda/\nu}$ with content $\mu$ for some $\nu \in DP$.
Thus, we get one tableau from the other by interchanging some entries in certain boxes.\par
Suppose the $3$ is in one of these boxes.
Let $(a,x)$ be the upper corner (where $x = a+b+c$) and let $(e,e)$ be the lower corner (where $e = a+c$).
Since the $3$ is the greatest entry it must be either in $(a,x)$ or in $(e,e)$.
Thus, we have $T_1(a,x) = 3$, say, and $T_2(e,e) = 3$.
Then, by Lemma \ref{firstrows} and since $T_1$ is amenable, we have $a \geq 3$, $T_1(a-1,x) = 2$ and $T_2(a-2,x) = 1$.
We have $T_2(a,x) \in \{1, 2\}$.
Either way, since all entries are unmarked, we have $T_2(a-2,x) \leq T_2(a-1,x)-1 \leq T_2(a,x)-2$ and, hence, $T_2(a-2,x) \notin \{1, 2, 3\}$.
Thus, either $T_1(a,x) = T_2(a,x) = 3$ or $T_1(e,e) = T_2(e,e) = 3$.\par
Suppose $T_1(a,x) = T_2(a,x) = 3$.
Then $T_1(a-1,x) = T_2(a-1,x) = 2$ and $T_1(a-2,x) = T_2(a-1,x) = 1$.
Thus, $T_1$ and $T_2$ differ only by interchanging one $1$ and one $2$.
Let the boxes containing these entries be $(f,t)$ and $(v,g)$, where $g > t$ and $v < f$.
The remaining $1$ must be in a box to the right and above $(v,g)$.
If $T_1(a-1,x-1) = T_2(a-1,x-1) = 1$ then $T_1(a,x-1) = T_2(a,x-1) = 2$ and both tableaux are the same; a contradiction.
Thus, we have $T_1(a,x-1) = T_2(a,x-1) = 1$.
The remaining entries must be in two corners below $(a,x-1)$.
However, there is only one corner (namely $(e,e)$), thus, there are no two different amenable tableaux such that $T_1(a,x) = T_2(a,x) = 3$.
Therefore, we have $T_1(e,e) = T_2(e,e) = 3$.\par
Suppose $T_1(a,x) = 1$.
Then $T_1(e-1,e) = T_1(e-1,e-1) = 2$ and after inserting the $1$s the tableau is determined.
Thus, if $T_1(a,x) = 1$, there are no two different amenable tableaux.\par
Therefore, $T_1(a,x) = T_2(a,x) = 2$.
Since $T_1$ and $T_2$ are amenable, $T_1(a-1,x) = T_2(a-1,x) = 1$.
Thus, $T_1$ and $T_2$ differ only by interchanging one $1$ and one $2$.
With the same argument as above we see that $T_1(a,x-1) = T_2(a,x-1) = 1$.
Then we have $T_1(e-1,e) = T_2(e-1,e) = 2$ and both tableaux are the same; a contradiction.
Thus, there are no two different amenable tableaux of shape $D_{\lambda/\nu}$ and content $\mu = (3,2,1)$.
\end{proof}
\begin{Ex}
For $\lambda = [3,3,3,1]$ and $\mu = [3,1]$ we get\par
$Q_{(9,8,7,3,2,1)/(3,2,1)} = Q_{(9,8,7)}+Q_{(9,8,6,1)}+Q_{(9,8,5,2)}+Q_{(9,8,4,3)}+Q_{(9,7,6,2)}+Q_{(9,7,5,3)}+Q_{(9,7,5,2,1)}+Q_{(9,7,4,3,1)}+Q_{(9,6,5,3,1)}+Q_{(9,6,4,3,2)}+Q_{(8,7,6,3)}+Q_{(8,7,4,3,2)}+Q_{(8,6,5,3,2)}+Q_{(8,6,4,3,2,1)}+Q_{(8,7,5,3,1)}$.
\end{Ex}

The Lemmas \ref{casee1}, \ref{casee2}, \ref{casee3} and \ref{casee4} all together show that \ref{mf1} (\ref{e}) is $Q$-multiplicity-free.

\begin{Lem}\label{casef}
Let $\lambda = [a,b]$ and $\mu = [w,x]$ where $2 \leq b \leq 4$ or $w \leq 2$ or $2 \leq x \leq 3$ or $a = w+1$ or $a+b-w-x \leq 2$.
Then $Q_{\lambda/\mu}$ is $Q$-multiplicity-free.
\end{Lem}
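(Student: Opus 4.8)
The plan is to split on the value of $x$. For $x=1$ the claim is essentially free from earlier work, while for $x\ge 2$ I would reduce to the already-proven Case (\ref{e}) of Proposition \ref{mf1} by orthogonal transposition. So first I would dispose of $x=1$: here $\mu=[w,1]$, so $D_{\lambda/\mu}$ is precisely a diagram of the type treated in Lemma \ref{uniquefilling}, and by the remark following that lemma such a diagram admits a \emph{unique} amenable filling. Hence $Q_{\lambda/\mu}=Q_\nu$ for a single $\nu\in DP$, so $f^\lambda_{\mu\nu'}\le 1$ for all $\nu'$ and $Q_{\lambda/\mu}$ is $Q$-multiplicity-free. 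This settles every instance of the hypothesis in which $x=1$, irrespective of which of the remaining disjuncts holds.

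Now assume $x\ge 2$. As in the proof of Corollary \ref{fathookco}, orthogonal transposition turns $D_{\lambda/\mu}$ into $D_{\alpha/\beta}$ with $\alpha=[a+b-w-x,\,w,\,x-1,\,1]$ and $\beta=[b-1,1]$, which is exactly a diagram of the kind handled in Case (\ref{e}); its shape-path entries are $a'=a+b-w-x$, $b'=w$, $c'=x-1$ together with the trailing $1$, while $w'=b-1$. By Lemma \ref{ot} we have $Q_{\lambda/\mu}=Q_{\alpha/\beta}$, so it suffices to check that each disjunct of the hypothesis forces one of the conditions of Case (\ref{e}) on $\alpha,\beta$. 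The bookkeeping is: $w\le 2$ gives $b'\le 2$ (Lemma \ref{casee2}); $2\le x\le 3$ gives $c'=x-1\le 2$ (Lemma \ref{casee3}); $a+b-w-x\le 2$ gives $a'\le 2$ (Lemma \ref{casee1}); $2\le b\le 4$ gives $w'=b-1\le 3$ (Lemma \ref{casee4}); and $a=w+1$ gives $w'=b-1=a+b-w-2=a'+c'-1$, again covered by Lemma \ref{casee4}. In every case the cited lemma shows $Q_{\alpha/\beta}$ is $Q$-multiplicity-free, whence so is $Q_{\lambda/\mu}$.

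The $x=1$ case and the invocations of Lemmas \ref{casee1}--\ref{casee4} are routine; the part needing care is the orthogonal-transpose step, that is, verifying that for $x\ge 2$ the diagram $D_{\lambda/\mu}^{ot}$ really has shape $D_{[a+b-w-x,\,w,\,x-1,\,1]/[b-1,1]}$ (this is where the assumption $x\ge2$ is essential, since $x=1$ would force the degenerate value $c'=0$ and no longer give a two-cornered $\alpha$), and that the boundary parameter values—$x=2$, small $b$, or $a+b-w-x$ near its minimum—still yield a genuine Case (\ref{e}) shape to which those lemmas apply. I expect this faithful translation of the disjunction of Case (\ref{f}) into the disjunction of Case (\ref{e}) to be the main obstacle.
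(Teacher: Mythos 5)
Your proof follows the paper's argument exactly: orthogonally transpose to $D_{\alpha/\beta}$ with $\alpha=[a+b-w-x,\,w,\,x-1,\,1]$ and $\beta=[b-1,1]$, then match each disjunct of the hypothesis to one of Lemmas \ref{casee1}--\ref{casee4} with identical bookkeeping. Your separate treatment of $x=1$ via Lemma \ref{uniquefilling} is a sensible extra precaution (the paper only disposes of that degenerate case later, in the proof of Theorem \ref{listmf}, where $x-1=0$ would otherwise break the shape-path computation), but otherwise the two proofs coincide.
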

\begin{proof}
The diagram $D_{\lambda/\mu}^{ot}$ has shape $D_{\alpha/\beta}$ where $\alpha=[a+b-w-x,w,x-1,1]$ and $\beta=[b-1,1]$.
For each of the given restrictions we have one of the following cases.\par
Case $2 \leq b \leq 4$: Then we have $\beta = [w',1]$ where $w' \leq 3$ and Lemma \ref{casee4} proves $Q$-multiplicity-freeness.\par
Case $w \leq 2$: Then we have $\alpha = [a',b',c',1]$ where $b' \leq 2$ and Lemma \ref{casee2} proves $Q$-multiplicity-freeness.\par
Case $2 \leq x \leq 3$: Then we have $\alpha = [a',b',c',1]$ where $c' \leq 2$ and Lemma \ref{casee3} proves $Q$-multiplicity-freeness.\par
Case $a = w+1$: Then we have $\alpha = [a',b',c',1]$ and $\beta = [w',1]$ where we have $a' = a+b-w-x = b-x+1$ and, hence, $w' = b-1 = (b-x+1)+(x-1)-1 = a'+c'-1$ and Lemma \ref{casee4} proves $Q$-multiplicity-freeness.\par
Case $a+b-w-x \leq 2$: Then we have $\alpha = [a',b',c',1]$ where $a' \leq 2$ and Lemma \ref{casee1} proves $Q$-multiplicity-freeness.
\end{proof}

We have now proven that all the skew Schur $Q$-functions occurring in Proposition \ref{mf1} are indeed $Q$-multiplicity-free, and hence we are now able to state this result as our final classification theorem.

\begin{Th}\label{listmf}
Let $\lambda, \mu \in DP$ and $a,b,c,d,w,x,y \in \NN$ such that $D_{\lambda/\mu}$ is basic.
$Q_{\lambda/\mu}$ is $Q$-multiplicity-free if and only if $\lambda$ and $\mu$ satisfy one of the following conditions:
\begin{enumerate}[(i)]
	\item $\lambda$ is arbitrary and $\mu \in \{\emptyset, (1)\}$,\label{a1}
	\item $\lambda = (a+b-1, a+b-2, \ldots, b)$ where $b \in \{1,2\}$ and $\mu$ is arbitrary,\label{b1}
	\item $\lambda = (a+b-1, a+b-2, \ldots, b)$ and $\mu = (w+x+y, w+x+y-1, \ldots, x+y+2,\linebreak
x+y+1, y, y-1, \ldots, 1)$ where $w = 1$ or $x = 1$ or $b \leq 3$ or $a+b-w-x-y-1 = 1$,\label{c1}
	\item $\lambda = (a+b+c+d-1, a+b+c+d-2, \ldots, b+c+d+1, b+c+d, c+d-1,\linebreak
	c+d-2, \ldots, d)$ where $d \neq 1$ and $\mu = (w, w-1, \ldots, 1)$ where $1 \in \{a,b,c\}$ or $w \leq 2$,\label{d1}
	\item $\lambda = (a+b+c, a+b+c-1, \ldots, b+c+2, b+c+1, c, c-1, \ldots, 1)$ and $\mu = (w, w-1, \ldots, 1)$ where $a \leq 2$ or $b \leq 2$ or $c \leq 2$ or $w \leq 3$ or $w = a+c-1$,\label{e1}
	\item $\lambda = (a+b-1, a+b-2, \ldots, b)$ and $\mu = (w+x-1, w+x-2, \ldots, x)$ where $2 \leq b \leq 4$ or $w \leq 2$ or $x \leq 3$ or $a = w+1$ or $a+b-w-x \leq 2$.\label{f1}
\end{enumerate}
Some of these cases overlap.
\end{Th}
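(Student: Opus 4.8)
The plan is to assemble the theorem from the two halves already in place: Proposition \ref{mf1} supplies the ``only if'' direction, and the block of lemmas proved afterwards supplies the ``if'' direction. The only genuinely new ingredient is a change of notation, since Proposition \ref{mf1} and the multiplicity-freeness lemmas are phrased using the shape paths $[a,b]$ and $[a,b,c,d]$, whereas the theorem lists explicit partitions. I would therefore open by invoking the bijection of Lemma \ref{shapepathbijection}, under which $[a,b]$ corresponds to $(a+b-1,a+b-2,\ldots,b)$, $[a,b,c,d]$ to $(a+b+c+d-1,\ldots,b+c+d,c+d-1,\ldots,d)$, and in particular $[w,x,y,1]$ to $(w+x+y,\ldots,x+y+1,y,\ldots,1)$ and $[w,1]$ to $(w,w-1,\ldots,1)$. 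A line-by-line comparison then matches each of conditions (ii1)--(vi1) of the theorem with the corresponding condition (ii)--(vi) of Proposition \ref{mf1}.

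For the ``only if'' direction: suppose $Q_{\lambda/\mu}$ is $Q$-multiplicity-free. If $\mu\in\{\emptyset,(1)\}$ we are in case (i1); otherwise Lemma \ref{notthreecorners} and Corollary \ref{notthreecornersco} (through Proposition \ref{mf1}) confine $\lambda$, and the admissible $\mu$, to $DP^{\le 2}$, so the shape-path language applies. Proposition \ref{mf1} then places $(\lambda,\mu)$ in one of its six families, and the translation above converts this into one of (i1)--(vi1).

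For the ``if'' direction I would treat the six cases in turn, each being exactly the content of a previously proved lemma or group of lemmas: case (i1) is Lemma \ref{casea}; case (ii1) is Lemma \ref{caseb}; case (iii1) is Lemma \ref{casec}; case (iv1) is the combination of Corollary \ref{cased1} and Lemmas \ref{cased3}, \ref{cased2}, \ref{cased4}; case (v1) is the combination of Lemmas \ref{casee1}, \ref{casee2}, \ref{casee3}, \ref{casee4}; and case (vi1) is Lemma \ref{casef}. Because the families are permitted to overlap, it is enough that their union is the full list of conditions; no disjointness need be checked.

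The step demanding the most care, though it is bookkeeping rather than mathematics, is reconciling the boundary ranges. Several inclusion lemmas state a slightly narrower hypothesis than the theorem, for example $2\le b\le 3$ in Lemma \ref{casec} against $b\le 3$ in (iii1), or $2\le x\le 3$ in Lemma \ref{casef} against $x\le 3$ in (vi1). I would check that the missing endpoints are recovered from an overlapping family---the values $b\in\{1,2\}$ fall under case (ii1) via Lemma \ref{caseb}, and the degenerate value $x=1$ is handled by the orthogonal-transpose remark attached to case (vi1). The main obstacle is thus to confirm that the union of the domains of the inclusion lemmas coincides exactly with the complement of the exclusion regions of Proposition \ref{mf1}, so that the two directions meet without a gap, and that each shape-path condition has been correctly rewritten as an explicit partition through Lemma \ref{shapepathbijection}.
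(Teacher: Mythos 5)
Your proposal matches the paper's proof essentially verbatim: both directions are assembled by translating the shape-path notation into explicit partitions via Lemma \ref{shapepathbijection}, invoking Proposition \ref{mf1} for the ``only if'' direction, and citing exactly the same block of lemmas (\ref{casea}, \ref{caseb}, \ref{casec}, \ref{cased1}--\ref{cased4}, \ref{casee1}--\ref{casee4}, \ref{casef}) for the ``if'' direction. The one small imprecision is the endpoint $x=1$ of case (vi), which the paper settles via the remark after Lemma \ref{uniquefilling} (the unique amenable filling of $D_{[a,b]/[w,1]}$ gives $Q_{\lambda/\mu}=Q_\alpha$) rather than via the orthogonal-transpose remark you cite, but this does not affect the structure of the argument.
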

\begin{proof}
Using the shape path notation of Definition \ref{shapepath} we have:
\begin{itemize}
	\item \ref{listmf} (\ref{b1}) is the case $\lambda = [a,b]$ where $b \in \{1,2\}$ and $\mu$ is arbitrary.
	\item \ref{listmf} (\ref{c1}) is the case $\lambda = [a,b]$ and $\mu = [w,x,y,1]$ where $w = 1$ or $x = 1$ or $b \leq 3$ or $a+b-w-x-y-1 = 1$.
	\item \ref{listmf} (\ref{d1}) is the case	$\lambda = [a,b,c,d]$ such that $d \neq 1$ and $\mu = [w,1]$ where $1 \in \{a,b,c\}$ or $w \leq 2$.
	\item \ref{listmf} (\ref{e1}) is the case	$\lambda = [a,b,c,1]$ and $\mu = [w,1]$ where $a \leq 2$ or $b \leq 2$ or $c \leq 2$ or $w \leq 3$ or $w = a+c-1$.
	\item \ref{listmf} (\ref{f1}) is the case	$\lambda = [a,b]$ and $\mu = [w,x]$ where $2 \leq b \leq 4$ or $w \leq 2$ or $x \leq 3$ or $a = w+1$ or $a+b-w-x \leq 2$.
\end{itemize}
By Proposition \ref{mf1}, only these cases can be $Q$-multiplicity-free.
Lemma \ref{casea} states that \ref{listmf} (\ref{a1}) is $Q$-multiplicity-free.
Lemma \ref{caseb} states that \ref{listmf} (\ref{b1}) is $Q$-multiplicity-free.
Lemmas \ref{cased3}, \ref{cased2} and \ref{cased4} and Corollary \ref{cased1} state that \ref{listmf} (\ref{d1}) is $Q$-multiplicity-free.
Lemma \ref{casec} states that \ref{listmf} (\ref{c1}) is $Q$-multiplicity-free.
Lemmas \ref{casee1}, \ref{casee2}, \ref{casee3} and \ref{casee4} state that \ref{listmf} (\ref{e1}) is $Q$-multiplicity-free.
Lemma \ref{casef} states that \ref{listmf} (\ref{f1}) for $x \neq 1$ is $Q$-multiplicity-free.
Lemma \ref{uniquefilling} states that for \ref{listmf} (\ref{f1}) for $x = 1$ we have $Q_{\lambda/\mu} = Q_{\alpha}$ for some $\alpha$ (see the remark after Lemma \ref{uniquefilling}).
Hence, \ref{listmf} (\ref{f1}) for $x = 1$ is $Q$-multiplicity-free.
Thus, all cases in Theorem \ref{listmf} are $Q$-multiplicity-free.
\end{proof}

\begin{Ack}
The QF package for Maple made by John Stembridge (\url{http://www.math.lsa.umich.edu/~jrs/maple.html}) was a helpful tool for analysing the decomposition of skew Schur $Q$-functions.\par
This paper is based on the research I did for my PhD thesis which was supervised by Prof. Christine Bessenrodt.
I am very grateful to Christine Bessenrodt that she introduced me to (skew) Schur $Q$-functions, and I would like to thank her for her help in writing this paper as well as for supervising my research.
\end{Ack}


\begin{thebibliography}{999}
\bibitem{BarekatvanWilligenburg} Farzin Barekat and Stephanie van Willigenburg: Composition of transpositions and equality of ribbon Schur Q-functions, Electron. J. Combin. 16, \#R110 (2009)
\bibitem{Bessenrodt} Christine Bessenrodt: On multiplicity-free products of Schur $P$-functions. Ann. Comb. 6, 119-124 (2002)
\bibitem{Cho} Soojin Cho: A new Littlewood-Richardson rule for Schur P-functions. Trans. Amer. Math. Soc. 365, 939-972 (2013)
\bibitem{DeWitt} Elizabeth A. DeWitt: Identities Relating Schur s-Functions and Q-Functions. Ph. D. Thesis, University of Michigan (2012)
\bibitem{Gutschwager} Christian Gutschwager: On multiplicity-free skew characters and the Schubert calculus. Ann. Comb. 14, 339-353 (2010)
\bibitem{HoffmanHumphreys} Peter N. Hoffman and John F. Humphreys: Projective Representation of the Symmetric Groups. Oxford Mathematical Monographs, Oxford Science Publications, Clarendon Press (1992)
\bibitem{SaganStanley} Bruce E. Sagan, Richard P. Stanley: Robinson-Schensted algorithms for skew tableaux. J. Combin. Theory Ser. A 55, 161-193 (1990)
\bibitem{Salmasian} Hadi Salmasian: Equality of Schur's Q-functions and their skew analogues. Ann. Comb. 12, 325-346 (2008)
\bibitem{Schure} Christopher Schure: Classification of Q-homogeneous skew Schur Q-functions. \arxiv{1609.02755} [math.CO]
\bibitem{Stembridge3} John R. Stembridge: Multiplicity-free products of Schur functions. Ann. Comb. 5, 113-121 (2001)
\bibitem{Stembridge} John R. Stembridge: Shifted tableaux and the projective representations of symmetric groups. Adv. in Math. 74, 87-134 (1989)
\bibitem{ThomasYong} Hugh Thomas and Alexander Yong: Multiplicity-free Schubert calculus. Canad. Math. Bull. 53, 171-186 (2007)
\end{thebibliography}
\end{document}